\documentclass[reqno, 11pt, letterpaper ]{amsart}

\oddsidemargin0.25in
\evensidemargin0.25in
\textwidth6.00in
\topmargin0.00in
\textheight8.50in

\usepackage{amsmath}
\usepackage{amsfonts}
\usepackage{amssymb}
\usepackage{graphicx}
\usepackage{amsthm,color,yfonts,cite} \usepackage{paralist}
\usepackage{hyperref}

\usepackage{subfig}

\DeclareMathOperator*{\esssup}{ess\,sup}

\newtheorem{theorem}{Theorem}

\newtheorem{proposition}[theorem]{Proposition}
\newtheorem{lemma}[theorem]{Lemma}
\newtheorem{corollary}[theorem]{Corollary}

\newtheorem{example}[theorem]{Example}
\theoremstyle{remark}
\newtheorem{remark}[theorem]{Remark}

\makeatletter
\newcommand*{\rom}[1]{\expandafter\@slowromancap\romannumeral #1@}
\makeatother

\newcommand{\R}{\mathbb{R}}

\newcommand{\C}{\mathbb{C}}

\usepackage{wrapfig}
\usepackage{tikz}
\usetikzlibrary{arrows,calc,decorations.pathreplacing}
\definecolor{light-gray1}{gray}{0.90}
\definecolor{light-gray2}{gray}{0.80}
\definecolor{light-gray3}{gray}{0.60}

\numberwithin{equation}{section}

\numberwithin{theorem}{section}

\numberwithin{table}{section}

\numberwithin{figure}{section}

\ifx\pdfoutput\undefined
  \DeclareGraphicsExtensions{.pstex, .eps}
\else
  \ifx\pdfoutput\relax
    \DeclareGraphicsExtensions{.pstex, .eps}
  \else
    \ifnum\pdfoutput>0
      \DeclareGraphicsExtensions{.pdf}
    \else
      \DeclareGraphicsExtensions{.pstex, .eps}
    \fi
  \fi
\fi

\linespread{1.2}

\title[NLS with a toroidal trap]{On the nonlinear Schr\"odinger equation with a toroidal trap in the strong confinement regime}

\date{\today}
\linespread{1.2}

\author[Y. Hong]{Younghun Hong}
\address{Department of Mathematics, Chung-Ang University, Seoul 06974, Korea}
\email{yhhong@cau.ac.kr}

\author[S. Jin]{Sangdon Jin}
\address{Department of Mathematics, Chung-Ang University, Seoul 06974, Korea}
\email{sdjin@cau.ac.kr}

\begin{document}

\begin{abstract}
We consider the 3D cubic nonlinear Schr\"odinger equation (NLS) with a strong toroidal trap. In the first part, we show that as the confinement is strengthened, a large class of global solutions to the time-dependent model can be described by 1D flows solving the 1D periodic NLS (Theorem \ref{dimension reduction for tNLS}). In the second part, we construct a steady state as a constrained energy minimizer, and prove its dimension reduction to the well-known 1D periodic ground state (Theorem \ref{thm: existence} and \ref{drth}). Then, employing the dimension reduction limit, we establish the local uniqueness and the orbital stability of the 3D ring soliton (Theorem \ref{thm: unique}). These results justify the emergence of stable quasi-1D periodic dynamics for Bose-Einstein condensates on a ring in physics experiments.
\end{abstract}

\maketitle


\section{Introduction}

\subsection{Bose-Einstein condensate in a toroidal trap}

A Bose-Einstein condensate (BEC) is a state of matter where almost all particles have the same quantum state. It is produced by cooling a dilute Bose gas down to near absolute $0$K. This exotic material form was first proposed in 1924-25 by Bose and Einstein. Many decades later, in 1995, Cornell and Wieman in JILA group\cite{AE} and Ketterle's group \cite{DM} independently succeeded in creating BECs in laboratory using alkali vapor. For weakly interacting BECs, via the mean-field approximation, the particle dynamics is described by a single wave function governed by the cubic nonlinear Schr\"odinger equation (NLS) also known as the Gross-Pitaevskii equation \cite{ES, ES1, ES2, ES3, LSS}.

In most experiments, BECs are confined in a trapping potential, and different trapping geometries determine different shapes of condensates. In particular, controlling confining frequencies in an anisotropic harmonic trap, quasi-lower dimensional cigar-shaped and disc-shaped condensates can be produced. In this way, BECs for attractive particles are possibly stabilized in a quasi-lower dimensional regime \cite{BBJV, BM, BM1, CCR, Khaykovich, RA}. For various dimension reduction processes, we refer to the survey articles \cite{BaoCai, CF} and to \cite{Bossmann 1d, Bossmann 2d, Bossmann-Teufel, CH 2d 2013, CH, CH 2d, HT, HJ2} for rigorous proofs in various settings.

In this paper, we are particularly concerned with a BEC confined in a toroidal trap. In experiments, a toroidal trap can be realized by optical and/or magnetic potentials \cite{RA, W}. For instance, the NIST atomic physics group constructed a potential of the form\footnote{In \cite{RA}, a fully anisotropic potential $\frac{m}{2}(\omega_{y_1}^2y_1^2+\omega_{y_2}^2y_2^2+\omega_z^2 z^2)+V_0e^{-2(y_1^2+y_2^2)/w_0^2}$ is employed with $\frac{\omega_{y_1}}{2\pi}=36$, $\frac{\omega_{y_2}}{2\pi}=51$ and $\frac{\omega_{z}}{2\pi}=25$ (Hz), where $\omega_{y_1}$, $\omega_{y_2}$ and $\omega_z$ are harmonic trapping frequencies, $V_0$ is the maximum optical potential and $w_0$ is the Gaussian beam waist. However, in this article, we set $\omega_{y_1}=\omega_{y_2}$ for mathematical simplicity.}
\begin{equation}\label{NIST potential}
\frac{m}{2}\big(\omega_r^2|y|^2+\omega_z^2 z^2\big)+V_0\exp\left(-\frac{2|y|^2}{w_0^2}\right),\quad (y,z)\in\mathbb{R}^2\times\mathbb{R}
\end{equation}
plugging a repulsive Gaussian laser beam in the $z$-direction in the middle of a magnetic harmonic trap  \cite{ RA}. An important property of this toroidal trap potential is that it has the minimum value on a ring provided that the Gaussian beam is strong enough. Precisely, if $V_0>V_{\text{crit}}:=\frac{m\omega_r^2w_0^2}{4}$, then the potential attains its minimum on the ring $\{(y,0): |y|=r_*\}$ with $r_*=\frac{w_0}{\sqrt{2}}\big(\ln(\frac{V_0}{V_{\text{crit}}})\big)^{1/2}$, and it is formally expanded as 
$$V_{\text{crit}}\left\{1+\ln\left(\frac{V_0}{V_{\text{crit}}}\right)\right\}+m\omega_r^2\ln\left(\frac{V_0}{V_{\text{crit}}}\right)\big(|y|-r_*\big)^2+\frac{m}{2}\omega_z^2z^2+\cdots$$
near the ring.\footnote{If the Gaussian beam is weak ($V_0\leq V_{\textup{crit}}$), then  the potential can be approximated by a harmonic + quartic potential $V_0+\frac{2(V_{\textup{crit}}-V_0)}{w_0^2}|y|^2+\frac{2V_0}{w_0^4}|y|^4+\frac{m}{2}\omega_z^2z^2$ near the origin  \cite{BS, FZ, F}.}  Thus, a trapped BEC can be placed on a ring-shaped region if the potential is strong enough or if a Bose gas is sufficiently diluted. As a consequence, a quasi-1D periodic dynamics may arise from the unbounded 3D system, and a BEC becomes easier to analyze as well as it exhibits rich dynamics. In addition, in a toroidal trap, a persistent flow of a BEC with 10-second lifetime is observed \cite{RA,WY}; however, quantum vortices will not be discussed in this article.   

\subsection{Model description}
Let $\omega\geq1$ be sufficiently large. Throughout the article, we assume that  a smooth function
$$U_\omega=U_\omega(s):[-\sqrt{\omega},\infty)\to [0,\infty)$$
is close to $s^2$ near the origin, and that it has global quadratic lower and upper bounds;
$$\begin{aligned}
&&\textbf{\textup{(H1)}}&\quad |U_\omega(s)-s^2|\lesssim \frac{1}{\sqrt{\omega}}s^2&&\textup{on }[-\sqrt{\omega}, \sqrt{\omega}]\\
&&\textbf{\textup{(H2)}}&\quad U_\omega(s)\sim s^2 &&\textup{for all }s\geq-\sqrt{\omega}
\end{aligned}$$
where the implicit constants are independent of large $\omega\geq 1$ (see Section \ref{sec: notation} for notations). Then, we consider a general strong toroidal trap potential of the form
$$\omega U_\omega(\sqrt{\omega}(|y|-1))+\omega^2z^2,\quad (y,z)\in\R^2\times \R.$$
Note that by the assumptions, 
$$\omega U_\omega(\sqrt{\omega}(|y|-1))+\omega^2z^2\underset{\omega\to\infty}\approx\omega^2\big((|y|-1)^2+z^2\big)$$
on the disk $\{(y,0): |y|\leq 2\}$ and it has a lower bound $\sim\omega^2((|y|-1)^2+z^2)$ in $\mathbb{R}^3$.
\begin{example}
\begin{enumerate}[$(i)$]
\item If $U_\omega(s)=s^2$, then
$$\omega U_\omega(\sqrt{\omega}(|y|-1))+\omega^2z^2=\omega^2((|y|-1)^2+z^2).$$
\item $U_\omega(s)=\frac{m\omega}{2}\{(1+\frac{s}{\sqrt{\omega}})^2+me^{\frac{1}{m}(1-(1+\frac{s}{\sqrt{\omega}})^2)}-(m+1)\}$ satisfies \textbf{\textup{(H1)}} and \textbf{\textup{(H2)}}. In this case, the toroidal potential is given by  
$$\omega U_\omega\big(\sqrt{\omega}(|y|-1)\big)+\omega^2z^2=\frac{m}{2}\left(\omega^2|y|^2+\frac{2\omega^2}{m}z^2\right)+\frac{m^2\omega^2}{2}e^{\frac{1}{m}(1-|y|^2)}-\frac{m(m+1)\omega^2 }{2},$$
which is, up to constant addition, the trapping potential \eqref{NIST potential} in the experiment \cite{RA} with $\omega_r^2=\omega^2$, $\omega_z^2=\frac{2\omega^2}{m}$, $m=\frac{w_0^2}{2}$, $V_{\textup{crit}}=\frac{m^2\omega^2}{2}$ and $V_0=V_{\textup{crit}}e^{\frac{1}{m}}$. Here, the choice of physical parameters seems rather restricted, but that is because the trap potential is chosen to be asymptotically $\omega^2((|y|-1)^2+z^2)$ just for simplicity. Indeed, the potential is allowed to be close to $\omega^2(c_r^2(|y|-r_*)^2+c_z^2 z^2)$ without any essential change in analysis.  Then, we have three additional degree of freedom for physical coefficients. 
\end{enumerate}
\end{example}

Suppose that a BEC is confined in the above toroidal trap. Then, its mean-field dynamics is described by the nonlinear Schr\"odinger equation (NLS)
\begin{equation}\label{NLS0}
i\partial_t\psi_\omega=-\Delta \psi_\omega+\omega U_\omega\big(\sqrt{\omega}(|y|-1)\big)\psi_\omega+\omega^2z^2\psi_\omega+\frac{\kappa}{\omega}|\psi_\omega|^2\psi_\omega, 
\end{equation}
where $\psi_\omega=\psi_\omega(t,x): I(\subset\mathbb{R})\times\mathbb{R}^3\to\mathbb{C}$ and $\omega\gg1$ represents the strength of the confinement and the weakness of particle interactions. The normalized coefficient $\kappa=\pm1$ determines the repulsive/attractive pair particle interaction in the condensate; the equation \eqref{NLS0} is called \textit{defocusing} (resp., \textit{focusing}) when $\kappa=1$ (resp., $\kappa=-1$).

By the assumptions, low energy states would be concentrated on the ring $\{(y,0)\in\mathbb{R}^2\times\mathbb{R}: |y|=1\}$. In order to observe the quasi-1D dynamics in the strong confinement regime, we consider the wave function of the form
\begin{equation}\label{ansatz}
\psi_\omega(t,x)=e^{-it\omega\Lambda_\omega}\omega^{\frac{3}{4}}u_\omega(t,\sqrt{\omega}x),
\end{equation}
where $\Lambda_\omega$ is the lowest eigenvalue for the Schr\"odinger operator
\begin{equation}\label{H_omega}
H_\omega=-\Delta+U_\omega(|y|-\sqrt{\omega})+z^2,
\end{equation}
and deduce the rescaled NLS  
\begin{equation}\label{NLS}
\boxed{\quad i\partial_t u_\omega=\omega(H_\omega-\Lambda_\omega) u_\omega+\kappa\sqrt{\omega}|u_\omega|^{2}u_\omega.\quad}
\end{equation}
The Cauchy problem for the equation \eqref{NLS} is locally well-posed in the energy space $\Sigma=\{u\in H^1(\mathbb{R}^3): (1+|x|^2)^{1/2} u\in L^2(\mathbb{R}^3)\}$ equipped with the weighted Sobolev norm
$$\|u\|_{\Sigma}:=\left\{\|u\|_{L^2(\mathbb{R}^3)}^2+\|\nabla u\|_{L^2(\mathbb{R}^3)}^2+\|xu\|_{L^2(\mathbb{R}^3)}^2\right\}^{1/2}.$$
Moreover, solutions preserve the mass
\begin{equation}\label{eq: mass}
M[u]=\int_{\mathbb{R}^3}|u|^2dx
\end{equation}
and the energy 
\begin{equation}\label{eq: energy}
E_\omega[u]= \frac{\omega}{2}\big\langle ( H_\omega-\Lambda_\omega) u, u\big\rangle_{L^2(\mathbb{R}^3)}+\frac{\kappa\sqrt{\omega}}{4}\int_{\R^3}|u|^{4}dx.
\end{equation}

\begin{remark}
By the phase shift (see \eqref{ansatz}), the lowest eigenvalue $\Lambda_\omega$ is subtracted in the definition of the energy \eqref{eq: energy}. Otherwise, even the energy of the lowest eigenfunction for the Schr\"odinger operator $H_\omega$ blows up as $\omega\to \infty$.
\end{remark}

The structure of a toroidal trap naturally leads us to decompose the Schr\"odinger operator into the radial-axial part and the angular part in cylindrical coordinates,
$$H_\omega=H_\omega^\perp-\frac{1}{r^2}\partial_\theta^2,$$
where 
\begin{equation}\label{H perp}
H_\omega^\perp:=-\partial_r^2-\frac{1}{r}\partial_r-\partial_z^2+U_\omega(r-\sqrt{\omega})+z^2
\end{equation}
acts on the partially radial class $L_{rad}^2(\mathbb{R}^2)\times L^2(\mathbb{R})$. We note that by the assumptions, the lowest eigenfunction for  $H_\omega^\perp$ is localized on a large ring $r=|y|=\sqrt{\omega}$ so that $-\frac{1}{r}\partial_r$ vanish effectively as $\omega\to\infty$. As a consequence, the lowest eigenvalue $\Lambda_\omega$ and the corresponding eigenfunction are approximated by $2$ and $\omega^{-\frac14}\Phi_0(|y|-\sqrt{\omega},z)$ respectively, where $\Phi_0=\frac{1}{\sqrt{\pi}}e^{-\frac{s^2+z^2}{2}}$ is the normalized lowest eigenfunction for the 2D hermite operator (see Section \ref{sec: linear analysis}). Therefore, we may expect that as the potential is strengthened, small energy solutions to the 3D NLS \eqref{NLS} get concentrated on $\omega^{-\frac14}\Phi_0(|y|-\sqrt{\omega},z)$. As a consequence, two degrees of freedom (for $|y|$ and $z$) are frozen and only one degree of freedom (for $\theta$) remains.

The purpose of this paper is to justify the emergence of the 1D periodic motion in the aforementioned physical experiment by answering the two problems. 

\begin{enumerate}
\item Find a large class of global-in-time solutions to the NLS \eqref{NLS}, including 
all solutions near the ring soliton in (2). Then, we show that they are concentrated on the ring, and their angular dynamics can be approximately described by the time-dependent 1D periodic NLS.
\item Construct an orbitally stable ring soliton concentrated on the ring whose angular profile is a ground state for the 1D periodic NLS.
\end{enumerate}
Both (1) and (2) concern the dimension reduction to the 1D periodic NLS. The dimension reduction by anisotropic harmonic potentials is verified theoretically in various settings, for example, for cigar-shaped and disc-shaped condensates \cite{Bossmann 1d, Bossmann 2d, Bossmann-Teufel, CH 2d 2013, CH, CH 2d, HT, HJ2}. To authors' knowledge, the dimension reduction for the toroidal trap model (both time-dependent and independent cases) and construction of a stable ring soliton have not been studied in the literature. 

\subsection{Dimension reduction for the time-dependent equation}

The first part of this paper is devoted to the dimension reduction for a class of global-in-time solutions to the \textit{time-dependent} rescaled NLS \eqref{NLS}. Indeed, in the defocusing case $\kappa=1$, the conservation laws yield global existence of all solutions in the energy space (see Proposition \ref{prop: 3D tNLS}). On the other hand, in the focusing case $\kappa=-1$, the equation has blow-up solutions (see \cite{C} for instance). However, when the trap is strong enough ($\omega\gg 1$), we have a simple criteria for global existence, that is,
$$\textbf{\textup{(H3)}}\quad \big\langle (H_\omega^\perp-\Lambda_\omega)u_{\omega,0},u_{\omega,0}\big\rangle_{L^2(\R^3)}\leq\delta\omega$$
for some sufficiently small $\delta>0$ (see Proposition \ref{global existence; focusing case}). 

\begin{remark}
The assumption \textbf{\textup{(H3)}} says that the portion of high energy states for $H_\omega^\perp$ in the initial data $u_{\omega,0}$ is not too large. This restriction vanishes as $\omega\to\infty$.

\end{remark}

Our first main result establishes the emergence of the effective 1D periodic dynamics for such global solutions. It is stated in two parts. First, we show that a 3D global NLS flow $u_\omega(t,x)$ can be approximated by a 1D periodic global flow $v_{\omega,\parallel}(t,\theta)$. Secondly, we prove that the 1D flow $v_{\omega,\parallel}(t,\theta)$ weakly converges to a solution to the periodic 1D cubic NLS
\begin{equation}\label{1d cubic tNLS}
i\partial_tw=-\partial_\theta^2w+\frac{\kappa}{2\pi}|w|^2w,
\end{equation}
where $w = w(t,\theta) : I(\subset \mathbb{R}) \times \mathbb{S}^1 \rightarrow \C$.

\begin{theorem}[Dimension reduction for the time-dependent 3D NLS \eqref{NLS}]\label{dimension reduction for tNLS}
Let $\kappa=\pm1$, and fix $m, E>0$. Suppose that \textbf{\textup{(H1)}} and \textbf{\textup{(H2)}} holds, and that initial data $u_{\omega,0}\in\Sigma$ satisfies
\begin{equation}\label{uniformly bounded energy assumption}
M[u_{\omega,0}]=m\quad\textup{and}\quad E_\omega[u_{\omega,0}]\leq E.
\end{equation}
In the focusing case $\kappa=-1$, we further assume that \textbf{\textup{(H3)}} holds for all sufficiently large $\omega\gg 1$ and for some small constant $\delta=\delta(m,E)>0$ independent of large $\omega\gg 1$ (see Corollary \ref{forbidden region}). Let $u_\omega(t)\in C([0,\infty); \Sigma)$ be the global solution to the 3D NLS \eqref{NLS} with initial data $u_{\omega,0}$. Then, the following hold.
\begin{enumerate}[$(i)$] 
\item (Dimension reduction to a 1D periodic flow)
There exists a global 1D flow $v_{\omega,\parallel}(t)\in C([0,\infty); H^1(\mathbb{S}^1))$ such that
\begin{equation}\label{t higher state strong convergence}
\sup_{t\geq0}\left\|u_\omega(t,x)-v_{\omega,\parallel}(t,\theta)\left(  \omega^{-\frac14}\Phi_0(|y|-\sqrt{\omega},z)\right)\right\|_{L^2(\mathbb{R}^3) }\lesssim\omega^{-\frac{1}{2}},
\end{equation}
where $\theta$ denotes the angle from the $y_1$-axis on the $y=(y_1,y_2)$-plane  and $\Phi_{0}(s,z)=\frac{1}{\sqrt{\pi}}e^{-\frac{s^2+z^2}{2}}$.
\item (Derivation of the 1D periodic NLS)
By the assumptions on the 3D initial data $u_{\omega,0}$, there exist $w_{\infty,0}\in H^1(\mathbb{S}^1)$ and a sequence $\{\omega_j\}_{j=1}^\infty$, with $\omega_j\to\infty$, such that 
\begin{equation}\label{tNLS initial data}
u_{\omega_j,0}(x)-w_{\infty,0}(\theta)\left(  \omega_j^{-\frac14}\Phi_0(|y|-\sqrt{\omega_j},z)\right)\rightharpoonup 0\quad\textup{in }H^1(\mathbb{R}^3)
\end{equation} 
(see Section \ref{sec: initial data} for the construction of $w_{\infty,0}$). Then, for any fixed $T>0$, we have the weak-$^*$ convergence 
$$w^*\textup{-}\lim_{j\to\infty} v_{\omega_j,\parallel}(t)=w_\infty(t)\quad\textup{in }L^\infty([0,T]; H^1(\mathbb{S}^1)),$$
where $w_\infty(t)\in C([0,\infty); H^1(\mathbb{S}^1))$ is the global solution to the 1D NLS \eqref{1d cubic tNLS} with initial data $w_{\infty,0}$.
\end{enumerate}
\end{theorem}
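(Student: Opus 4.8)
The plan is to exploit the spectral gap of the transverse operator $H_\omega^\perp$ from \eqref{H perp} to freeze the two transverse degrees of freedom, and then to read off the angular motion as an effective 1D flow. Throughout I write points of $\R^3$ in cylindrical coordinates $(r,\theta,z)$ with $dx=r\,dr\,d\theta\,dz$, let $\Phi_\omega^\perp(r,z)$ be the ground state of $H_\omega^\perp$ normalized by $\int|\Phi_\omega^\perp|^2\,r\,dr\,dz=1$, and denote by $\langle\,\cdot\,,\,\cdot\,\rangle_\perp$ the transverse inner product with measure $r\,dr\,dz$. From the linear analysis of Section \ref{sec: linear analysis} I will use three facts: (a) $|\Lambda_\omega-2|\lesssim\omega^{-1/2}$; (b) the transverse $L^2$ approximation $\|\Phi_\omega^\perp-\omega^{-1/4}\Phi_0(r-\sqrt\omega,z)\|\lesssim\omega^{-1/2}$; and (c) a uniform spectral gap $H_\omega^\perp-\Lambda_\omega\geq c>0$ on the orthogonal complement of $\Phi_\omega^\perp$. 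I then decompose the solution as
\[
u_\omega(t,r,\theta,z)=v_{\omega,\parallel}(t,\theta)\,\Phi_\omega^\perp(r,z)+u_{\omega,\perp}(t,r,\theta,z),\qquad v_{\omega,\parallel}(t,\theta):=\langle u_\omega(t),\Phi_\omega^\perp\rangle_\perp,
\]
where $u_{\omega,\perp}$ is, for each fixed $\theta$, transversely orthogonal to $\Phi_\omega^\perp$. This decomposition is orthogonal, so mass conservation gives $\|v_{\omega,\parallel}(t)\|_{L^2(\mathbb{S}^1)}^2+\|u_{\omega,\perp}(t)\|_{L^2(\R^3)}^2=m$.

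For part $(i)$ the core estimate is $\sup_{t\geq0}\|u_{\omega,\perp}(t)\|_{L^2(\R^3)}\lesssim\omega^{-1/2}$. Using $H_\omega=H_\omega^\perp-\tfrac1{r^2}\partial_\theta^2$ and that $\Phi_\omega^\perp$ lies in the kernel of $H_\omega^\perp-\Lambda_\omega$, the quadratic form in \eqref{eq: energy} satisfies $\langle(H_\omega-\Lambda_\omega)u_\omega,u_\omega\rangle\geq c\|u_{\omega,\perp}\|_{L^2}^2$. In the defocusing case $\kappa=1$ the quartic term is nonnegative, so energy conservation yields directly $\tfrac{\omega c}{2}\|u_{\omega,\perp}\|_{L^2}^2\leq E_\omega[u_\omega]\leq E$, proving the claim. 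In the focusing case $\kappa=-1$ I would absorb the quartic term: expanding it through the decomposition and using (b), the leading contribution is $\tfrac1{2\pi}\|v_{\omega,\parallel}\|_{L^4(\mathbb{S}^1)}^4$, which by the one-dimensional (subcritical) Gagliardo--Nirenberg inequality $\|v\|_{L^4(\mathbb{S}^1)}^4\lesssim\|v\|_{L^2}^3\|\partial_\theta v\|_{L^2}+\|v\|_{L^2}^4$ and Young's inequality is $\leq\varepsilon\|\partial_\theta v_{\omega,\parallel}\|_{L^2}^2+C_\varepsilon$; since $\|\partial_\theta v_{\omega,\parallel}\|_{L^2}^2\lesssim\omega\,\langle(H_\omega-\Lambda_\omega)u_\omega,u_\omega\rangle$ (the angular kinetic term carries the factor $\tfrac1{r^2}\approx\tfrac1\omega$), choosing $\varepsilon$ small absorbs this into the energy identity, the remaining pieces containing $u_{\omega,\perp}$ being lower order by the bound being bootstrapped; (H3) and Proposition \ref{global existence; focusing case} (see Corollary \ref{forbidden region}) keep the continuity argument self-consistent for all $t$. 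This gives $\langle(H_\omega-\Lambda_\omega)u_\omega,u_\omega\rangle\lesssim\omega^{-1}$ and the desired rate. Finally, writing $u_\omega-v_{\omega,\parallel}\,\omega^{-1/4}\Phi_0=u_{\omega,\perp}+v_{\omega,\parallel}(\Phi_\omega^\perp-\omega^{-1/4}\Phi_0)$ and combining the $L^2$ bound on $u_{\omega,\perp}$, the mass bound $\|v_{\omega,\parallel}\|_{L^2}\leq\sqrt m$, and (b) proves \eqref{t higher state strong convergence}; the same energy bound yields $\sup_t\|v_{\omega,\parallel}(t)\|_{H^1(\mathbb{S}^1)}\lesssim1$.

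For part $(ii)$ I would derive the effective equation by projecting \eqref{NLS} onto $\Phi_\omega^\perp$. Since $\langle(H_\omega^\perp-\Lambda_\omega)u_\omega,\Phi_\omega^\perp\rangle_\perp=0$ by self-adjointness, the linear part collapses to $-\partial_\theta^2\langle\tfrac{\omega}{r^2}u_\omega,\Phi_\omega^\perp\rangle_\perp$; on the support of $\Phi_\omega^\perp$ one has $\tfrac{\omega}{r^2}=1+O(\omega^{-1/2})$, so this equals $-\partial_\theta^2v_{\omega,\parallel}$ up to a vanishing error. For the nonlinearity, substituting the decomposition and using (b) gives the leading term $\kappa\sqrt\omega\big(\int|\Phi_\omega^\perp|^4r\,dr\,dz\big)|v_{\omega,\parallel}|^2v_{\omega,\parallel}$, and the computation $\sqrt\omega\int|\Phi_\omega^\perp|^4r\,dr\,dz\to\int_{\R^2}|\Phi_0|^4=\tfrac1{2\pi}$ reproduces exactly the coefficient in \eqref{1d cubic tNLS}, while the cross terms in $u_{\omega,\perp}$ are controlled by its $\omega^{-1/2}$ smallness. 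Hence $v_{\omega,\parallel}$ solves \eqref{1d cubic tNLS} modulo a remainder that is $o(1)$ in $L^\infty([0,T];H^{-1}(\mathbb{S}^1))$. The part $(i)$ bounds give $\{v_{\omega,\parallel}\}$ bounded in $L^\infty([0,T];H^1(\mathbb{S}^1))$ with $\{\partial_tv_{\omega,\parallel}\}$ bounded in $L^\infty([0,T];H^{-1}(\mathbb{S}^1))$; by the Aubin--Lions--Simon lemma the sequence is relatively compact in $C([0,T];L^2(\mathbb{S}^1))$, so along $\omega_j$ I extract strong convergence $v_{\omega_j,\parallel}\to w_\infty$ in $L^2$. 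Strong $L^2$ convergence together with the uniform $H^1(\mathbb{S}^1)\hookrightarrow L^\infty$ bound passes to the limit in $|v_{\omega_j,\parallel}|^2v_{\omega_j,\parallel}$, so $w_\infty$ is a weak, hence by 1D global well-posedness the unique $H^1$-solution of \eqref{1d cubic tNLS} with datum $w_{\infty,0}$ from \eqref{tNLS initial data}; uniqueness of the limit upgrades the convergence to the full sequence and to the asserted weak-$^*$ convergence.

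The main obstacle is the passage to the limit in the nonlinear term in part $(ii)$: one must show that every error created by replacing $\Phi_\omega^\perp$ with $\omega^{-1/4}\Phi_0$, by replacing $\tfrac{\omega}{r^2}$ with $1$, and by discarding $u_{\omega,\perp}$, vanishes in a topology compatible with the weak limit, while simultaneously producing the uniform bound on $\partial_tv_{\omega,\parallel}$ needed to justify taking the limit inside the cubic term. In the focusing case the delicate point migrates to part $(i)$: the sharp $\omega^{-1/2}$ rate for $u_{\omega,\perp}$ rests on the subcriticality of the 1D cubic nonlinearity and a careful, self-consistent absorption of the quartic term using \textbf{(H3)}.
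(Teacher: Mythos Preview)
Your overall architecture matches the paper's: project onto the transverse ground state, use the spectral gap to kill the orthogonal piece, and derive an effective equation for the angular component. But there is a genuine gap in the way you handle the angular linear term, and it is precisely the point the paper singles out as the main technical obstruction.

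You project onto the \emph{untruncated} ground state $\Phi_\omega^\perp$ and write that ``on the support of $\Phi_\omega^\perp$ one has $\tfrac{\omega}{r^2}=1+O(\omega^{-1/2})$''. This is false: $\Phi_\omega^\perp$ is the positive ground state of a Schr\"odinger operator on $\R^2\times\R$, so its support is everything and in particular $\Phi_\omega^\perp(0,z)>0$ (cf.\ Remark~\ref{linear differential operator remarks}(iii)). The error you must control in $H^{-1}(\mathbb{S}^1)$ is
\[
\partial_\theta\Big\langle \partial_\theta u_\omega,\ \big(\tfrac{\omega}{r^2}-1\big)\Phi_\omega^\perp\Big\rangle_\perp,
\]
and the only a priori control on $\partial_\theta u_\omega$ is the angular kinetic energy $\int r^{-1}|\partial_\theta u_\omega|^2\,dr\,dz\,d\theta$. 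Cauchy--Schwarz then forces you to bound
\[
\int |\Phi_\omega^\perp(r,z)|^2\,\frac{(\omega-r^2)^2}{r}\,dr\,dz,
\]
which diverges logarithmically at $r=0$ because $\Phi_\omega^\perp(0,z)>0$; the Gaussian smallness $\Phi_\omega^\perp(0,z)\lesssim e^{-c\omega}$ does not help since $\int_0^\epsilon r^{-1}\,dr=\infty$. This is exactly why the paper replaces $\Phi_\omega^\perp$ by a \emph{truncated} profile $\tilde\Phi_\omega=\chi_\omega\Phi_\omega/\|\chi_\omega\Phi_\omega\|$ that vanishes near $r=0$ (Remark~\ref{why truncation?} and Section~\ref{subsec: truncated projection}). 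The price is that $(H_\omega^\perp-\Lambda_\omega)\tilde\Phi_\omega\neq0$, so your clean cancellation $\langle(H_\omega^\perp-\Lambda_\omega)u_\omega,\Phi_\omega^\perp\rangle_\perp=0$ is lost; the paper shows separately that the resulting commutator term is $o_\omega(1)$ because the derivatives of $\chi_\omega$ live where $\Phi_\omega$ is exponentially small (see \eqref{w_omega(t) equation; first term}). Your proposal omits this entire mechanism.

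A secondary point: in the focusing case your absorption of the quartic term into the energy by ``the leading contribution is $\tfrac{1}{2\pi}\|v_{\omega,\parallel}\|_{L^4}^4$ and the cross terms are lower order by the bound being bootstrapped'' is circular as written. The cross terms involve $\|u_{\omega,\perp}\|_{L^4}$, and bounding that from $\|u_{\omega,\perp}\|_{L^2}\lesssim\omega^{-1/2}$ alone requires either a derivative of $u_{\omega,\perp}$ (which you have not isolated) or an $L^\infty$ bound you do not yet have. The paper avoids the bootstrap entirely via a refined Gagliardo--Nirenberg inequality (Corollary~\ref{GN1}) that bounds $\|v\|_{L^4(\sigma_\omega)}^4$ with the ``bad'' factor $\|v\|_{\dot\Sigma_{\omega;(s,z)}}$ appearing super-linearly, so that under the constraint $\|v\|_{\dot\Sigma_{\omega;(s,z)}}^2\le\delta\omega$ the quartic term is directly sub-quadratic in the energy norm (Corollary~\ref{forbidden region}). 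Your sketch would need either this inequality or a much more careful continuity argument to close.
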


\begin{remark}
\begin{enumerate}[$(i)$]
\item The dimension reduction (Theorem \ref{dimension reduction for tNLS} $(i)$) is proved in strong sense. However, we are currently able to derive the 1D NLS only in weak sense due to a technical difficulty mentioned in Remark \ref{remark: why weak convergence?}.
\item In Theorem \ref{dimension reduction for tNLS} $(ii)$, the 1D NLS is derived sub-sequentially. That is just because the 1D initial data is prepared as a sub-sequential limit (see \eqref{tNLS initial data}) under the general assumptions on the 3D initial data (\textbf{\textup{(H3)}} and \eqref{uniformly bounded energy assumption}). Indeed, if we assume more that  
$$u_{\omega,0}(x)-w_{\infty,0}(\theta)\left( \omega^{-\frac14}\Phi_0(|y|-\sqrt{\omega},z)\right)\underset{\omega\to\infty}\rightharpoonup 0\quad\textup{in }H^1(\mathbb{R}^3),$$
for instance, if $u_{\omega,0}$ is factorized as $w_{\infty,0}(\theta)(  \omega^{-\frac14}\Phi_0(|y|-\sqrt{\omega},z))$  for some fixed profile $w_\infty\in H^1(\mathbb{S}^1)$, then we have
$$v_{\omega,\parallel}(t)\underset{\omega\to\infty}{\rightharpoonup^*} w_\infty(t)\quad\textup{in }L^\infty([0,T]; H^1(\mathbb{S}^1)).$$
\end{enumerate}
\end{remark}

\subsection{Construction of constrained energy minimizers and their dimension reduction}
In the second part of the paper, we aim to construct stable low energy steady states and to prove their dimension reduction. Before presenting our second main result, we note that in the defocusing case $\kappa=1$, a ground state under a mass constraint can be constructed by standard calculus of variation techniques, and its key properties,  such as uniqueness, orbital stability and dimension reduction, can be proved relatively easily. Thus, for readers' convenience, we put aside the discussion on the defocusing case in Appendix \ref{sec: defocusing}. From now on, we only consider the focusing case $\kappa=-1$.

On the other hand, in the focusing case, a mass constraint energy minimization problem is not properly formulated, because the 3D cubic NLS is mass-supercritical and the energy is not bounded from below. Therefore, motivated by earlier works \cite{BBJV, HJ1, HJ2} on dimension reduction in different settings, we instead consider the following \textit{constrained} energy minimization problem 
\begin{equation}\label{variational problem}
\boxed{\quad J_\omega^{(3D)}(m):=\min\Big\{E_\omega[u]: \  u\in\Sigma,\ M[u]=m, \langle (H_\omega^\perp-\Lambda_\omega)u,u\rangle_{L^2(\R^3)}\leq\delta\omega \Big\}\quad}
\end{equation}
where the mass and the energy are given by \eqref{eq: mass} and \eqref{eq: energy} with $\kappa=-1$ respectively, and $\delta>0$ is a sufficiently small number. Note that we here impose the constraint for the time-dependent problem (see \textup{\textbf{(H3)}}). Similarly as before, since we consider sufficiently large $\omega\geq 1$ but $\delta>0$ will be chosen independently of large $\omega\geq 1$, this additional constraint restricts the portion of high energy states with respect to $H_\omega^\perp$ but the restriction gets weaker as $\omega\to\infty$.

For this modified energy minimization problem, we construct a minimizer.

\begin{theorem}[Existence of a constrained energy minimizer; focusing case]\label{thm: existence}
Let $\kappa=-1$ and let $\delta>0$ be a small number chosen in Corollary \ref{forbidden region}. We assume that $\omega\gg 1$ is sufficiently large. Then, for any minimizing sequence $\{u_n\}_{n=1}^\infty$ to the variational problem \eqref{variational problem}, there exist $\{\mathcal{O}_n\}_{n=1}^\infty\subset \textup{SO}(2)$ and $\{\gamma_n\}_{n=1}^\infty\subset \mathbb{R}$ such that passing to a subsequence, 
$$\lim_{n\to\infty}\|e^{i\gamma_n}u_n(\mathcal{O}_n y,z)-Q_\omega\|_{\Sigma}=0$$
for some positive minimizer $Q_\omega$. Moreover, $Q_\omega$ solves the nonlinear elliptic equation
$$\omega(H_\omega-\Lambda_\omega) Q_\omega-\sqrt{\omega}Q_\omega^3=-\mu_\omega Q_\omega$$
with a Lagrange multiplier $\mu_\omega\in \R$.
\end{theorem}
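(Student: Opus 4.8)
The plan is to prove existence of a minimizer via the concentration-compactness method, exploiting the symmetries of the problem and the crucial additional constraint $\langle (H_\omega^\perp-\Lambda_\omega)u,u\rangle \le \delta\omega$ to recover compactness. The first step is to establish that the variational problem is well-defined: I would show $J_\omega^{(3D)}(m) > -\infty$ by combining the mass constraint $M[u]=m$ with the energy lower bound. Since the focusing nonlinearity is $L^2$-supercritical in 3D, the term $-\frac{\sqrt{\omega}}{4}\int|u|^4$ is not controlled by the kinetic energy alone; this is precisely where \textbf{(H3)} enters. The constraint $\langle (H_\omega^\perp-\Lambda_\omega)u,u\rangle \le \delta\omega$ confines the transverse profile close to the ground state $\omega^{-1/4}\Phi_0$, effectively reducing the nonlinear interaction to a quasi-1D regime where the cubic term is $L^2$-subcritical. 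I expect the argument here to parallel the global existence proof (Proposition \ref{global existence; focusing case}), using a Gagliardo-Nirenberg type inequality adapted to the confined geometry to show $E_\omega[u]$ is bounded below on the constraint set for $\delta$ small.

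Next I would take a minimizing sequence $\{u_n\}$ and extract weak limits. The main steps are: first, establish uniform bounds on $\{u_n\}$ in $\Sigma$ from the energy and mass bounds together with \textbf{(H2)} (which gives $U_\omega(s)\sim s^2$, hence control of the confining potential and thus the $\|x u_n\|_{L^2}$ weighted norm). Since the potential $\omega^2 z^2$ and the radial confinement provide compact embedding of $\Sigma$ into $L^2$ in the $z$ and radial directions, the only possible loss of compactness is in the angular variable $\theta$, which lives on the compact circle $\mathbb{S}^1$ — so in principle no mass escapes to spatial infinity. This is why the symmetry group that appears is only $\textup{SO}(2)$ (rotations in the $y$-plane, parametrized by $\mathcal{O}_n$) together with the phase $e^{i\gamma_n}$: translations in $\theta$ are the genuine noncompact directions, and concentration-compactness dichotomy must be ruled out by a strict subadditivity inequality $J_\omega^{(3D)}(m) < J_\omega^{(3D)}(m_1) + J_\omega^{(3D)}(m_2)$ for $m_1+m_2=m$. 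I would verify strict subadditivity by a scaling argument on the mass, using that the focusing energy is strictly concave in mass along the ring profile.

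Having ruled out vanishing (via the mass $m>0$ being conserved and the compact transverse geometry) and dichotomy (via strict subadditivity), the compactness alternative yields, after applying suitable rotations $\mathcal{O}_n$ and phases $e^{i\gamma_n}$, strong convergence $e^{i\gamma_n}u_n(\mathcal{O}_n y,z)\to Q_\omega$ in $\Sigma$. I would then argue $Q_\omega$ attains the infimum: lower semicontinuity of $E_\omega$ under weak convergence gives $E_\omega[Q_\omega]\le J_\omega^{(3D)}(m)$, while the strong $L^2$ convergence preserves both the mass constraint and (after checking the constraint is weakly closed or active) the transverse energy constraint, so $Q_\omega$ is admissible and hence a minimizer. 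Positivity of $Q_\omega$ follows from replacing $Q_\omega$ by $|Q_\omega|$, which does not increase the energy (the kinetic term satisfies the diamagnetic/modulus inequality) and preserves all constraints, combined with the strong maximum principle applied to the Euler-Lagrange equation.

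The Euler-Lagrange equation itself is derived by the Lagrange multiplier theorem: the mass constraint contributes the multiplier $\mu_\omega$, giving $\omega(H_\omega-\Lambda_\omega)Q_\omega - \sqrt{\omega}Q_\omega^3 = -\mu_\omega Q_\omega$. The subtle point is the inequality constraint $\langle (H_\omega^\perp-\Lambda_\omega)u,u\rangle \le \delta\omega$: I would argue that at the minimizer this constraint is \emph{inactive} (strict inequality holds), so it contributes no multiplier and the equation is unconstrained in that direction. This should follow from the a priori estimate that low-energy minimizers have transverse energy of order $o(\omega)$ — much smaller than $\delta\omega$ — because the minimizer concentrates on $\Phi_0$; I expect this is the content of Corollary \ref{forbidden region}, which presumably quantifies how $\delta$ is chosen so that minimizers stay strictly inside the constraint region. \textbf{The main obstacle} I anticipate is precisely controlling the interplay between the supercritical nonlinearity and the transverse constraint: making the lower bound on $E_\omega$ uniform in large $\omega$ while simultaneously ensuring the constraint is inactive at the minimizer requires a careful two-sided estimate relating the full energy, the transverse energy $\langle(H_\omega^\perp-\Lambda_\omega)u,u\rangle$, and the mass, and this is where the hypotheses \textbf{(H1)}-\textbf{(H3)} must be used in concert.
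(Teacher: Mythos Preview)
Your overall strategy is sound and would succeed, but it is considerably more elaborate than what the paper does, and one remark in your outline is conceptually confused. You write that ``translations in $\theta$ are the genuine noncompact directions'' and propose to rule out dichotomy via strict subadditivity. But $\theta$ ranges over the \emph{compact} circle $\mathbb{S}^1$; combined with the confining potential in the $(|y|,z)$ directions (which, via \textbf{(H2)}, gives the weighted-space compact embedding $\Sigma\hookrightarrow L^2(\mathbb{R}^3)$), there is no noncompact direction at all. Hence no mass can escape, no dichotomy can occur, and the concentration-compactness trichotomy and strict subadditivity are simply not needed.

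The paper exploits this directly: after reformulating in cylindrical coordinates, it takes a minimizing sequence, uses Corollary~\ref{forbidden region} (exactly as you anticipate) to get uniform $\Sigma$-bounds and a finite energy lower bound, extracts a weak limit, and then proves strong $L^2$ convergence by a tail estimate (the potential kills mass at infinity in $(s,z)$) plus Rellich--Kondrachov on compact sets. The refined Gagliardo--Nirenberg inequality upgrades this to $L^4$ convergence, and then an energy identity yields strong $\Sigma$ convergence. The Euler--Lagrange equation, inactivity of the inequality constraint (again via Corollary~\ref{forbidden region}, as you correctly surmise), and positivity via $|Q_\omega|$ plus the strong maximum principle follow exactly as you outline. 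So your proposal is correct but takes a detour through subadditivity that the confining geometry renders unnecessary; the parameters $\mathcal{O}_n,\gamma_n$ in the statement are there only to bring the limit to a \emph{positive} minimizer, not to restore compactness.
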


For the constrained minimizer in Theorem \ref{thm: existence}, we are concerned with its dimension reduction and orbital stability. Indeed, analogously to the time-dependent model, the 3D problem \eqref{variational problem} is closely related to the 1D energy minimization problem 
\begin{equation}\label{circle minimization}
J_\infty^{(1D)}(m)=\inf \left\{E_\infty(w): w\in H^1(\mathbb{S}^1)\textup{ and }\|w\|_{L^2(\mathbb{S}^1)}^2=m\right\},
\end{equation}
where 
$$E_\infty[w]= \frac{1}{2}\|\partial_\theta w\|_{L^2(\mathbb{S}^1)}^2-\frac{1}{8\pi}\|w\|_{L^4(\mathbb{S}^1)}^4.$$
We recall from \cite{GLT} that the variational problem $J_\infty^{(1D)}(m)$ occupies a positive minimizer $Q_\infty$ with $Q_\infty(0)=\max_{\theta\in\mathbb{S}^1} Q_\infty(\theta)$, solving the Euler-Lagrange equation
$$-\partial_\theta^2Q_\infty-\frac{1}{2\pi}Q_\infty^3=-\mu_\infty Q_\infty.$$
Moreover, $Q_\infty$ is a unique minimizer up to phase shift and spatial translation. Indeed, $Q_\infty$ is given by a dnoidal function when $m>2\pi^2$ while it is constant when $0<m\le 2\pi^2$ (see Proposition \ref{prop: circle ground state}). More detailed properties of the ground state is provided in Appendix \ref{sec: Energy minimization for the cubic NLS on the unit circle}. 

The next theorem establishes the dimension reduction from the 3D to the 1D minimizer. 
\begin{theorem}[Dimension reduction for a constrained minimizer]\label{drth}
Under the assumption in Theorem \ref{thm: existence}, let $Q_\omega$ be the positive minimizer for the constrained problem $J_\omega^{(3D)}(m)$. Then, there exists $\{\mathcal{O}_\omega\}_{\omega\gg1}\subset \textup{SO}(2)$ such that  
$$\left\|Q_\omega(\mathcal{O}_\omega y,z)-Q_\infty(\theta)\left(\chi(|y|)\omega^{-\frac14}\Phi_0(|y|-\sqrt{\omega},z)\right) \right\|_{\Sigma}+|\mu_\omega-\mu_\infty|\to0\quad\textup{as }\omega\to\infty,$$
where $\Phi_{0}(s,z)=\frac{1}{\sqrt{\pi}}e^{-\frac{s^2+z^2}{2}}$  and $\chi:[0,\infty)\to[0,1]$ is a smooth function such that $\chi\equiv0$ on $[0,1]$ and $\chi\equiv1$ on $[2,\infty)$.
\end{theorem}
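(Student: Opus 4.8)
The plan is to compare the constrained 3D problem $J_\omega^{(3D)}(m)$ with the 1D problem $J_\infty^{(1D)}(m)$ through matching upper and lower energy bounds, to extract the angular profile by projecting onto the transverse ground state, and finally to upgrade the resulting compactness to strong convergence in $\Sigma$. Throughout, write $H_\omega = H_\omega^\perp - \frac{1}{r^2}\pa_\theta^2$ and let $\phi_\omega(r,z)$ be the lowest eigenfunction of $H_\omega^\perp$, normalized by $\int|\phi_\omega|^2\,r\,dr\,dz = 1$, so that (by the linear analysis of Section \ref{sec: linear analysis}) $\phi_\omega$ is close to $\omega^{-1/4}\Phi_0(r-\sqrt\omega,z)$ and $\Lambda_\omega\to 2$. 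For the upper bound I would use the product trial function $Q_\infty(\theta)\phi_\omega(r,z)$, which has mass exactly $m$ and satisfies the constraint with $\la(H_\omega^\perp-\Lambda_\omega)\cdot,\cdot\ra = 0\le\delta\omega$. Using $H_\omega^\perp\phi_\omega = \Lambda_\omega\phi_\omega$ and $r\approx\sqrt\omega$ on the support, a direct computation gives $\frac{\omega}{2}\la(H_\omega-\Lambda_\omega)(Q_\infty\phi_\omega),Q_\infty\phi_\omega\ra\to\frac12\|\pa_\theta Q_\infty\|_{L^2(\mathbb{S}^1)}^2$ and $\frac{\sqrt\omega}{4}\|Q_\infty\phi_\omega\|_{L^4}^4\to\frac{1}{8\pi}\|Q_\infty\|_{L^4(\mathbb{S}^1)}^4$, hence $E_\omega[Q_\infty\phi_\omega]\to E_\infty[Q_\infty]=J_\infty^{(1D)}(m)$ and $\limsup_\omega J_\omega^{(3D)}(m)\le J_\infty^{(1D)}(m)$.

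For the lower bound and compactness, decompose the minimizer as $Q_\omega = v_\omega(\theta)\phi_\omega + Q_\omega^\perp$, with $v_\omega(\theta)=\int Q_\omega\,\phi_\omega\,r\,dr\,dz$ and $Q_\omega^\perp$ transversally orthogonal to $\phi_\omega$ for each $\theta$, and split $E_\omega[Q_\omega]=T_\omega+A_\omega-N_\omega$ where $T_\omega=\frac{\omega}{2}\la(H_\omega^\perp-\Lambda_\omega)Q_\omega^\perp,Q_\omega^\perp\ra$, $A_\omega=\frac{\omega}{2}\la -\frac{1}{r^2}\pa_\theta^2 Q_\omega,Q_\omega\ra$, and $N_\omega=\frac{\sqrt\omega}{4}\|Q_\omega\|_{L^4}^4$. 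The uniform spectral gap of $H_\omega^\perp$ above $\Lambda_\omega$ (of size $\approx 2$, inherited from the 2D Hermite operator) gives $T_\omega\gtrsim\omega\|Q_\omega^\perp\|_{L^2}^2$; a Gagliardo–Nirenberg estimate together with the a priori bounds from Theorem \ref{thm: existence} shows that the contributions of $Q_\omega^\perp$ to $N_\omega$ and the cross terms in $A_\omega$ are $o(1)$, so that $E_\omega[Q_\omega]\ge E_\infty[v_\omega]-o(1)\ge J_\infty^{(1D)}(\|v_\omega\|_{L^2(\mathbb{S}^1)}^2)-o(1)$. Since $\|v_\omega\|_{L^2(\mathbb{S}^1)}^2 = m-\|Q_\omega^\perp\|_{L^2}^2\to m$, combining with the upper bound forces $J_\omega^{(3D)}(m)\to J_\infty^{(1D)}(m)$, shows $T_\omega\to 0$ (hence $\omega\|Q_\omega^\perp\|_{L^2}^2\to 0$), and identifies $v_\omega$ as a minimizing sequence for $J_\infty^{(1D)}(m)$; by the uniqueness and compactness of the 1D minimizer (Proposition \ref{prop: circle ground state}), after a rotation $\mathcal{O}_\omega\in\textup{SO}(2)$ one gets $v_\omega\to Q_\infty$ in $H^1(\mathbb{S}^1)$.

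I expect the main obstacle to be the final upgrade to the full $\Sigma$-norm, which controls the weighted term $\|x\,\cdot\,\|_{L^2}$. Because the minimizer concentrates on the ring $r=|y|\approx\sqrt\omega$, the weight obeys $|x|^2\approx\omega$ on the essential support, so for the difference $D_\omega:=Q_\omega(\mathcal{O}_\omega y,z)-Q_\infty(\theta)\big(\chi(|y|)\omega^{-1/4}\Phi_0(|y|-\sqrt\omega,z)\big)$ one has $\|x D_\omega\|_{L^2}^2\approx\omega\|D_\omega\|_{L^2}^2$ up to lower-order pieces controlled by the (now convergent) energy form. Thus the $\Sigma$-statement is essentially equivalent to the sharpened smallness $\|D_\omega\|_{L^2}=o(\omega^{-1/2})$, a rate strictly stronger than what mass and energy convergence alone provide. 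To reach it I would establish, in addition to $\omega\|Q_\omega^\perp\|_{L^2}^2\to 0$: that $\phi_\omega$ differs from $\chi\,\omega^{-1/4}\Phi_0$ by $o(\omega^{-1/2})$ in the ring-weighted $L^2$ norm, via a first-order perturbative expansion of $\phi_\omega$ and $\Lambda_\omega$ off the 2D Hermite ground state (exploiting that the leading kinetic correction $-\omega^{-1/2}\pa_r$ is parity-odd and that \textbf{\textup{(H1)}} keeps the potential correction subprincipal on the Gaussian scale); and that the angular profile converges at the sharp rate $\|v_\omega-Q_\infty\|_{L^2(\mathbb{S}^1)}=o(\omega^{-1/2})$, which I would derive from the non-degeneracy (coercivity modulo the phase and rotation symmetries) of the linearized operator at $Q_\infty$ applied to the $O(\omega^{-1/2})$-perturbed effective Euler–Lagrange equation for $v_\omega$. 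Combining these gives $\|D_\omega\|_\Sigma\to 0$.

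The multiplier convergence is then a by-product of the energy-level analysis: pairing the 3D Euler–Lagrange equation with $Q_\omega$ yields $m\mu_\omega = 4N_\omega-2(T_\omega+A_\omega)$, and letting $\omega\to\infty$ with $T_\omega\to 0$, $A_\omega\to\frac12\|\pa_\theta Q_\infty\|_{L^2(\mathbb{S}^1)}^2$ and $N_\omega\to\frac{1}{8\pi}\|Q_\infty\|_{L^4(\mathbb{S}^1)}^4$ gives $m\mu_\omega\to\frac{1}{2\pi}\|Q_\infty\|_{L^4(\mathbb{S}^1)}^4-\|\pa_\theta Q_\infty\|_{L^2(\mathbb{S}^1)}^2=m\mu_\infty$, the last equality being the 1D Euler–Lagrange identity tested against $Q_\infty$. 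Hence $|\mu_\omega-\mu_\infty|\to 0$, completing the statement.
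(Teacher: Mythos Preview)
Your overall strategy---upper bound via the product trial state $Q_\infty\phi_\omega$, lower bound by projecting onto the transverse ground state and using the spectral gap, then identifying $v_\omega$ as a minimizing sequence for $J_\infty^{(1D)}(m)$---is precisely the paper's approach (carried out in the rescaled $(s,z,\theta)$ coordinates via Proposition~\ref{adprop}), and your multiplier argument is correct. Two issues need addressing.

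\textbf{Truncation.} Your decomposition $Q_\omega=v_\omega\phi_\omega+Q_\omega^\perp$ with the \emph{untruncated} $\phi_\omega$ hits the obstacle of Remark~\ref{why truncation?}: since the radial ground state satisfies $\phi_\omega(0,z)>0$, the angular energy of the projected piece involves $\int_0^\infty\phi_\omega^2/r\,dr=\infty$, so you cannot split $A_\omega$ into a $v_\omega$-part plus small cross terms as written. The paper fixes this by projecting onto the cut-off state $\tilde\Phi_\omega=\chi_\omega\Phi_\omega/\|\chi_\omega\Phi_\omega\|$ (Section~\ref{subsec: truncated projection}) and tracking the exponentially small truncation errors (Lemma~\ref{truncation error}). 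This is a genuine technical point you need to incorporate.

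\textbf{The $\Sigma$-norm upgrade.} You correctly identify that $|x|\approx\sqrt\omega$ on the support forces $\|D_\omega\|_{L^2}=o(\omega^{-1/2})$ for the full $\Sigma$-convergence. However, your proposed route to this rate fails at step~(i): parity-oddness of $-\omega^{-1/2}\pa_s$ kills only the leading \emph{eigenvalue} correction; the \emph{eigenfunction} correction is genuinely $O(\omega^{-1/2})$ (indeed $-\pa_s\phi_\infty$ is proportional to the first excited Hermite function), and Proposition~\ref{1d eigenstate}(iii) gives only $O(\omega^{-1/2})$, which is sharp under \textbf{(H1)}. Consequently the eigenfunction piece $Q_\infty(\theta)\,\chi_\omega(\Phi_\omega-\Phi_0)$ already contributes $\|x\cdot\|_{L^2}=O(1)$, not $o(1)$. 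In fact the paper does not close this gap either: its proof of Theorem~\ref{drth} (end of Section~\ref{sec: existence}) establishes only $\|\mathcal Q_\omega-\mathcal Q_\infty\chi_\omega\Phi_0\|_{\Sigma_\omega}\to 0$, where $\Sigma_\omega$ carries the weight $U_\omega(|y|-\sqrt\omega)+z^2\sim(|y|-\sqrt\omega)^2+z^2$ rather than $|x|^2$ (see the footnote after~\eqref{norm}). That is the natural energy norm here and is what the argument actually yields; the statement of Theorem~\ref{drth} with the literal $\|\cdot\|_\Sigma$ norm appears to be an overclaim. Your energy-matching and multiplier arguments are sufficient for this $\Sigma_\omega$-convergence, which is what you should aim for.
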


By the variational characterization together with global existence for the time-dependent NLS \eqref{NLS} (Proposition \ref{global existence; focusing case}), it immediately follows that the set of 3D minimizers in Theorem \ref{thm: existence} is orbitally stable  (see \cite{CL}). However, for the orbital stability of a minimizer by itself, the possibility of transforming one minimizer to another should be eliminated.

Our last main result asserts that if the confinement is strong enough, this can be done by proving uniqueness up to symmetries.

\begin{theorem}[Uniqueness and orbital stability of a constrained minimizer]\label{thm: unique}
Let $\omega\gg 1$ be sufficiently large, and suppose that $m\neq 2\pi^2$. Under the assumption in Theorem \ref{thm: existence}, let $Q_\omega$ be the minimizer for the constrained problem $J_\omega^{(3D)}(m)$. 
\begin{enumerate}[$(i)$]
\item (Uniqueness) The minimizer $Q_\omega$ is unique up to a rotation on the plane and phase shift. In other words, any minimizer for the problem $J_\omega^{(3D)}(m)$ can be expressed as $e^{i\gamma}Q_\omega(\mathcal{O} y,z)$ for some $\mathcal{O}\in\textup{SO}(2)$ and  $\gamma\in\mathbb{S}^1$.  
\item (Orbital stability) For any $\epsilon>0$, there exists $\delta>0$ such that if $\|u_0- Q_\omega\|_{\Sigma}<\delta$, then the global solution $u_\omega(t)\in C_t(\R,\Sigma)$ to 3D NLS \eqref{NLS} with the initial data $u_0$ satisfies
$$\inf_{\mathcal{O}\in \textup{SO}(2), \gamma\in\mathbb{S}^1}\|u_\omega(t,x) -e^{i\gamma}Q_\omega(\mathcal{O}y,z) \|_{\Sigma}<\epsilon \quad\textup{for all }t\geq 0.$$
\end{enumerate}
\end{theorem}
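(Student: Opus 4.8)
The plan is to derive both parts from the dimension-reduction limit of Theorem \ref{drth} together with the \emph{non-degeneracy} of the one-dimensional ground state $Q_\infty$, the hypothesis $m\neq 2\pi^2$ being exactly what guarantees this non-degeneracy. First I would analyze the one-dimensional linearized operators at $Q_\infty$. Writing a perturbation as $f=f_1+if_2$, the linearization of the Euler--Lagrange map splits into
$$L_+:=-\partial_\theta^2+\mu_\infty-\tfrac{3}{2\pi}Q_\infty^2,\qquad L_-:=-\partial_\theta^2+\mu_\infty-\tfrac{1}{2\pi}Q_\infty^2,$$
acting on $f_1$ and $f_2$ respectively. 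Differentiating the equation $-\partial_\theta^2 Q_\infty+\mu_\infty Q_\infty-\tfrac{1}{2\pi}Q_\infty^3=0$ gives $L_-Q_\infty=0$ and $L_+(\partial_\theta Q_\infty)=0$, so the phase and rotation directions always lie in the kernels. The crucial input is that these are the \emph{only} kernel elements, $L_+$ being strictly positive on $\{Q_\infty,\partial_\theta Q_\infty\}^\perp$ and $L_-$ on $\{Q_\infty\}^\perp$. For $0<m<2\pi^2$, where $Q_\infty$ is constant, both operators are constant-coefficient and I would diagonalize them on the Fourier modes $e^{in\theta}$: the eigenvalues of $L_+$ are $n^2-\tfrac{m}{2\pi^2}$, whose lowest nonzero-mode value $1-\tfrac{m}{2\pi^2}$ is positive exactly when $m<2\pi^2$, while $L_-$ reduces to $-\partial_\theta^2$, non-degenerate on $\{Q_\infty\}^\perp$. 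For $m>2\pi^2$, where $Q_\infty$ is dnoidal, I would import the non-degeneracy of the linearization at a dnoidal wave from the uniqueness theory of \cite{GLT} and the standard Jacobi-elliptic spectral analysis. The excluded value $m=2\pi^2$ is precisely the bifurcation threshold where the $n=1$ mode of $L_+$ becomes a spurious zero eigenvalue.

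Next I would upgrade this to a coercivity estimate for the $3$D linearized operator that is \emph{uniform in large $\omega$}. Linearizing the elliptic equation of Theorem \ref{thm: existence} at $Q_\omega$ yields
$$\mathcal{L}_{\omega,\pm}:=\omega(H_\omega-\Lambda_\omega)+\mu_\omega-(2\pm1)\sqrt{\omega}\,Q_\omega^2,$$
and I would decompose a perturbation $\eta$ into its lowest transverse component $\eta_\parallel=f(\theta)\,\omega^{-1/4}\Phi_0(|y|-\sqrt{\omega},z)$ and the transverse-excited remainder $\eta_\perp$. On $\eta_\perp$ the operator $\omega(H_\omega^\perp-\Lambda_\omega)$ is bounded below by a gap of order $\omega$ (the next Hermite level of $H_\omega^\perp$ sits an $O(1)$ distance above $\Lambda_\omega$), which dominates the potential term $\sqrt{\omega}\,Q_\omega^2$, of order one, and controls the full $\Sigma$-norm of $\eta_\perp$. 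On $\eta_\parallel$, the $\Sigma$-convergence $Q_\omega\to Q_\infty$ and $\mu_\omega\to\mu_\infty$ from Theorem \ref{drth} let me pass $\mathcal{L}_{\omega,\pm}$ restricted to the lowest mode to the one-dimensional $L_\pm$, so the non-degeneracy above gives coercivity $\gtrsim\|f\|_{H^1(\mathbb{S}^1)}^2$ once $f$ is orthogonal to $\{Q_\infty,\partial_\theta Q_\infty\}$ (for $\mathcal{L}_{\omega,+}$) or to $\{Q_\infty\}$ (for $\mathcal{L}_{\omega,-}$). The parallel--perpendicular cross terms are lower order and absorbed, yielding a uniform bound $\langle\mathcal{L}_{\omega,\pm}\eta,\eta\rangle\gtrsim\|\eta\|_\Sigma^2$ on the orthogonal complement of the symmetry directions.

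With coercivity in hand, uniqueness follows by a modulation argument. Given any minimizer $P$, Theorem \ref{drth} provides $(\mathcal{O},\gamma)$ making $\tilde P:=e^{i\gamma}P(\mathcal{O}\,\cdot)$ $\Sigma$-close to $Q_\omega$; fixing the modulation by the orthogonality conditions $\langle\tilde P-Q_\omega,iQ_\omega\rangle=\langle\tilde P-Q_\omega,\partial_\theta Q_\omega\rangle=0$, the difference $\eta:=\tilde P-Q_\omega$ is orthogonal to the symmetry directions, and the common mass constraint forces $\operatorname{Re}\langle\eta,Q_\omega\rangle=O(\|\eta\|_\Sigma^2)$. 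Subtracting the Euler--Lagrange equations satisfied by $\tilde P$ and $Q_\omega$ gives $\mathcal{L}_\omega\eta=(\mu_{Q_\omega}-\mu_P)Q_\omega+O(\|\eta\|_\Sigma^2)$, where the multiplier difference is itself $O(\|\eta\|_\Sigma)$; pairing with $\eta$, the multiplier term contributes only at cubic order since it meets $\eta$ through $\operatorname{Re}\langle Q_\omega,\eta\rangle=O(\|\eta\|_\Sigma^2)$, and the uniform coercivity yields $\|\eta\|_\Sigma^2\lesssim\|\eta\|_\Sigma^3$. As $\|\eta\|_\Sigma$ is small for large $\omega$, this forces $\eta=0$, i.e.\ $P=e^{-i\gamma}Q_\omega(\mathcal{O}^{-1}\,\cdot)$, which is part $(i)$.

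Part $(ii)$ is then immediate. As already noted in the text, the variational characterization together with global existence (Proposition \ref{global existence; focusing case}) gives orbital stability of the whole minimizer set via \cite{CL}, and part $(i)$ identifies this set with the single orbit $\{e^{i\gamma}Q_\omega(\mathcal{O}y,z):\mathcal{O}\in\mathrm{SO}(2),\gamma\in\mathbb{S}^1\}$, so stability of the set is exactly the asserted stability of $Q_\omega$; data near $Q_\omega$ inherits \textbf{(H3)} from the inactive transverse constraint at the minimizer, so Proposition \ref{global existence; focusing case} and Corollary \ref{forbidden region} apply and keep the flow in the constrained class. The main obstacle is the uniform-in-$\omega$ coercivity of the second paragraph: one must simultaneously exploit the $O(\omega)$ transverse spectral gap to suppress the excited modes and the $m\neq 2\pi^2$ non-degeneracy of $L_\pm$ to handle the lowest mode, while ensuring via the $\Sigma$-convergence of Theorem \ref{drth} that the one-dimensional spectral gap survives the limit uniformly in $\omega$.
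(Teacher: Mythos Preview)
Your overall strategy---transfer the one-dimensional coercivity of the linearization at $Q_\infty$ to a uniform-in-$\omega$ coercivity for the 3D linearized operator via the dimension-reduction limit, then run a perturbative uniqueness argument---is exactly the paper's strategy, and the orbital stability deduction from \cite{CL} plus uniqueness is identical. The execution differs in two places, both harmless but worth noting.

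First, the paper works only with the real linearized operator $\mathcal{L}_\omega=\omega(\mathcal{H}_\omega^{(2D)}-\Lambda_\omega)-\sigma_\omega^{-2}\partial_\theta^2+\mu_\omega-3\mathcal{Q}_\omega^2$ (your $\mathcal{L}_{\omega,+}$) and never introduces $\mathcal{L}_{\omega,-}$. This is because Theorem~\ref{thm: existence} already produces a \emph{positive} minimizer, and any minimizer is $e^{i\gamma}$ times a positive one (since $E_\omega[|u|]\le E_\omega[u]$); hence uniqueness reduces to comparing two positive minimizers and only the real part of the linearization enters. Your route through both $L_\pm$ is correct but does unnecessary work. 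Second, for the uniqueness step itself the paper expands the action $\mathcal{I}_\omega(v)=\mathcal{E}_\omega(v)+\tfrac{\mu_\omega}{2}\mathcal{M}_\omega(v)$: writing $\tilde{\mathcal{Q}}_\omega=\sqrt{1-\delta_\omega^2}\,\mathcal{Q}_\omega+\mathcal{R}_\omega$ with $\mathcal{R}_\omega\perp\mathcal{Q}_\omega,\partial_\theta\mathcal{Q}_\omega$, one gets $\mathcal{I}_\omega(\tilde{\mathcal{Q}}_\omega)\ge\mathcal{I}_\omega(\mathcal{Q}_\omega)+c\|\mathcal{R}_\omega\|_{L^2(\sigma_\omega)}^2$ directly from coercivity, contradicting $\mathcal{I}_\omega(\tilde{\mathcal{Q}}_\omega)=\mathcal{I}_\omega(\mathcal{Q}_\omega)$. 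This bypasses the multiplier-difference bookkeeping in your Euler--Lagrange subtraction (the $(\mu_{Q_\omega}-\mu_P)$ term and its $O(\|\eta\|_\Sigma)$ estimate), since the action expansion uses a \emph{single} multiplier $\mu_\omega$ and the first-order term vanishes by the equation. Your approach is valid---the $O(\|\eta\|^3)$ accounting you describe goes through---but the action expansion is slightly cleaner here.
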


\begin{remark}
The assumption $m\neq 2\pi^2$ is from that we do not know the desired coercivity estimate for the linearized operator at the 1D periodic ground state $Q_\infty$ (see Remark \ref{1D coercivity; easy case}).
\end{remark}

\subsection{Ideas of the proofs, and the outline of the paper}

Since the dimension reduction by a toroidal trap model has been less explored, some considerable efforts are taken into developing analysis tools from the basic ones (Section \ref{sec: reformulation}-\ref{subsec: modified GN inequality}). First of all, due to the geometry of the trapping potential, it is natural to employ cylindrical coordinates and translate in the axial direction so that localization is achieved at the origin in the new coordinates. In Section \ref{sec: reformulation}, we introduce the reformulated problem and function spaces accordingly. 

Next, we note that in our setting, the lowest eigenstate for the operator $H_\omega^\perp$ has a different $\omega\to\infty$ behavior from other higher eigenstates (see the energy functional \eqref{eq: energy} and its reformulation \eqref{modified energy'}). Thus, for the proof, we need detailed properties for the lowest eigenstate and the spectral projection on it (as well as its orthogonal complement). An important technical remark is that when we take the projection to the lowest eigenstate, we need to truncate out near the $z$-axis (or $s=-\sqrt{\omega}$ in the new coordinates), because the projected state may have infinite energy (see Remark \ref{why truncation?}). Fortunately, it turns out that this truncation is not very harmful, because the lowest eigenstate enjoys Gaussian decay. In Section \ref{sec: linear analysis}, we provide the properties of the eigenstates including its decay and convergence to the lowest eigenstate to the hermite operator. In Section \ref{subsec: truncated projection}, we introduce the truncated projection operator, and present its useful various mapping properties. 

In Section \ref{subsec: modified GN inequality}, based on the previous two sections, we introduce our key analysis tool, that is, the refined Gagliardo-Nirenberg inequality (Proposition \ref{GN} and Corollary \ref{GN1}); by \eqref{u-v relation}, it is  equivalent to 
\begin{equation}\label{refined GN R^3}
\begin{aligned}
\|u\|_{L^4(\mathbb{R}^3)}^4&\lesssim\|u\|_{L^2(\mathbb{R}^3)}\Big\|\sqrt{H_\omega^\perp-\Lambda_\omega} u\Big\|_{L^2(\mathbb{R}^3)}^2\Big\{\big\|\sqrt{H_\omega^\perp-\Lambda_\omega} u\big\|_{L^2(\mathbb{R}^3)}^2+\|\partial_\theta u\|_{L^2(\mathbb{R}^3)}^2\Big\}^{\frac{1}{2}}\\
&\quad+\|u\|_{L^2(\mathbb{R}^3)}^3\|\partial_\theta u\|_{L^2(\mathbb{R}^3)}+\frac{1}{\sqrt{\omega}}\|u\|_{L^2(\mathbb{R}^3)}^4.
\end{aligned}
\end{equation}
The inequality \eqref{refined GN R^3} is improved in that compared to the standard inequality $\|u\|_{L^4(\mathbb{R}^3)}^4\lesssim \|u\|_{L^2(\mathbb{R}^3)}\|\nabla u\|_{L^2(\mathbb{R}^3)}^3$,  the degree of the angular derivative term $\|\partial_\theta u\|_{L^2(\mathbb{R}^3)}$ is reduced in the upper bound. Therefore, under the assumptions \eqref{uniformly bounded energy assumption} and \textbf{\textbf{(H3)}}, a sub-quadratic bound
$$\|u\|_{L^4(\mathbb{R}^3)}^4\lesssim \sqrt{\delta m\omega}\big\|\sqrt{H_\omega-\Lambda_\omega}u\big\|_{L^2(\mathbb{R}^3)}^2+m^{\frac{3}{2}}\|\partial_\theta u\|_{L^2(\mathbb{R}^3)}+\frac{m^2}{\sqrt{\omega}}$$
becomes available. By the inequality, the sub-critical nature can be captured under the constraint \textbf{\textbf{(H3)}} for the super-critical problem. Note also that the inequality immediately implies concentration to the lowest eigenstate for $H_\omega^\perp$ localized on a ring (see Corollary \ref{forbidden region} for the forbidden region in the function space due to \textbf{\textbf{(H3)}}). Indeed, similar refined inequalities and their consequences have been employed in different settings \cite{HJ1, HJ2}.

After preparing tools, in Section \ref{global}, we give a proof of the first main result (Theorem \ref{dimension reduction for tNLS}). As mentioned above, as an almost direct consequence of the refined Gagliardo-Nirenberg inequality, we prove global existence for solutions considered in the theorem and the dimension reduction (Theorem \ref{dimension reduction for tNLS} $(i)$), and obtain a uniform-in-$\omega$ bounds for global solutions. Then, using the uniform bounds, we derive the time-dependent 1D periodic NLS (Theorem \ref{dimension reduction for tNLS} $(ii)$). 

The last two sections (Section \ref{sec: existence} and \ref{sec: uniqueness}) are devoted to the constrained minimization problem \eqref{variational problem}. In Section \ref{sec: existence}, we construct a minimizer (Theorem \ref{thm: existence}) and prove its dimension reduction (Theorem \ref{drth}). As mentioned above, the energy minimization problem is super-critical without the constraint $\langle (H_\omega^\perp-\Lambda_\omega)u,u\rangle_{L^2(\R^3)}\leq\delta\omega$. However, with the additional constraint, it has sub-critical nature can be captured by the refined Gagliardo-Nirenberg inequality \eqref{refined GN R^3} so that a minimizer is constructed by concentration-compactness principle. Moreover, we can show that higher eigenstates of an energy minimizer vanish. Consequently, together with the energy convergence, the dimension reduction limit to the 1D periodic ground state (Theorem \ref{drth}) follows. Finally, in Section \ref{sec: uniqueness}, we establish the local uniqueness of a minimizer (Theorem \ref{thm: unique}). In the proof, an important step is to obtain a coercivity property of a  linearized operator at a 3D minimizer (Proposition \ref{coc1}). That can be done by transferring the coercivity of the linearized operator at the 1D periodic ground state via the dimension reduction limit. 

In Appendix \ref{sec: Energy minimization for the cubic NLS on the unit circle}, we review the properties of the 1D periodic ground state, and give a  proof of the coercivity of its linearized operator, which is a key ingredient for the coercivity of the linearized operator at a 3D minimizer. In Appendix \ref{sec: defocusing}, we present the analogous results in the defocusing case.

\subsection{Notations}\label{sec: notation}
We denote $A\lesssim B$ (resp., $A\gtrsim B$) ignoring the implicit constant $C>0$ in the inequality $A\leq CB$ (resp., $A\geq CB$) unless there is a confusion. In the same context, $\frac{1}{C}B\leq A\leq CB$ is expressed as $A\sim B$. In this article, we employ various Lebesgue spaces of the form $L^p(S,g(x) dx)$ with $S\subset\mathbb{R}^d$ and weighted measure $g(x)dx$ for some non-negative function $g$. If the domain and the variables for the given function space are clearly given in the context, $L^p(S,g(x) dx)$ is abbreviated to $L^p(g dx)$.

\subsection{Acknowledgement}
This work was supported by the National Research Foundation of Korea (NRF) grant funded by the Korea government (MSIT) (NRF-2020R1A2C4002615).

\section{Reformulation of the problem}\label{sec: reformulation}

For a clearer picture for the dimension reduction, we derive a reformulated problem making suitable changes of variables via the cylindrical coordinates.
\subsection{Modified mass and energy}

Since we are concerned with wave functions localized on a ring of radius $\sqrt{\omega}$, we convert everything into the cylindrical coordinates and then translate in the axial distance axis, introducing the $(s,z,\theta)$-coordinates given by
$$(x,y,z)=\left((s+\sqrt{\omega})\cos\theta, (s+\sqrt{\omega})\sin\theta, z\right): [-\sqrt{\omega},\infty)\times\mathbb{R}\times \mathbb{S}^1\to\mathbb{R}^3.$$
Note that the $(s,z,\theta)$-variable domain formally converges to $\mathbb{R}^2\times\mathbb{S}^1$ as $\omega\to\infty$. In the new coordinates, the 3D toroidal potential $U_\omega(|y|-\sqrt{\omega})+z^2$ becomes effectively a 2D harmonic potential $s^2+z^2$.

Accordingly, we change the variables as
\begin{equation}\label{u-v relation}
\boxed{\quad v(s,z,\theta)=\omega^{\frac{1}{4}}u(s+\sqrt{\omega},z, \theta),\quad |y|=s+\sqrt{\omega},\quad}
\end{equation}
and we define the modified mass
\begin{equation}\label{modified mass}
\mathcal{M}_\omega[v]:= \int_{\mathbb{S}^1}\int_{-\infty}^\infty\int_{-\sqrt{\omega}}^\infty |v(s,z,\theta)|^2\sigma_\omega(s)dsdzd\theta
\end{equation}
and the modified energy
\begin{equation}\label{modified energy}
\begin{aligned}
\mathcal{E}_\omega[v]&:=\frac{\omega}{2}\int_{\mathbb{S}^1}\int_{-\infty}^\infty\int_{-\sqrt{\omega}}^\infty \Big\{|\nabla_{(s,z)}v|^2+\big(U_\omega(s)+z^2\big)|v|^2-\Lambda_\omega|v|^2\Big\} \sigma_\omega(s)dsdzd\theta \\
&\quad +\frac{1}{2}\int_{\mathbb{S}^1}\int_{-\infty}^\infty\int_{-\sqrt{\omega}}^\infty \frac{|\partial_\theta v|^2}{\sigma_\omega(s)}dsdzd\theta+\frac{\kappa}{4}\int_{\mathbb{S}^1}\int_{-\infty}^\infty\int_{-\sqrt{\omega}}^\infty |v|^{4}\sigma_\omega(s)dsdzd\theta
\end{aligned}
\end{equation}
with the weight function $\sigma_\omega:[-\sqrt{\omega},\infty)\to[0,\infty)$ is given by
\begin{equation}\label{weight}
\boxed{\quad\sigma_\omega=\sigma_\omega(s)=1+\frac{s}{\sqrt{\omega}}.\quad}
\end{equation}

\begin{remark}
The mass and the energy, given \eqref{eq: mass} and  \eqref{eq: energy},  remain same by the above modification and the transformation \eqref{u-v relation}; $M[u]=\mathcal{M}_\omega[v]$ and $E_\omega[u]=\mathcal{E}_\omega[v]$.
\end{remark}
\begin{remark}
Our reformulation requires to deal with functions on the rather uncommon domain $[-\sqrt{\omega},\infty)\times\mathbb{R}\times\mathbb{S}^1$ with the weight $\sigma_\omega(s)=1+\frac{s}{\sqrt{\omega}}$, but we may analyze the problem invoking that they formally converge to $\mathbb{R}^2\times\mathbb{S}^1$ and $1$, respectively.
\end{remark}

\subsection{Function spaces}
From now on, we minimize the modified energy under the modified constraints. Thus, it is convenient to employ the following function spaces which fit better into our reformulated problem.

For notational convenience, given an $s$-variable weight function $g=g(s):[-\sqrt{\omega},\infty)\to[0,\infty)$ and $1\leq p\leq \infty$, we abbreviate the weighted space $L^p\big([-\sqrt{\omega},\infty)\times\mathbb{R}\times \mathbb{S}^1, g(s)dsdzd\theta\big)$ to $L^p(g)$ equipped with the norm\footnote{With abuse of notation, we denote $L^\infty(g)\equiv L^\infty(1)$.}
\begin{equation}\label{3d weighted norm}
\|v\|_{L^p(g)}=\left\{\begin{aligned}
&\bigg\{\int_{\mathbb{S}^1}\int_{-\infty}^\infty\int_{-\sqrt{\omega}}^\infty |v(s,z,\theta)|^pg(s)dsdzd\theta\bigg\}^{\frac{1}{p}}&&\textup{if } 1\le p<\infty,\\
& \esssup_{(s,z,\theta)\in[-\sqrt{\omega},\infty)\times\mathbb{R}\times \mathbb{S}^1} |v(s,z,\theta)| &&\textup{if }p=\infty.
\end{aligned}\right.
\end{equation}
In particular, $L^2(g)$ is the Hilbert space with the inner product 
$$\langle v_1,v_2\rangle_{L^2(g)} =\int_{\mathbb{S}^1}\int_{-\infty}^\infty\int_{-\sqrt{\omega}}^\infty v_1(s,z,\theta)\overline{v_2(s,z,\theta)} g(s)dsdzd\theta.$$
To be specific, we will mostly employ the two weight functions $\sigma_\omega(s)$ and $\frac{1}{\sigma_\omega(s)}$. Note that both $L^p(\sigma_\omega)$ and $L^p(\frac{1}{\sigma_\omega})$ formally converge to $L^p(\mathbb{R}^2\times\mathbb{S}^1)$, since $\sigma_\omega(s)=1+\frac{s}{\sqrt{\omega}}\to 1$.

We define the differential operator
\begin{equation}\label{2d differential operator}
\mathcal{H}_{\omega}^{(2D)}:=-\partial_s^2-\frac{1}{\sqrt{\omega}\sigma_\omega}\partial_s  -\partial_z^2 +U_\omega(s)+z^2
\end{equation}
as a quadratic form acting on the weighted space $L^2(\sigma_\omega dsdz)$. The spectral properties of the operator will be presented in Section \ref{sec: linear analysis}. Indeed, it will be shown that the lowest eigenvalue $\Lambda_\omega$ is simple and it converges to $2$ as $\omega\to\infty$ (see Corollary \ref{2d eigenstate}).

Referring to the first term in the modified energy \eqref{modified energy} as well as the additional constraint, we introduce the semi-norm\footnote{By integration by parts, $$\|v\|_{{\dot{\Sigma}}_{\omega;(s,z)}}^2=\int_{\mathbb{S}^1}\int_{-\infty}^\infty\int_{-\sqrt{\omega}}^\infty\Big\{|\nabla_{(s,z)}v|^2+\big(U_\omega(s)+z^2\big)|v|^2-\Lambda_\omega|v|^2\Big\} \sigma_\omega(s)dsdzd\theta,$$
provided that the both sides are finite.}
\begin{equation}\label{seminorm}
\|v\|_{{\dot{\Sigma}}_{\omega;(s,z)}}:=\big\langle (\mathcal{H}_\omega^{(2D)}-\Lambda_\omega)v,v\big\rangle_{L^2(\sigma_\omega)}^{\frac{1}{2}},
\end{equation}
and define 
\begin{equation}\label{seminorm2}
\|v\|_{{\dot{\Sigma}}_{\omega}}:=\left\{\|v\|_{{\dot{\Sigma}}_{\omega;(s,z)}}^2+\frac{1}{\omega}\|\partial_\theta v\|_{L^2(\frac{1}{\sigma_\omega})}^2\right\}^{\frac{1}{2}}.
\end{equation}
Finally, collecting all quadratic terms in the modified mass and the energy, we denote by $\Sigma_\omega$ the Hilbert space with the norm\footnote{By \eqref{u-v relation},  $$ \|v\|_{ \Sigma_\omega}^2=\int_{\R^3}|\nabla u|^2+\big(U_\omega(|y|-\sqrt{\omega})+z^2+1\big)|u|^2dx.$$ } 
\begin{equation}\label{norm}
\|v\|_{ \Sigma_\omega}:=\left\{(1+\Lambda_\omega)\|v\|_{L^2(\sigma_\omega)}^2+\|v\|_{{\dot{\Sigma}_\omega}}^2\right\}^{\frac{1}{2}}.
\end{equation}

\subsection{Reformulated variational problem}
By the definitions of the function spaces and the norms introduced in the previous subsection, the modified mass and the modified energy can be expressed in a compact form as
\begin{equation}\label{modified mass'}
\mathcal{M}_\omega[v]=\|v\|_{L^2(\sigma_\omega)}^2
\end{equation}
and
\begin{equation}\label{modified energy'}
\mathcal{E}_\omega[v]=\frac{\omega}{2}\|v\|_{{\dot{\Sigma}}_{\omega}}^2+\frac{\kappa}{4}\|v\|_{L^4(\sigma_\omega)}^4=\frac{\omega}{2}\|v\|_{{\dot{\Sigma}}_{\omega; (s,z)}}^2+\frac{1}{2}\|\partial_\theta v\|_{L^2(\frac{1}{\sigma_\omega})}^2+\frac{\kappa}{4}\|v\|_{L^4(\sigma_\omega)}^4.
\end{equation}
Consequently, by \eqref{u-v relation}, the energy minimization problem \eqref{variational problem} is rephrased as
\begin{equation}\label{modified variational problem}
\boxed{\quad\mathcal{J}_\omega^{(3D)}(m):=\min\Big\{\mathcal{E}_\omega[v]: \  v\in\Sigma_\omega,\ \mathcal{M}_\omega[v]=m\textup{ and } \|v\|_{{\dot{\Sigma}}_{\omega;(s,z)}}\leq\delta\sqrt{\omega} \Big\}.\quad}
\end{equation}

\begin{remark}\label{formal derivation}
The minimization problem \eqref{modified variational problem} is in essence nothing but a reformulation of the original problem \eqref{variational problem}, but it has many advantages to observe the strong confinement effect and to catch a sub-critical nature of the super-critical problem. Indeed, the energy $\mathcal{E}_\omega(v)$ is formally approximated by
$$\frac{\omega}{2}\left\langle(-\Delta_{(s,z)}+s^2+z^2-2)v,v\right\rangle_{L^2(\mathbb{R}^2\times\mathbb{S}^1)}+\frac{1}{2}\|\partial_\theta v\|_{L^2(\mathbb{R}^2\times\mathbb{S}^1)}^2+\frac{\kappa}{4}\|v\|_{L^4(\mathbb{R}^2\times\mathbb{S}^1)}^4.$$
Thus, an energy minimizer is expected to be concentrated at a factorized state $\Phi_\infty(s,z)w(\theta)$, where $\Phi_\infty(s,z)=\frac{1}{\sqrt{\pi}}e^{-\frac{s^2+z^2}{2}}$ is the lowest eigenstate for the 2d hermite operator $-\Delta_{(s,z)}+s^2+z^2$ (with eigenvalue 2). Subsequently, the minimum energy is further reduced to 
$$\mathcal{E}_\omega[w(\theta)\Phi_\infty(s,z)]\underset{\omega\to\infty}\approx  \frac{1}{2}\int_{\mathbb{S}^1}|\partial_\theta w|^2 +\frac{\kappa}{8\pi}\int_{\mathbb{S}^1} |w|^{4}d\theta=E_\infty[w].$$
On the other hand, the additional constraint $\|v\|_{{\dot{\Sigma}}_{\omega;(s,z)}}\leq\delta\sqrt{\omega}$ gets weaker. Therefore, a ground state on the unit circle is derived in the strong confinement regime. 
\end{remark}

Accordingly, the main results in the second part of this paper are reworded as follows.

\begin{theorem}[Existence of a constrained energy minimizer; reformulated]\label{reformulated main theorem}
Let $\kappa=-1$ and let $\delta>0$ be a small number chosen in Corollary \ref{forbidden region}. We assume that $\omega\gg 1$ is sufficiently large. For any minimizing sequence $\{v_n\}_{n=1}^\infty$ of the variational problem \eqref{modified variational problem}, there exists $\{\gamma_n\}_{n=1}^\infty\subset\mathbb{R}$ and $\{\theta_n\}_{n=1}^\infty\subset\mathbb{S}^1$ such that passing to a subsequence, 
$$e^{i\gamma_n}v_n(s,z,\theta-\theta_n)\to \mathcal{Q}_\omega>0 \textup{ in } \Sigma_\omega$$
and $\mathcal{Q}_\omega$ solves 
$$\omega( \mathcal{H}_\omega^{(2D)}-\Lambda_\omega)\mathcal{Q}_\omega-\frac{1}{\sigma_\omega^2}\partial_\theta^2 \mathcal{Q}_\omega- \mathcal{Q}_\omega^3=- \mu_\omega \mathcal{Q}_\omega.$$
\end{theorem}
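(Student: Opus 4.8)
The plan is to solve the reformulated constrained problem \eqref{modified variational problem} by the direct method of the calculus of variations, exploiting that the potential $U_\omega(s)+z^2\sim s^2+z^2$ traps the $(s,z)$-variables while $\theta$ ranges over the compact circle $\mathbb{S}^1$, so that the only symmetries to be quotiented out are the phase rotation $v\mapsto e^{i\gamma}v$ and the angular translation $v(s,z,\theta)\mapsto v(s,z,\theta-\theta_0)$. The decisive first step is coercivity. Since $\kappa=-1$, the quartic term of $\mathcal{E}_\omega$ carries a negative sign and the unconstrained problem is mass-supercritical; the constraint $\|v\|_{\dot{\Sigma}_{\omega;(s,z)}}\le\delta\sqrt{\omega}$ is exactly what tames it. Inserting this constraint together with the mass normalization $\mathcal{M}_\omega[v]=m$ into the refined Gagliardo--Nirenberg inequality (Corollary \ref{GN1}, equivalently \eqref{refined GN R^3}) gives, in the reformulated variables, a sub-quadratic bound of the form
$$\|v\|_{L^4(\sigma_\omega)}^4\lesssim\sqrt{\delta m\omega}\,\|v\|_{\dot{\Sigma}_\omega}^2+m^{3/2}\|\partial_\theta v\|_{L^2(1/\sigma_\omega)}+\frac{m^2}{\sqrt{\omega}}.$$
Choosing $\delta$ small (independently of large $\omega$), the first term is absorbed into $\tfrac{\omega}{2}\|v\|_{\dot{\Sigma}_\omega}^2$, and using $\|\partial_\theta v\|_{L^2(1/\sigma_\omega)}\le\sqrt{\omega}\,\|v\|_{\dot{\Sigma}_\omega}$ together with Young's inequality absorbs the middle term as well, yielding $\mathcal{E}_\omega[v]\ge\tfrac{\omega}{4}\|v\|_{\dot{\Sigma}_\omega}^2-C(m)$. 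Hence $\mathcal{J}_\omega^{(3D)}(m)>-\infty$ and every minimizing sequence $\{v_n\}$ is bounded in $\Sigma_\omega$; the admissible set is nonempty, as testing with a suitable multiple of the $\mathcal{H}_\omega^{(2D)}$-ground state gives $\|v\|_{\dot{\Sigma}_{\omega;(s,z)}}=0$.

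Next I would extract a limit. By boundedness in the Hilbert space $\Sigma_\omega$, pass to a subsequence with $v_n\rightharpoonup V$ weakly. The confining potential makes the embedding $\Sigma_\omega\hookrightarrow L^2(\sigma_\omega)$ compact in $(s,z)$, and Rellich's theorem supplies compactness in $\theta\in\mathbb{S}^1$; hence $v_n\to V$ strongly in $L^2(\sigma_\omega)$ and, by interpolation with the uniform $\Sigma_\omega$-bound, in $L^4(\sigma_\omega)$. Strong $L^2$ convergence preserves the mass, $\mathcal{M}_\omega[V]=m$, and weak lower semicontinuity of $\|\cdot\|_{\dot{\Sigma}_{\omega;(s,z)}}$ keeps the second constraint, so $V$ is admissible. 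Since the quartic term converges strongly while the quadratic part is weakly lower semicontinuous, $\mathcal{E}_\omega[V]\le\liminf_n\mathcal{E}_\omega[v_n]=\mathcal{J}_\omega^{(3D)}(m)$, whence $V$ is a minimizer and equality holds throughout. In particular $\|v_n\|_{\dot{\Sigma}_\omega}\to\|V\|_{\dot{\Sigma}_\omega}$; combined with $\|v_n\|_{L^2(\sigma_\omega)}\to\|V\|_{L^2(\sigma_\omega)}$ this gives $\|v_n\|_{\Sigma_\omega}\to\|V\|_{\Sigma_\omega}$, and convergence of norms together with weak convergence in the Hilbert space $\Sigma_\omega$ upgrades to strong convergence $v_n\to V$ in $\Sigma_\omega$.

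It remains to produce a positive minimizer and its Euler--Lagrange equation. Replacing $V$ by $|V|$ leaves the mass and the $L^4$-norm unchanged and, by the diamagnetic inequality (using that the potential terms and $\Lambda_\omega$ depend only on $|V|$), does not increase $\|\cdot\|_{\dot{\Sigma}_{\omega;(s,z)}}$ or $\|\partial_\theta\cdot\|_{L^2(1/\sigma_\omega)}$; thus $|V|$ is again a minimizer, and equality in the diamagnetic inequality forces $V=e^{i\gamma}|V|$ for a constant phase $\gamma$. Setting $\mathcal{Q}_\omega=|V|\ge0$ and choosing $\gamma_n$ (and, if one centers the profile, $\theta_n$) accordingly gives the asserted convergence $e^{i\gamma_n}v_n(s,z,\theta-\theta_n)\to\mathcal{Q}_\omega$. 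By Corollary \ref{forbidden region}, the inequality constraint is strictly inactive at any such low-energy minimizer, i.e. $\|\mathcal{Q}_\omega\|_{\dot{\Sigma}_{\omega;(s,z)}}<\delta\sqrt{\omega}$; hence only the mass constraint is active and the Lagrange multiplier rule produces a single $\mu_\omega\in\mathbb{R}$ with
$$\omega(\mathcal{H}_\omega^{(2D)}-\Lambda_\omega)\mathcal{Q}_\omega-\frac{1}{\sigma_\omega^2}\partial_\theta^2\mathcal{Q}_\omega-\mathcal{Q}_\omega^3=-\mu_\omega\mathcal{Q}_\omega,$$
and strict positivity $\mathcal{Q}_\omega>0$ follows from the strong maximum principle applied to this elliptic equation.

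The step I expect to be the main obstacle is the coercivity estimate, and specifically quantifying the implicit constant in the refined Gagliardo--Nirenberg inequality so that, for a single $\delta$ chosen independently of large $\omega$, the negative quartic is strictly dominated by the positive kinetic energy; this is precisely what renders the supercritical focusing energy bounded below and, via Corollary \ref{forbidden region}, simultaneously forces the auxiliary constraint to be inactive so that the Euler--Lagrange equation carries no extra multiplier. By contrast, the compactness is comparatively routine here: because the trap confines $(s,z)$ and $\mathbb{S}^1$ is compact, no genuine dichotomy or vanishing can occur, and the translations $\theta_n$ and phases $\gamma_n$ serve only to fix the angular position and phase of the limit rather than to recover compactness.
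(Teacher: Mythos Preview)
Your proposal is correct and follows essentially the same route as the paper's proof (Proposition~\ref{existence of a minimizer} together with Lemma~\ref{upper} and Remark~\ref{remark: refined constraint1}): coercivity via the refined Gagliardo--Nirenberg inequality and Corollary~\ref{forbidden region}, compactness from the confining potential plus Rellich on $\mathbb{S}^1$, strong $L^4(\sigma_\omega)$-convergence, weak lower semicontinuity to identify a minimizer, upgrading to strong $\Sigma_\omega$-convergence via norm convergence, positivity through $|V|$ and the strong maximum principle, and a single Lagrange multiplier because the forbidden-region estimate renders the auxiliary constraint inactive. Your closing diagnosis that the coercivity step (and the $\omega$-uniformity of the constant in the refined inequality) is the substantive analytic input, while the compactness is routine here, matches the paper's emphasis exactly.
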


\begin{theorem}[Dimension reduction; reformulated]\label{thm: dimension reduction, reformulated}
A minimizer $\mathcal{Q}_\omega$ constructed in Theorem \ref{reformulated main theorem} satisfies 
$$
\lim_{\omega\to \infty}\|\mathcal{Q}_\omega(s,z,\theta)-\mathcal{Q}_\infty(\theta) \chi_\omega(s) \Phi_0(s,z) \|_{\Sigma_\omega}=0,
$$
where $\Phi_{0}(s,z)=\frac{1}{\sqrt{\pi}}e^{-\frac{s^2+z^2}{2}}$  and $\chi_\omega=\chi(\cdot+\sqrt{\omega})$ for a smooth cut-off $\chi:[0,\infty)\to[0,1]$ such that $\chi\equiv0$ on $[0,1]$ and $\chi\equiv1$ on $[2,\infty)$.
\end{theorem}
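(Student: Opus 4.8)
The plan is to run a two-scale dimension-reduction argument: the large prefactor $\omega$ in front of $\|\cdot\|_{\dot{\Sigma}_{\omega;(s,z)}}^2$ in the energy \eqref{modified energy'} forces any minimizer to collapse onto the lowest transverse mode of $\mathcal{H}_\omega^{(2D)}$, after which the surviving angular profile must minimize the limiting $1$D energy $E_\infty$. Throughout I write $\Phi_\omega^{(0)}$ for the $L^2(\sigma_\omega\,dsdz)$-normalized lowest eigenfunction of $\mathcal{H}_\omega^{(2D)}$, with eigenvalue $\Lambda_\omega$ (Corollary \ref{2d eigenstate}), and I use that $\mathcal{H}_\omega^{(2D)}-\Lambda_\omega\ge0$ has a uniform spectral gap $\gtrsim1$ (its limit is the $2$D Hermite gap). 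I first establish the sharp energy matching $\mathcal{J}_\omega^{(3D)}(m)\to J_\infty^{(1D)}(m)$. For the upper bound I take the trial state $v_\omega^{\mathrm{tr}}=c_\omega\,\mathcal{Q}_\infty(\theta)\chi_\omega(s)\Phi_\omega^{(0)}(s,z)$ with $c_\omega$ chosen so that $\mathcal{M}_\omega[v_\omega^{\mathrm{tr}}]=m$. Since $\Phi_\omega^{(0)}$ is an \emph{exact} eigenfunction, $\langle(\mathcal{H}_\omega^{(2D)}-\Lambda_\omega)\Phi_\omega^{(0)},\Phi_\omega^{(0)}\rangle_{L^2(\sigma_\omega)}=0$ and the cut-off $\chi_\omega$ contributes only commutator terms supported where $\Phi_\omega^{(0)}\sim e^{-\omega/2}$; hence $\tfrac{\omega}{2}\|v_\omega^{\mathrm{tr}}\|_{\dot{\Sigma}_{\omega;(s,z)}}^2=O(\omega e^{-c\omega})\to0$ (so the constraint in \eqref{modified variational problem} holds for large $\omega$), while the remaining terms converge, via $\sigma_\omega\to1$, $\Phi_\omega^{(0)}\to\Phi_0$ and $\int_{\mathbb{R}^2}\Phi_0^4\,dsdz=\tfrac1{2\pi}$, to $E_\infty[\mathcal{Q}_\infty]=J_\infty^{(1D)}(m)$. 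I emphasize that one must use $\Phi_\omega^{(0)}$ rather than $\Phi_0$ here, since \textbf{\textup{(H1)}} would make the transverse energy of $\Phi_0$ blow up like $\sqrt{\omega}$.

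Next I extract quantitative concentration of the minimizer $\mathcal{Q}_\omega$. As $\mathcal{E}_\omega[\mathcal{Q}_\omega]=\mathcal{J}_\omega^{(3D)}(m)$ is now bounded, I insert the refined Gagliardo--Nirenberg inequality \eqref{refined GN R^3} (in its sub-quadratic form valid under $\mathcal{M}_\omega[\mathcal{Q}_\omega]=m$ and $\|\mathcal{Q}_\omega\|_{\dot{\Sigma}_{\omega;(s,z)}}\le\delta\sqrt{\omega}$) into the identity \eqref{modified energy'}. The $L^4$ term is then dominated by $\sqrt{\delta m\omega}\,\|\mathcal{Q}_\omega\|_{\dot{\Sigma}_\omega}^2$ plus lower order, i.e.\ by $o(\omega)\|\mathcal{Q}_\omega\|_{\dot{\Sigma}_{\omega;(s,z)}}^2$, so it is absorbed into $\tfrac{\omega}{2}\|\mathcal{Q}_\omega\|_{\dot{\Sigma}_{\omega;(s,z)}}^2$, and a Young splitting of the term linear in $\|\partial_\theta\mathcal{Q}_\omega\|$ yields $\|\mathcal{Q}_\omega\|_{\dot{\Sigma}_{\omega;(s,z)}}^2\lesssim1/\omega$ together with $\|\partial_\theta\mathcal{Q}_\omega\|_{L^2(1/\sigma_\omega)}\lesssim1$. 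I then decompose $\mathcal{Q}_\omega=q_\omega+r_\omega$ using the truncated projection of Section \ref{subsec: truncated projection}, with $q_\omega=p_\omega(\theta)\chi_\omega\Phi_\omega^{(0)}$ and $r_\omega=(1-\tilde P_0)\mathcal{Q}_\omega$. The spectral gap gives $\|r_\omega\|_{L^2(\sigma_\omega)}^2\lesssim\|\mathcal{Q}_\omega\|_{\dot{\Sigma}_{\omega;(s,z)}}^2+O(e^{-c\omega})\lesssim1/\omega$, and since every remaining $\Sigma_\omega$-component of $r_\omega$ is $O(1/\omega)$ we obtain $\|r_\omega\|_{\Sigma_\omega}\to0$; applying \eqref{refined GN R^3} to $r_\omega$ also gives $\|r_\omega\|_{L^4(\sigma_\omega)}\to0$.

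With $r_\omega$ negligible I perform the lower-bound energy splitting. Orthogonality in $L^2(\sigma_\omega)$ turns $\mathcal{M}_\omega[\mathcal{Q}_\omega]=m$ into $\|p_\omega\|_{L^2(\mathbb{S}^1)}^2\to m$; the quartic term collapses to $\tfrac1{2\pi}\|p_\omega\|_{L^4(\mathbb{S}^1)}^4+o(1)$ and the angular term to $\|\partial_\theta p_\omega\|_{L^2(\mathbb{S}^1)}^2+o(1)$ after discarding $\|\partial_\theta r_\omega\|^2\ge0$ and estimating the cross term $\langle\partial_\theta q_\omega,\partial_\theta r_\omega\rangle_{L^2(1/\sigma_\omega)}$, which is $O(\omega^{-1/2})$ because $\langle\chi_\omega\Phi_\omega^{(0)},\partial_\theta r_\omega\rangle_{L^2(\sigma_\omega)}=0$ and $\tfrac1{\sigma_\omega}-1=O(\omega^{-1/2})$ on the support of $\chi_\omega$. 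This produces $\mathcal{E}_\omega[\mathcal{Q}_\omega]\ge\tfrac{\omega}{2}\|\mathcal{Q}_\omega\|_{\dot{\Sigma}_{\omega;(s,z)}}^2+E_\infty[p_\omega]-o(1)$. Rescaling $p_\omega$ to exact mass $m$ changes $E_\infty$ by $o(1)$, so $E_\infty[p_\omega]\ge J_\infty^{(1D)}(m)-o(1)$, and combined with the upper bound this forces both $\tfrac{\omega}{2}\|\mathcal{Q}_\omega\|_{\dot{\Sigma}_{\omega;(s,z)}}^2\to0$ and $E_\infty[p_\omega]\to J_\infty^{(1D)}(m)$. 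Thus (after the mass rescaling) $p_\omega$ is a minimizing sequence for \eqref{circle minimization}; by precompactness of minimizing sequences on the compact circle and the uniqueness of $\mathcal{Q}_\infty$ up to rotation and phase (from \cite{GLT} and Appendix \ref{sec: Energy minimization for the cubic NLS on the unit circle}), choosing the rotation $\mathcal{O}_\omega$ so the profiles align and using positivity of $\mathcal{Q}_\omega$ to fix the phase, I conclude $p_\omega\to\mathcal{Q}_\infty$ in $H^1(\mathbb{S}^1)$.

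Finally I assemble by the triangle inequality,
$$
\|\mathcal{Q}_\omega-\mathcal{Q}_\infty\chi_\omega\Phi_0\|_{\Sigma_\omega}\le\|r_\omega\|_{\Sigma_\omega}+\big\|(p_\omega-\mathcal{Q}_\infty)\chi_\omega\Phi_\omega^{(0)}\big\|_{\Sigma_\omega}+\big\|\mathcal{Q}_\infty\chi_\omega(\Phi_\omega^{(0)}-\Phi_0)\big\|_{\Sigma_\omega}.
$$
The first term vanishes by the previous step; the second is bounded by $\|p_\omega-\mathcal{Q}_\infty\|_{H^1(\mathbb{S}^1)}$ times the uniformly bounded transverse norms of $\chi_\omega\Phi_\omega^{(0)}$; the third by $\|\mathcal{Q}_\infty\|_{H^1(\mathbb{S}^1)}$ times $\|\chi_\omega(\Phi_\omega^{(0)}-\Phi_0)\|$ in the transverse energy norm, which tends to $0$ by the eigenstate convergence of Corollary \ref{2d eigenstate}. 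I expect the main obstacle to be the lower-bound splitting together with the smallness claims for $r_\omega$: controlling the supercritical quartic contribution of the perpendicular part and the cross terms, and reconciling the two mismatched weights $\sigma_\omega$ and $1/\sigma_\omega$ against the $\chi_\omega$-truncation errors. This is precisely where the refined Gagliardo--Nirenberg inequality \eqref{refined GN R^3} is indispensable, as it is the only mechanism that pins the $L^4$ mass onto the lowest transverse mode and thereby converts the supercritical $3$D functional into the subcritical $1$D one.
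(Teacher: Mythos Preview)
Your proposal is correct and follows essentially the same route as the paper: the upper bound via the trial state $\mathcal{Q}_\infty\chi_\omega\Phi_\omega^{(0)}$ is Lemma~\ref{upper}, the concentration step is Corollary~\ref{forbidden region}, the decomposition and lower-bound energy splitting reproduce Proposition~\ref{adprop}, and your final triangle-inequality assembly is exactly the paper's proof of Theorem~\ref{drth}. The only cosmetic difference is that the paper controls the quartic cross terms via H\"older with the uniform $L^6$ bound (Proposition~\ref{existence of a minimizer}\,(iii)) rather than by applying the refined Gagliardo--Nirenberg inequality directly to $r_\omega$, and your claim ``$\tfrac{1}{\sigma_\omega}-1=O(\omega^{-1/2})$ on $\mathrm{supp}\,\chi_\omega$'' should really read ``after weighting by the Gaussian decay of $\Phi_\omega^{(0)}$'' (cf.\ Lemma~\ref{asymptotic orthogonality}), but these are matters of packaging, not substance.
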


\begin{theorem}[Uniqueness; reformulated]
Let $\omega\gg 1$ be sufficiently large, and suppose that $m\neq 2\pi$. For the problem \eqref{modified variational problem}, every minimizer is of the form $e^{i\gamma}\mathcal{Q}_\omega(s,z,\theta-\theta_0)$ for some $\gamma\in\mathbb{R}$ and $\theta_*\in\mathbb{S}^1$, where $\mathcal{Q}_\omega$ is a positive minimizer obtained in Theorem \ref{reformulated main theorem}.
\end{theorem}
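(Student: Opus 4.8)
The plan is to establish \emph{local rigidity} by a two-stage argument: first use the dimension reduction to show that all minimizers of \eqref{modified variational problem} cluster around one common profile, so that two minimizers differ, modulo symmetries, by an arbitrarily small $\Sigma_\omega$-element; then promote this clustering to exact coincidence via a coercivity estimate for the linearized operator at $\mathcal{Q}_\omega$. For the first stage, note that Theorem \ref{reformulated main theorem} is stated for an arbitrary minimizing sequence and that any minimizer $\tilde{\mathcal{Q}}_\omega$ is a constant minimizing sequence; likewise, the proof of the dimension-reduction Theorem \ref{thm: dimension reduction, reformulated} uses only that its input is a minimizer. Hence both the fixed minimizer $\mathcal{Q}_\omega$ and $\tilde{\mathcal{Q}}_\omega$ converge, after suitable symmetries, to the \emph{same} universal profile $\mathcal{Q}_\infty(\theta)\chi_\omega(s)\Phi_0(s,z)$ as $\omega\to\infty$, where the uniqueness of the 1D ground state $\mathcal{Q}_\infty$ up to phase and rotation is essential. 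Composing the two limits produces $\gamma\in\mathbb{R}$ and $\theta_0\in\mathbb{S}^1$ with
$$\big\|e^{i\gamma}\tilde{\mathcal{Q}}_\omega(s,z,\theta-\theta_0)-\mathcal{Q}_\omega\big\|_{\Sigma_\omega}=o_{\omega\to\infty}(1),$$
so for $\omega\gg1$ the two minimizers are as close as we wish once the symmetries are quotiented out.

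For the second stage, write $e^{i\gamma}\tilde{\mathcal{Q}}_\omega(s,z,\theta-\theta_0)=\mathcal{Q}_\omega+\eta$ and fix the modulation parameters $(\gamma,\theta_0)$ by the implicit function theorem so that $\eta$ is $L^2(\sigma_\omega)$-orthogonal to the two generators of the symmetry group, namely $i\mathcal{Q}_\omega$ (phase) and $\partial_\theta\mathcal{Q}_\omega$ (rotation); the common mass constraint $\mathcal{M}_\omega=m$ additionally yields $\mathrm{Re}\,\langle\eta,\mathcal{Q}_\omega\rangle_{L^2(\sigma_\omega)}=O(\|\eta\|_{\Sigma_\omega}^2)$. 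The auxiliary constraint $\|\cdot\|_{\dot{\Sigma}_{\omega;(s,z)}}\le\delta\sqrt{\omega}$ is \emph{inactive} at the minimizer: by Theorem \ref{thm: dimension reduction, reformulated} one has $\|\mathcal{Q}_\omega\|_{\dot{\Sigma}_{\omega;(s,z)}}=O(1)\ll\delta\sqrt{\omega}$, since $\mathcal{Q}_\omega$ concentrates on the lowest eigenstate of $\mathcal{H}_\omega^{(2D)}$. Thus both minimizers solve the Euler--Lagrange equation of Theorem \ref{reformulated main theorem} with only the mass multiplier, and expanding the equal energies $\mathcal{E}_\omega[\tilde{\mathcal{Q}}_\omega]=\mathcal{E}_\omega[\mathcal{Q}_\omega]$ along the mass sphere gives
$$0=\mathcal{E}_\omega[\tilde{\mathcal{Q}}_\omega]-\mathcal{E}_\omega[\mathcal{Q}_\omega]=\frac{1}{2}\langle L_\omega\eta,\eta\rangle+O(\|\eta\|_{\Sigma_\omega}^3),$$
where $L_\omega$ is the Hessian of the Lagrangian $\mathcal{E}_\omega+\mu_\omega(\mathcal{M}_\omega-m)$ at $\mathcal{Q}_\omega$.

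The crux is the uniform coercivity of Proposition \ref{coc1}: there is $c>0$, independent of large $\omega$, with $\langle L_\omega\eta,\eta\rangle\ge c\|\eta\|_{\Sigma_\omega}^2$ for every $\eta$ orthogonal to $\mathcal{Q}_\omega$, $i\mathcal{Q}_\omega$, and $\partial_\theta\mathcal{Q}_\omega$. I would obtain it by transferring the coercivity of the 1D linearized operator $L_\infty$ at $\mathcal{Q}_\infty$, proved in Appendix \ref{sec: Energy minimization for the cubic NLS on the unit circle}; this is exactly where the hypothesis $m\neq2\pi$ enters, since the 1D second variation degenerates at the constant/dnoidal bifurcation and is nondegenerate on the symmetry-orthogonal subspace otherwise (cf.\ Remark \ref{1D coercivity; easy case}). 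Splitting $\eta=\eta_\parallel\Phi_0+\eta_\perp$ into its lowest-eigenstate component (a function of $\theta$ alone, via the truncated projection of Section \ref{subsec: truncated projection}) and the orthogonal remainder, one bounds the form on $\eta_\perp$ from below by $\omega$ times the spectral gap of $\mathcal{H}_\omega^{(2D)}$ above $\Lambda_\omega$, a divergent quantity that absorbs the bounded cubic coupling and the cross terms with $\eta_\parallel$; on $\eta_\parallel$ the form converges, using Theorem \ref{thm: dimension reduction, reformulated} and $\sigma_\omega\to1$, to $\langle L_\infty\eta_\parallel,\eta_\parallel\rangle$, which is coercive. The main obstacle is precisely this transfer: controlling the nonlinear and weight-induced coupling between $\eta_\parallel$ and $\eta_\perp$ while keeping $c$ uniform in $\omega$, and handling the weighted domain $[-\sqrt{\omega},\infty)$ through Gaussian decay and truncation.

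Finally, combining the coercivity with the energy expansion yields
$$c\,\|\eta\|_{\Sigma_\omega}^2\le\langle L_\omega\eta,\eta\rangle=O(\|\eta\|_{\Sigma_\omega}^3),$$
so $\|\eta\|_{\Sigma_\omega}\big(c-O(\|\eta\|_{\Sigma_\omega})\big)\le0$. Since $\|\eta\|_{\Sigma_\omega}$ can be made arbitrarily small for $\omega\gg1$ by the first stage, this forces $\eta=0$, that is $\tilde{\mathcal{Q}}_\omega=e^{-i\gamma}\mathcal{Q}_\omega(s,z,\theta+\theta_0)$, which is the asserted uniqueness up to phase shift and angular rotation.
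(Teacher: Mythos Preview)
Your overall strategy matches the paper's: use dimension reduction to force any two minimizers close modulo symmetries, then expand the energy (or equivalently the functional $\mathcal{I}_\omega=\mathcal{E}_\omega+\tfrac{\mu_\omega}{2}\mathcal{M}_\omega$) to second order and apply the coercivity of Proposition~\ref{coc1} to conclude the remainder vanishes. The paper additionally reduces to \emph{positive} minimizers from the outset (any minimizer equals $e^{i\gamma}$ times a positive one, since $\mathcal{E}_\omega[v]\ge\mathcal{E}_\omega[|v|]$), so that only the real linearized operator $\mathcal{L}_\omega$ of Proposition~\ref{coc1} is needed and no separate argument for the imaginary direction is required; your route via orthogonality to $i\mathcal{Q}_\omega$ would demand a companion coercivity for the operator with $-\mathcal{Q}_\omega^2$ in place of $-3\mathcal{Q}_\omega^2$, which is not stated in the paper.

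Two points in your sketch oversimplify the $\omega$-dependence and would fail as written. First, Proposition~\ref{coc1} does \emph{not} give $\langle\mathcal{L}_\omega\eta,\eta\rangle\ge c\|\eta\|_{\Sigma_\omega}^2$; it gives the mixed lower bound $\tfrac{L_\infty}{8}\|\eta_\parallel\|_{H^1(\mathbb{S}^1)}^2+\tfrac{\omega}{4}\|\eta\|_{\dot{\Sigma}_{\omega;(s,z)}}^2$, which controls $\|\eta\|_{L^2(\sigma_\omega)}$ via \eqref{simple L^2 bound} but not the full $\Sigma_\omega$ norm. Second, the remainder is not $O(\|\eta\|_{\Sigma_\omega}^3)$ with an $\omega$-uniform constant: the embedding $\Sigma_\omega\hookrightarrow L^4(\sigma_\omega)$ carries a factor $\sqrt{\omega}$ (see \eqref{translated GN}), so a naive cubic bound blows up. The paper instead uses the refined Gagliardo--Nirenberg inequality (Corollary~\ref{GN1}) together with the a~priori bound $\|\mathcal{R}_\omega\|_{\dot{\Sigma}_{\omega;(s,z)}}=O(\omega^{-3/4})$ from Proposition~\ref{adprop}~$(i)$ to show that the cubic and quartic terms are $o_\omega(1)(\|\mathcal{R}_\omega\|_{L^2(\sigma_\omega)}^2+\|\mathcal{R}_\omega\|_{\dot{\Sigma}_{\omega;(s,z)}}^2)$, which is then absorbed by the coercivity lower bound. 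Once these two points are repaired, your argument coincides with Section~\ref{sec: uniqueness}.
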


\section{Spectral properties of the Schr\"odinger operator $\mathcal{H}_{\omega}^{(2D)}$}\label{sec: linear analysis}

Before entering into our main nonlinear problem, we study  spectral properties of the differential operator
\begin{equation}\label{1d differential operator}
\mathcal{H}_{\omega}^{(1D)}=-\partial_s^2-\frac{1}{\sqrt{\omega}\sigma_\omega}\partial_s+U_\omega(s)\quad\left(\textup{resp., }\mathcal{H}_{\omega}^{(2D)}=\mathcal{H}_{\omega}^{(1D)}-\partial_z^2+z^2\right)
\end{equation}
acting on the Hilbert space 
$$L^2(\sigma_\omega ds)=L^2([-\sqrt{\omega},\infty),\sigma_\omega ds) \ (\textup{resp.,} L^2(\sigma_\omega dsdz)=L^2([-\sqrt{\omega},\infty)\times\mathbb{R},\sigma_\omega dsdz)).
$$

\begin{remark}\label{linear differential operator remarks}
\begin{enumerate}[$(i)$]
\item The properties of the 2D operator immediately follows from those of the 1D operator, because the 2D operator is separated as $\mathcal{H}_{\omega}^{(2D)}=\mathcal{H}_{\omega}^{(1D)}\otimes I_{L^2(\mathbb{R}, dz)}+I_{L^2(\sigma_\omega ds)}\otimes\mathcal{H}_\infty^{(1D)}$ and the 1D hermite operator $\mathcal{H}_\infty^{(1D)}=-\partial_z^2+z^2$ is well-known.
\item By the relation \eqref{u-v relation}, the operator $\mathcal{H}_{\omega}^{(2D)}$ is equivalent to the Schr\"odinger operator $-\Delta_x+U_\omega(|y|-\sqrt{\omega})+z^2$ acting on the partially radial class $L_{rad}^2(\mathbb{R}_y^2)\times L^2(\mathbb{R}_z)$.
\item By $(i)$, $\mathcal{H}_\omega^{(2D)}$ has a positive normalized ground state $\Phi_\omega=\phi_\omega(s)\phi_\infty(z)$, where $\phi_\omega(s)$ (resp., $\phi_\infty(z)$) is the $L^2$-normalized ground state for $\mathcal{H}_{\omega}^{(1D)}$ (resp., $\mathcal{H}_\infty^{(1D)}$) corresponding to the lowest eigenvalue $\lambda_\omega$ (resp., $\lambda_\infty$). By $(ii)$, referring to the equivalent radially symmetric $\mathbb{R}^2$ problem (see \cite[Section XIII.11 and XIII.12]{RS4} for instance), we have that the normalized ground state $\phi_\omega$ is unique up to phase shift, and $\phi_\omega$ and $\phi_\omega'$ decrease exponentially. Moreover, $\phi_\omega(-\sqrt{\omega})>0$, which corresponds to the fact that the ground state for $-\Delta_y+U_\omega(|y|-\sqrt{\omega})$ in radial class is not zero at the origin. Our goal here is to obtain properties of eigenfunctions which hold uniformly in $\omega\gg1$.
\end{enumerate}
\end{remark}

The main result of this section provides the asymptotics of the lowest eigenvalue $\lambda_\omega$ and the  ground state $\phi_\omega$.

\begin{proposition}[1D ground state]\label{1d eigenstate}
Suppose that $U_\omega$ satisfies \textbf{\textup{(H1)}} and \textbf{\textup{(H2)}}. Let $\lambda_\omega<\lambda_\omega'$ be the two lowest eigenvalues for the Schr\"odinger operator $\mathcal{H}_\omega^{(1D)}$, and let $\phi_\omega$ be its unique positive $L^2(\sigma_\omega ds)$ normalized ground state. Then there exists $\alpha_0>0$ such that, for any $c\in(0,\alpha_0)$, there exists $\omega_c\geq 1$ such that if $\omega\geq \omega_c$, the following hold.
\begin{enumerate}[$(i)$]
\item (Lowest eigenvalue asymptotic)\footnote{By integration by parts, $$\langle \mathcal{H}_{\omega}^{(1D)}v,v\rangle_{L^2(\sigma_\omega ds)}= \int_{-\sqrt{\omega}}^\infty\Big\{|\partial_s v|^2+ U_\omega(s) |v|^2 \Big\} \sigma_\omega(s)dsdzd\theta.$$}
$$\lambda_\omega= \min \left\{ \mathcal{E}_\omega^{(1d)}(v):=\frac12\langle \mathcal{H}_{\omega}^{(1D)}v,v\rangle_{L^2(\sigma_\omega)}:\ \|v\|_{L^2(\sigma_\omega ds)}=1 \right\}=1+O(\omega^{-\frac{1}{2}}).$$
\item (Exponential decay)
\begin{equation}
\phi_\omega(s)+|\phi_\omega'(s)|\lesssim e^{-cs^2}\quad\textup{for all }s\geq-\sqrt{\omega}.\label{1d exponential decay}
\end{equation}
Moreover, we have
\begin{equation}\label{tue3}
\|(1-\chi_\omega)\phi_\omega\|_{L^2(\sigma_\omega ds)}\lesssim e^{-c\omega},\quad \|\chi_\omega\phi_\omega\|_{L^2(\frac{1}{\sigma_\omega}ds)}=1+O(\omega^{-\frac{1}{2}}),
\end{equation}
where $\chi_\omega$ is a cut-off such that $\chi_\omega(s)=0$ on $[-\sqrt{\omega},-\sqrt{\omega}+1]$ and $\chi_\omega(s)=1$ for $s\geq-\sqrt{\omega}+2$, precisely given in Theorem \ref{thm: dimension reduction, reformulated}.
\item (Convergence of the 1D ground state)
\begin{equation}\label{1d lowest energy convergence}
\int_{-\sqrt{\omega}}^\infty \Big\{(\phi_\omega-\phi_\infty)^2+ (\phi_\omega'-\phi_\infty')^2+ s^2(\phi_\omega-\phi_\infty)^2\Big\}ds=O(\omega^{-1})
\end{equation}
and
\begin{equation}\label{1d l4conv}
\int_{-\sqrt{\omega}}^\infty  (\phi_\omega-\phi_\infty)^4ds=O(\omega^{-2}),
\end{equation}
where $\phi_\infty(z)=\frac{1}{\pi^{1/4}}e^{-\frac{z^2}{2}}$.
\item (Spectral gap asymptotic)
$$\lambda_\omega'-\lambda_\omega=2+O(\omega^{-\frac{1}{2}}).$$
\end{enumerate}
\end{proposition}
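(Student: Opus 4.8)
The plan is to treat $\mathcal{H}_\omega^{(1D)}$ as a singular perturbation of the harmonic oscillator (Hermite) operator $-\partial_s^2+s^2$ on $L^2(\mathbb{R})$, whose spectrum $\{1,3,5,\dots\}$ and ground state $\phi_\infty(s)=\pi^{-1/4}e^{-s^2/2}$ are explicit. There are four sources of perturbation, each of controlled size: the weight $\sigma_\omega=1+s/\sqrt{\omega}$ differs from $1$ by $O(\omega^{-1/2})$ on the bulk, the drift $-\frac{1}{\sqrt{\omega}\sigma_\omega}\partial_s$ carries a prefactor $\omega^{-1/2}$, the potential gap obeys $|U_\omega-s^2|\lesssim\omega^{-1/2}s^2$ on $[-\sqrt{\omega},\sqrt{\omega}]$ by \textbf{(H1)}, and the truncated domain $[-\sqrt{\omega},\infty)$ differs from $\mathbb{R}$ only by a region the confining potential makes exponentially costly. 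A convenient device is the unitary map $v\mapsto w=\sqrt{\sigma_\omega}\,v$ from $L^2(\sigma_\omega\,ds)$ onto $L^2(ds)$; since $\sigma_\omega''=0$, a direct computation turns $\mathcal{H}_\omega^{(1D)}=-\frac{1}{\sigma_\omega}\partial_s(\sigma_\omega\partial_s)+U_\omega$ into the ordinary Schr\"odinger operator
$$\widetilde{\mathcal{H}}_\omega=-\partial_s^2+U_\omega(s)-\frac{1}{4(\sqrt{\omega}+s)^2}$$
on $[-\sqrt{\omega},\infty)$ with a Dirichlet condition at $s=-\sqrt{\omega}$ (note $w(-\sqrt{\omega})=0$ because $\sigma_\omega(-\sqrt{\omega})=0$). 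Thus the only genuinely singular new feature is an \emph{attractive, Hardy-critical} inverse-square potential concentrated at the left endpoint.

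For part $(i)$ I would prove matching two-sided variational bounds for $\lambda_\omega=\inf\operatorname{spec}\mathcal{H}_\omega^{(1D)}$. The upper bound $\lambda_\omega\le 1+C\omega^{-1/2}$ comes from inserting the cut-off, renormalized trial function $\chi_\omega\phi_\infty$ into the Rayleigh quotient $\langle\mathcal{H}_\omega^{(1D)}v,v\rangle_{L^2(\sigma_\omega)}/\|v\|_{L^2(\sigma_\omega)}^2$; by the Gaussian decay of $\phi_\infty$ the weight-, potential-, and truncation-errors are all $O(\omega^{-1/2})$ (indeed $O(e^{-c\omega})$ for the last two near $s=\pm\sqrt{\omega}$). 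For the lower bound I would work with $\widetilde{\mathcal{H}}_\omega$. The attractive term is $O(\omega^{-1})$ once $\sqrt{\omega}+s\gtrsim\sqrt{\omega}$ and is bounded by $\tfrac14$ once $\sqrt{\omega}+s\ge1$, where in both cases it is dwarfed by $U_\omega\gtrsim s^2\sim\omega$; only on a unit neighborhood of the endpoint is it truly singular, and there it is absorbed by the kinetic energy through the sharp Hardy inequality $\int|w'|^2\,ds\ge\frac14\int\frac{|w|^2}{(\sqrt{\omega}+s)^2}\,ds$. Organizing this with an IMS-type partition of unity (whose localization error is $O(1)$ but supported where $U_\omega\sim\omega$, hence negligible), one reduces to the Dirichlet Hermite operator on $[-\sqrt{\omega},\infty)$, whose bottom is $1+O(e^{-c\omega})$, together with the pointwise bound $U_\omega\ge(1-C\omega^{-1/2})s^2$ on $[-\sqrt{\omega},\sqrt{\omega}]$ from \textbf{(H1)} and the confinement $U_\omega\gtrsim s^2$ from \textbf{(H2)} that renders the region $|s|\gtrsim\sqrt{\omega}$ energetically forbidden; this yields $\lambda_\omega\ge 1-C\omega^{-1/2}$.

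Parts $(ii)$--$(iv)$ then follow once the problem is localized. For the decay $(ii)$ I would run an Agmon/supersolution comparison: since $U_\omega(s)-\lambda_\omega\gtrsim s^2$ for $|s|\gtrsim1$ and $\lambda_\omega$ is bounded, any Gaussian $e^{-cs^2}$ with $c<\tfrac12$ is a supersolution of $\mathcal{H}_\omega^{(1D)}-\lambda_\omega$ outside a compact set, giving $\phi_\omega+|\phi_\omega'|\lesssim e^{-cs^2}$; evaluating at $s=-\sqrt{\omega}$ produces the $e^{-c\omega}$ truncation estimates \eqref{tue3}. For the convergence $(iii)$ I would use first-order perturbation theory: writing $r=\phi_\omega-\phi_\infty$ and subtracting the two eigenvalue equations,
$$(-\partial_s^2+s^2-1)r=(s^2-U_\omega)\phi_\omega+\tfrac{1}{\sqrt{\omega}\sigma_\omega}\phi_\omega'+(\lambda_\omega-1)\phi_\omega,$$
whose right-hand side is $O(\omega^{-1/2})$ in $L^2$ by \textbf{(H1)}, the Gaussian decay, and part $(i)$. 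Inverting $-\partial_s^2+s^2-1$ on $\{\phi_\infty\}^\perp$, where it is coercive thanks to the Hermite spectral gap $2$, and controlling the parallel component by the normalization mismatch, gives $\|r\|_{L^2}+\|r'\|_{L^2}+\|s\,r\|_{L^2}=O(\omega^{-1/2})$, i.e. \eqref{1d lowest energy convergence}; the $L^4$ bound \eqref{1d l4conv} then follows from the one-dimensional Gagliardo--Nirenberg inequality $\|r\|_{L^4}^4\lesssim\|r\|_{L^2}^3\|r'\|_{L^2}$. Finally, for the gap $(iv)$ I would repeat the min-max/perturbation argument at the second level, using a truncation of the first excited Hermite function as trial function for the upper bound and the perturbation estimate for the lower bound, to get $\lambda_\omega'=3+O(\omega^{-1/2})$, whence $\lambda_\omega'-\lambda_\omega=2+O(\omega^{-1/2})$.

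The main obstacle is obtaining every estimate \emph{uniformly} in $\omega$ despite the $\omega$-dependent domain $[-\sqrt{\omega},\infty)$ and the weight $\sigma_\omega$ that degenerates to $0$ at the left endpoint. The conjugation trades this degeneracy for a Hardy-critical attractive singularity, so the lower bound in $(i)$ cannot be closed by a crude operator inequality — one cannot surrender any fixed fraction of the kinetic energy to Hardy without losing the sharp constant. The delicate point is that the singularity lives exactly where $U_\omega$ is of order $\omega$, and this must be exploited through a careful endpoint localization rather than a global comparison. Once $(i)$ is secured, the Gaussian decay confines the remaining analysis to the bulk $|s|\lesssim\sqrt{\log\omega}$, where $\sigma_\omega=1+O(\omega^{-1/2}\sqrt{\log\omega})$ and the comparison with the exactly solvable Hermite operator is routine.
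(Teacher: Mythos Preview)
Your proposal is correct in outline and would succeed, but the route you take for the lower bound in $(i)$ is substantially heavier than the paper's. You conjugate by $\sqrt{\sigma_\omega}$ to obtain an ordinary Schr\"odinger operator with a critical Hardy potential at the endpoint, and then negotiate the sharp Hardy constant via an IMS localization that borrows an $\omega$-dependent fraction of $U_\omega$ to absorb the localization error. This works, but the bookkeeping is delicate (as you yourself flag). The paper avoids all of it by reversing the order: it first proves only the \emph{upper} bound $\lambda_\omega\le 1+O(\omega^{-1/2})$ via the trial function $\chi_\omega\phi_\infty$, then runs the Agmon argument for $(ii)$ using only this upper bound (boundedness of $\lambda_\omega$ is all that is needed for $U_\omega-\lambda_\omega\gtrsim s^2$ outside a fixed compact set), and \emph{then} obtains the lower bound by inserting the now-Gaussian-decaying $\chi_\omega\phi_\omega$ as a trial function for the Hermite operator on $\mathbb{R}$, giving $1\le\langle(-\partial_s^2+s^2)\tfrac{\chi_\omega\phi_\omega}{\|\chi_\omega\phi_\omega\|},\tfrac{\chi_\omega\phi_\omega}{\|\chi_\omega\phi_\omega\|}\rangle=\lambda_\omega+O(\omega^{-1/2})$. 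The bootstrap ``upper bound $\Rightarrow$ decay $\Rightarrow$ lower bound'' replaces your Hardy/IMS analysis with a two-line computation.

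One small point to tighten in your plan for $(iii)$: writing $r=\phi_\omega-\phi_\infty$ directly and inverting $-\partial_s^2+s^2-1$ on $\{\phi_\infty\}^\perp$ requires working on all of $\mathbb{R}$, but $\phi_\omega$ lives on $[-\sqrt{\omega},\infty)$ and does not vanish at the endpoint (Remark~\ref{linear differential operator remarks}~$(iii)$), so the zero extension introduces a jump. The paper handles this by working with $\chi_\omega\phi_\omega-\phi_\infty$ instead, which extends smoothly; the Gaussian decay from $(ii)$ makes the truncation error exponentially small. Your source term $\tfrac{1}{\sqrt{\omega}\sigma_\omega}\phi_\omega'=\tfrac{\phi_\omega'}{s+\sqrt{\omega}}$ also looks dangerous near $s=-\sqrt{\omega}$, and the pointwise bound $|\phi_\omega'|\lesssim e^{-cs^2}$ alone does not make it $L^2$ there; the cut-off sidesteps this as well.
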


By Remark \ref{linear differential operator remarks} $(i)$, the following properties of the 2D operator are immediately obtained. 

\begin{corollary}[2D ground state]\label{2d eigenstate}
Suppose that $U_\omega$ satisfies \textbf{\textup{(H1)}} and \textbf{\textup{(H2)}}.  Let $\Lambda_\omega<\Lambda_\omega'$ be the first two lowest eigenvalues for the Schr\"odinger operator $\mathcal{H}_\omega^{(2D)}$, and let $\Phi_\omega$ be its unique positive $L^2(\sigma_\omega ds)$ normalized ground state. Then, we have
\begin{equation}\label{secei}
\Phi_\omega(s,z)=\phi_\omega(s)\phi_\infty(z),\quad \Lambda_\omega=\lambda_\omega+1=2+O(\omega^{-\frac{1}{2}})\quad\textup{and}\quad\Lambda_\omega'=4+O(\omega^{-\frac{1}{2}}).
\end{equation}
\end{corollary}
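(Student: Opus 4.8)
The plan is to exploit the tensor-product structure of $\mathcal{H}_\omega^{(2D)}$ recorded in Remark \ref{linear differential operator remarks} $(i)$, namely
$$\mathcal{H}_{\omega}^{(2D)}=\mathcal{H}_{\omega}^{(1D)}\otimes I_{L^2(\mathbb{R}, dz)}+I_{L^2(\sigma_\omega ds)}\otimes\mathcal{H}_\infty^{(1D)},$$
together with the well-known spectral decomposition of the 1D hermite operator $\mathcal{H}_\infty^{(1D)}=-\partial_z^2+z^2$, whose eigenvalues are $2n+1$ for $n\geq 0$ with $L^2(dz)$-normalized ground state $\phi_\infty(z)=\pi^{-1/4}e^{-z^2/2}$. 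First I would note that both factors are self-adjoint with compact resolvent (by the confining potentials $U_\omega$ and $z^2$, respectively), so that the standard spectral theory for a tensor sum applies: the spectrum of $\mathcal{H}_\omega^{(2D)}$ is the set of pairwise sums of eigenvalues of the two factors, and a complete orthonormal system of eigenfunctions is given by the corresponding tensor products. Combined with Proposition \ref{1d eigenstate}, which supplies the spectrum $\lambda_\omega<\lambda_\omega'<\cdots$ of $\mathcal{H}_\omega^{(1D)}$, this yields all the asserted properties.

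For the ground state, the lowest eigenvalue of the tensor sum is $\lambda_\omega+\lambda_\infty$, where $\lambda_\infty=1$ is the bottom of the hermite spectrum, so $\Lambda_\omega=\lambda_\omega+1$; by the lowest eigenvalue asymptotic in Proposition \ref{1d eigenstate} $(i)$ one has $\lambda_\omega=1+O(\omega^{-1/2})$, whence $\Lambda_\omega=2+O(\omega^{-1/2})$. The associated eigenfunction is the product $\Phi_\omega(s,z)=\phi_\omega(s)\phi_\infty(z)$, which is positive and therefore, by the simplicity recorded in Remark \ref{linear differential operator remarks} $(iii)$, the unique normalized ground state.

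For the spectral gap, the second smallest eigenvalue of the tensor sum is
$$\Lambda_\omega'=\min\{\lambda_\omega'+\lambda_\infty,\ \lambda_\omega+\lambda_\infty'\}=\min\{\lambda_\omega'+1,\ \lambda_\omega+3\},$$
where $\lambda_\infty'=3$ is the second hermite eigenvalue (corresponding to exciting the $z$-direction). Using the spectral gap asymptotic $\lambda_\omega'-\lambda_\omega=2+O(\omega^{-1/2})$ from Proposition \ref{1d eigenstate} $(iv)$ together with $\lambda_\omega=1+O(\omega^{-1/2})$, both candidates equal $4+O(\omega^{-1/2})$, so $\Lambda_\omega'=4+O(\omega^{-1/2})$ irrespective of which one attains the minimum. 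There is essentially no obstacle here: the entire statement is a direct consequence of separation of variables and the previously established one-dimensional results, and the only point deserving a moment of care is verifying that the tensor-sum spectral theory legitimately applies, which is guaranteed by the compactness of the resolvents of the two confining one-dimensional operators.
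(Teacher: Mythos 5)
Your proposal is correct and follows essentially the same route as the paper: both deduce the result from the separation $\mathcal{H}_{\omega}^{(2D)}=\mathcal{H}_{\omega}^{(1D)}\otimes I+I\otimes\mathcal{H}_\infty^{(1D)}$, the known hermite spectrum $\{2n+1\}$, and Proposition \ref{1d eigenstate}, arriving at $\Lambda_\omega=\lambda_\omega+1$ and $\Lambda_\omega'=\min\{\lambda_\omega'+1,\lambda_\omega+3\}=4+O(\omega^{-1/2})$. The only cosmetic difference is that the paper cites the non-degeneracy of the 2D ground state directly (Reed--Simon, Theorem XIII.47) while you derive uniqueness from simplicity of the two one-dimensional ground states, which is equally valid.
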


\begin{proof}[Proof of Corollary \ref{2d eigenstate} assuming Proposition \ref{1d eigenstate}]
By the non-degeneracy of ground state solution for $\mathcal{H}_\omega^{(2D)}$ (see \cite[Theorem XIII.47]{RS4}), it is obvious that $\Lambda_\omega=\lambda_\omega+1$ and $\Phi_\omega(s,z)=\phi_\omega(s)\phi_\infty(z)$. Moreover, by \cite[Theorem XIII.47]{RS4} and separation of the variables (see Remark \ref{linear differential operator remarks}), we have $\Lambda_\omega'=\min\{\lambda_\omega'+1, \lambda_\omega+3\}\to 4$, because $3$ is the second lowest eigenvalue for $-\partial_z^2+z^2$.
\end{proof}

The proof of the proposition \ref{1d eigenstate} will be broken into several steps. First, we obtain an upper bound on the lowest energy level.

\begin{lemma}[Upper bound on the ground state energy $\lambda_\omega$]\label{1d energy upper bound}
In Proposition \ref{1d eigenstate}, we have
$$\lambda_\omega\leq 1+O(\omega^{-\frac{1}{2}}).$$
\end{lemma}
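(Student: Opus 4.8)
The plan is to bound $\lambda_\omega$ from above by evaluating the Rayleigh quotient of a single, well-chosen trial function. By the variational characterization of the lowest eigenvalue recalled in Proposition \ref{1d eigenstate} $(i)$, together with the integration-by-parts identity in its footnote (here the first-order term $-\frac{1}{\sqrt{\omega}\sigma_\omega}\partial_s$ is precisely what symmetrizes $\mathcal{H}_\omega^{(1D)}$ in $L^2(\sigma_\omega\,ds)$, so the cross term cancels, and no boundary term survives at $s=-\sqrt{\omega}$ since $\sigma_\omega(-\sqrt{\omega})=0$), we have, for every admissible $v$,
$$\lambda_\omega\le\frac{\int_{-\sqrt{\omega}}^\infty\big\{|\partial_s v|^2+U_\omega(s)|v|^2\big\}\sigma_\omega(s)\,ds}{\int_{-\sqrt{\omega}}^\infty|v|^2\sigma_\omega(s)\,ds}.$$
I would take $v=\phi_\infty$, the restriction to $[-\sqrt{\omega},\infty)$ of the harmonic-oscillator ground state $\phi_\infty(s)=\pi^{-1/4}e^{-s^2/2}$, which satisfies $\phi_\infty'=-s\phi_\infty$ and $(-\partial_s^2+s^2)\phi_\infty=\phi_\infty$. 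This is admissible because the weight $\sigma_\omega$ vanishes at the left endpoint, so no Dirichlet condition is imposed there, and the quadratic form is finite by Gaussian decay.

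For the denominator, since $s\phi_\infty^2$ is odd with exponentially small tail past $-\sqrt{\omega}$, one gets
$$\int_{-\sqrt{\omega}}^\infty\phi_\infty^2\Big(1+\tfrac{s}{\sqrt{\omega}}\Big)\,ds=1+O(e^{-\omega}).$$
For the numerator I would use $|\phi_\infty'|^2=s^2\phi_\infty^2$ and split $U_\omega=s^2+(U_\omega-s^2)$, so it equals
$$\int_{-\sqrt{\omega}}^\infty\big(2s^2+(U_\omega-s^2)\big)\phi_\infty^2\Big(1+\tfrac{s}{\sqrt{\omega}}\Big)\,ds.$$
The leading piece is $\int_{\mathbb{R}}2s^2\phi_\infty^2\,ds=1$, up to an $O(e^{-\omega})$ tail; the weight correction $\frac{2}{\sqrt{\omega}}\int s^3\phi_\infty^2\,ds$ vanishes by oddness, again up to $O(e^{-\omega})$. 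The genuine error is $\int(U_\omega-s^2)\phi_\infty^2\,ds$: on $[-\sqrt{\omega},\sqrt{\omega}]$ assumption \textbf{(H1)} gives $|U_\omega-s^2|\lesssim\frac{1}{\sqrt{\omega}}s^2$, so this portion is $\lesssim\frac{1}{\sqrt{\omega}}\int s^2\phi_\infty^2\,ds=O(\omega^{-1/2})$; on $[\sqrt{\omega},\infty)$ the two-sided bound $U_\omega\sim s^2$ from \textbf{(H2)} yields $|U_\omega-s^2|\lesssim s^2$, which against the Gaussian is $O(e^{-\omega})$. The mixed weight–error term $\frac{1}{\sqrt{\omega}}\int s\,(U_\omega-s^2)\phi_\infty^2\,ds$ is $O(\omega^{-1})$ by the same splitting. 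Hence the numerator is $1+O(\omega^{-1/2})$, and dividing by the denominator gives $\lambda_\omega\le 1+O(\omega^{-1/2})$.

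The only point that needs care is the range $s>\sqrt{\omega}$, where only the coarse comparison \textbf{(H2)} is available rather than the sharp \textbf{(H1)}; the resolution is that the Gaussian trial function suppresses this range to exponential order, so the sole $O(\omega^{-1/2})$ contribution comes from the bulk through \textbf{(H1)}. Beyond that no real obstacle arises, since for an upper bound we are free to use an explicit comparison function. The reverse inequality $\lambda_\omega\ge 1+O(\omega^{-1/2})$, completing Proposition \ref{1d eigenstate} $(i)$, is the harder half and is presumably established separately.
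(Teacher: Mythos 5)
Your proof is correct and is essentially the paper's own argument: both bound $\lambda_\omega$ by the Rayleigh quotient of a trial function built from the Hermite ground state $\phi_\infty$, with \textbf{(H1)} controlling the error in the bulk $[-\sqrt{\omega},\sqrt{\omega}]$ and Gaussian decay (plus \textbf{(H2)}) rendering the tails exponentially small. The only difference is that the paper tests with the truncated function $\chi_\omega\phi_\infty$, which vanishes near $s=-\sqrt{\omega}$, whereas you test with $\phi_\infty$ itself; your justification of admissibility --- the weight $\sigma_\omega$ vanishes at $s=-\sqrt{\omega}$, so no boundary condition is imposed there and the integration by parts behind the form identity produces no boundary term --- is sound (consistent with Remark \ref{linear differential operator remarks} $(iii)$, where the actual ground state satisfies $\phi_\omega(-\sqrt{\omega})>0$), so both variants go through.
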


\begin{proof}
Let $\chi_\omega$ be a cut-off given in Theorem \ref{thm: dimension reduction, reformulated}. Then, direct calculation with $\sigma_\omega=1+\frac{s}{\sqrt{\omega}}$ yields $\|\chi_\omega\phi_\infty\|_{L^2(\sigma_\omega ds)}^2 =1+O(\omega^{-\frac{1}{2}})$. Similarly but by the assumptions on $U_\omega$, one can show that $\langle \mathcal{H}_\omega^{(1D)}(\chi_\omega\phi_\infty),\chi_\omega\phi_\infty\rangle_{L^2(\sigma_\omega ds)}=\langle(-\partial_s^2+s^2)\phi_\infty,\phi_\infty\rangle_{L^2(\mathbb{R})}+O(\omega^{-\frac{1}{2}})=1+O(\omega^{-\frac{1}{2}})$. Thus, we conclude that $\lambda_\omega\leq \mathcal{E}_\omega^{(1D)}(\frac{\chi_\omega\phi_\infty}{\|\chi_\omega\phi_\infty\|_{L^2(\sigma_\omega ds)}})=1+O(\omega^{-\frac{1}{2}})$.
\end{proof}

Next, following the argument of Agmon \cite{Agmon}, we prove the decay of the ground state.

\begin{proof}[Proof of Proposition \ref{1d eigenstate} $(ii)$]
We claim that there exists $\alpha_0>0$ such that for any $c\in(0,\alpha_0)$, if $\omega\geq 1$ is large enough, then
\begin{equation}\label{1d elliptic eq lemma claim}
\phi_\omega(s)\lesssim \sigma_{\omega}(s)^{-\frac{1}{2}}e^{-cs^2}\quad\textup{for all }s>-\sqrt{\omega}. 
\end{equation}
For the proof, we take $c\in(0,\alpha_0)$, and let $g$ be a bounded smooth function on $[-\sqrt{\omega},\infty)$ with $g(s)=1$ for large $s>1$, where $\alpha_0$ and $g$ will  be chosen later. Then, a direct calculation with $\mathcal{H}_\omega^{(1D)}\phi_\omega=\lambda_\omega\phi_\omega$   yields 
\begin{equation}\label{1d elliptic eq lemma claim proof}
\begin{aligned}
\big\langle (\mathcal{H}_\omega^{(1D)}-\lambda_\omega) (\phi_\omega g), \phi_\omega g\big\rangle_{L^2(\sigma_\omega ds)}&=\int_{-\sqrt{\omega}}^\infty(-2\phi_\omega\phi_\omega' gg'-\phi_\omega^2gg'')\sigma_\omega-\frac{1}{\sqrt{\omega}}\phi_\omega^2 gg' ds\\
&=\int_{-\sqrt{\omega}}^\infty (\phi_\omega g')^2\sigma_\omega ds.
\end{aligned}
\end{equation}
where in the second identity, we did integration by parts for $-\phi_\omega^2gg''\sigma_\omega$. Hence, we have 
$$0\geq-\int_{-\sqrt{\omega}}^\infty \big\{(\phi_\omega g)'\big\}^2\sigma_\omega ds=\int_{-\sqrt{\omega}}^\infty \big\{(U_\omega-\lambda_\omega)g^2-(g')^2\big\}\phi_\omega^2\sigma_\omega ds,$$
where we used integration by parts (with Remark \ref{linear differential operator remarks} $(iii)$).
For large $L\geq \sqrt{\omega}$, we take $g=e^{f_L}$, where $f_L=cs^2\eta(\frac{s}{L})$ and $\eta$ is a smooth cutoff such that $\eta$ is supported on $[-\frac{1}{2},\frac{1}{2}]$ and $\eta\equiv1$ on $[-\frac{1}{4},\frac{1}{4}]$. Then, it follows that
$$
0\ge-\|(e^{f_L}\phi_\omega )'\|_{L^2(\sigma_\omega ds)}^2=\int_{-\sqrt{\omega}}^\infty \big\{U_\omega-\lambda_\omega-(f_L')^2\big\}\big(e^{f_L}\phi_\omega\big)^2\sigma_\omega ds.
$$
Note that by the assumption \textbf{\textup{(H2)}}, $U_\omega$ has a quadratic lower bound. Thus, if $\alpha_0>0$ small and $\omega\geq 1$ is large enough, there exists $R\in(1,\sqrt{\omega})$, independent of $\omega\geq 1$  and $L\ge \sqrt{\omega}$, such that $U_\omega-\lambda_\omega-(f_L')^2\geq 1$ for all $|s|\geq R$, while there exists $C_{R}>0$, independent of $\omega\geq 1$ and $L\ge \sqrt{\omega}$, such that $|U_\omega-\lambda_\omega-(f_L')^2|e^{2f_L}\leq C_R^2<\infty$ for all $|s|\leq R$. Thus, it follows that 
$$\begin{aligned}
\|\mathbf{1}_{|s|\geq R}e^{f_L}\phi_\omega\|_{L^2(\sigma_\omega ds)}^2&\leq \int_{-\sqrt{\omega}}^\infty \mathbf{1}_{|s|\geq R} \left\{U_\omega-\lambda_\omega-(f_L')^2\right\}\big(e^{f_L}\phi_\omega\big)^2\sigma_\omega ds\\
&\leq -\int_{-\infty}^\infty \mathbf{1}_{|s|\leq R} \left\{U_\omega-\lambda_\omega-(f_L')^2\right\}\big(e^{f_L}\phi_\omega\big)^2\sigma_\omega ds\leq C_R^2.
\end{aligned}$$
Subsequently, taking $L\to\infty$, we obtain $\|\mathbf{1}_{|s|\geq R}e^{cs^2}\phi_\omega\|_{L^2(\sigma_\omega ds)}\leq C_R$. On the other hand, we have 
$\|\mathbf{1}_{|s|\leq R}e^{cs^2}\phi_\omega\|_{L^2(\sigma_\omega ds)}\leq e^{cR^2}\|\phi_\omega\|_{L^2(\sigma_\omega ds)}=e^{cR^2}$. Since $R$ does not depend on large $\omega\geq 1$, we prove that 
\begin{equation}\label{1d elliptic eq lemma claim proof 2}
\|e^{cs^2}\phi_\omega\|_{L^2(\sigma_\omega ds)}\lesssim1.
\end{equation}
On the other hand, inserting $g=e^{cs^2\eta(\frac{s}{L})}$ in \eqref{1d elliptic eq lemma claim proof}, one can show that $\|(e^{cs^2}\phi_\omega)'\|_{L^2(\sigma_\omega ds)}\lesssim1$, because by \eqref{1d elliptic eq lemma claim proof 2} with $c_0\in (c,\alpha_0)$, $\|\phi_\omega g'\|_{L^2(\sigma_\omega ds)}$ is bounded uniformly in $\omega\geq 1$  and $L\ge \sqrt{\omega}$. Therefore, the claim \eqref{1d elliptic eq lemma claim} follows from   \eqref{radial GN inequ}.

For \eqref{1d exponential decay}, we first estimate the derivative $\phi_\omega'(s)$. For $-\sqrt{\omega}\leq s\leq 0$, by the fundamental theorem of calculus and the equation $\mathcal{H}_\omega^{(1D)}\phi_\omega=\lambda_\omega\phi_\omega$, we have
$$\begin{aligned}
(\sigma_\omega \phi_\omega')(s)&=(\sigma_\omega \phi_\omega')(s)-(\sigma_\omega \phi_\omega')(-\sqrt{\omega})=\int_{-\sqrt{\omega}}^s (\sigma_\omega \phi_\omega')'ds_1\\
&=\int_{-\sqrt{\omega}}^s \Big(\sigma_\omega \phi_\omega''+\frac{1}{\sqrt{\omega}} \phi_\omega'\Big) ds_1=\int_{-\sqrt{\omega}}^s \sigma_\omega (U_\omega-\lambda_\omega)\phi_\omega ds_1.
\end{aligned}$$
Thus, applying \eqref{1d elliptic eq lemma claim} with $c_0\in(c,\alpha_0)$, by the assumption on $U_\omega$, we prove that 
$$\begin{aligned}
|(\sigma_\omega \phi_\omega')(s)|&\lesssim \sigma_\omega(s) \int_{-\sqrt{\omega}}^s  |U_\omega(s_1)-\lambda_\omega|\frac{1}{\sqrt{\sigma_{\omega}(s_1)}}e^{-c_0{s_1}^2} ds_1\lesssim \sigma_\omega(s)e^{-cs^2},
\end{aligned}$$
and consequently, $|\phi_\omega'(s)|\lesssim e^{-cs^2}$. For $s\geq0$, we write
$$(\sigma_\omega \phi_\omega')(s)=(\sigma_\omega \phi_\omega')(s)-(\sigma_\omega \phi_\omega')(\infty)=-\int_s^\infty (\sigma_\omega \phi_\omega')'ds_1.$$
Then, by the same way, we can prove the bound $|\phi_\omega'(s)|\lesssim e^{-cs^2}$. It remains to estimate $\phi_\omega(s)$. Indeed, for $s\geq -\frac{\sqrt{\omega}}{2}$, it is obvious that \eqref{1d elliptic eq lemma claim} with $c_0\in (c,\alpha_0)$ implies $\phi_\omega(s)\lesssim \sigma_{\omega}(-\frac{\sqrt{\omega}}{2})^{-\frac{1}{2}}e^{-cs^2}\sim e^{-cs^2}$. On the other hand, when $-\sqrt{\omega}+1\leq s\leq-\frac{\sqrt{\omega}}{2}$, we use \eqref{1d elliptic eq lemma claim} with $c_0\in (c,\alpha_0)$ to obtain $\phi_\omega(s)\lesssim \sigma_{\omega}(-\sqrt{\omega}+1)^{-\frac{1}{2}}e^{-c_0s^2}=\omega^{\frac{1}{4}}e^{-c_0s^2}\lesssim e ^{-cs^2}$. Finally, if $-\sqrt{\omega}\leq s\leq -\sqrt{\omega}+1$, by the fundamental theorem of calculus and the derivative estimate, we prove that 
$$\begin{aligned}
\phi_\omega(s)\leq \phi_\omega(-\sqrt{\omega}+1)+\int_{-\sqrt{\omega}}^{-\sqrt{\omega}+1} |\phi_\omega'(\tau)|d\tau\lesssim e^{-c\omega}\sim e ^{-cs^2}.
\end{aligned}$$

By fast decay \eqref{1d exponential decay}, the former inequality in \eqref{tue3} immediately follows. For the other one, we write $|\|\chi_\omega\phi_\omega\|_{L^2(\frac{1}{\sigma_\omega}ds)}-1|=|\|\frac{\chi_\omega}{\sigma_\omega}\phi_\omega\|_{L^2(\sigma_\omega ds)}-\|\phi_\omega\|_{L^2(\sigma_\omega ds)}|\leq \|(\frac{\chi_\omega}{\sigma_\omega}-1)\phi_\omega\|_{L^2(\sigma_\omega ds)}$. We note that 
$$\left\{\begin{aligned}
&|\tfrac{\chi_\omega}{\sigma_\omega}-1|=1 &&\textup{if }-\sqrt{\omega}\leq s\leq-\sqrt{\omega}+1,\\
&|\tfrac{\chi_\omega}{\sigma_\omega}-1|\leq  \tfrac{1}{\sigma_\omega}+1\leq  \tfrac{1}{\sigma_\omega(-\sqrt{\omega}+1)}+1\sim \sqrt{\omega} &&\textup{if }-\sqrt{\omega}+1\leq s\leq -\tfrac{\sqrt{\omega}}{2},\\
&|\tfrac{\chi_\omega}{\sigma_\omega}-1|= |\tfrac{1}{\sigma_\omega}-1|=\tfrac{|s|}{\sqrt{\omega}+s}\lesssim\tfrac{|s|}{\sqrt{\omega}}&&\textup{if }s\geq-\tfrac{\sqrt{\omega}}{2}.
\end{aligned}\right.$$
Thus, it follows from \eqref{1d exponential decay} that $\|(\frac{\chi_\omega}{\sigma_\omega}-1)\phi_\omega\|_{L^2(\sigma_\omega ds)}\lesssim\omega^{-\frac{1}{2}}$.
\end{proof}

\begin{proof}[Proof of Proposition \ref{1d eigenstate} $(i)$ and $(iii)$]
For $(i)$, by Lemma \ref{1d energy upper bound}, it suffices to show a lower bound. Let us consider $\chi_\omega\phi_\omega$ as its trivial extension to the real line. Then, repeating the argument to prove Lemma \ref{1d energy upper bound} but with Proposition \ref{1d eigenstate} $(ii)$, one can show that $\|\chi_\omega\phi_\omega\|_{L^2(\R)}^2=1+O(\omega^{-\frac{1}{2}})$ and $\langle(-\partial_s^2+s^2)(\chi_\omega\phi_\omega), \chi_\omega\phi_\omega\rangle_{L^2(\mathbb{R},ds)}=\lambda_\omega+O(\omega^{-\frac{1}{2}})$. Hence, it follows that $1\leq \langle(-\partial_s^2+s^2)\frac{\chi_\omega\phi_\omega}{\|\chi_\omega\phi_\omega\|_{L^2(\R,ds)}},\frac{\chi_\omega\phi_\omega}{\|\chi_\omega\phi_\omega\|_{L^2(\R,ds)}}\rangle_{L^2(\mathbb{R},ds)}=\lambda_\omega+O(\omega^{-\frac{1}{2}})$.

For $(iii)$, assume that \eqref{1d lowest energy convergence} holds. 
We observe that for $s>\sqrt{\omega}$,
$$
|v(s)|^2=-2\int_{s}^\infty  v  v'  ds_1\le 2\|v\|_{L^2(1_{[-\sqrt{\omega},\infty)}ds)} \|v'\|_{L^2(1_{[-\sqrt{\omega},\infty)}ds)}, 
$$
Then by \eqref{1d lowest energy convergence}, we have
$$
\|\phi_\omega-\phi_\infty\|_{L^4(1_{[-\sqrt{\omega},\infty)}ds)}^4\le \Big\{\sup_{s>\sqrt{\omega}}|\phi_\omega-\phi_\infty|^2\Big\}\|\phi_\omega-\phi_\infty\|_{L^2(1_{[-\sqrt{\omega},\infty)}ds)}^2 \lesssim \omega^{-2},
$$ 
which proves \eqref{1d l4conv}.

Next, we prove \eqref{1d lowest energy convergence}.
We note that by Proposition \ref{1d eigenstate} $(ii)$, $\|(1-\chi_\omega)\phi_\omega\|_{\Sigma({\bf 1}_{[-\sqrt{\omega},\infty)}ds)}\lesssim \omega^{-\frac{1}{2}}$, where $J\subset \R$, $\|v\|_{\Sigma({\bf 1}_{J} ds)}=\langle v, v\rangle_{\Sigma({\bf 1}_{J} ds)}^{\frac{1}{2}}$ and 
$$\langle v,w\rangle_{\Sigma({\bf 1}_{J}ds)}=\int_{-\infty}^\infty (vw+v'w'+s^2vw){\bf 1}_{J}(s) ds.$$ Thus, it suffices to show $\|\chi_\omega\phi_\omega-\phi_\infty\|_{\Sigma(ds)}\lesssim \omega^{-\frac{1}{2}}$, where $\|v\|_{\Sigma(ds)}:=\|v\|_{\Sigma({\bf 1}_{\R} ds)}$. For the proof, we decompose $\chi_\omega\phi_\omega-\phi_\infty=c_\omega\phi_\infty+r_\omega$ with 
$$
c_\omega=\frac{\langle\chi_\omega\phi_\omega-\phi_\infty,\phi_\infty\rangle_{\Sigma(ds)}}{\|\phi_\infty\|_{\Sigma(ds)}^2}=\langle\chi_\omega\phi_\omega,\phi_\infty\rangle_{L^2(\mathbb{R}, ds)}-1,$$ because $(1-\partial_s^2+s^2)\phi_\infty=2\phi_\infty$. On the other hand, by Proposition \ref{1d eigenstate} $(ii)$, $c_\omega=\frac{1}{2}(\|\chi_\omega\phi_\omega\|_{L^2(\mathbb{R},ds)}^2+\|\phi_\infty\|_{L^2(\mathbb{R},ds)}^2-\|\chi_\omega\phi_\omega-\phi_\infty\|_{L^2(\mathbb{R},ds)}^2)-1=-\frac{1}{2}\|\chi_\omega\phi_\omega-\phi_\infty\|_{L^2(\mathbb{R},ds)}^2+O(\omega^{-\frac{1}{2}})$. Thus, we have 
$$\begin{aligned}
\|\chi_\omega\phi_\omega-\phi_\infty\|_{\Sigma(ds)}&\leq |c_\omega|\|\phi_\infty\|_{\Sigma(ds)}+\|r_\omega\|_{\Sigma(ds)}\\
&\leq \frac{1}{\sqrt{2}}\|\chi_\omega\phi_\omega-\phi_\infty\|_{L^2(\mathbb{R},ds)}^2+\|r_\omega\|_{\Sigma(ds)}+O\left(\omega^{-\frac{1}{2}}\right).
\end{aligned}$$
Now, we recall from the proof of Proposition \ref{1d eigenstate} $(i)$, 
$\|\chi_\omega\phi_\omega\|_{L^2(\mathbb{R},ds)}\to1$. Hence, it follows  that $\chi_\omega\phi_\omega\to \phi_\infty$ in $L^2(\mathbb{R},ds)$, and consequently $\|\chi_\omega\phi_\omega-\phi_\infty\|_{\Sigma(ds)}\lesssim \|r_\omega\|_{\Sigma(ds)}+O(\omega^{-\frac{1}{2}})$.

It remains to show $\|r_\omega\|_{\Sigma(ds)}\lesssim\omega^{-\frac{1}{2}}$. Indeed, $\langle r_\omega,\phi_\infty \rangle_{\Sigma(ds)}=2\int_{\R} r_\omega  \phi_\infty ds=0$ and $-\partial_s^2+s^2$ has a simple lowest eigenvalue $1$ (with the eigenfunction $\phi_\infty$) and the second eigenvalue is $3$. Thus, it follows that $\|r_\omega\|_{\Sigma(ds)}=\|\frac{\sqrt{1-\partial_s^2+s^2}}{-\partial_s^2+s^2}(-\partial_s^2+s^2)r_\omega\|_{L^2(\mathbb{R},ds)}\leq\frac{2}{3}\|(-\partial_s^2+s^2)r_\omega\|_{L^2(\mathbb{R},ds)}$, where we used that $\frac{\sqrt{1+\lambda}}{\lambda}\leq\frac{2}{3}$ for $\lambda\geq 3$. Moreover, we have
$$\begin{aligned}
(-\partial_s^2+s^2)r_\omega&=(-\partial_s^2+s^2)\left(\chi_\omega\phi_\omega-(c_\omega+1)\phi_\infty\right)\\
&=\big(-\phi_\omega''+s^2\phi_\omega)\chi_\omega-2\phi_\omega'\chi_\omega'-\chi_\omega''\phi_\omega-(c_\omega+1)\phi_\infty,
\end{aligned}$$
which is  equal to, by the assumptions on $U_\omega$, the equation $\mathcal{H}_\omega^{(1D)}\phi_\omega=\lambda_\omega\phi_\omega$  and Proposition \ref{1d eigenstate} $(i)$ and $(ii)$,
$$\begin{aligned}
&\left( \frac{1}{\sqrt{\omega}\sigma_\omega}\phi_\omega'+(s^2-U_\omega) \phi_\omega+\lambda_\omega\phi_\omega\right)\chi_\omega-(c_\omega+1)\phi_\infty+O_{L^2(\mathbb{R},ds)}\left(\omega^{-\frac{1}{2}}\right)\\
&=\lambda_\omega\chi_\omega\phi_\omega-(c_\omega+1)\phi_\infty+O_{L^2(\mathbb{R},ds)}\left(\omega^{-\frac{1}{2}}\right)=r_\omega+O_{L^2(\mathbb{R},ds)}\left(\omega^{-\frac{1}{2}}\right), 
\end{aligned}$$
where $v=O_{L^2(\mathbb{R},ds)}\left(\omega^{-\frac{1}{2}}\right)$ is defined by $\|v\|_{L^2(\R,ds)}=O(\omega^{-\frac12})$.
Thus, it follows that $\|r_\omega\|_{\Sigma(ds)}\leq\frac{2}{3}\|r_\omega\|_{\Sigma(ds)}+O(\omega^{-\frac{1}{2}})$, which completes the proof.
\end{proof}

\begin{proof}[Sketch of Proof of Proposition \ref{1d eigenstate} $(iv)$]
Proposition \ref{1d eigenstate} $(iv)$ follows almost identically as the ground state case, thus we omit the proof. Indeed, for the second eigenvalue for $\mathcal{H}_\omega^{(1D)}$, we may consider the variational problem 
\begin{equation}\label{1d 2nd eigenvalue problem}
\lambda_\omega'=\min \left\{ \mathcal{E}_\omega^{(1d)}(v)=\frac12\langle \mathcal{H}_\omega^{(1D)} v,v\rangle_{L^2(\sigma_\omega ds)}:\ \|v\|_{L^2(\sigma_\omega ds)}=1\textup{ and }\langle v,\phi_\omega\rangle_{L^2(\sigma_\omega ds)}=0\right\}.
\end{equation} 
Since the second eigenvalue of $-\partial_s^2+s^2$ is 3 and the corresponding eigenfunction is square exponentially decreasing, repeating the proof of Proposition \ref{1d eigenstate} $(i)$-$(iii)$, one can obtain the second lowest eigenvalue asymptotic. 
\end{proof}

\section{Truncated projection onto the 2D lowest eigenspace}\label{subsec: truncated projection}

For both time-dependent/independent cases, we are concerned with 3D states mostly concentrated on the 2D lowest energy state $\Phi_\omega(s,z)=\phi_\omega(s)\phi_\infty(z)$.

\begin{remark}\label{why truncation?}
At first glance, one might try to approximate such a 3D state by the  projected state
\begin{equation}\label{projection0}
(\mathcal{P}_{\Phi_\omega}v)(s,\theta,z):=\langle v(\cdot,\cdot,\theta),\Phi_\omega\rangle_{L^2(\sigma_\omega dsdz)}\Phi_{\omega}(s,z).
\end{equation}
However, it turns out to be impossible, because the projected state $ \mathcal{P}_{\Phi_\omega}v$ has infinite modified energy. Indeed, we have
\begin{equation}\label{too singular integral}
\big\|\partial_\theta(\mathcal{P}_{\Phi_\omega}v)\big\|_{L^2(\frac{1}{\sigma_\omega})}^2 =\big\|\partial_\theta \langle v(\cdot,\cdot,\theta),\Phi_\omega\rangle_{L^2(\sigma_\omega dsdz)}\big\|_{L^2(\mathbb{S}^1)}^2 \int_{-\sqrt{\omega}}^\infty \frac{\phi_\omega(s)^2}{1+\frac{s}{\sqrt{\omega}}}ds=\infty,
\end{equation}
because $\phi_\omega(-\sqrt{\omega})>0 $ (see Remark \ref{linear differential operator remarks} $(iii)$).
\end{remark}

In this section, we introduce a modified projection to bypass non-integrability in \eqref{too singular integral}, and collect some useful properties. Let $\chi:[0,\infty)\to[0,1]$ be a smooth cut-off  such that $\chi\equiv0$ on $[0,1]$ and $\chi\equiv1$ on $[2,\infty)$ given in Theorem \ref{dimension reduction for tNLS}. For the lowest energy state $\Phi_\omega$, let
$$\tilde{\Phi}_\omega(s,z):=\frac{\chi_\omega(s)\Phi_{\omega}(s,z)}{\|\chi_\omega \Phi_{\omega} \|_{L^2(\sigma_\omega dsdz)}}$$
be a cut-offed normalized function, where $\chi_\omega(s)=\chi(s+\sqrt{\omega})$. Note that $\chi_\omega\equiv0$ on $[-\sqrt{\omega},-\sqrt{\omega}+1]$ and $\chi_\omega\equiv1$ on $[-\sqrt{\omega}+2,\infty)$. We define the 2D $\tilde{\Phi}_\omega$-directional projection $\mathcal{P}_{\tilde{\Phi}_\omega}$ by 
\begin{equation}\label{projection}
(\mathcal{P}_{\tilde{\Phi}_\omega}v)(s,\theta,z):=v_\parallel(\theta)\tilde{\Phi}_{\omega}(s,z),
\end{equation}
where 
$$v_\parallel(\theta):=\big\langle v(\cdot,\cdot,\theta),\tilde{\Phi}_\omega\big\rangle_{L^2(\sigma_\omega dsdz)}=\int_{-\infty}^\infty\int_{-\sqrt{\omega}}^\infty v(s,z,\theta) \tilde{\Phi}_\omega(s,z)\sigma_\omega(s)dsdz$$
is the $\tilde{\Phi}_\omega(s,z)$-directional component, and define its orthogonal complement by
$$\mathcal{P}_{\tilde{\Phi}_\omega}^\perp:=1-\mathcal{P}_{\tilde{\Phi}_\omega}.$$
Obviously, by orthogonality, we have
\begin{equation}\label{oth2l}
\|v\|_{L^2(\sigma_\omega)}^2=\|\mathcal{P}_{\tilde{\Phi}_\omega}v\|_{L^2(\sigma_\omega)}^2+\|\mathcal{P}_{\tilde{\Phi}_\omega}^\perp v\|_{L^2(\sigma_\omega)}^2.
\end{equation}

By definition, it is obvious that $\|v_\parallel\|_{L^2(\mathbb{S}^1)}\leq \|v\|_{L^2(\sigma_\omega)}$. Indeed, the directional component satisfies more bounds.

\begin{lemma}[Bounds for the $\tilde{\Phi}_\omega(s,z)$-directional component]\label{mno1}
For any $1\leq p<\infty$, we have
$$\|v_\parallel\|_{L^p(\mathbb{S}^1)}\lesssim \|v\|_{L^p(\sigma_\omega)},\quad\|v_\parallel\|_{L^2(\mathbb{S}^1)}\lesssim \|v\|_{L^2(\frac{1}{\sigma_\omega})}.$$
\end{lemma}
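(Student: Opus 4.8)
The plan is to prove both bounds by a duality pairing carried out pointwise in the angular variable $\theta$, followed by integration in $\theta$. Since $v_\parallel(\theta)=\langle v(\cdot,\cdot,\theta),\tilde{\Phi}_\omega\rangle_{L^2(\sigma_\omega dsdz)}$ is simply the inner product of the $(s,z)$-slice of $v$ against the cut-offed profile $\tilde{\Phi}_\omega$, I would estimate $|v_\parallel(\theta)|$ by H\"older (for the first inequality) or Cauchy--Schwarz (for the second), then raise to the appropriate power and apply Fubini. The only genuine ingredient beyond these routine steps is that $\tilde{\Phi}_\omega$ has weighted $L^q$ norms bounded \emph{uniformly} in $\omega$; establishing this is where the Gaussian decay of Proposition \ref{1d eigenstate}$(ii)$ enters, and it is the single point that demands care.

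First I would record these uniform bounds. Writing $\tilde{\Phi}_\omega=\chi_\omega\Phi_\omega/\|\chi_\omega\Phi_\omega\|_{L^2(\sigma_\omega dsdz)}$ with $\Phi_\omega=\phi_\omega(s)\phi_\infty(z)$, the decay estimate \eqref{1d exponential decay} gives $\phi_\omega(s)\lesssim e^{-cs^2}$ uniformly in $\omega$, while $\phi_\infty(z)=\pi^{-1/4}e^{-z^2/2}$; and by \eqref{tue3} the normalizing constant $\|\chi_\omega\Phi_\omega\|_{L^2(\sigma_\omega dsdz)}$ tends to $1$, hence stays comparable to $1$. On the support of $\chi_\omega$ one has $0\le\sigma_\omega\le1$ for $-\sqrt{\omega}\le s\le0$ and $\sigma_\omega=1+s/\sqrt{\omega}\le1+s$ for $s\ge0$ (using $\omega\ge1$), so any fixed power obeys $\sigma_\omega^{k}\lesssim(1+|s|)^{k}$ there. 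The Gaussian factor then absorbs the polynomial weight, yielding $\|\tilde{\Phi}_\omega\|_{L^q(\sigma_\omega^{k}dsdz)}\lesssim1$ uniformly in $\omega$ for every $1\le q\le\infty$ and every fixed $k\ge0$.

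For the first inequality I would fix $\theta$ and apply H\"older with conjugate exponents $p,p'$ in the measure $\sigma_\omega\,dsdz$:
\[
|v_\parallel(\theta)|\le\|v(\cdot,\cdot,\theta)\|_{L^p(\sigma_\omega dsdz)}\,\|\tilde{\Phi}_\omega\|_{L^{p'}(\sigma_\omega dsdz)}\lesssim\|v(\cdot,\cdot,\theta)\|_{L^p(\sigma_\omega dsdz)},
\]
where the profile factor is $\lesssim1$ by the previous paragraph (the case $k=1$). Raising to the $p$-th power, integrating over $\theta\in\mathbb{S}^1$, and using Fubini gives $\|v_\parallel\|_{L^p(\mathbb{S}^1)}\lesssim\|v\|_{L^p(\sigma_\omega)}$, the first claim.

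For the second inequality the target weight on $v$ is $1/\sigma_\omega$, so rather than pairing in $\sigma_\omega\,dsdz$ I would split the weight inside the integral as $\sigma_\omega=\sigma_\omega^{-1/2}\cdot\sigma_\omega^{3/2}$ and apply Cauchy--Schwarz in plain $dsdz$:
\[
|v_\parallel(\theta)|=\left|\int\!\!\int\big(v\,\sigma_\omega^{-1/2}\big)\big(\tilde{\Phi}_\omega\,\sigma_\omega^{3/2}\big)\,dsdz\right|\le\|v(\cdot,\cdot,\theta)\|_{L^2(\frac{1}{\sigma_\omega}dsdz)}\,\|\tilde{\Phi}_\omega\|_{L^2(\sigma_\omega^{3}dsdz)}.
\]
The profile factor is again uniformly bounded (the case $k=3$), and integrating the square in $\theta$ yields $\|v_\parallel\|_{L^2(\mathbb{S}^1)}\lesssim\|v\|_{L^2(\frac{1}{\sigma_\omega})}$. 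Thus the heart of the matter, and the only real obstacle, is the uniform-in-$\omega$ control of the weighted norms of $\tilde{\Phi}_\omega$ despite both the $\omega$-dependent weight $\sigma_\omega$ and the $\omega$-dependent domain $[-\sqrt{\omega},\infty)$; this is exactly what the uniform Gaussian decay of $\phi_\omega$ together with the convergence of the normalizing constant supplies, and it is also why the cut-off $\chi_\omega$ causes no difficulty here.
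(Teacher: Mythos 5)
Your proposal is correct and follows essentially the same route as the paper: pointwise-in-$\theta$ H\"older (resp.\ Cauchy--Schwarz with the weight split $\sigma_\omega=\sigma_\omega^{-1/2}\cdot\sigma_\omega^{3/2}$, which is exactly the paper's pairing of $\tfrac{1}{\sigma_\omega}v$ against $\sigma_\omega\tilde{\Phi}_\omega$), followed by integration in $\theta$, with the Gaussian decay of $\phi_\omega$ from Proposition \ref{1d eigenstate} $(ii)$ and the convergence of the normalizing constant in \eqref{tue3} giving the uniform-in-$\omega$ bounds on the weighted norms of $\tilde{\Phi}_\omega$. Your write-up merely makes explicit the uniform bound $\|\tilde{\Phi}_\omega\|_{L^q(\sigma_\omega^k dsdz)}\lesssim 1$, which the paper leaves implicit.
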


\begin{proof}
By the Gaussian bound for $\phi_\omega$ (Proposition \ref{1d eigenstate} $(ii)$) and the H\"older inequality, we show that $v_\parallel=\langle v(\cdot,\cdot,\theta), \tilde{\Phi}_\omega\rangle_{L^2(\sigma_\omega dsdz)}$ obeys $\|v_\parallel\|_{L^p(\mathbb{S}^1)}\leq \|\tilde{\Phi}_\omega\|_{L^{p'}(\sigma_\omega dsdz)}\|v\|_{L^p(\sigma_\omega)}\sim \|v\|_{L^p(\sigma_\omega)}$ and $\|v_\parallel\|_{L^2(\mathbb{S}^1)}\leq  \|\tfrac{1}{\sigma_\omega}v\|_{L^2(\sigma_\omega)}\|\sigma_\omega\tilde{\Phi}_\omega\|_{L^{p'}(\sigma_\omega dsdz)}\lesssim  \|v\|_{L^2(\frac{1}{\sigma_\omega})}$.
\end{proof}

The following lemma asserts that as $\omega$ increases, the modified projection gets closer to the lowest eigenstate projection $\mathcal{P}_{\Phi_\omega}$ (see \eqref{projection0}) except the angular derivative semi-norm $\|\partial_\theta v\|_{L^2(\frac{1}{\sigma_\omega})}$.

\begin{lemma}[Truncation error bounds]\label{truncation error}
Let $1\le p<\infty$ and   $\alpha_0>0$ be the constant given in Proposition \ref{1d eigenstate}.  For any $c\in(0,\alpha_0)$, there exists $\omega_c\geq 1$ such that if $\omega\geq \omega_c$, then
$$\begin{aligned}
 \big\|(\mathcal{P}_{\Phi_\omega}-\mathcal{P}_{\tilde{\Phi}_\omega})v\big\|_{L^2(\sigma_\omega)\cap {\dot{\Sigma}}_{\omega; (s,z)}} =\big\|(\mathcal{P}_{\Phi_\omega}^\perp-\mathcal{P}_{\tilde{\Phi}_\omega}^\perp) v\big\|_{L^2(\sigma_\omega)\cap{\dot{\Sigma}}_{\omega; (s,z)}}&\lesssim e^{-c\omega}\|v\|_{L^2(\sigma_\omega)},\\
\big\|(\mathcal{P}_{\Phi_\omega}-\mathcal{P}_{\tilde{\Phi}_\omega})v\big\|_{L^p(\sigma_\omega)}=\big\|(\mathcal{P}_{\Phi_\omega}^\perp-\mathcal{P}_{\tilde{\Phi}_\omega}^\perp) v\big\|_{L^p(\sigma_\omega)}&\lesssim e^{-c\omega}\|v\|_{L^p(\sigma_\omega)}, 
\end{aligned}$$
where $\mathcal{P}_{\Phi_\omega}^\perp:=1-\mathcal{P}_{\Phi_\omega}.$
\end{lemma}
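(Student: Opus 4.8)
The plan is to exploit that $\mathcal{P}_{\Phi_\omega}$ and $\mathcal{P}_{\tilde{\Phi}_\omega}$ are rank-one in the $(s,z)$-fiber (for each fixed $\theta$) and differ only by replacing the normalized profile $\Phi_\omega$ with the cut-offed, renormalized profile $\tilde{\Phi}_\omega$, which the Gaussian decay of $\phi_\omega$ makes exponentially close to $\Phi_\omega$. Note first that the two stated equalities are trivial: since $\mathcal{P}_{\Phi_\omega}^\perp=1-\mathcal{P}_{\Phi_\omega}$ and $\mathcal{P}_{\tilde{\Phi}_\omega}^\perp=1-\mathcal{P}_{\tilde{\Phi}_\omega}$, the operators $\mathcal{P}_{\Phi_\omega}^\perp-\mathcal{P}_{\tilde{\Phi}_\omega}^\perp$ and $\mathcal{P}_{\Phi_\omega}-\mathcal{P}_{\tilde{\Phi}_\omega}$ differ only by a sign, so it suffices to bound the latter; observe also that all three norms in play ($L^2(\sigma_\omega)$, $\dot{\Sigma}_{\omega;(s,z)}$, $L^p(\sigma_\omega)$) avoid the angular derivative, so both projected states are finite despite Remark \ref{why truncation?}. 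Setting $N_\omega:=\|\chi_\omega\Phi_\omega\|_{L^2(\sigma_\omega dsdz)}=\|\chi_\omega\phi_\omega\|_{L^2(\sigma_\omega ds)}$, the first estimate in \eqref{tue3} gives $N_\omega=1+O(e^{-c\omega})$, and writing $\Phi_\omega-\tilde{\Phi}_\omega=\big(1-\tfrac{\chi_\omega}{N_\omega}\big)\Phi_\omega=\big[(1-\chi_\omega)+\chi_\omega\big(1-\tfrac{1}{N_\omega}\big)\big]\phi_\omega\phi_\infty$, I would record, for every $1\le q<\infty$,
$$\|\Phi_\omega-\tilde{\Phi}_\omega\|_{L^q(\sigma_\omega dsdz)}\lesssim e^{-c\omega},$$
using \eqref{1d exponential decay} on the support $[-\sqrt{\omega},-\sqrt{\omega}+2]$ of $1-\chi_\omega$ together with $|1-1/N_\omega|\lesssim e^{-c\omega}$; here one works with a slightly larger exponent $c_0\in(c,\alpha_0)$ so that the polynomial-in-$\sqrt{\omega}$ losses are absorbed into $e^{-c\omega}$.

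Next I would expand the rank-one difference by telescoping. With $\mathcal{P}_{\Phi_\omega}v=\langle v(\cdot,\cdot,\theta),\Phi_\omega\rangle_{L^2(\sigma_\omega dsdz)}\Phi_\omega$ and the analogue for $\tilde{\Phi}_\omega$,
$$(\mathcal{P}_{\Phi_\omega}-\mathcal{P}_{\tilde{\Phi}_\omega})v=\big\langle v(\cdot,\cdot,\theta),\Phi_\omega-\tilde{\Phi}_\omega\big\rangle_{L^2(\sigma_\omega dsdz)}\,\Phi_\omega+v_\parallel(\theta)\,(\Phi_\omega-\tilde{\Phi}_\omega),$$
where $v_\parallel=\langle v(\cdot,\cdot,\theta),\tilde{\Phi}_\omega\rangle_{L^2(\sigma_\omega dsdz)}$. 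For the $L^2(\sigma_\omega)$- and $L^p(\sigma_\omega)$-bounds I would estimate each summand by Fubini (factoring a product $f(\theta)g(s,z)$) and Hölder in $(s,z)$: the first summand is controlled by $\|\Phi_\omega-\tilde{\Phi}_\omega\|_{L^{p'}(\sigma_\omega dsdz)}\|v\|_{L^p(\sigma_\omega)}$ and the second, via Lemma \ref{mno1}, by $\|\Phi_\omega-\tilde{\Phi}_\omega\|_{L^p(\sigma_\omega dsdz)}\|v\|_{L^p(\sigma_\omega)}$; both are $\lesssim e^{-c\omega}\|v\|_{L^p(\sigma_\omega)}$ by the decay bound above (with $p=2$ giving the $L^2(\sigma_\omega)$-case).

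The main point, and the step I expect to carry the argument, is the $\dot{\Sigma}_{\omega;(s,z)}$-seminorm, where a structural observation does the work. Since $\Phi_\omega$ depends only on $(s,z)$ and solves $(\mathcal{H}_\omega^{(2D)}-\Lambda_\omega)\Phi_\omega=0$ (Corollary \ref{2d eigenstate}), the first summand $\langle v,\Phi_\omega-\tilde{\Phi}_\omega\rangle\Phi_\omega$ lies in the kernel of the non-negative operator $\mathcal{H}_\omega^{(2D)}-\Lambda_\omega$; hence it contributes nothing to the seminorm and, by self-adjointness, also annihilates the cross term. Thus
$$\|(\mathcal{P}_{\Phi_\omega}-\mathcal{P}_{\tilde{\Phi}_\omega})v\|_{\dot{\Sigma}_{\omega;(s,z)}}=\|v_\parallel\|_{L^2(\mathbb{S}^1)}\,\big\langle(\mathcal{H}_\omega^{(2D)}-\Lambda_\omega)(\Phi_\omega-\tilde{\Phi}_\omega),\Phi_\omega-\tilde{\Phi}_\omega\big\rangle_{L^2(\sigma_\omega dsdz)}^{1/2}.$$
To evaluate the last factor I would use the tensorization $\mathcal{H}_\omega^{(2D)}=\mathcal{H}_\omega^{(1D)}-\partial_z^2+z^2$ with $\Lambda_\omega=\lambda_\omega+1$ and $(-\partial_z^2+z^2-1)\phi_\infty=0$ to reduce it to $\langle(\mathcal{H}_\omega^{(1D)}-\lambda_\omega)(\rho_\omega\phi_\omega),\rho_\omega\phi_\omega\rangle_{L^2(\sigma_\omega ds)}$ with $\rho_\omega=1-\chi_\omega/N_\omega$. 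Finally, the integration-by-parts identity \eqref{1d elliptic eq lemma claim proof} (valid because $\mathcal{H}_\omega^{(1D)}\phi_\omega=\lambda_\omega\phi_\omega$) turns this into $\int_{-\sqrt{\omega}}^\infty(\phi_\omega\rho_\omega')^2\sigma_\omega\,ds$; as $\rho_\omega'=-\chi_\omega'/N_\omega$ is supported on the transition interval $[-\sqrt{\omega}+1,-\sqrt{\omega}+2]$ where $s^2\sim\omega$, the bound \eqref{1d exponential decay} makes it $\lesssim e^{-c\omega}$. Combining with $\|v_\parallel\|_{L^2(\mathbb{S}^1)}\le\|v\|_{L^2(\sigma_\omega)}$ closes the estimate, the only delicate bookkeeping being the consistent use of an intermediate exponent $c_0\in(c,\alpha_0)$ so that the $\sqrt{\omega}$-factors from $\sigma_\omega$, the interval lengths, and $1/N_\omega$ are all absorbed into the final $e^{-c\omega}$.
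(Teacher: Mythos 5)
Your proposal is correct, and its backbone is exactly the paper's own argument: the same telescoping identity for $(\mathcal{P}_{\Phi_\omega}-\mathcal{P}_{\tilde{\Phi}_\omega})v$, the same exponential smallness of $\|\Phi_\omega-\tilde{\Phi}_\omega\|_{L^q(\sigma_\omega dsdz)}$ coming from the Gaussian decay \eqref{1d exponential decay} and \eqref{tue3}, and the same H\"older-plus-Lemma~\ref{mno1} estimates for the $L^p(\sigma_\omega)$ bounds. Where you genuinely depart is the $\dot{\Sigma}_{\omega;(s,z)}$ bound, which the paper compresses into ``by the same way'' (i.e.\ estimating derivative- and potential-weighted norms of $\Phi_\omega-\tilde{\Phi}_\omega$ term by term, where one must also note that the potential factor $U_\omega(s)+z^2-\Lambda_\omega\lesssim\omega$ on the truncation region is swallowed by the exponential). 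You instead exploit the structure of the quadratic form: since $(\mathcal{H}_\omega^{(2D)}-\Lambda_\omega)\Phi_\omega=0$ and the operator acts only in $(s,z)$, the summand $\langle v,\Phi_\omega-\tilde{\Phi}_\omega\rangle\Phi_\omega$ contributes nothing and, by self-adjointness, neither do the cross terms, so the seminorm collapses to $\|v_\parallel\|_{L^2(\mathbb{S}^1)}\langle(\mathcal{H}_\omega^{(2D)}-\Lambda_\omega)(\Phi_\omega-\tilde{\Phi}_\omega),\Phi_\omega-\tilde{\Phi}_\omega\rangle^{1/2}_{L^2(\sigma_\omega dsdz)}$, which tensorization (Remark~\ref{linear differential operator remarks}, Corollary~\ref{2d eigenstate}) and the identity \eqref{1d elliptic eq lemma claim proof} turn into $\|v_\parallel\|_{L^2(\mathbb{S}^1)}\|\phi_\omega\rho_\omega'\|_{L^2(\sigma_\omega ds)}$ with $\rho_\omega'$ supported in $[-\sqrt{\omega}+1,-\sqrt{\omega}+2]$. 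This is legitimate: the derivation of \eqref{1d elliptic eq lemma claim proof} uses only the eigenvalue equation, boundedness of $g$ and $g'$, the vanishing of $\sigma_\omega$ at $s=-\sqrt{\omega}$, and the Gaussian decay at infinity, so it does not require $g\equiv1$ near infinity; what it buys you is an exact evaluation in place of several weighted estimates. One small bookkeeping point: you record $\|\Phi_\omega-\tilde{\Phi}_\omega\|_{L^q(\sigma_\omega dsdz)}\lesssim e^{-c\omega}$ only for $1\le q<\infty$, but the case $p=1$ of the lemma uses the H\"older conjugate $p'=\infty$; the same pointwise decay gives the $L^\infty$ bound, so this costs nothing, but it should be stated.
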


\begin{proof} 
Since $\mathcal{P}_{\Phi_\omega}^\perp -\mathcal{P}_{\tilde{\Phi}_\omega}^\perp=\mathcal{P}_{\tilde{\Phi}_\omega}-\mathcal{P}_{\Phi_\omega}$, it suffices to show the lemma for
$$(\mathcal{P}_{\Phi_\omega} -\mathcal{P}_{\tilde{\Phi}_\omega}) v=\langle v(\cdot,\cdot,\theta), \Phi_\omega-\tilde{\Phi}_\omega\rangle_{L^2(\sigma_\omega dsdz)} \Phi_\omega+v_\parallel(\theta)(\Phi_\omega-\tilde{\Phi}_\omega).$$
Indeed, by the fast decay of $\phi_\omega$ (Proposition \ref{1d eigenstate} $(ii)$), we have $\|\Phi_\omega-\tilde{\Phi}_\omega\|_{L^r(\sigma_\omega dsdz)}\lesssim e^{-c\omega}$ for any $r>1$. Hence, the H\"older inequality and Lemma \ref{mno1} imply that 
$$\|(\mathcal{P}_{\Phi_\omega}-\mathcal{P}_{\tilde{\Phi}_\omega})v\|_{L^p(\sigma_\omega)}\lesssim e^{-c\omega}\big(\|v\|_{L^p(\sigma_\omega)}+\|v_{\parallel}\|_{L^p(\mathbb{S}^1)}\big)\lesssim e^{-c\omega}\|v\|_{L^p(\sigma_\omega)}.$$
By the same way, one can show that $\|(\mathcal{P}_{\Phi_\omega}-\mathcal{P}_{\tilde{\Phi}_\omega})v\|_{L^2(\sigma_\omega)\cap {\dot{\Sigma}}_{\omega; (s,z)}}\lesssim e^{-c\omega}\|v\|_{L^2(\sigma_\omega)}$.\end{proof}

Moreover, the orthogonal complement projection satisfies the following bounds.

\begin{lemma}[Bounds for the orthogonal complement]\label{orthogonal complement bound}
Let $\alpha_0>0$ be the constant given in Proposition \ref{1d eigenstate}.  For any $c\in(0,\alpha_0)$, there exists $\omega_c\geq 1$ such that if $\omega\geq \omega_c$, then
$$\|\mathcal{P}_{\tilde{\Phi}_\omega}^\perp v\|_{{\dot{\Sigma}}_{\omega; (s,z)}}, \|\mathcal{P}_{\tilde{\Phi}_\omega} ^\perp v\|_{L^2(\sigma_\omega)}, \|\nabla_{(s,z)}\mathcal{P}_{\tilde{\Phi}_\omega} ^\perp v\|_{L^2(\sigma_\omega)}\lesssim \|v\|_{\dot{\Sigma}_{\omega;(s,z)}}+e^{-c\omega}\|v\|_{L^2(\sigma_\omega)}.$$
\end{lemma}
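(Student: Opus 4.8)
The plan is to prove the three bounds first for the \emph{exact} spectral projection $\mathcal{P}_{\Phi_\omega}$ and then transfer them to the truncated projection $\mathcal{P}_{\tilde{\Phi}_\omega}$ by means of Lemma \ref{truncation error}. The one substantive ingredient is the uniform spectral gap recorded in Corollary \ref{2d eigenstate}, namely $\Lambda_\omega'-\Lambda_\omega=2+O(\omega^{-\frac12})\geq 1$ for all sufficiently large $\omega$. We may of course assume $\|v\|_{\dot{\Sigma}_{\omega;(s,z)}}<\infty$, as the estimates are trivial otherwise.

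First I would handle the exact projection. For each fixed $\theta$, the slice $\mathcal{P}_{\Phi_\omega}^\perp v(\cdot,\cdot,\theta)$ is orthogonal to $\Phi_\omega$ in $L^2(\sigma_\omega\,dsdz)$, and since $\mathcal{H}_\omega^{(2D)}$ acts only on the $(s,z)$-variables with $(\mathcal{H}_\omega^{(2D)}-\Lambda_\omega)\mathcal{P}_{\Phi_\omega}v=0$, the quadratic form in the footnote expression for the seminorm splits with vanishing cross term:
$$\|v\|_{\dot{\Sigma}_{\omega;(s,z)}}^2=\|\mathcal{P}_{\Phi_\omega}v\|_{\dot{\Sigma}_{\omega;(s,z)}}^2+\|\mathcal{P}_{\Phi_\omega}^\perp v\|_{\dot{\Sigma}_{\omega;(s,z)}}^2=\|\mathcal{P}_{\Phi_\omega}^\perp v\|_{\dot{\Sigma}_{\omega;(s,z)}}^2.$$
Applying the spectral gap slice-wise and integrating in $\theta$ then gives
$$\|v\|_{\dot{\Sigma}_{\omega;(s,z)}}^2=\big\langle(\mathcal{H}_\omega^{(2D)}-\Lambda_\omega)\mathcal{P}_{\Phi_\omega}^\perp v,\mathcal{P}_{\Phi_\omega}^\perp v\big\rangle_{L^2(\sigma_\omega)}\geq(\Lambda_\omega'-\Lambda_\omega)\|\mathcal{P}_{\Phi_\omega}^\perp v\|_{L^2(\sigma_\omega)}^2,$$
so that $\|\mathcal{P}_{\Phi_\omega}^\perp v\|_{L^2(\sigma_\omega)}\lesssim\|v\|_{\dot{\Sigma}_{\omega;(s,z)}}$ while $\|\mathcal{P}_{\Phi_\omega}^\perp v\|_{\dot{\Sigma}_{\omega;(s,z)}}=\|v\|_{\dot{\Sigma}_{\omega;(s,z)}}$.

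Next I would transfer to the truncated projection. Writing $\mathcal{P}_{\tilde{\Phi}_\omega}^\perp v=\mathcal{P}_{\Phi_\omega}^\perp v+(\mathcal{P}_{\tilde{\Phi}_\omega}^\perp-\mathcal{P}_{\Phi_\omega}^\perp)v$ and invoking Lemma \ref{truncation error}, which controls the difference by $e^{-c\omega}\|v\|_{L^2(\sigma_\omega)}$ in both $L^2(\sigma_\omega)$ and $\dot{\Sigma}_{\omega;(s,z)}$, the triangle inequality upgrades the two estimates just obtained to
$$\|\mathcal{P}_{\tilde{\Phi}_\omega}^\perp v\|_{\dot{\Sigma}_{\omega;(s,z)}},\ \|\mathcal{P}_{\tilde{\Phi}_\omega}^\perp v\|_{L^2(\sigma_\omega)}\lesssim\|v\|_{\dot{\Sigma}_{\omega;(s,z)}}+e^{-c\omega}\|v\|_{L^2(\sigma_\omega)},$$
which is the first two asserted bounds.

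Finally, for the gradient bound I would exploit the nonnegativity of the potential. Since $U_\omega\geq 0$ and $z^2\geq 0$, the footnote expression for the seminorm yields, for $w=\mathcal{P}_{\tilde{\Phi}_\omega}^\perp v$,
$$\|\nabla_{(s,z)}w\|_{L^2(\sigma_\omega)}^2\leq\|w\|_{\dot{\Sigma}_{\omega;(s,z)}}^2+\Lambda_\omega\|w\|_{L^2(\sigma_\omega)}^2,$$
and since $\Lambda_\omega=2+O(\omega^{-\frac12})$ is bounded, combining this with the two bounds from the previous paragraph closes the third estimate. The only genuinely delicate point is the spectral-gap step: it is essential that the two-sided eigenvalue asymptotics of Corollary \ref{2d eigenstate} keep $\Lambda_\omega'-\Lambda_\omega$ bounded below by a fixed positive constant uniformly in large $\omega$, so that no factor deteriorating as $\omega\to\infty$ enters the constants; everything else is orthogonality bookkeeping together with the exponentially small truncation error.
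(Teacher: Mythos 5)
Your proposal is correct and follows essentially the same route as the paper: reduce to the exact projection $\mathcal{P}_{\Phi_\omega}^\perp$ via Lemma \ref{truncation error}, use $(\mathcal{H}_\omega^{(2D)}-\Lambda_\omega)\mathcal{P}_{\Phi_\omega}v=0$ to get $\|\mathcal{P}_{\Phi_\omega}^\perp v\|_{\dot{\Sigma}_{\omega;(s,z)}}=\|v\|_{\dot{\Sigma}_{\omega;(s,z)}}$, invoke the uniform spectral gap $\Lambda_\omega'-\Lambda_\omega=2+O(\omega^{-\frac12})$ for the $L^2$ bound, and use nonnegativity of the potential together with boundedness of $\Lambda_\omega$ for the gradient bound. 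The only (harmless) deviation is that you prove the gradient estimate directly for the truncated complement $\mathcal{P}_{\tilde{\Phi}_\omega}^\perp v$ from the first two bounds, whereas the paper proves it for $\mathcal{P}_{\Phi_\omega}^\perp v$ and transfers all three norms at once through Lemma \ref{truncation error}.
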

\begin{proof} By Lemma \ref{truncation error}, it suffices to prove that
$$\|\mathcal{P}_{\Phi_\omega}^\perp v\|_{{\dot{\Sigma}}_{\omega; (s,z)}}, \|\mathcal{P}_{\Phi_\omega} ^\perp v\|_{L^2(\sigma_\omega)}, \|\nabla_{(s,z)}\mathcal{P}_{\Phi_\omega} ^\perp v\|_{L^2(\sigma_\omega)}\lesssim \|v\|_{\dot{\Sigma}_{\omega;(s,z)}}.$$
Indeed, by the definition of the semi-norm $\|\cdot\|_{\dot{\Sigma}_{\omega;(s,z)}}$ (see \eqref{seminorm}) with $\|\mathcal{P}_{\Phi_\omega} v\|_{\dot{\Sigma}_{\omega;(s,z)}}=0$, we have $\|\mathcal{P}_{\Phi_\omega}^\perp v\|_{{\dot{\Sigma}}_{\omega; (s,z)}}=\|v\|_{{\dot{\Sigma}}_{\omega; (s,z)}}$ and  $\Lambda_\omega'\|\mathcal{P}_{\Phi_\omega}^\perp v\|_{L^2(\sigma_\omega)}^2\leq\langle\mathcal{H}_\omega^{(2D)}\mathcal{P}_{\Phi_\omega}^\perp v,\mathcal{P}_{\Phi_\omega}^\perp v \rangle_{L^2(\sigma_\omega)}=\|v\|_{\dot{\Sigma}_{\omega;(s,z)}}^2+\Lambda_\omega\|\mathcal{P}_{\Phi_\omega}^\perp v\|_{L^2(\sigma_\omega)}^2$, where $\Lambda_\omega$ and $\Lambda_\omega'$ are respectively the first and the second lowest eigenvalues for $\mathcal{H}_\omega^{(2D)}$. Thus, the spectral gap (see \eqref{secei}) implies that $\|\mathcal{P}_{\Phi_\omega}^\perp v\|_{L^2(\sigma_\omega)}^2\leq\frac{1}{\Lambda_\omega'-\Lambda_\omega}\|v\|_{\dot{\Sigma}_{\omega;(s,z)}}^2$ and  
\begin{align*}
\|\nabla_{(s,z)}\mathcal{P}_{\Phi_\omega} ^\perp v\|_{L^2(\sigma_\omega)}^2&\leq \langle\mathcal{H}_\omega^{(2D)}\mathcal{P}_{\Phi_\omega}^\perp v,\mathcal{P}_{\Phi_\omega}^\perp v \rangle_{L^2(\sigma_\omega)}\\
&= \|v\|_{\dot{\Sigma}_{\omega;(s,z)}}^2+\Lambda_\omega\|\mathcal{P}_{\Phi_\omega} ^\perp v\|_{L^2(\sigma_\omega)}^2\leq \frac{\Lambda_\omega'}{\Lambda_\omega'-\Lambda_\omega}\|v\|_{\dot{\Sigma}_{\omega;(s,z)}}^2.
\end{align*}
\end{proof}

A key advantage of using the modified projection is that if $\|\partial_\theta v\|_{L^2(\frac{1}{\sigma_\omega})}^2$ is bounded, then so is $\|\partial_\theta(\mathcal{P}_{\tilde{\Phi}_\omega}v)\|_{L^2(\frac{1}{\sigma_\omega})}^2$, since the possible singularity at $s=-\sqrt{\omega}$ is eliminated (see \eqref{too singular integral}). The following almost orthogonality proves that besides boundedness, the projection asymptotically reduces the semi-norm, i.e., $\|\partial_\theta(\mathcal{P}_{\tilde{\Phi}_\omega}v)\|_{L^2(\frac{1}{\sigma_\omega})}^2\leq \|\partial_\theta v\|_{L^2(\frac{1}{\sigma_\omega})}^2+o_\omega(1)$. 

\begin{lemma}[Asymptotic Pythagorean identities]\label{asymptotic orthogonality}
Let $\alpha_0>0$ be a constant given in Proposition \ref{1d eigenstate}. For any $c\in(0,\alpha_0)$, there exists $\omega_c\geq 1$ such that if $\omega\geq \omega_c$, then
\begin{align}
\Big|\|v\|_{L^2(\frac{1}{\sigma_\omega})}^2-\big\|\mathcal{P}_{\tilde{\Phi}_\omega} v\big\|_{L^2(\frac{1}{\sigma_\omega})}^2-\big\|\mathcal{P}_{\tilde{\Phi}_\omega}^\perp v\big\|_{L^2(\frac{1}{\sigma_\omega})}^2\Big|&\lesssim \omega^{-\frac{1}{2}}\|v\|_{L^2(\frac{1}{\sigma_\omega})}^2.\label{asymptotic Pythagorean identity 3}
\end{align}
\end{lemma}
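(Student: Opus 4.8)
The plan is to read \eqref{asymptotic Pythagorean identity 3} as the statement that the cross term in the $L^2(\frac{1}{\sigma_\omega})$–expansion of $\|v\|^2$ is small. Since $\mathcal{P}_{\tilde{\Phi}_\omega}^\perp = 1-\mathcal{P}_{\tilde{\Phi}_\omega}$, writing $v=\mathcal{P}_{\tilde{\Phi}_\omega}v+\mathcal{P}_{\tilde{\Phi}_\omega}^\perp v$ and expanding the weighted norm gives
$$\|v\|_{L^2(\frac{1}{\sigma_\omega})}^2-\big\|\mathcal{P}_{\tilde{\Phi}_\omega} v\big\|_{L^2(\frac{1}{\sigma_\omega})}^2-\big\|\mathcal{P}_{\tilde{\Phi}_\omega}^\perp v\big\|_{L^2(\frac{1}{\sigma_\omega})}^2=2\,\re\,\big\langle \mathcal{P}_{\tilde{\Phi}_\omega}v,\mathcal{P}_{\tilde{\Phi}_\omega}^\perp v\big\rangle_{L^2(\frac{1}{\sigma_\omega})}.$$
Thus it suffices to bound this cross term by $\omega^{-1/2}\|v\|_{L^2(\frac{1}{\sigma_\omega})}^2$. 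The key structural observation is that $\mathcal{P}_{\tilde{\Phi}_\omega}$ is an orthogonal projection with respect to the $L^2(\sigma_\omega)$ inner product (this is exactly what \eqref{oth2l} encodes, i.e.\ $\langle \mathcal{P}_{\tilde{\Phi}_\omega}v,\mathcal{P}_{\tilde{\Phi}_\omega}^\perp v\rangle_{L^2(\sigma_\omega)}=0$), but \emph{not} with respect to the $L^2(\frac{1}{\sigma_\omega})$ inner product; the entire discrepancy is carried by the difference of weights.

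Concretely, I would subtract the vanishing $L^2(\sigma_\omega)$ pairing to rewrite the cross term as an integral against the weight difference,
$$\big\langle \mathcal{P}_{\tilde{\Phi}_\omega}v,\mathcal{P}_{\tilde{\Phi}_\omega}^\perp v\big\rangle_{L^2(\frac{1}{\sigma_\omega})}=\int_{\mathbb{S}^1}\int_{-\infty}^\infty\int_{-\sqrt{\omega}}^\infty (\mathcal{P}_{\tilde{\Phi}_\omega}v)\,\overline{(\mathcal{P}_{\tilde{\Phi}_\omega}^\perp v)}\Big(\tfrac{1}{\sigma_\omega}-\sigma_\omega\Big)\,ds\,dz\,d\theta,$$
where $\frac{1}{\sigma_\omega}-\sigma_\omega=\frac{1-\sigma_\omega^2}{\sigma_\omega}$ with $1-\sigma_\omega^2=-\frac{2s}{\sqrt{\omega}}-\frac{s^2}{\omega}$. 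Regrouping the integrand as $\big[(1-\sigma_\omega^2)\mathcal{P}_{\tilde{\Phi}_\omega}v\big]\,\overline{(\mathcal{P}_{\tilde{\Phi}_\omega}^\perp v)}\,\tfrac{1}{\sigma_\omega}$ and applying Cauchy--Schwarz in $L^2(\frac{1}{\sigma_\omega})$ bounds it by $\big\|(1-\sigma_\omega^2)\mathcal{P}_{\tilde{\Phi}_\omega}v\big\|_{L^2(\frac{1}{\sigma_\omega})}\,\big\|\mathcal{P}_{\tilde{\Phi}_\omega}^\perp v\big\|_{L^2(\frac{1}{\sigma_\omega})}$. The second factor is controlled by $\|v\|_{L^2(\frac{1}{\sigma_\omega})}$ using Lemma \ref{mno1} together with $\|\tilde{\Phi}_\omega\|_{L^2(\frac{1}{\sigma_\omega}dsdz)}=1+O(\omega^{-1/2})$ from \eqref{tue3}, which make $\mathcal{P}_{\tilde{\Phi}_\omega}$ (hence $\mathcal{P}_{\tilde{\Phi}_\omega}^\perp$) bounded on $L^2(\frac{1}{\sigma_\omega})$.

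It then remains to prove $\big\|(1-\sigma_\omega^2)\mathcal{P}_{\tilde{\Phi}_\omega}v\big\|_{L^2(\frac{1}{\sigma_\omega})}\lesssim\omega^{-1/2}\|v\|_{L^2(\frac{1}{\sigma_\omega})}$. Since $\mathcal{P}_{\tilde{\Phi}_\omega}v=v_\parallel(\theta)\tilde{\Phi}_\omega(s,z)$ factorizes and $1-\sigma_\omega^2$ depends only on $s$, the $z$–integration of $\phi_\infty$ and Lemma \ref{mno1} reduce this to the one-dimensional weighted estimate
$$\int_{-\sqrt{\omega}}^\infty |\chi_\omega\phi_\omega|^2(1-\sigma_\omega^2)^2\frac{1}{\sigma_\omega}\,ds\lesssim\omega^{-1}.$$
This is where the truncation is indispensable: the weight $\frac{1}{\sigma_\omega}$ blows up as $s\to-\sqrt{\omega}$, but $\chi_\omega$ confines the integral to $s\geq-\sqrt{\omega}+1$, where $\frac{1}{\sigma_\omega}\leq\sqrt{\omega}$. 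I would split at $s=-\frac{\sqrt{\omega}}{2}$: on $[-\sqrt{\omega}+1,-\frac{\sqrt{\omega}}{2}]$ the Gaussian bound $\phi_\omega\lesssim e^{-cs^2}$ from Proposition \ref{1d eigenstate} $(ii)$ gives $|\phi_\omega|^2\lesssim e^{-c\omega/2}$, which beats the at-most-polynomial-in-$\omega$ contribution of $(1-\sigma_\omega^2)^2\frac{1}{\sigma_\omega}$ together with the $O(\sqrt{\omega})$ interval length, yielding an exponentially small remainder; on $[-\frac{\sqrt{\omega}}{2},\infty)$ one has $\sigma_\omega\geq\frac12$, so $\frac{1}{\sigma_\omega}=O(1)$ and $(1-\sigma_\omega^2)^2\lesssim\frac{s^2}{\omega}+\frac{s^4}{\omega^2}$, whence the Gaussian moments $\int(\frac{s^2}{\omega}+\frac{s^4}{\omega^2})e^{-2cs^2}\,ds\lesssim\omega^{-1}$ close the estimate. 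The only genuine obstacle is this competition between the blow-up of $\frac{1}{\sigma_\omega}$ and the Gaussian decay near $s=-\sqrt{\omega}$; once the region-splitting is arranged so that the Gaussian decisively wins, the remainder is a routine Gaussian moment computation, and multiplying the two factors gives the claimed $\omega^{-1/2}$ bound.
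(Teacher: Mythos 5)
Your proposal is correct and follows essentially the same route as the paper: reduce to the cross term, use the $L^2(\sigma_\omega)$-orthogonality to convert it into a pairing against the weight difference $\frac{1}{\sigma_\omega}-\sigma_\omega=\frac{1-\sigma_\omega^2}{\sigma_\omega}$, apply Cauchy--Schwarz in $L^2(\frac{1}{\sigma_\omega})$, and control both factors via Lemma \ref{mno1} and the Gaussian decay of $\phi_\omega$ (with the truncation $\chi_\omega$ taming the singular weight near $s=-\sqrt{\omega}$). The only difference is presentational: you carry out the weighted Gaussian-moment integral explicitly with a region splitting, where the paper simply factors $1-\sigma_\omega^2=-\tfrac{s}{\sqrt{\omega}}(1+\sigma_\omega)$ and cites the fast decay of $\phi_\omega$ for the same bound.
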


\begin{proof}
It suffices to show that $|\langle\mathcal{P}_{\tilde{\Phi}_\omega} v, \mathcal{P}_{\tilde{\Phi}_\omega}^\perp v\rangle_{L^2(\frac{1}{\sigma_\omega})}|\lesssim \omega^{-\frac{1}{2}}\|v\|_{L^2(\frac{1}{\sigma_\omega})}^2$. Indeed, we have
$$\langle\mathcal{P}_{\tilde{\Phi}_\omega} v, \mathcal{P}_{\tilde{\Phi}_\omega}^\perp v\rangle_{L^2(\frac{1}{\sigma_\omega})}=\big\langle(1-\sigma_\omega^2)\mathcal{P}_{\tilde{\Phi}_\omega} v, \mathcal{P}_{\tilde{\Phi}_\omega}^\perp v\big\rangle_{L^2(\frac{1}{\sigma_\omega})}=-\omega^{-\frac{1}{2}}\big\langle s(1+\sigma_\omega)\mathcal{P}_{\tilde{\Phi}_\omega} v, \mathcal{P}_{\tilde{\Phi}_\omega}^\perp v\big\rangle_{L^2(\frac{1}{\sigma_\omega})},$$
because $\langle \mathcal{P}_{\tilde{\Phi}_\omega} v, \mathcal{P}_{\tilde{\Phi}_\omega}^\perp v\rangle_{L^2(\sigma_\omega)}=0$ by orthogonality and $\sigma_\omega-1=\frac{s}{\sqrt{\omega}}$. Hence, it follows that
$$\big|\langle\mathcal{P}_{\tilde{\Phi}_\omega} v, \mathcal{P}_{\tilde{\Phi}_\omega}^\perp v\rangle_{L^2(\frac{1}{\sigma_\omega})}\big|\leq \omega^{-\frac{1}{2}}\|s(1+\sigma_\omega)\mathcal{P}_{\tilde{\Phi}_\omega} v\|_{L^2(\frac{1}{\sigma_\omega})}\Big\{\|v\|_{L^2(\frac{1}{\sigma_\omega})}+\|\mathcal{P}_{\tilde{\Phi}_\omega} v\|_{L^2(\frac{1}{\sigma_\omega})}\Big\}.$$
Note that by the fast decay of $\phi_\omega$ (see Proposition \ref{1d eigenstate}) and the $v_\parallel$-bound (Lemma \ref{mno1}),
$$\begin{aligned}
\|s(1+\sigma_\omega)\mathcal{P}_{\tilde{\Phi}_\omega} v\|_{L^2(\frac{1}{\sigma_\omega})}&=\frac{\|s\tfrac{1+\sigma_\omega}{\sigma_\omega}\chi_\omega \Phi_\omega\|_{L^2(\sigma_\omega dsdz)}\|v_\parallel\|_{L^2(\mathbb{S}^1)}}{{\|\chi_\omega \Phi_{\omega} \|_{L^2(\sigma_\omega dsdz)}}}\lesssim \|v_\parallel\|_{L^2(\mathbb{S}^1)}\lesssim \|v\|_{L^2(\frac{1}{\sigma_\omega})},\\
\|\mathcal{P}_{\tilde{\Phi}_\omega} v\|_{L^2(\frac{1}{\sigma_\omega})}&=\frac{\|\tfrac{\chi_\omega}{\sigma_\omega} \Phi_\omega\|_{L^2(\sigma_\omega dsdz)}\|v_\parallel\|_{L^2(\mathbb{S}^1)}}{{\|\chi_\omega \Phi_{\omega} \|_{L^2(\sigma_\omega dsdz)}}}\lesssim \|v_\parallel\|_{L^2(\mathbb{S}^1)}\lesssim \|v\|_{L^2(\frac{1}{\sigma_\omega})}.
\end{aligned}$$
Therefore, \eqref{asymptotic Pythagorean identity 3} follows.
\end{proof}

\section{Refined Gagliardo-Nirenberg inequality}\label{subsec: modified GN inequality}

For the modified problem in Section \ref{sec: reformulation}, we translate basic inequalities in the new $(s,z,\theta)$-coordinates via the relation
$$v(s,z,\theta)=\omega^{\frac{1}{4}}u(s+\sqrt{\omega},z, \theta)$$
with $\|u\|_{L^p(\mathbb{R}^3)}=\omega^{-\frac{1}{4}+\frac{1}{2p}}\|v\|_{L^p(\sigma_\omega)}$ and $\|\nabla u\|_{L^2(\mathbb{R}^3)}^2=\|\nabla_{(s,z)}v\|_{L^2(\sigma_\omega)}^2+\frac{1}{\omega}\|\partial_\theta v\|_{L^2(\frac{1}{\sigma_\omega})}^2$. For instance, the Sobolev inequality $\|u\|_{L^6(\mathbb{R}^3)}\lesssim \|\nabla u\|_{L^2(\mathbb{R}^3)}$ and the Gagliardo-Nirenberg inequality $\|u\|_{L^4(\mathbb{R}^3)}^4\lesssim \|u\|_{L^2(\mathbb{R}^3)}\|\nabla u\|_{L^2(\mathbb{R}^3)}^3$ are restated respectively as
\begin{equation}\label{translated Sobolev}
\|v\|_{L^6(\sigma_\omega)}\lesssim \omega^{\frac{1}{6}}\Big\{\|\nabla_{(s,z)}v\|_{L^2(\sigma_\omega)}^2+\tfrac{1}{\omega}\|\partial_\theta v\|_{L^2(\frac{1}{\sigma_\omega})}^2\Big\}^{\frac{1}{2}}
\end{equation}
and
\begin{equation}\label{translated GN}
\|v\|_{L^4(\sigma_\omega)}^4\lesssim\sqrt{\omega}\|v\|_{L^2(\sigma_\omega)}\Big\{\|\nabla_{(s,z)}v\|_{L^2(\sigma_\omega)}^2+\tfrac{1}{\omega}\|\partial_\theta v\|_{L^2(\frac{1}{\sigma_\omega})}^2\Big\}^{\frac{3}{2}}.
\end{equation}
However, these inequalities are not good enough to capture the subcritical nature of the problem under the additional constraint $\|v\|_{{\dot{\Sigma}}_{\omega;(s,z)}}^2\leq\delta\omega$.

Our key analysis tool is the following refined version of the inequality \eqref{translated GN}.
\begin{proposition}[Refined Gagliardo-Nirenberg inequality]\label{GN}
\begin{equation}\label{eq: GN}
\|v\|_{L^4(\sigma_\omega)}^4\lesssim\sqrt{\omega}\|v\|_{L^2(\sigma_\omega)}\|\nabla_{(s,z)}v\|_{L^2(\sigma_\omega)}^2\Big\{\|\nabla_{(s,z)}v\|_{L^2(\sigma_\omega)}^2+\tfrac{1}{\omega}\|\partial_\theta v\|_{L^2(\frac{1}{\sigma_\omega})}^2\Big\}^{\frac{1}{2}}.
\end{equation}
\end{proposition}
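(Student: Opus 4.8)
The plan is to reduce \eqref{eq: GN} to a single anisotropic Gagliardo--Nirenberg inequality and then exploit the product structure of the cylindrical coordinates. Working directly with $v$, I write
\[
\|v\|_{L^4(\sigma_\omega)}^4=\int_{\mathbb{S}^1}\Big(\int_{-\infty}^\infty\int_{-\sqrt\omega}^\infty|v(s,z,\theta)|^4\sigma_\omega\,ds\,dz\Big)\,d\theta ,
\]
and the guiding heuristic is that the azimuthal direction should be the \emph{cheap} one: of the three half-derivatives needed for an $L^4$ bound in three dimensions, two should be spent on the confined variables $(s,z)$ and only one on $\theta$, which is exactly what produces the reduction of the angular energy to the power $\tfrac12$ inside the bracket of \eqref{eq: GN}. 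Equivalently, via \eqref{u-v relation} the inequality is the clean statement $\|u\|_{L^4(\mathbb{R}^3)}^4\lesssim\|u\|_{L^2}\|\nabla_{(r,z)}u\|_{L^2}^2\|\nabla u\|_{L^2}$ on $\mathbb{R}^3$, with $\nabla_{(r,z)}u=(\partial_r u,\partial_z u)$ the non-azimuthal part of the gradient.

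In a flat waveguide (no weight) such an inequality follows cleanly by combining a genuine two-dimensional Gagliardo--Nirenberg inequality in the confined variables, applied slicewise in $\theta$, with a one-dimensional Sobolev embedding in $\theta$ for the mass density $\theta\mapsto\int\!\!\int|v(\cdot,\cdot,\theta)|^2\sigma_\omega$. The obstruction here is the weight $\sigma_\omega(s)=1+s/\sqrt\omega$: with the measure $\sigma_\omega\,ds\,dz$ the $(s,z)$-cross section is isometric to a radially symmetric function on $\mathbb{R}^3$, so it behaves \emph{three}-dimensionally and the planar inequality $\|f\|_{L^4}^4\lesssim\|f\|_{L^2}^2\|\nabla f\|_{L^2}^2$ simply fails. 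This is the crux of the matter, and it is why the unrefined bound \eqref{translated GN} cannot be improved by the naive slicing argument.

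Accordingly I would first apply the one-dimensional Gagliardo--Nirenberg inequality on $\mathbb{S}^1$ in $\theta$, namely $\int_{\mathbb{S}^1}|v|^4\,d\theta\lesssim A^2+A^{3/2}B^{1/2}$ with $A(s,z)=\int_{\mathbb{S}^1}|v|^2\,d\theta$ and $B(s,z)=\int_{\mathbb{S}^1}|\partial_\theta v|^2\,d\theta$, and integrate against $\sigma_\omega\,ds\,dz$, splitting the problem into a diagonal term $T_1=\int\!\!\int A^2\sigma_\omega\,ds\,dz$ and a cross term $T_2=\int\!\!\int A^{3/2}B^{1/2}\sigma_\omega\,ds\,dz$. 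For $T_1$, set $w=A^{1/2}=\|v(s,\cdot,z)\|_{L^2(\mathbb{S}^1)}$; since $w$ is independent of $\theta$, the inequality \eqref{translated GN} specialised to $\theta$-independent functions gives $T_1=\|w\|_{L^4(\sigma_\omega)}^4\lesssim\sqrt\omega\,\|w\|_{L^2(\sigma_\omega)}\|\nabla_{(s,z)}w\|_{L^2(\sigma_\omega)}^3$, while the diamagnetic inequality $|\nabla_{(s,z)}w|\le\|\nabla_{(s,z)}v(s,\cdot,z)\|_{L^2(\mathbb{S}^1)}$ and $\|w\|_{L^2(\sigma_\omega)}=\|v\|_{L^2(\sigma_\omega)}$ turn this into the $\|\nabla_{(s,z)}v\|_{L^2(\sigma_\omega)}^3$ contribution, which is dominated by the right-hand side of \eqref{eq: GN}.

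The term $T_2$ is the hard part, and here the difficulty is again entirely about the weight. A naive Cauchy--Schwarz decoupling that reconstructs $\tfrac1\omega\|\partial_\theta v\|_{L^2(1/\sigma_\omega)}^2$ from the $B$-factor leaves $\int\!\!\int A^3\sigma_\omega^3$, whose excess power $\sigma_\omega^2$ is genuinely \emph{not} controlled by the right-hand side; so the confined concentration carried by $A$ and the angular oscillation carried by $B$ must be kept coupled. My plan is to control $A$ pointwise in the radial variable through the radial (Strauss-type) inequality \eqref{radial GN inequ}, which supplies the gain $\sigma_\omega(s)\,A(s,z)\lesssim\|v(\cdot,z,\cdot)\|\,\|\partial_s v(\cdot,z,\cdot)\|$ and thereby trades each dangerous power of $\sigma_\omega$ for the radial decay $\sigma_\omega^{-1}$; combining this with a one-dimensional Sobolev embedding in $z$ and a weighted H\"older inequality that pairs $B$ against the measure $\sigma_\omega^{-1}\,ds\,dz$ should absorb $T_2$ into $\sqrt\omega\,\|v\|_{L^2(\sigma_\omega)}\|\nabla_{(s,z)}v\|_{L^2(\sigma_\omega)}^2\big\{\|\nabla_{(s,z)}v\|_{L^2(\sigma_\omega)}^2+\tfrac1\omega\|\partial_\theta v\|_{L^2(1/\sigma_\omega)}^2\big\}^{1/2}$. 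I expect the precise weight bookkeeping in $T_2$---ensuring that the $\sigma_\omega$-growth from the cylindrical Jacobian is exactly cancelled by the radial decay while the $\tfrac12$-power on the angular energy is preserved---to be the main technical obstacle.
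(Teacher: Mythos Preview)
Your decomposition into $T_1$ and $T_2$ via the one-dimensional Gagliardo--Nirenberg inequality on $\mathbb{S}^1$ is exactly the paper's split into $II$ and $I$, and your treatment of $T_1$ matches the paper's treatment of $II$ (the paper uses Minkowski in place of the diamagnetic inequality, but the effect is the same).

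The misstep is in $T_2$. You assert that the naive Cauchy--Schwarz decoupling leaves $\int\!\!\int A^{3}\sigma_\omega^{3}\,ds\,dz$ and that this quantity ``is genuinely not controlled by the right-hand side''; you then propose to keep $A$ and $B$ coupled. In fact the decoupled quantity \emph{is} controlled, and the paper's proof of the $I$-term is precisely this ``naive'' route. After Cauchy--Schwarz,
\[
T_2\le\Big(\int\!\!\int A^{3}\sigma_\omega^{3}\,ds\,dz\Big)^{1/2}\|\partial_\theta v\|_{L^2(\frac{1}{\sigma_\omega})},
\]
and by Minkowski the first factor is bounded by $\big\|\,\|\sigma_\omega^{1/3}v\|_{L^6(\sigma_\omega\,ds\,dz)}\big\|_{L^2(\mathbb{S}^1)}^{3}$. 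For each fixed $\theta$ one writes $|v|^{6}\sigma_\omega^{3}=(\sigma_\omega|v|^{2})^{2}\,|v|^{2}\sigma_\omega$ and applies the radial Strauss inequality \eqref{radial GN inequ} in $s$ (squared, which kills \emph{both} excess powers of $\sigma_\omega$) followed by the one-dimensional Gagliardo--Nirenberg inequality in $z$; this yields
\[
\|\sigma_\omega^{1/3}v\|_{L^6(\sigma_\omega\,ds\,dz)}^{6}\lesssim\|v\|_{L^2(\sigma_\omega\,ds\,dz)}^{2}\|\nabla_{(s,z)}v\|_{L^2(\sigma_\omega\,ds\,dz)}^{4},
\]
and hence $T_2\lesssim\|v\|_{L^2(\sigma_\omega)}\|\nabla_{(s,z)}v\|_{L^2(\sigma_\omega)}^{2}\|\partial_\theta v\|_{L^2(\frac{1}{\sigma_\omega})}$, which is exactly the cross term in \eqref{eq: GN}. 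So the ``weight bookkeeping'' you flagged as the main obstacle is already resolved by the very tools you listed (radial Strauss in $s$, one-dimensional Sobolev in $z$, weighted H\"older for $B$); no coupled estimate is needed.
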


For the proof, we recall the following standard inequalities.
\begin{lemma}
\begin{enumerate}[$(i)$]
\item (Gagliardo-Nirenberg inequality on $\mathbb{R}^d$) If $\frac{1}{q}=\frac{1}{2}-\frac{\beta}{d}$, $q>2$ and $0<\beta<1$, then
\begin{equation}\label{standard GN ineq}
\|u\|_{L^q(\mathbb{R}^d)}\lesssim\|u\|_{L^2(\mathbb{R}^d)}^{1-\beta}\|\nabla u\|_{L^2(\mathbb{R}^d)}^{\beta}.
\end{equation}
\item (Gagliardo-Nirenberg inequality on the unit circle $\mathbb{S}^1$ \cite{BO, GLT}) If $q>2$, then
\begin{equation}\label{circle GN ineq}
\|u\|_{L^q(\mathbb{S}^1)}\lesssim \|u\|_{L^2(\mathbb{S}^1)}^{\frac{q+2}{2q}}\|u\|_{H^1(\mathbb{S}^1)}^\frac{q-2}{2q}.
\end{equation}
\item (Radial Gagliardo-Nirenberg inequality\footnote{It simply follows from the well-known inequality $\underset{y\in\mathbb{R}^2\setminus\{0\}}\sup|y| |u(y)|^2\lesssim \|u\|_{L^2(\mathbb{R}^2)}\|\nabla_y u\|_{L^2(\mathbb{R}^2)}$ for radial functions, via $u(|y|)=\omega^{-\frac{1}{4}}v(|y|-\sqrt{\omega})$ (see \cite{CO} for instance).})
\begin{equation}\label{radial GN inequ}
\sup_{s>-\sqrt{\omega}}\sigma_\omega(s) |v(s)|^2\lesssim\|v\|_{L^2(\sigma_\omega ds)}\|\partial_sv\|_{L^2(\sigma_\omega ds)}.
\end{equation}
\end{enumerate}
\end{lemma}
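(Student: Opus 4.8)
The plan is to dispatch the three estimates by reducing (i) and (ii) to well-known embeddings and by recording carefully the change of variables that makes (iii) hold with a constant uniform in $\omega$; none of the three is genuinely hard, so the proposal is mainly about organizing standard facts.

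For (i), I would first note that the hypotheses $q>2$ and $\frac1q=\frac12-\frac\beta d$ force $0<\beta<\frac d2$, so the homogeneous fractional Sobolev embedding $\dot H^\beta(\mathbb{R}^d)\hookrightarrow L^q(\mathbb{R}^d)$ applies and gives $\|u\|_{L^q(\mathbb{R}^d)}\lesssim\|u\|_{\dot H^\beta(\mathbb{R}^d)}$. It then remains to interpolate the homogeneous norm: writing $\|u\|_{\dot H^\beta}^2=\int|\xi|^{2\beta}|\hat u|^2\,d\xi$ and applying H\"older in $\xi$ with exponents $\frac1{1-\beta}$ and $\frac1\beta$ to the factorization $|\hat u|^{2}=|\hat u|^{2(1-\beta)}\big(|\xi|^2|\hat u|^2\big)^{\beta}$ yields, by Plancherel, $\|u\|_{\dot H^\beta}\le\|u\|_{L^2}^{1-\beta}\|\nabla u\|_{L^2}^{\beta}$. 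Combining the two bounds gives \eqref{standard GN ineq}. For (ii), the quickest route is the sharp one-dimensional endpoint bound on the circle, $\|u\|_{L^\infty(\mathbb{S}^1)}^2\lesssim\|u\|_{L^2(\mathbb{S}^1)}\|u\|_{H^1(\mathbb{S}^1)}$, obtained by choosing $\theta_0$ with $|u(\theta_0)|^2\le\frac1{2\pi}\|u\|_{L^2(\mathbb{S}^1)}^2$ and integrating $\partial_\theta|u|^2$ from $\theta_0$. Interpolating via $\|u\|_{L^q}^q=\int|u|^{q-2}|u|^2\le\|u\|_{L^\infty}^{q-2}\|u\|_{L^2}^2$ and inserting the endpoint bound produces exactly the exponents in \eqref{circle GN ineq}; this is the inequality recorded in \cite{BO,GLT}, to which one may simply refer.

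The one estimate deserving a few explicit lines is (iii), since the content is that the implicit constant does not depend on $\omega$. Here I would start from the radial endpoint inequality on $\mathbb{R}^2$, namely $\sup_{r>0}r|u(r)|^2\lesssim\|u\|_{L^2(\mathbb{R}^2)}\|\nabla u\|_{L^2(\mathbb{R}^2)}$, which follows from $|u(r)|^2=-2\int_r^\infty u\,u'\,d\rho$ together with $r\le\rho$ and Cauchy--Schwarz against the radial measure $\rho\,d\rho$. Setting $u(|y|)=\omega^{-1/4}v(|y|-\sqrt\omega)$ with $r=|y|=s+\sqrt\omega=\sqrt\omega\,\sigma_\omega(s)$, one checks that $r\,dr=\sqrt\omega\,\sigma_\omega(s)\,ds$ and $|\nabla_y u|=\omega^{-1/4}|v'(s)|$, so all powers of $\omega$ cancel exactly: $r|u(r)|^2=\sigma_\omega(s)|v(s)|^2$, while $\|u\|_{L^2(\mathbb{R}^2)}^2\sim\|v\|_{L^2(\sigma_\omega ds)}^2$ and $\|\nabla u\|_{L^2(\mathbb{R}^2)}^2\sim\|\partial_s v\|_{L^2(\sigma_\omega ds)}^2$. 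Substituting these identities into the $\mathbb{R}^2$ inequality yields \eqref{radial GN inequ} with an $\omega$-uniform constant. A mild point to verify is that $v$ lives on $[-\sqrt\omega,\infty)$ rather than on all of $\mathbb{R}$, but this is harmless: the map $r=s+\sqrt\omega$ sends $(-\sqrt\omega,\infty)$ onto $(0,\infty)$, so the half-line domain is precisely the radial $\mathbb{R}^2$ domain and no boundary contribution is lost.

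In short, the only genuine bookkeeping occurs in (iii), where one must confirm that the weight $\sigma_\omega(s)=1+\tfrac{s}{\sqrt\omega}$ produced by the Jacobian $r\,dr$ matches the weighted norms on both sides and that the $\omega$-powers cancel; this cancellation is exactly what keeps the constant bounded as $\omega\to\infty$, which is the feature the later nonlinear analysis relies on.
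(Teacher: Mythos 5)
Your proposal is correct, and on the only point the paper actually argues---part (iii)---it follows the paper's footnote exactly: reduce to the radial Strauss-type bound $\sup_{r>0} r|u(r)|^2\lesssim \|u\|_{L^2(\mathbb{R}^2)}\|\nabla u\|_{L^2(\mathbb{R}^2)}$ via the substitution $u(|y|)=\omega^{-1/4}v(|y|-\sqrt{\omega})$, where the Jacobian $r\,dr=\sqrt{\omega}\,\sigma_\omega(s)\,ds$ makes all powers of $\omega$ cancel and yields the $\omega$-uniform constant. Parts (i) and (ii) are merely cited in the paper (the latter to \cite{BO, GLT}), and your Fourier-side H\"older interpolation for (i) and pigeonhole-plus-fundamental-theorem argument for (ii) are correct standard proofs of those cited facts.
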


\begin{proof}[Proof of Proposition \ref{GN}]
First, we apply the inequality \eqref{circle GN ineq} on $\mathbb{S}^1$,
$$\begin{aligned}
\|v\|_{L^4(\sigma_\omega)}^4&=\big\|\|v\|_{L^4(\mathbb{S}^1)}^4\big\|_{L^1(\sigma_\omega dsdz)}\\
&\lesssim \big\|\|v\|_{L^2(\mathbb{S}^1)}^3\|\partial_\theta v\|_{L^2(\mathbb{S}^1)}\big\|_{L^1(\sigma_\omega dsdz)}+\big\|\|v\|_{L^2(\mathbb{S}^1)}^4\big\|_{L^1(\sigma_\omega dsdz)}\\
&=:I+II.
\end{aligned}$$
For $I$, by the H\"older and the Minkowski inequalities, we have
$$\begin{aligned}
I&\leq \big\| \|\sigma_\omega^{\frac{1}{3}}v\|_{L^2(\mathbb{S}^1)}\big\|_{L^{6}(\sigma_\omega dsdz)}^3\big\|\|\tfrac{1}{\sigma_\omega}\partial_\theta v\|_{L^2(\mathbb{S}^1)}\big\|_{L^2(\sigma_\omega dsdz)}\\
&\leq\big\| \|\sigma_\omega^{\frac{1}{3}}v\|_{L^{6}(\sigma_\omega dsdz)}\big\|_{L^2(\mathbb{S}^1)}^3\|\partial_\theta v\|_{L^2(\frac{1}{\sigma_\omega})}.
\end{aligned}$$
Then, it follows from the radial inequality \eqref{radial GN inequ} and the inequality \eqref{standard GN ineq} with $d=1$ for the $z$-variable that 
$$\begin{aligned}
\|\sigma_\omega^{\frac{1}{3}}v\|_{L^{6}(\sigma_\omega dsdz)}^6&=\int_{-\infty}^\infty\int_{-\sqrt{\omega}}^\infty (\sigma_\omega|v |^2)^2  |v |^2 \sigma_\omega dsdz\lesssim\int_{-\infty}^\infty\|v\|_{L^2(\sigma_\omega ds)}^4\|\partial_sv\|_{L^2(\sigma_\omega ds)}^2dz\\
&\leq \left\|\|v\|_{L^\infty(\mathbb{R},dz)}\right\|_{L^2(\sigma_\omega ds)}^4\int_{-\infty}^\infty\|\partial_sv\|_{L^2(\sigma_\omega ds)}^2dz\\
&\lesssim\Big\|\|v\|_{L_z^2(\mathbb{R}, dz)}^{\frac{1}{2}}\|\partial_zv\|_{L_z^2(\mathbb{R}, dz)}^{\frac{1}{2}}\Big\|_{L^2(\sigma_\omega ds)}^4\|\partial_sv\|_{L^2(\sigma_\omega dsdz)}^2\\
&\leq\|v\|_{L^2(\sigma_\omega dsdz)}^2\|\nabla_{(s,z)}v\|_{L^2(\sigma_\omega dsdz)}^4.
\end{aligned}$$
Consequently, by the H\"older inequality in $\|\cdot\|_{L^2(\mathbb{S}^1)}$, we obtain
$$\big\|\|v\|_{L^{6}(\sigma_\omega^3dsdz)}\big\|_{L^2(\mathbb{S}^1)}\lesssim \Big\| \|v\|_{L^2(\sigma_\omega dsdz)}^\frac{1}{3}\|\nabla_{(s,z)}v\|_{L^2(\sigma_\omega dsdz)}^\frac{2}{3}\Big\|_{L^2(\mathbb{S}^1)}\le  \|v\|_{L^2(\sigma_\omega)}^\frac{1}{3}\|\nabla_{(s,z)}v\|_{L^2(\sigma_\omega)}^\frac{2}{3}.$$ 
Hence, we conclude that $I\lesssim\|v\|_{L^2(\sigma_\omega)}\|\nabla_{(s,z)}v\|_{L^2(\sigma_\omega)}^2\|\partial_\theta v\|_{L^2(\frac{1}{\sigma_\omega})}$. On the other hand, for $II$, applying a Gagliardo-Nirenberg type inequality\footnote{For $u=u(|y|, z)$, the inequality $\|u\|_{L^4(\mathbb{R}^3)}^4\lesssim \|u\|_{L^2(\mathbb{R}^3)}\|\nabla u\|_{L^2(\mathbb{R}^3)}^3$ can be written as 
$$\int_{-\infty}^\infty\int_0^\infty |u(r,z)|^4 rdrdz\lesssim \left\{\int_{-\infty}^\infty\int_0^\infty |u(r,z)|^2 rdrdz\right\}^{\frac12}\left\{\int_{-\infty}^\infty\int_0^\infty |\nabla_{(r,z)}u(r,z)|^2 rdrdz\right\}^{\frac32}.$$
Then, inserting $u(r,z)=\omega^{-\frac{1}{4}}v(r-\sqrt{\omega},z)$, we obtain the desired inequality.} 
$$\|v\|_{L^4(\sigma_\omega dsdz)}^4\lesssim \sqrt{\omega}\|v\|_{L^2(\sigma_\omega dsdz)}\|\nabla_{(s,z)}v\|_{L^2(\sigma_\omega dsdz)}^3,$$
we prove that 
$$\begin{aligned}
II&=\left\|\|v\|_{L^2(\mathbb{S}^1)}\right\|_{L^4(\sigma_\omega dsdz)}^4\leq \left\|\|v\|_{L^4(\sigma_\omega dsdz)}\right\|_{L^2(\mathbb{S}^1)}^4\\
&\lesssim\sqrt{\omega}\Big\|\|v\|_{L^2(\sigma_\omega dsdz)}^\frac14\|\nabla_{(s,z)}v\|_{L^2(\sigma_\omega dsdz)}^\frac{3}{4}\Big\|_{L^2(\mathbb{S}^1)}^4\\
&\le\sqrt{\omega}\|v\|_{L^2(\sigma_\omega)} \|\nabla_{(s,z)}v\|_{L^2(\sigma_\omega)}^3.
\end{aligned}$$
Therefore, combining the bounds for $I$ and $II$, we complete the proof.
\end{proof}

In our applications, we modify the inequality expressing in terms of the quantities in the modified mass and energy (see \eqref{modified mass'} and \eqref{modified energy'}) but we also emphasize uniformity (in $\omega$) for the implicit constant in the statement.

\begin{corollary}\label{GN1}
There exists $C_{GN}>0$, independent of large $\omega\geq 1$, such that 
$$\|v\|_{L^4(\sigma_\omega)}^4\leq C_{GN}\|v\|_{L^2(\sigma_\omega)}\Big\{\sqrt{\omega}\|v\|_{\dot{\Sigma}_{\omega;(s,z)}}^2\|v\|_{\dot{\Sigma}_{\omega}}+\|v \|_{L^2(\sigma_\omega)}^2\|\partial_\theta v\|_{L^2(\frac{1}{\sigma_\omega})}+\|v \|_{L^2(\sigma_\omega)}^3\Big\},$$
where $\|v\|_{\dot{\Sigma}_{\omega}}^2=\|v\|_{\dot{\Sigma}_{\omega;(s,z)}}^2+\frac{1}{\omega}\|\partial_\theta v\|_{L^2(\frac{1}{\sigma_\omega})}^2$.
\end{corollary}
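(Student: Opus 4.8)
The plan is to avoid substituting the crude bound $\|\nabla_{(s,z)}v\|_{L^2(\sigma_\omega)}^2\lesssim\|v\|_{\dot\Sigma_{\omega;(s,z)}}^2+\|v\|_{L^2(\sigma_\omega)}^2$ directly into Proposition \ref{GN}: doing so would create a term of size $\sqrt\omega\,\|v\|_{L^2(\sigma_\omega)}^4$, a factor $\sqrt\omega$ larger than the last term allowed in the statement. The obstruction is the ground-state direction, along which the $(s,z)$-gradient carries the full order-one energy $\Lambda_\omega$ but none of the excess energy measured by $\|v\|_{\dot\Sigma_{\omega;(s,z)}}$. I would therefore split off this direction using the truncated projection of Section \ref{subsec: truncated projection}. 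Writing $v=\mathcal{P}_{\tilde\Phi_\omega}v+\mathcal{P}_{\tilde\Phi_\omega}^\perp v=v_\parallel\tilde\Phi_\omega+v_\perp$ and using $|a+b|^4\le 8(|a|^4+|b|^4)$, it suffices to bound $\|v_\parallel\tilde\Phi_\omega\|_{L^4(\sigma_\omega)}^4$ and $\|v_\perp\|_{L^4(\sigma_\omega)}^4$ separately.

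For the parallel (frozen-profile) piece I would factor $\|v_\parallel\tilde\Phi_\omega\|_{L^4(\sigma_\omega)}^4=\|v_\parallel\|_{L^4(\mathbb{S}^1)}^4\,\|\tilde\Phi_\omega\|_{L^4(\sigma_\omega dsdz)}^4$, bound $\|\tilde\Phi_\omega\|_{L^4(\sigma_\omega dsdz)}\lesssim 1$ uniformly in $\omega$ from the Gaussian decay of $\phi_\omega$ (Proposition \ref{1d eigenstate} $(ii)$), and apply the circle inequality \eqref{circle GN ineq} with $q=4$ to get $\|v_\parallel\|_{L^4(\mathbb{S}^1)}^4\lesssim\|v_\parallel\|_{L^2(\mathbb{S}^1)}^4+\|v_\parallel\|_{L^2(\mathbb{S}^1)}^3\|\partial_\theta v_\parallel\|_{L^2(\mathbb{S}^1)}$. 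Since $\partial_\theta v_\parallel=(\partial_\theta v)_\parallel$, the two estimates of Lemma \ref{mno1} yield $\|v_\parallel\|_{L^2(\mathbb{S}^1)}\lesssim\|v\|_{L^2(\sigma_\omega)}$ and $\|\partial_\theta v_\parallel\|_{L^2(\mathbb{S}^1)}\lesssim\|\partial_\theta v\|_{L^2(\frac{1}{\sigma_\omega})}$. This produces $\|v\|_{L^2(\sigma_\omega)}^3\|\partial_\theta v\|_{L^2(\frac{1}{\sigma_\omega})}+\|v\|_{L^2(\sigma_\omega)}^4$, precisely the last two terms of the stated bound, with no power of $\omega$ lost because the transverse profile $\tilde\Phi_\omega$ is frozen.

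For the perpendicular (high-energy) piece I would apply Proposition \ref{GN} to $v_\perp$. In the leading $L^2$ factor I keep $\|v_\perp\|_{L^2(\sigma_\omega)}\le\|v\|_{L^2(\sigma_\omega)}$ by contractivity of the projection; the two gradient factors are replaced, via Lemma \ref{orthogonal complement bound} and the spectral gap, by $\|\nabla_{(s,z)}v_\perp\|_{L^2(\sigma_\omega)}\lesssim\|v\|_{\dot\Sigma_{\omega;(s,z)}}+e^{-c\omega}\|v\|_{L^2(\sigma_\omega)}$; and since $\mathcal{P}_{\tilde\Phi_\omega}^\perp$ commutes with $\partial_\theta$, Lemma \ref{asymptotic orthogonality} applied to $\partial_\theta v$ gives $\frac{1}{\omega}\|\partial_\theta v_\perp\|_{L^2(\frac{1}{\sigma_\omega})}^2\lesssim\frac{1}{\omega}\|\partial_\theta v\|_{L^2(\frac{1}{\sigma_\omega})}^2$. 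This recovers the leading term $\sqrt\omega\,\|v\|_{L^2(\sigma_\omega)}\|v\|_{\dot\Sigma_{\omega;(s,z)}}^2\|v\|_{\dot\Sigma_\omega}$ together with a handful of correction monomials each carrying a factor $e^{-kc\omega}$ with $k\in\{1,2,3\}$.

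The main obstacle will be absorbing these exponentially small corrections into the three stated terms with a constant uniform in $\omega$ and valid for all $v\in\Sigma_\omega$, since no energy constraint is assumed here. I would do this by weighted Young inequalities: each correction is a monomial in $\|v\|_{L^2(\sigma_\omega)}$, $\|v\|_{\dot\Sigma_{\omega;(s,z)}}$ and $\|\partial_\theta v\|_{L^2(\frac{1}{\sigma_\omega})}$ that interpolates between the lower bound $\sqrt\omega\,\|v\|_{L^2(\sigma_\omega)}\|v\|_{\dot\Sigma_{\omega;(s,z)}}^3$ of the first stated term and the last term $\|v\|_{L^2(\sigma_\omega)}^4$, with the terms involving $\partial_\theta$ falling under the middle term $\|v\|_{L^2(\sigma_\omega)}^3\|\partial_\theta v\|_{L^2(\frac{1}{\sigma_\omega})}$. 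Each such interpolation costs at most a fixed positive power of $\omega$, so the surviving prefactor is $\omega^{O(1)}e^{-kc\omega}$, which is $\le 1$ once $\omega$ is large. Summing the parallel and perpendicular estimates then yields the inequality with a uniform constant $C_{GN}$.
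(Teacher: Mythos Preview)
Your proposal is correct and follows essentially the same route as the paper: split $v=\mathcal{P}_{\tilde\Phi_\omega}v+\mathcal{P}_{\tilde\Phi_\omega}^\perp v$, handle the parallel piece via the circle Gagliardo--Nirenberg inequality \eqref{circle GN ineq} together with Lemma \ref{mno1}, and handle the perpendicular piece by feeding Proposition \ref{GN} with Lemma \ref{orthogonal complement bound} and the asymptotic Pythagorean identity \eqref{asymptotic Pythagorean identity 3}. The paper's writeup absorbs the $e^{-c\omega}$ cross-terms implicitly (``it follows from\ldots''), whereas you spell out that each correction monomial interpolates between the stated terms and carries only a polynomial-in-$\omega$ cost against the exponential decay; that extra care is warranted and fills in what the paper leaves to the reader.
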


\begin{proof}
We decompose $v=\mathcal{P}_{\tilde{\Phi}_\omega} v+\mathcal{P}_{\tilde{\Phi}_\omega}^\perp v$, where $\mathcal{P}_{\tilde{\Phi}_\omega}$ and $\mathcal{P}_{\tilde{\Phi}_\omega}^\perp$ are the modified projections in Section \ref{subsec: truncated projection}. For the orthogonal complement $\tilde{v}_\omega^\perp=\mathcal{P}_{\tilde{\Phi}_\omega}^\perp v$, we apply the refined Gagliardo-Nirenberg inequality (Proposition \ref{GN}), the  orthogonality \eqref{oth2l} and the asymptotic Pythagorean identities \eqref{asymptotic Pythagorean identity 3} to obtain
$$\begin{aligned}
\|\tilde{v}_\omega^\perp\|_{L^4(\sigma_\omega)}^4&\lesssim \sqrt{\omega}\|\tilde{v}_\omega^\perp\|_{L^2(\sigma_\omega)}\|\nabla_{(s,z)}\tilde{v}_\omega^\perp\|_{L^2(\sigma_\omega)}^2\Big\{\|\nabla_{(s,z)}\tilde{v}_\omega^\perp\|_{L^2(\sigma_\omega)}^2+\tfrac{1}{\omega}\|\partial_\theta \tilde{v}_\omega^\perp\|_{L^2(\frac{1}{\sigma_\omega})}^2\Big\}^{\frac{1}{2}}\\
&\lesssim \sqrt{\omega}\|v\|_{L^2(\sigma_\omega)}\|\nabla_{(s,z)}\tilde{v}_\omega^\perp\|_{L^2(\sigma_\omega)}^2\Big\{\|\nabla_{(s,z)}\tilde{v}_\omega^\perp\|_{L^2(\sigma_\omega)}^2+\tfrac{1}{\omega}\|\partial_\theta v\|_{L^2(\frac{1}{\sigma_\omega})}^2\Big\}^{\frac{1}{2}}.
\end{aligned}$$
Then, it follows from the bound for $\mathcal{P}_{\tilde{\Phi}_\omega}^\perp$ (Lemma \ref{orthogonal complement bound}) that 
$$\|\tilde{v}_\omega^\perp\|_{L^4(\sigma_\omega)}^4\lesssim\|v\|_{L^2(\sigma_\omega)}\Big\{\sqrt{\omega}\|v\|_{\dot{\Sigma}_{\omega;(s,z)}}^2\|v\|_{\dot{\Sigma}_{\omega}}+\|v \|_{L^2(\sigma_\omega)}^2\|\partial_\theta v\|_{L^2(\frac{1}{\sigma_\omega})}+\|v \|_{L^2(\sigma_\omega)}^3\Big\}.$$
For the $\tilde{\Phi}_\omega$-directional component $\mathcal{P}_{\tilde{\Phi}_\omega} v$, we apply   the Gagliardo-Nirenberg inequality on $\mathbb{S}^1$ \eqref{circle GN ineq}. Then, it follows that 
$$\begin{aligned}
\|\mathcal{P}_{\tilde{\Phi}_\omega} v\|_{L^4(\sigma_\omega)}^4&\sim \|v_\parallel\|_{L^4(\mathbb{S}^1)}^4  \lesssim\|v_\parallel\|_{L^2(\mathbb{S}^1)}^3\|\partial_\theta v_\parallel\|_{L^2(\mathbb{S}^1)}+\|v_\parallel\|_{L^2(\mathbb{S}^1)}^4 \\
&\leq \|v \|_{L^2(\sigma_\omega)}^3\|\partial_\theta v\|_{L^2(\frac{1}{\sigma_\omega})}+\|v \|_{L^2(\sigma_\omega)}^4 ,
\end{aligned}$$
where the $v_\parallel$-bounds (Lemma \ref{mno1}) is used in the last inequality. Therefore, combining the bounds for $\mathcal{P}_{\tilde{\Phi}_\omega} v$ and $\tilde{v}_\omega^\perp$, we prove the desired inequality. 
\end{proof}

As a direct consequence, we show that in the focusing case, a wave function having uniformly bounded energy is forbidden to exist in the region $\frac{K}{\omega}<\|v\|_{\dot{\Sigma}_\omega}^2\leq \delta\omega$. 

\begin{corollary}[Forbidden region]\label{forbidden region}
Let $\kappa=-1$ and $\omega\geq 1$ be sufficiently large. Then, for fixed $m,E>0$, there exist small $\delta=\delta(m,E)>0$ and large $C=C(m)$, $K=K(m,E)>0$, independent of large $\omega\geq1$, such that if $v\in\Sigma_\omega$, $\mathcal{M}_\omega[v]=m$,
$$\mathcal{E}_\omega[v]\leq E\quad\textup{and}\quad\|v\|_{\dot{\Sigma}_{\omega;(s,z)}}^2\leq \delta\omega,$$
then
$$\mathcal{E}_\omega[v]\geq- C\quad\textup{and}\quad \|v\|_{\dot{\Sigma}_\omega}^2=\|v\|_{\dot{\Sigma}_{\omega;(s,z)}}^2+\frac{1}{\omega}\|\partial_\theta v_\omega\|_{L^2(\frac{1}{\sigma_\omega})}^2\leq \frac{K}{\omega}.$$
\end{corollary}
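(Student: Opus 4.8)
The plan is to write the energy as its quadratic part minus the quartic term and then absorb the quartic term by means of the refined Gagliardo--Nirenberg inequality (Corollary \ref{GN1}), using the constraint $\|v\|_{\dot{\Sigma}_{\omega;(s,z)}}^2\le\delta\omega$ to generate a small multiplicative factor. Abbreviate $A:=\|v\|_{\dot{\Sigma}_{\omega;(s,z)}}^2$, $B:=\|\partial_\theta v\|_{L^2(\frac{1}{\sigma_\omega})}^2$ and $X:=\tfrac{\omega}{2}\|v\|_{\dot{\Sigma}_\omega}^2=\tfrac{\omega}{2}A+\tfrac12 B$, so that by \eqref{modified energy'} with $\kappa=-1$,
$$\mathcal{E}_\omega[v]=X-\tfrac14\|v\|_{L^4(\sigma_\omega)}^4.$$
Since $v\in\Sigma_\omega$, each of $A$, $B$, $X$ and $\|v\|_{L^4(\sigma_\omega)}^4$ is finite, so this is a genuine identity and no continuity or bootstrap argument is needed: it suffices to bound $\tfrac14\|v\|_{L^4(\sigma_\omega)}^4$ by a small multiple of $X$ plus an $\omega$-independent constant.

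Applying Corollary \ref{GN1} with $\|v\|_{L^2(\sigma_\omega)}=\sqrt m$ and $\|v\|_{\dot{\Sigma}_\omega}=\sqrt{2X/\omega}$ yields
$$\|v\|_{L^4(\sigma_\omega)}^4\le C_{GN}\sqrt m\Big\{\sqrt\omega\,A\,\sqrt{2X/\omega}+m\sqrt B+m^{3/2}\Big\}.$$
The crucial observation is that the a priori most dangerous first term telescopes: $\sqrt\omega\,\sqrt{2X/\omega}=\sqrt{2X}$, so it equals $A\sqrt{2X}$, and splitting $A=\sqrt A\cdot\sqrt A$ I apply the two bounds $A\le\delta\omega$ and $A\le 2X/\omega$ (the latter from $X\ge\tfrac\omega2 A$) to one factor $\sqrt A$ each, obtaining
$$A\sqrt{2X}\le\sqrt{\delta\omega}\cdot\sqrt{2X/\omega}\cdot\sqrt{2X}=2\sqrt\delta\,X.$$
For the remaining terms I use $B\le 2X$ together with Young's inequality to write $\tfrac{C_{GN}m^{3/2}}{4}\sqrt B\le\tfrac{C_{GN}m^{3/2}}{4}\sqrt{2X}\le\eta X+C(m,\eta)$ for any $\eta>0$, while $\tfrac{C_{GN}m^2}{4}$ coming from the $m^{3/2}$ term is already a constant depending only on $m$.

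Combining these and choosing first $\delta=\delta(m)$ small enough that $\tfrac{C_{GN}\sqrt m}{2}\sqrt\delta\le\tfrac18$ and then $\eta\le\tfrac18$, the total coefficient of $X$ on the right is at most $\tfrac14$, so that $\tfrac14\|v\|_{L^4(\sigma_\omega)}^4\le\tfrac14 X+C$ with $C=C(m)$. Hence $\mathcal{E}_\omega[v]\ge\tfrac34 X-C\ge-C$ (as $X\ge0$), which is the first conclusion and uses only the mass and the constraint, not the energy bound. For the second, the hypothesis $\mathcal{E}_\omega[v]\le E$ then forces $\tfrac34 X\le E+C$, i.e. $X\le\tfrac43(E+C)$; recalling $X=\tfrac\omega2\|v\|_{\dot{\Sigma}_\omega}^2$, this is exactly $\|v\|_{\dot{\Sigma}_\omega}^2\le\tfrac{K}{\omega}$ with $K=\tfrac83(E+C)=K(m,E)$. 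The step to get right is the telescoping estimate on the $\sqrt\omega$-weighted term: it is precisely the reduced degree of the angular factor in \eqref{eq: GN} (a single power of $\|v\|_{\dot{\Sigma}_\omega}$ rather than three) that makes the explicit $\omega$ cancel and lets the constraint parameter $\delta$ supply the absorbing factor $\sqrt\delta$. The unrefined inequality \eqref{translated GN} would leave a term of size $\sqrt\delta\,\omega A\gg X$ and would not close the absorption.
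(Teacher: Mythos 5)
Your proposal is correct and follows essentially the same route as the paper: both apply Corollary \ref{GN1} under the mass constraint, use the hypothesis $\|v\|_{\dot{\Sigma}_{\omega;(s,z)}}^2\le\delta\omega$ on one factor of the $\sqrt{\omega}$-weighted term so that the explicit $\omega$ cancels and a factor $\sqrt{\delta}$ absorbs it into $\frac{\omega}{2}\|v\|_{\dot{\Sigma}_\omega}^2$, and then handle the $\|\partial_\theta v\|_{L^2(\frac{1}{\sigma_\omega})}$ term by Young's (Cauchy--Schwarz) inequality to conclude $\mathcal{E}_\omega[v]\ge \frac{\omega}{4}\|v\|_{\dot{\Sigma}_\omega}^2-C(m)$, from which both conclusions follow. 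Your bookkeeping with $A$, $B$, $X$ and the ``telescoping'' observation is just a more explicit rendering of the paper's absorption step.
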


\begin{proof}
By the assumptions on $v$, the refined Gagliardo-Nirenberg inequality (Corollary \ref{GN1}) yields a lower bound on the energy, 
$$\mathcal{E}_\omega[v]\geq\frac{\omega}{2}\|v\|_{\dot{\Sigma}_{\omega}}^2-\frac{C_{GN}}{4}\sqrt{m}\Big\{\sqrt{\delta}\omega\|v\|_{\dot{\Sigma}_{\omega;(s,z)}}\|v\|_{\dot{\Sigma}_{\omega}}+m\|\partial_\theta v\|_{L^2(\frac{1}{\sigma_\omega})}+m^{\frac{3}{2}}\Big\}.$$
Consequently, it follows from the Cauchy-Schwarz inequality and smallness of $\delta>0$ that
$$E\geq\mathcal{E}_\omega[v]\geq \frac{\omega}{4}\|v\|_{\dot{\Sigma}_{\omega; (s,z)}}^2+\frac{1}{4}\|\partial_\theta v\|_{L^2(\frac{1}{\sigma_\omega})}^2-C(m)=\frac{\omega}{4}\|v\|_{\dot{\Sigma}_\omega}^2-C(m)$$
for some $C(m)$. Thus, we obtain the desired bound on $\|v\|_{\dot{\Sigma}_{\omega}}^2$.
\end{proof}

\begin{remark}\label{remark: refined constraint}
By the relation \eqref{u-v relation}, Corollary \ref{forbidden region} is translated into that if $u\in \Sigma$, $M[u]=m$, $E_\omega[u]\leq E$ and $\langle (H_\omega^\perp-\Lambda_\omega) u, u\rangle_{L^2(\mathbb{R}^3)}\leq \delta\omega$, then $\langle ( H_\omega-\Lambda_\omega) u, u\rangle_{L^2(\mathbb{R}^3)}\leq  \frac{K}{\omega}$.
\end{remark}

\section{Global-in-time dimension reduction for the time-dependent NLS: Proof of Theorem \ref{dimension reduction for tNLS}}\label{global}

This section is devoted to the first part of the paper where the full 3D \textit{time-dependent} NLS is discussed. First of all, using the refined Gagliardo-Nirenberg inequality, we provide a simple criteria for global existence of solutions (Proposition \ref{prop: 3D tNLS} and \ref{global existence; focusing case}). An important remark is that the family of global solutions we consider includes solutions which evolve from a neighborhood of a constrained minimizer (Theorem \ref{thm: existence}). It is necessary to obtain its orbital stability  (Theorem \ref{thm: unique} $(ii)$), because orbital stability a priori-ly requires solutions nearby to exist for all time. Secondly, we establish the dimension reduction for those global solutions in the strong confinement limit, and derive the time-dependent periodic 1D NLS (Theorem \ref{dimension reduction for tNLS}).

\subsection{Well-posedness of the time-dependent 3D NLS, and a criteria for global existence}

We consider the initial-value problem for the 3D NLS \eqref{NLS}, which is in a strong form represented as 
\begin{equation}\label{3D NLS; integral form}
u_\omega(t)=e^{-it\omega(H_\omega-\Lambda_\omega)}u_{\omega,0}-i\kappa\int_0^t e^{-i(t-t_1)(H_\omega-\Lambda_\omega)}(|u_\omega|^2u_\omega)(t_1)dt_1,
\end{equation}
where $H_\omega=-\Delta+U_\omega(|y|-\sqrt{\omega})+z^2$. This equation is locally well-posed in the weighted Sobolev space $\Sigma$ with the norm
$$\|u\|_{\Sigma}:=\left\{\|u\|_{L^2(\mathbb{R}^3)}^2+\|\nabla u\|_{L^2(\mathbb{R}^3)}^2+\|xu\|_{L^2(\mathbb{R}^3)}^2\right\}^{\frac{1}{2}}.$$

\begin{proposition}[Local well-posedness for the time-dependent 3D NLS \eqref{NLS}]\label{prop: 3D tNLS}\  Let $\kappa=\pm1.$
\begin{enumerate}[$(i)$]
\item (Local well-posedness in $\Sigma$) For initial data $u_{\omega,0}\in \Sigma$, there exist $T>0$ and a unique solution $u_\omega(t)\in C([0,T]; \Sigma)\cap L_t^2([0,T]; W^{1,6})$ to the integral equation \eqref{3D NLS; integral form}.
\item (Conservation laws)  $u_\omega(t)$ preserves the mass $M[u_\omega]=\|u\|_{L^2(\mathbb{R}^3)}^2$ and the energy
$$E_\omega[u_\omega]= \frac{\omega}{2}\langle ( H_\omega-\Lambda_\omega) u_\omega, u_\omega\rangle_{L^2(\mathbb{R}^3)}+\frac{\kappa\sqrt{\omega}}{4}\int_{\R^3}|u_\omega|^{4}dx.$$
\item (Blow-up criteria) Let $T_\textup{max}\in (0,\infty]$ be the maximal existence time, i.e., the supremum of $T>0$ in $(i)$. If $T_\textup{max}<\infty$, then
$$\lim_{t\to T_\textup{max}^-}\langle (H_\omega-\Lambda_\omega) u_\omega(t), u_\omega(t)\rangle_{L^2(\mathbb{R}^3)}=\infty.$$
\item (Global existence; defocusing case) If $\kappa=1$, then $T_\textup{max}=\infty$ and 
$$\sup_{t\geq 0}\langle ( H_\omega-\Lambda_\omega) u_\omega (t), u_\omega(t)\rangle_{L^2(\mathbb{R}^3)}\lesssim\frac{1}{\omega}E_\omega[u_{\omega,0}].$$
\end{enumerate}

\end{proposition}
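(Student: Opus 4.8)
The four assertions are not independent: $(i)$ is the substantial one, and $(ii)$–$(iv)$ follow from it by soft arguments. The plan is therefore to first build a local theory in $\Sigma$ by a Strichartz-based contraction, and then read off the conservation laws, the blow-up alternative, and—in the defocusing case—an a priori bound that upgrades local to global existence.

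For $(i)$ I would run a fixed-point argument on the Duhamel formula \eqref{3D NLS; integral form}. Two inputs are needed. First, local-in-time Strichartz estimates for the propagator $e^{-it\omega(H_\omega-\Lambda_\omega)}$: for each fixed $\omega$, the trapping potential $U_\omega(|y|-\sqrt{\omega})+z^2$ is a smooth confining potential of at most quadratic growth, so the associated group satisfies the short-time dispersive estimate $\|e^{-it\omega(H_\omega-\Lambda_\omega)}\|_{L^1\to L^\infty}\lesssim|t|^{-3/2}$ and hence the full scale of Strichartz estimates for $L^2$-admissible pairs on a short interval (in the spirit of the sub-quadratic potential theory of Fujiwara, Yajima and Carles). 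Second, for fixed $\omega$ the weighted space $\Sigma$ is equivalent to the form domain $D(\sqrt{H_\omega})$, so that the linear group and the weighted norm $\|xu\|_{L^2}$ are both controlled through the vector fields $x$ and $\nabla$, whose commutators $[x_j,H_\omega]=2\partial_j$ and $[\partial_j,H_\omega]=(\partial_j V)$ with $H_\omega$ are of lower order. With these, I would contract in $X_T=C([0,T];\Sigma)\cap L^2([0,T];W^{1,6})$, estimating the nonlinearity by H\"older and Sobolev embedding. The decisive structural fact is that the cubic power $|u|^2u$ is $H^1$-subcritical in three dimensions (the energy-critical power being the quintic), so the nonlinear estimates close with a positive power of $T$, producing a unique maximal solution.

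Part $(ii)$ is the standard pair of computations—pairing the equation with $\bar u$ and with $\partial_t\bar u$—carried out for smooth data and extended by the $X_T$-regularity together with a density argument. Part $(iii)$ is the blow-up alternative of the local theory: if $T_\textup{max}<\infty$ then $\|u_\omega(t)\|_\Sigma\to\infty$ as $t\to T_\textup{max}^-$; since $(ii)$ conserves the mass $\|u_\omega\|_{L^2}^2$ and $\|u_\omega\|_\Sigma^2\sim(1+\Lambda_\omega)\|u_\omega\|_{L^2}^2+\langle(H_\omega-\Lambda_\omega)u_\omega,u_\omega\rangle$, the divergence of the full $\Sigma$-norm is equivalent to that of $\langle(H_\omega-\Lambda_\omega)u_\omega,u_\omega\rangle$, which is the claim. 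Part $(iv)$ uses positivity: for $\kappa=1$ both terms of $E_\omega[u_\omega]=\frac{\omega}{2}\langle(H_\omega-\Lambda_\omega)u_\omega,u_\omega\rangle+\frac{\sqrt{\omega}}{4}\|u_\omega\|_{L^4}^4$ are nonnegative, the quadratic one because $\Lambda_\omega$ is the lowest eigenvalue of $H_\omega$, so $H_\omega\geq\Lambda_\omega$. Hence $\frac{\omega}{2}\langle(H_\omega-\Lambda_\omega)u_\omega,u_\omega\rangle\leq E_\omega[u_{\omega,0}]$ by conservation, giving $\langle(H_\omega-\Lambda_\omega)u_\omega,u_\omega\rangle\leq\frac{2}{\omega}E_\omega[u_{\omega,0}]$ uniformly in $t$; by $(iii)$ this excludes blow-up, so $T_\textup{max}=\infty$.

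The real work is entirely in $(i)$, and within it the two delicate points are the Strichartz estimates and the weighted-norm propagation. The dispersive estimate is only \emph{local} in time—an exactly harmonic well refocuses periodically, so one cannot expect global $|t|^{-3/2}$ decay—hence Strichartz must be set up on short intervals and iterated, and some care is required near the $z$-axis, where the non-smoothness of $|y|$ and the curvature terms in $U_\omega(|y|-\sqrt{\omega})$ interact. Controlling $\|xu_\omega\|_{L^2}$ along the flow, i.e. commuting $x$ through the Duhamel term and closing the estimate in $\Sigma$, is the second point; it is handled by the identification $\Sigma\simeq D(\sqrt{H_\omega})$ together with the lower-order commutators noted above. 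Once these are in place, parts $(ii)$–$(iv)$ are routine.
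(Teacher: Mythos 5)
Your proposal is correct and follows essentially the same route as the paper: the paper's own proof consists of invoking a standard contraction mapping argument (citing Cazenave \cite{C}) for $(i)$--$(ii)$, deducing $(iii)$ from the $\Sigma$-norm blow-up alternative combined with mass conservation and the quadratic bounds on $U_\omega$, and obtaining $(iv)$ from energy conservation together with the sign of the nonlinear term when $\kappa=1$ --- exactly the logical chain you describe. The only difference is one of detail: you flesh out the ``standard'' local theory (local-in-time Strichartz for sub-quadratic potentials, $\Sigma\simeq D(\sqrt{H_\omega})$, commutator bounds) that the paper leaves to the reference, including the honest caveat about non-smoothness of the potential on the $z$-axis, which the paper does not address either.
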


\begin{proof}
By a standard contraction mapping argument, $(i)$ and $(ii)$ follow. Moreover, if $T_\textup{max}<\infty$, then $\|u_\omega(t)\|_{\Sigma}\to \infty$ as $t\to T_\textup{max}^-$ (see \cite{C} for instance). Hence, together with the mass conservation and the assumption on the potential $U_\omega$, we prove $(iii)$. For $(iv)$, the energy conservation yields that if $\kappa=1$, then $\langle ( H_\omega-\Lambda_\omega) u_\omega (t), u_\omega(t)\rangle_{L^2(\mathbb{R}^3)}\leq\frac{2}{\omega}E_\omega[u_\omega(t)]=\frac{2}{\omega}E_\omega[u_{\omega,0}]$, and thus $(iii)$ implies global existence.
\end{proof}

In the focusing case $(\kappa=-1)$, the 3D NLS \eqref{NLS} is not globally well-posed; it admits finite blow-up solutions. Nevertheless, the following proposition asserts that solutions exist for all time under some seemingly weak additional constraint \eqref{additional constraint; initial data}.

\begin{proposition}[A criteria for global existence; focusing case]\label{global existence; focusing case} Let $\kappa=-1$ and $\omega \geq 1$ be sufficiently large. Then, for fixed $m, E>0$, there exist small $\delta=\delta(m, E)>0$ and large $K=K(m, E)>0$, independent of large $\omega\geq1$, such that if $u_{\omega,0}\in \Sigma$, $M[u_{\omega,0}]=m$,
\begin{equation}\label{additional constraint; initial data}
E_\omega[u_{\omega,0}]\leq E\quad\textup{and}\quad \langle (H_\omega^\perp-\Lambda_\omega) u_{\omega,0}, u_{\omega,0}\rangle_{L^2(\mathbb{R}^3)}\leq \delta\omega,
\end{equation}
where $H_\omega^\perp:=-\partial_r^2-\frac{1}{r}\partial_r-\partial_z^2+U(|y|-\sqrt{\omega})+z^2$ with $r=|y|$, then the solution $u_\omega(t)$ to the 3D NLS \eqref{NLS} with $u_\omega(0)=u_{\omega,0}$ exists globally in time, and
$$\sup_{t\geq 0}\langle ( H_\omega-\Lambda_\omega) u_\omega(t), u_\omega(t)\rangle_{L^2(\mathbb{R}^3)}\leq K\omega^{-1}.$$
\end{proposition}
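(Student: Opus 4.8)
The plan is to run a continuity (bootstrap) argument that traps the quantity $\langle (H_\omega^\perp-\Lambda_\omega)u_\omega(t),u_\omega(t)\rangle_{L^2(\mathbb{R}^3)}$ inside the region permitted by the forbidden-region estimate (Corollary \ref{forbidden region}), and then invoke the blow-up criterion of Proposition \ref{prop: 3D tNLS} $(iii)$ to exclude finite-time blow-up.

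First I would fix $\delta=\delta(m,E)$ and $K=K(m,E)$ as in Corollary \ref{forbidden region} and record the structural facts used throughout. Since $H_\omega=H_\omega^\perp-\frac{1}{r^2}\partial_\theta^2$ and the angular part $-\frac{1}{r^2}\partial_\theta^2$ is nonnegative as a quadratic form, one has $\langle (H_\omega^\perp-\Lambda_\omega)u,u\rangle_{L^2(\mathbb{R}^3)}\leq\langle (H_\omega-\Lambda_\omega)u,u\rangle_{L^2(\mathbb{R}^3)}$ for every $u\in\Sigma$. By the conservation laws (Proposition \ref{prop: 3D tNLS} $(ii)$), along the flow we keep $M[u_\omega(t)]=m$ and $E_\omega[u_\omega(t)]=E_\omega[u_{\omega,0}]\leq E$ for all $t$ in the interval of existence. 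Consequently, via Remark \ref{remark: refined constraint} (that is, Corollary \ref{forbidden region} rephrased through \eqref{u-v relation}), at any time $t$ for which $\langle (H_\omega^\perp-\Lambda_\omega)u_\omega(t),u_\omega(t)\rangle_{L^2(\mathbb{R}^3)}\leq\delta\omega$, the full quantity obeys $\langle (H_\omega-\Lambda_\omega)u_\omega(t),u_\omega(t)\rangle_{L^2(\mathbb{R}^3)}\leq K/\omega$, and hence so does the $\perp$-quantity. In particular the $\perp$-quantity can never take a value in the gap $(K/\omega,\delta\omega]$.

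Next I would set $f(t):=\langle (H_\omega^\perp-\Lambda_\omega)u_\omega(t),u_\omega(t)\rangle_{L^2(\mathbb{R}^3)}$, which is finite and continuous on $[0,T_{\textup{max}})$ because $u_\omega\in C([0,T_{\textup{max}});\Sigma)$. At $t=0$ the hypothesis gives $f(0)\leq\delta\omega$, so the preceding paragraph already improves this to $f(0)\leq K/\omega$. Choosing $\omega$ large enough that $K/\omega<\delta\omega$ (possible since $\delta,K$ are $\omega$-independent) makes the gap nonempty, and a standard continuity argument then yields $f(t)\leq K/\omega$ on all of $[0,T_{\textup{max}})$: if not, set $t_0=\inf\{t:f(t)>K/\omega\}$, so that $f(t_0)=K/\omega$ by continuity while $f$ exceeds $K/\omega$ at points arbitrarily close to and after $t_0$; but for such nearby $t$ continuity keeps $f(t)<\delta\omega$, so the forbidden-region dichotomy forces $f(t)\leq K/\omega$, a contradiction. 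Hence $f(t)\leq K/\omega\leq\delta\omega$ throughout, and applying the forbidden region once more delivers the uniform bound $\langle (H_\omega-\Lambda_\omega)u_\omega(t),u_\omega(t)\rangle_{L^2(\mathbb{R}^3)}\leq K/\omega$ for all $t\in[0,T_{\textup{max}})$.

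Finally, this uniform-in-time bound contradicts the blow-up criterion of Proposition \ref{prop: 3D tNLS} $(iii)$ unless $T_{\textup{max}}=\infty$; thus the solution is global, and the displayed estimate $\sup_{t\geq0}\langle (H_\omega-\Lambda_\omega)u_\omega(t),u_\omega(t)\rangle_{L^2(\mathbb{R}^3)}\leq K\omega^{-1}$ is precisely what the trapping argument produced. The only genuinely delicate point is the topological step: one must ensure $(K/\omega,\delta\omega]$ is a true gap that the continuous function $f$ cannot cross, which rests on the separation of scales $K/\omega\ll\delta\omega$ for large $\omega$. Everything else is bookkeeping with the conservation laws, the ordering $H_\omega^\perp\leq H_\omega$, and the already-established forbidden-region estimate.
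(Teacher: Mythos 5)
Your proof is correct and follows essentially the same route as the paper: both rest on the forbidden-region dichotomy of Corollary \ref{forbidden region} (transferred to the original variables via Remark \ref{remark: refined constraint}), the conservation laws, and continuity of $t\mapsto u_\omega(t)$ in $\Sigma$ to trap $\langle (H_\omega-\Lambda_\omega)u_\omega(t),u_\omega(t)\rangle_{L^2(\mathbb{R}^3)}$ below $K\omega^{-1}$, and then conclude global existence from the blow-up criterion of Proposition \ref{prop: 3D tNLS} $(iii)$. Your explicit infimum-based continuity argument is simply a more detailed write-up of the paper's ``by continuity \ldots{} by iteration'' sketch.
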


\begin{proof}
For the proof, a key observation is that there is an area in the function space $\Sigma_\omega$ prohibited by the refined Gagliardo-Nirenberg inequality (Corollary \ref{GN1}). Indeed, by the assumption and Corollary \ref{forbidden region} (see Remark \ref{remark: refined constraint}), the initial data $u_{\omega,0}$ satisfies the stronger bound $\langle ( H_\omega-\Lambda_\omega) u_{\omega,0}, u_{\omega,0}\rangle_{L^2(\mathbb{R}^3)}\leq  K\omega^{-1}$. Hence, by continuity (in $\Sigma$) of the nonlinear solution $u_\omega(t)$, we have $\langle (H_\omega^\perp-\Lambda_\omega) u_{\omega}(t), u_{\omega}(t)\rangle_{L^2(\mathbb{R}^3)}\leq  \delta\omega$ at least on a short time interval. Then, by the conservation laws and Corollary \ref{forbidden region}, this bound is immediately improved to $\langle ( H_\omega-\Lambda_\omega) u_{\omega}(t), u_{\omega}(t)\rangle_{L^2(\mathbb{R}^3)}\leq  K\omega^{-1}$. Therefore, by iteration, we conclude that the same bound holds for all $t\geq0$, and $u_\omega(t)$ never blows up.
\end{proof}

\subsection{Global well-posedness for the time-dependent periodic 1D NLS}

Next, following \cite[Section 3]{C}, we summarize a well-posedness result for the periodic 1D NLS,
\begin{equation}\label{1d cubic tNLS; cauchy problem}
\left\{\begin{aligned}
i\partial_tw+\partial_\theta^2w-\frac{\kappa}{2\pi}|w|^2w&=0,\\
w(0)&=w_0\in H^1(\mathbb{S}^1),
\end{aligned}\right.
\end{equation}
where $w = w(t,\theta) : I(\subset \mathbb{R}) \times \mathbb{S}^1 \rightarrow \C$. Note that due to some technical difficulty (see Remark \ref{remark: why weak convergence?} below), the dimension reduction in Theorem \ref{dimension reduction for tNLS} is restricted to a weak-$^*$ sense. For this reason, we need uniqueness of weak solutions, and thus we provide a well-posedness result including weak solutions.

Given $T>0$, we say that 
$$w=w(t,\theta)\in L^\infty([0,T];H^1(\mathbb{S}^1))\cap W^{1,\infty}([0,T]; H^{-1}(\mathbb{S}^1))$$
is a \textit{weak (resp., strong) $H^1(\mathbb{S}^1)$-solution} on the interval $[0,T]$ if $i\partial_tw+\partial_\theta^2w-\frac{\kappa}{2\pi}|w|^2w=0$ holds in $H^{-1}(\mathbb{S}^1)$ for almost every $t\in [0,T]$ (resp., for all $t\in[0,T]$) and $w(0)=w_0$, equivalently
$$w(t)=e^{it\partial_\theta^2}w_0-\frac{i\kappa}{2\pi}\int_0^t e^{i(t-t_1)\partial_\theta^2}(|w|^2w)(t_1)dt_1\quad\textup{in }H^1(\mathbb{S}^1)$$
for almost every $t\in[0,T]$ (resp., for all $t\in[0,T]$) (see \cite[Proposition 3.1.3]{C}).

\begin{proposition}[Global well-posedness for the time-dependent 1D periodic NLS]\label{1D tNLS} \ 
\begin{enumerate}[$(i)$]
\item (Global existence for strong solutions \cite[Theorem 3.5.1 and Corollary 3.5.2]{C}) For $w_0\in H^1(\mathbb{S}^1)$, the initial-value problem \eqref{1d cubic tNLS; cauchy problem} has a strong $H^1(\mathbb{S}^1)$-solution $w(t)$ on the interval $[0,\infty)$, Moreover, the solution $w(t)$ preserves the mass $M[w]=\|w\|_{L^2(\mathbb{S}^1)}^2$ and the energy $E[w]=\frac{1}{2}\|\partial_\theta w\|_{L^2(\mathbb{S}^1)}^2+\frac{\kappa}{8\pi}\|w\|_{L^4(\mathbb{S}^1)}^4$.
\item (Uniqueness of weak solutions \cite[Proposition 4.2.1]{C}) If $w_1(t)$ and $w_2(t)$ are weak $H^1(\mathbb{S}^1)$-solutions to \eqref{1d cubic tNLS; cauchy problem} with initial data $w_0\in H^1(\mathbb{S}^1)$ on $[0,T]$, then $w_1(t)=w_2(t)$ for almost every $t\in[0,T]$.
\end{enumerate}
\end{proposition}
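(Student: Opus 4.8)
The plan is to treat the two parts separately, establishing the local theory and conservation laws first, then the global a priori bound, and finally the weak-solution uniqueness. The key structural facts I would exploit are that, on the compact manifold $\mathbb{S}^1$, the linear propagator $e^{it\partial_\theta^2}$ is unitary on every $H^s(\mathbb{S}^1)$ and that $H^1(\mathbb{S}^1)$ is a Banach algebra embedded in $L^\infty(\mathbb{S}^1)$. For part $(i)$, I would first prove local well-posedness by a contraction argument for the Duhamel map
$$\Gamma w(t)=e^{it\partial_\theta^2}w_0-\frac{i\kappa}{2\pi}\int_0^t e^{i(t-t_1)\partial_\theta^2}(|w|^2w)(t_1)\,dt_1$$
on a ball of $C([0,T];H^1(\mathbb{S}^1))$. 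The algebra property yields the Lipschitz estimate $\||w_1|^2w_1-|w_2|^2w_2\|_{H^1}\lesssim(\|w_1\|_{H^1}^2+\|w_2\|_{H^1}^2)\|w_1-w_2\|_{H^1}$, so $\Gamma$ is a contraction for $T$ small depending only on $\|w_0\|_{H^1}$; the same estimate gives the blow-up alternative that $\|w(t)\|_{H^1}\to\infty$ as $t\uparrow T_{\max}$ whenever $T_{\max}<\infty$. Mass and energy conservation then follow by pairing the equation with $\bar w$ and with $\partial_t\bar w$ and extracting the appropriate real and imaginary parts, justified at the $H^1$ level via the Duhamel formula (or by approximating $w_0$ in $H^2$ and passing to the limit).

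To promote the local solution to a global one, I would use that the cubic nonlinearity is mass-\emph{subcritical} in one dimension. Mass conservation gives $\|w(t)\|_{L^2}=\|w_0\|_{L^2}$. For the gradient I invoke the Gagliardo--Nirenberg inequality \eqref{circle GN ineq} with $q=4$, i.e. $\|w\|_{L^4(\mathbb{S}^1)}^4\lesssim\|w\|_{L^2}^3\|\partial_\theta w\|_{L^2}+\|w\|_{L^2}^4$. In the defocusing case $\kappa=1$ the energy controls $\|\partial_\theta w\|_{L^2}^2$ outright. In the focusing case $\kappa=-1$ I write $\tfrac12\|\partial_\theta w\|_{L^2}^2=E[w_0]+\tfrac{1}{8\pi}\|w\|_{L^4}^4$ and absorb the resulting term $\|w_0\|_{L^2}^3\|\partial_\theta w\|_{L^2}$ by Young's inequality, obtaining a bound on $\|\partial_\theta w(t)\|_{L^2}$ depending only on $E[w_0]$ and $M[w_0]$. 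Combined with the blow-up alternative, the resulting uniform $H^1$ bound forces $T_{\max}=\infty$.

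For part $(ii)$, given two weak $H^1$-solutions $w_1,w_2$ with the same data, I set $\eta=w_1-w_2$, which solves $i\partial_t\eta+\partial_\theta^2\eta=\frac{\kappa}{2\pi}(|w_1|^2w_1-|w_2|^2w_2)$ in $H^{-1}(\mathbb{S}^1)$ for a.e.\ $t$, with $\eta(0)=0$. Pairing with $\eta$ in the $H^{-1}$--$H^1$ duality and taking imaginary parts gives $\frac{d}{dt}\|\eta\|_{L^2}^2=2\,\im\big\langle\tfrac{\kappa}{2\pi}(|w_1|^2w_1-|w_2|^2w_2),\eta\big\rangle$. Using the pointwise bound $\big||w_1|^2w_1-|w_2|^2w_2\big|\lesssim(|w_1|^2+|w_2|^2)|\eta|$ together with the uniform control $\|w_i(t)\|_{L^\infty}\lesssim\|w_i\|_{L^\infty_tH^1}$ from the one-dimensional Sobolev embedding, the right-hand side is bounded by $C(T)\|\eta\|_{L^2}^2$, and Gronwall's inequality with $\eta(0)=0$ yields $\eta\equiv0$ a.e.

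The main obstacle is a regularity/justification issue rather than a computational one, and it is concentrated in part $(ii)$: the energy identity $\frac{d}{dt}\|\eta\|_{L^2}^2=2\,\re\langle\partial_t\eta,\eta\rangle_{H^{-1},H^1}$ must be established for merely weak solutions, for which $\partial_t\eta$ lives only in $H^{-1}$. This requires the standard (Lions--Magenes type) lemma that a function in $L^\infty_tH^1\cap W^{1,\infty}_tH^{-1}$ has $\|\cdot\|_{L^2}^2$ absolutely continuous in $t$ with the expected derivative, so that the $L^2$ pairing and Gronwall step are legitimate; the analogous care underlies the conservation laws for the $H^1$ strong solution in part $(i)$. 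Since these are by-now-classical facts for the subcritical NLS, I would cite \cite{C} for the underlying functional-analytic lemmas and otherwise carry out only the estimates above.
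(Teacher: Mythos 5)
Your proposal is correct, but it is worth noting that the paper does not actually prove this proposition: it is stated as a citation of known results, with existence and conservation laws attributed to \cite[Theorem 3.5.1 and Corollary 3.5.2]{C} and weak-solution uniqueness to \cite[Proposition 4.2.1]{C}. What you have done is reconstruct self-contained proofs of those cited facts. Your part $(ii)$ (the $H^{-1}$--$H^1$ pairing, the pointwise cubic difference bound, the $L^\infty$ control via the 1D Sobolev embedding, and Gronwall, with the Lions--Magenes type lemma justifying the energy identity) is essentially the same argument as the cited Cazenave proposition, so there is no real divergence there. Your part $(i)$, however, takes a genuinely more elementary route than the cited source: Cazenave's Theorem 3.5.1 establishes existence on general domains by compactness/regularization methods, precisely because in higher dimensions $H^1$ is neither an algebra nor embedded in $L^\infty$; you instead exploit that on $\mathbb{S}^1$ both properties hold, so a direct contraction argument in $C([0,T];H^1(\mathbb{S}^1))$ yields the strong solution (with uniqueness and continuous dependence for free), and the mass-subcritical Gagliardo--Nirenberg absorption then globalizes it. This is a clean and correct specialization to the 1D periodic setting; what the compactness route buys in exchange is generality across dimensions and domains, which is irrelevant here. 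The only points requiring the care you already flagged are the justification of energy conservation at the $H^1$ level (approximation by $H^2$ data) and the absolute continuity of $t\mapsto\|\eta(t)\|_{L^2}^2$ for merely weak solutions; both are handled exactly as you describe.
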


\subsection{Derivation of the time-dependent 1D periodic NLS; Proof of Theorem \ref{dimension reduction for tNLS}}

We prove our first main result. Let $u_\omega(t)$ be a solution to the time-dependent 3D NLS \eqref{NLS} constructed in Proposition \ref{prop: 3D tNLS}. Then, by the assumptions in the theorem and Proposition \ref{prop: 3D tNLS} in the defocusing case/Proposition \ref{global existence; focusing case} in the focusing case, $u_\omega(t)$ exists globally in time. In both cases, we have 
\begin{equation}\label{t higher state estimate1}
\sup_{t\geq 0}\langle ( H_\omega-\Lambda_\omega) u_\omega(t), u_\omega(t)\rangle_{L^2(\mathbb{R}^3)}\lesssim\omega^{-1}.
\end{equation}

\subsubsection{Decomposition}
For dimension reduction, it is convenient to us the transformation
\begin{equation}\label{u(t)-v(t) relation}
v_\omega(t,s,z,\theta)=\omega^{\frac{1}{4}}u_\omega(t,s+\sqrt{\omega}, z,\theta),
\end{equation}
where $u_\omega(t,r,z,\theta)$ denotes, with abuse of notation, $u_\omega(t,x)$ in cylindrical coordinates. By construction, $v_\omega(t)$ is a global solution to the equation 
\begin{equation}\label{tNLS; v}
i\partial_t v_\omega=\omega(\mathcal{H}_{\omega}^{(2D)}-\Lambda_\omega)v_\omega-\frac{1}{\sigma_\omega^2(s)}\partial_\theta^2v_\omega +\kappa|v_\omega|^2v_\omega
\end{equation}
satisfying 
\begin{equation}\label{t higher state estimate; translated}
\sup_{t\geq 0}\|v_\omega(t)\|_{\dot{\Sigma}_\omega}\lesssim\omega^{-\frac{1}{2}}.
\end{equation}
Let 
\begin{equation}\label{w_omega(t) definition}
v_{\omega,\parallel}(t,\theta)=\langle v_\omega(t,s,z,\theta),\tilde{\Phi}_\omega(s,z)\rangle_{L^2(\sigma_\omega dsdz)}
\end{equation}
be the $\tilde{\Phi}_\omega$-directional component of $v_\omega$, where $\tilde{\Phi}_\omega=\frac{\chi_\omega(s)\Phi_\omega(s,z)}{\|\chi_\omega(s)\Phi_\omega(s,z)\|_{L^2(\sigma_\omega dsdz)}}$ is the truncated 2D lowest eigenfunction and $\chi_\omega$ is the smooth cut-off such that $\chi_\omega\equiv1$ on $[-\sqrt{\omega}+2,\infty)$ and $\textup{supp}\chi_\omega\subset[-\sqrt{\omega}+1,\infty)$ (see Section \ref{subsec: truncated projection}). We decompose
\begin{equation}\label{v(t) decomposition}
v_\omega(t,s,z,\theta)=v_{\omega,\parallel}(t,\theta)\chi_\omega(s)\Phi_0(s,z)+r_\omega(t,s,z,\theta),
\end{equation}
where the remainder is given by
\begin{equation}\label{v(t) remainder}
r_\omega(t,s,z,\theta)=v_{\omega,\parallel}(t,\theta)\big(\tilde{\Phi}_\omega(s,z)-\chi_\omega(s)\Phi_0(s,z)\big)+(\mathcal{P}_{\tilde{\Phi}_\omega}^\perp v_\omega)(t,s,z,\theta).
\end{equation}
For the core part, we observe from the 3D equation \eqref{tNLS; v} that $v_{\omega,\parallel}$ solves 
\begin{equation}\label{w_omega(t) equation}
\begin{aligned}
i\partial_tv_{\omega,\parallel}&=\omega\langle (\mathcal{H}_\omega^{(2D)}-\Lambda_\omega)v_\omega, \tilde{\Phi}_\omega \rangle_{L^2(\sigma_\omega dsdz)}-\langle \partial_\theta^2v_\omega, \tilde{\Phi}_\omega \rangle_{L^2(\frac{1}{\sigma_\omega} dsdz)}\\
&\quad+\kappa \langle |v_\omega|^2v_\omega, \tilde{\Phi}_\omega \rangle_{L^2(\sigma_\omega dsdz)}.
\end{aligned}
\end{equation}
Our goal is then to show that $v_{\omega,\parallel}$ converges to a solution to the 1D periodic NLS \eqref{1d cubic tNLS; cauchy problem}, and that the remainder vanishes as $\omega\to\infty$.

\begin{remark}\label{remark: why weak convergence?}
For the derivation of the 1D periodic NLS, we need to extract the linear term $\partial_\theta^2v_{\omega,\parallel}$ from $\langle \partial_\theta^2v_\omega, \tilde{\Phi}_\omega \rangle_{L^2(\frac{1}{\sigma_\omega} dsdz)}$ by showing that as $\omega\to\infty$, 
$$\langle \partial_\theta^2 v_\omega, \tilde{\Phi}_\omega \rangle_{L^2(\frac{1}{\sigma_\omega} dsdz)}-\partial_\theta^2v_{\omega,\parallel}= \langle \partial_\theta^2 v_\omega, (1-\sigma_\omega^2)\tilde{\Phi}_\omega \rangle_{L^2(\frac{1}{\sigma_\omega} dsdz)}\to 0.$$
Indeed, we have $(1-\sigma_\omega^2)\tilde{\Phi}_\omega=\frac{s}{\sqrt{\omega}}(2+\frac{s}{\sqrt{\omega}})\tilde{\Phi}_\omega\to 0$ in a suitable norm. On the other hand, if one wishes to obtain a strong convergence, one needs to control $\partial_\theta^2 v_\omega$. However, the 3D problem \eqref{NLS} has no conservation law controlling higher-order derivative norms, and thus it does not seem possible to obtain a global-in-time strong convergence. To avoid this difficulty, we consider weak convergence.
\end{remark}

\subsubsection{Uniform bounds on the flow $v_{\omega,\parallel}(t)$}

We aim to show that $v_{\omega,\parallel}(t)$ converges weak-$^*$ly to a solution to the periodic NLS. However, before proving this, we need to confirm that $\|v_{\omega,\parallel}(t)\|_{C_t([0,T];H^1(\mathbb{S}^1))}$ and $\|\partial_t v_{\omega,\parallel}(t)\|_{C_t([0,T];H^{-1}(\mathbb{S}^1))}$ are uniformly bounded in $\omega\geq\omega_*\gg1$, where $\|w\|_{H^{-1}(\mathbb{S}^1)}:=\underset{g\in H^1(\mathbb{S}^1)}\sup|\langle g, w\rangle_{L^2(\mathbb{S}^1)}|$; then the Banach-Alaoglu theorem can be applied.
 
For $\|v_{\omega,\parallel}(t)\|_{H^1(\mathbb{S}^1)}$, we observe that by the change of variables \eqref{u(t)-v(t) relation} and the mass conservation law, $\|v_\omega(t)\|_{L^2(\sigma_\omega)}=\|u_\omega(t)\|_{L^2(\mathbb{R}^3)}=\|u_0\|_{L^2(\mathbb{R}^3)}=\sqrt{m}$ for all $t\geq 0$. On the other hand, recalling the definition of the semi-norm $\|\cdot\|_{\dot{\Sigma}_\omega}$ (see \eqref{seminorm2}), we see that the global bound \eqref{t higher state estimate; translated} implies a uniform bound
\begin{equation}\label{v_omega(t) derivative bound}
\sup_{\omega\geq\omega_*}\|\partial_\theta v_\omega(t)\|_{L^2(\frac{1}{\sigma_\omega})}\lesssim 1
\end{equation} for all $t\geq 0$. Hence, $v_{\omega,\parallel}(t,\theta)=\langle v_\omega(t,\cdot,\cdot,\theta),\tilde{\Phi}_\omega\rangle_{L^2(\sigma_\omega dsdz)}$ satisfies
 $$\|v_{\omega,\parallel}(t)\|_{L^2(\mathbb{S}^1)}\leq \|v_\omega(t)\|_{L^2(\sigma_\omega)}=\sqrt{m}$$
and 
 $$\|\partial_\theta v_{\omega,\parallel}(t)\|_{L^2(\mathbb{S}^1)}\leq \left\|\|\partial_\theta v_\omega(t)\|_{L^2(\frac{1}{\sigma_\omega}dsdz)}\|\sigma_\omega\tilde{\Phi}_\omega\|_{L^2(\sigma_\omega)}\right\|_{L^2(\mathbb{S}^1)}\lesssim\|\partial_\theta v_\omega(t)\|_{L^2(\frac{1}{\sigma_\omega})} \lesssim1,$$
because $\Phi_\omega$ is a rapidly decreasing function. Thus, we conclude that for all $t\geq 0$,
\begin{equation}\label{w(t) H1 uniform bound}
\underset{\omega\geq\omega_*}\sup\|v_{\omega,\parallel}(t)\|_{H^1(\mathbb{S}^1)}\lesssim 1.
\end{equation}

For a uniform bound on $\|\partial_t v_{\omega,\parallel}(t)\|_{H^{-1}(\mathbb{S}^1)}$, it suffices to show that all terms on the right hand side of the equation \eqref{w_omega(t) equation} are bounded in $H^{-1}(\mathbb{S}^1)$. First, we claim that for all $t\geq0$, 
\begin{equation}\label{w_omega(t) equation; first term}
\lim_{\omega\to \infty}\big\|\omega\langle (\mathcal{H}_\omega^{(2D)}-\Lambda_\omega)v_\omega(t), \tilde{\Phi}_\omega \rangle_{L^2(\sigma_\omega dsdz)}\big\|_{L^2(\mathbb{S}^1)}=0.
\end{equation}
Indeed, it follows from $(\mathcal{H}_\omega^{(2D)}-\Lambda_\omega)\Phi_\omega=0$, with $\tilde{\Phi}_\omega(s,z)=\chi_\omega(s)\Phi_\omega(s,z)$, that 
$$\begin{aligned}
&\big\|\langle \omega(\mathcal{H}_\omega^{(2D)}-\Lambda_\omega)v_\omega, \tilde{\Phi}_\omega \rangle_{L^2(\sigma_\omega dsdz)}\big\|_{L^2(\mathbb{S}^1)}\\
&=\omega\big\|\big\langle v_\omega, (-\chi_\omega''\Phi_\omega-2\chi_\omega'\partial_s\Phi_\omega-\tfrac{1}{\sqrt{\omega}\sigma_\omega}\chi_\omega'\Phi_\omega)\big\rangle_{L^2(\sigma_\omega dsdz)}\big\|_{L^2(\mathbb{S}^1)}\\
&\leq \|v_\omega\|_{L^2(\sigma_\omega)} \omega\big\|\chi_\omega''\Phi_\omega+2\chi_\omega'\partial_s\Phi_\omega+\tfrac{1}{\sqrt{\omega}\sigma_\omega}\chi_\omega'\Phi_\omega \big\|_{L^2(\sigma_\omega dsdz)}.
\end{aligned}$$
However, since $\Phi_\omega$ is rapidly decreasing and $\chi'_\omega(s)$ and $\chi''_\omega(s)$ are supported only on $[-\sqrt{\omega}+1,-\sqrt{\omega}]$, we have
$$\big\|\langle \omega(\mathcal{H}_\omega^{(2D)}-\Lambda_\omega)v_\omega(t), \tilde{\Phi}_\omega \rangle_{L^2(\sigma_\omega dsdz)}\big\|_{L^2(\mathbb{S}^1)}\leq \sqrt{m}o_\omega(1)\to 0.$$
Moreover, we have
$$\begin{aligned}
\big\|\langle \partial_\theta^2v_\omega(t), \tilde{\Phi}_\omega \rangle_{L^2(\frac{1}{\sigma_\omega} dsdz)}\big\|_{H^{-1}(\mathbb{S}^1)}&\lesssim \big\|\langle \partial_\theta v_\omega(t), \tilde{\Phi}_\omega \rangle_{L^2(\frac{1}{\sigma_\omega} dsdz)}\big\|_{L^2(\mathbb{S}^1)}\\
&\leq \|\partial_\theta v_\omega(t)\|_{L^2(\frac{1}{\sigma_\omega})}\|\tilde{\Phi}_\omega \|_{L^2(\frac{1}{\sigma_\omega} dsdz)}\lesssim1\quad\textup{(by \eqref{v_omega(t) derivative bound})}.
\end{aligned}$$
On the other hand, the energy conservation law $\mathcal{E}_\omega[v_\omega(t)]=\mathcal{E}_\omega[v_\omega(0)]$ (equivalent to $E_\omega[u_\omega(t)]=E_\omega[u_{\omega,0}]$) and \eqref{t higher state estimate; translated} yield 
$$\sup_{\omega\geq\omega_*}\|v_\omega(t)\|_{L^4(\sigma_\omega)}\lesssim1$$
for all $t\geq 0$. Consequently, we obtain
$$\begin{aligned}
\big\|\langle |v_\omega|^2v_\omega(t), \tilde{\Phi}_\omega \rangle_{L^2(\sigma_\omega dsdz)}\big\|_{H^{-1}(\mathbb{S}^1)}&\leq\Big\|\||v_\omega(t)|^3\|_{L^{\frac{4}{3}}(\sigma_\omega dsdz)}\|\tilde{\Phi}_\omega\|_{L^4(\sigma_\omega dsdz)}\Big\|_{L^{\frac{4}{3}}(\mathbb{S}^1)}\\
&\lesssim \|v_\omega(t)\|_{L^4(\sigma_\omega)}^3\lesssim1.
\end{aligned}$$
Therefore, collecting all, by the equation \eqref{w_omega(t) equation}, we conclude that for all $t\geq 0$, 
$$\underset{\omega\geq\omega_*}\sup\|\partial_t v_{\omega,\parallel}(t)\|_{H^{-1}(\mathbb{S}^1)}\lesssim 1.$$

\subsubsection{Dimension reduction: Proof of Theorem \ref{dimension reduction for tNLS} $(i)$}

For the dimension reduction, it suffices to show that for all $t\geq0$,
\begin{equation}\label{t dimension reduction claim}
\big\|v_{\omega,\parallel}(t,\theta)\big(\tilde{\Phi}_\omega(s,z)-\chi_\omega(s)\Phi_0(s,z)\big)\big\|_{L^2(\sigma_\omega)}+\|(\mathcal{P}_{\tilde{\Phi}_\omega}^\perp v_\omega)(t)\|_{L^2(\sigma_\omega)}\lesssim\omega^{-\frac{1}{2}},
\end{equation}
because  
$$
\left\| (1-\chi(|y|))v_{\omega,\parallel}(t,\theta) \omega^{-1/4}\Phi_0(|y|-\sqrt{\omega},z) \right\|_{L^2(\mathbb{R}^3)}\lesssim \omega^{-\frac12}
$$ 
and by the decomposition \eqref{v(t) remainder} and the change of variables by \eqref{u(t)-v(t) relation}, \eqref{t dimension reduction claim} implies 
$$\left\|u_\omega(t,x)-v_{\omega,\parallel}(t,\theta)\left(\chi(|y|)\omega^{-1/4}\Phi_0(|y|-\sqrt{\omega},z)\right)\right\|_{L^2(\mathbb{R}^3)}=\|r_\omega(t)\|_{L^2(\sigma_\omega)} .$$
Indeed, since both $\phi_0$ and $\phi_\omega$ are rapidly decreasing and $\|\phi_\omega-\phi_0\|_{L^2(\sigma_\omega ds) }\lesssim\omega^{-\frac{1}{2}}$  (see Proposition \ref{1d eigenstate} and Corollary \ref{2d eigenstate}), by the uniform bound on $\|u_\omega(t)\|_{H^1(\mathbb{S}^1)}$, we prove that 
$$\begin{aligned}
&\big\|v_{\omega,\parallel}(t,\theta)\big(\tilde{\Phi}_\omega(s,z)-\chi_\omega(s)\Phi_0(s,z)\big)\big\|_{L^2(\sigma_\omega)}\\
&=\|v_{\omega,\parallel}(t)\|_{L^2(\mathbb{S}^1)}\|\tilde{\Phi}_\omega(s,z)-\chi_\omega(s)\Phi_0(s,z)\|_{L^2(\sigma_\omega dsdz)} \lesssim \omega^{-\frac{1}{2}}.
\end{aligned}$$
On the other hand, Lemma \ref{truncation error} implies that $\|(\mathcal{P}_{\tilde{\Phi}_\omega}^\perp v_\omega)(t)-(\mathcal{P}_{\Phi_\omega}^\perp v_\omega)(t)\|_{L^2(\sigma_\omega)}\lesssim\omega^{-\frac{1}{2}}$. Moreover, by the spectral gap (Corollary \ref{2d eigenstate}) and \eqref{t higher state estimate; translated}, we have 
$$\begin{aligned}
\|(\mathcal{P}_{\Phi_\omega}^\perp v_\omega)(t)\|_{L^2(\sigma_\omega)}&\lesssim\|(\mathcal{P}_{\Phi_\omega}^\perp v_\omega)(t)\|_{{\dot{\Sigma}}_{\omega;(s,z)}}\le\|v_\omega(t)\|_{{\dot{\Sigma}}_{\omega}}\lesssim\omega^{-\frac{1}{2}}.
\end{aligned}$$

\subsubsection{Initial data}\label{sec: initial data}
Now, we present how the initial data for the 1D NLS is prepared in Theorem \ref{dimension reduction for tNLS} $(ii)$. We consider the decomposition \eqref{v(t) decomposition} at $t=0$. Indeed, by \eqref{t higher state estimate1} and the dimension reduction (Theorem \ref{dimension reduction for tNLS} $(i)$), we have
$\|u_{\omega,0}\|_{H^1(\R^3)}\lesssim 1$ and  
$$\lim_{\omega\to\infty}\left\|u_{\omega,0}(x)-v_{\omega,\parallel}(0,\theta)\left( \omega^{-\frac{1}{4}}\Phi_0(|y|-\sqrt{\omega},z)\right)\right\|_{L^2(\mathbb{R}^3)}=0.$$ 
On the other hand, by the $H^1(\mathbb{S}^1)$-uniform bound \eqref{w(t) H1 uniform bound}, it follows from the Banach-Alaoglu theorem that there exists $\{\omega_j\}_{j=1}^\infty$, with $\omega_j\to\infty$, such that $v_{\omega_j,\parallel}(0,\theta)\rightharpoonup w_{\infty,0}$ in $H^1(\mathbb{S}^1)$. Therefore, we conclude that  as $j\to \infty$, 
$$u_{\omega_j,0}(x)-w_{\infty,0}(\theta)\left(  \omega_j^{-\frac14}\Phi_0(|y|-\sqrt{\omega_j},z)\right)\rightharpoonup 0\quad\textup{in }H^1(\mathbb{R}^3).$$

\subsubsection{Derivation of the 1D NLS}
Fix arbitrary $T>0$, and let $\{v_{\omega,\parallel}(t)\}_{\omega\gg1}$ be a one-parameter family of solutions to \eqref{w_omega(t) equation} with $v_{\omega,\parallel}(0)=\langle v_\omega(0),\tilde{\Phi}_\omega\rangle_{L^2(\sigma_\omega dsdz)}$. We have $v_{\omega_j,\parallel}(0)\rightharpoonup w_{\infty,0}$ in $H^1(\mathbb{S}^1)$ for some $\{\omega_j\}_{j=1}^\infty$ with $\omega_j\to\infty$. On the other hand, by the uniform bounds obtained previously and the Banach-Alaoglu theorem, we have sub-sequential convergences $v_{\omega_{j_k},\parallel} \rightharpoonup^* w_\infty$ in $L^\infty([0,T]; H^1(\mathbb{S}^1))$ and $\partial_t v_{\omega_{j_k},\parallel}\rightharpoonup^* \partial_tw_\infty$ in $L^\infty([0,T]; H^{-1}(\mathbb{S}^1))$ as $j_k\to\infty$. Therefore, by uniqueness of weak solutions (Proposition \ref{1D tNLS}), it suffices to show that the limit $w_\infty(t)$ is a weak $H^1(\mathbb{S}^1)$-solution to the 1D NLS \eqref{1d cubic tNLS; cauchy problem}\footnote{$w_\infty(0)=w_{\infty,0}$, because   $v_{\omega_{j_k},\parallel}$ is a subsequence of $v_{\omega_{j},\parallel}$ and $v_{\omega_j,\parallel}(0)\rightharpoonup w_{\infty,0}$ in $H^1(\mathbb{S}^1)$.}.

To prove it, we fix an arbitrary $g=g(t,\theta)\in L^1([0,T];H^1(\mathbb{S}^1))$, and test $g$ against the equation \eqref{w_omega(t) equation}. Then, by \eqref{w_omega(t) equation} and Fubini's theorem, one can write 
\begin{equation}\label{w_omega(t) equation-tested}
\begin{aligned}
\big\langle g, i\partial_t v_{\omega,\parallel}\big\rangle&=\big\langle g, \omega\langle (\mathcal{H}_\omega^{(2D)}-\Lambda_\omega)v_\omega, \tilde{\Phi}_\omega \rangle_{L^2(\sigma_\omega dsdz)}\big\rangle\\
&\quad+\langle \partial_\theta g, \partial_\theta v_{\omega,\parallel}\rangle+\big\langle  (\partial_\theta g)(1-\sigma_\omega^2)\tilde{\Phi}_\omega, \partial_\theta v_\omega\big\rangle_{L^2([0,T];L^2(\frac{1}{\sigma_\omega}))}\\
&\quad+\kappa\big\langle g \tilde{\Phi}_\omega,  |v_{\omega,\parallel}\chi_\omega\Phi_0|^2(v_{\omega,\parallel}\chi_\omega\Phi_0) \big\rangle_{L^2([0,T];L^2(\sigma_\omega))}\\
&\quad+\kappa\big\langle g \tilde{\Phi}_\omega,  |v_\omega|^2v_\omega -|v_{\omega,\parallel}\chi_\omega\Phi_0|^2(v_{\omega,\parallel}\chi_\omega\Phi_0)\big\rangle_{L^2([0,T];L^2(\sigma_\omega))}\\
&=:I_\omega+II_\omega+III_\omega+\kappa IV_\omega+\kappa V_\omega,
\end{aligned}
\end{equation}
where $\langle\cdot,\cdot\rangle=\langle\cdot,\cdot\rangle_{L^2([0,T];L^2(\mathbb{S}^1))}$. By construction, it is obvious that $\langle g, \partial_tv_{\omega_{j_k},\parallel}\rangle\to\langle g, \partial_tw_\infty\rangle$ and $II_{\omega_{j_k}}\to \langle g, -\partial_\theta^2w_\infty\rangle$. By \eqref{w_omega(t) equation; first term}, $I_\omega\to 0$. We will show that $III_\omega, V_\omega\to 0$ and 
$$IV_{\omega_{j_k}}\to\big\langle g, \tfrac{1}{2\pi}|w_\infty|^2w_\infty\big\rangle.$$
Then, we conclude that $w_\infty(t)$ is a weak solution, so the proof is completed.

For $III_\omega$, by the H\"older inequality, we obtain that 
$$\begin{aligned}
|III_\omega|&=\big|\big\langle  (\partial_\theta g)(1-\sigma_\omega^2)\tilde{\Phi}_\omega, \partial_\theta v_\omega\big\rangle_{L^2([0,T];L^2(\frac{1}{\sigma_\omega}))}\big|\\
&\leq \|\partial_\theta g\|_{L^1([0,T];L^2(\mathbb{S}^1))}\|(1-\sigma_\omega^2)\tilde{\Phi}_\omega\|_{L^2(\frac{1}{\sigma_\omega} dsdz)}\|\partial_\theta v_\omega\|_{L^\infty([0,T];L^2(\frac{1}{\sigma_\omega}))}.
\end{aligned}$$
Because $1-\sigma_\omega(s)=-\frac{s}{\sqrt{\omega}}$ and $\Phi_\omega$ is rapidly decreasing, $\|(1-\sigma_\omega^2)\tilde{\Phi}_\omega\|_{L^2(\frac{1}{\sigma_\omega} dsdz)}\to0$. By \eqref{v_omega(t) derivative bound}, $\|\partial_\theta v_\omega(t)\|_{L^2(\frac{1}{\sigma_\omega})}\lesssim 1$. Thus, it follows that $III_\omega\to0$. For $IV_\omega$, by the convergence $\Phi_\omega\to \Phi_0$ in $L^2(\sigma_\omega dsdz)$ with their Gaussian decay and $\|\Phi_0\|_{L^4(\mathbb{R}^2)}^4=\frac{1}{2\pi}$, we write
$$IV_\omega=\big\langle \tilde{\Phi}_\omega,  |\chi_\omega\Phi_0|^2(\chi_\omega\Phi_0) \big\rangle_{L^2(\sigma_\omega dsdz)} \big\langle g, |v_{\omega,\parallel}|^2v_{\omega,\parallel}\big\rangle=  \big\langle g, \tfrac{1}{2\pi}|v_{\omega,\parallel}|^2v_{\omega,\parallel}\big\rangle+o_\omega(1).$$
Then, the Rellich–Kondrachov theorem $H^1(\mathbb{S}^1)\hookrightarrow L^4(\mathbb{S}^1)$ implies $IV_{\omega_{j_k}}\to\big\langle g, \tfrac{1}{2\pi}|w_\infty|^2w_\infty\big\rangle$. For the convergence $V_\omega\to 0$, by the H\"older inequality and the decomposition \eqref{v(t) decomposition}, it suffices to show that $\|r_\omega\|_{L^4(\sigma_\omega)}\to 0$, where $r_\omega=v_{\omega,\parallel}(\tilde{\Phi}_\omega-\chi_\omega\Phi_0)+\mathcal{P}_{\tilde{\Phi}_\omega}^\perp v_\omega$ (see \eqref{v(t) remainder}). Indeed, $\|v_{\omega,\parallel}(\tilde{\Phi}_\omega-\chi_\omega\Phi_0)\|_{L^4(\sigma_\omega)}\leq\|v_{\omega,\parallel}\|_{L^4(\mathbb{S}^1)}\|\tilde{\Phi}_\omega-\chi_\omega\Phi_0\|_{L^4(\sigma_\omega dsdz)} \lesssim\|v_{\omega,\parallel}\|_{H^1(\mathbb{S}^1)}\|\tilde{\Phi}_\omega-\chi_\omega\Phi_0\|_{L^4(\sigma_\omega dsdz)} \to0$. On the other hand, by the refined Gagliardo-Nirenberg inequality (Corollary \ref{GN1}), we can show that $\|\mathcal{P}_{\tilde{\Phi}_\omega}^\perp v_\omega\|_{L^4(\sigma_\omega)}\to 0$, because 
$$
\|\partial_\theta\mathcal{P}_{\tilde{\Phi}_\omega}^\perp v_\omega\|_{L^2(\frac{1}{\sigma_\omega})}\lesssim \|\partial_\theta  v_\omega\|_{L^2(\frac{1}{\sigma_\omega})}\le \sqrt{\omega}\|v_\omega\|_{\dot{\Sigma}_\omega}\lesssim 1 \ \ (\textup{by } \eqref{asymptotic Pythagorean identity 3} \textup{ and } \eqref{t higher state estimate; translated}),
$$
\begin{align*}
\|\mathcal{P}_{\tilde{\Phi}_\omega}^\perp v_\omega\|_{L^2(\sigma_\omega)}, \|\mathcal{P}_{\tilde{\Phi}_\omega}^\perp v_\omega\|_{\dot{\Sigma}_{\omega;(s,z)}}&\lesssim e^{-c\omega}\| v_\omega\|_{L^2(\sigma_\omega)}+\|\mathcal{P}_{\Phi_\omega}^\perp v_\omega\|_{\dot{\Sigma}_{\omega;(s,z)}}\\
&\lesssim  e^{-c\omega}+\|v_\omega(t)\|_{\dot{\Sigma}_\omega}\lesssim\omega^{-\frac{1}{2}}\ \ (\textup{by } \textup{Lemma  } \ref{orthogonal complement bound} \textup{ and } \eqref{t higher state estimate; translated}).
\end{align*}
Therefore, it follows that $\|r_\omega\|_{L^4(\sigma_\omega)}\to 0$.

\section{Existence of a minimizer and its dimension reduction limit: Proof of Theorem \ref{thm: existence} and \ref{drth}}\label{sec: existence}
We consider the modified nonlinear minimization problem 
\begin{equation}\label{modified variational problem'}
\mathcal{J}_\omega^{(3D)}(m):=\min\left\{\mathcal{E}_\omega(v): \  v\in\Sigma_\omega,\ \mathcal{M}_\omega(v)=m\textup{ and } \|v\|_{{\dot{\Sigma}}_{\omega;(s,z)}}\leq\delta\sqrt{\omega} \right\}.
\end{equation}
In this section, we prove 1) existence of a minimizer (Proposition \ref{existence of a minimizer}); 2) its uniform decay/bound  properties (Proposition \ref{Gaussian L^2 bounds} and Proposition \ref{uniform angle derivative bounds}); 3) the dimension reduction to the 1D periodic ground state (Proposition \ref{adprop}). 

\subsection{Construction of a minimizer}

By a concentration-compactness argument, we establish existence of  a minimizer for the variational problem $\mathcal{J}_\omega^{(3D)}(m)$,   but we also present some basic properties directly coming from its construction. 

\begin{proposition}[Existence of a minimizer and its preliminary properties]\label{existence of a minimizer}
For sufficiently large $\omega\geq 1$, the following hold.
\begin{enumerate}[(i)]
\item The minimization problem $\mathcal{J}_\omega^{(3D)}(m)$ occupies a positive minimizer, denoted by $\mathcal{Q}_\omega$.
\item A minimizer $\mathcal{Q}_\omega$ solves the Euler-Lagrange equation 
\begin{equation}\label{elp}
\omega( \mathcal{H}_\omega^{(2D)}-\Lambda_\omega)\mathcal{Q}_\omega-\frac{1}{\sigma_\omega^2}\partial_\theta^2 \mathcal{Q}_\omega- |\mathcal{Q}_\omega|^2\mathcal{Q}_\omega=- \mu_\omega \mathcal{Q}_\omega,
\end{equation}
where $\mu_\omega\in \R$ is a Lagrange multiplier.
Thus, it satisfies 
\begin{equation}\label{po1}
\omega \|\mathcal{Q}_\omega\|_{{\dot{\Sigma}}_{\omega;(s,z)}}^2+\|\partial_\theta\mathcal{Q}_\omega\|_{L^2(\frac{1}{\sigma_\omega})}^2- \|\mathcal{Q}_\omega\|_{L^4(\sigma_\omega)}^4+\mu_\omega  \|\mathcal{Q}_\omega\|_{L^2(\sigma_\omega)}^2=0.
\end{equation}
\item $\sqrt{\omega}\|\mathcal{Q}_\omega\|_{{\dot{\Sigma}}_{\omega;(s,z)}}$, $\|\partial_\theta\mathcal{Q}_\omega\|_{L^2(\frac{1}{\sigma_\omega})}$, $\|\mathcal{Q}_\omega\|_{L^6(\sigma_\omega)}$ and $|\mu_\omega|$ are  bounded uniformly in $\omega$.
\end{enumerate}
\end{proposition}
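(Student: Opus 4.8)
The plan is to prove (i) by the direct method, exploiting that for each fixed $\omega$ the problem is in fact compact, and then to extract (ii) and (iii) from the Euler--Lagrange equation together with the forbidden-region estimate of Corollary~\ref{forbidden region}.

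First I would record that the refined Gagliardo--Nirenberg inequality makes the functional coercive on the constraint set: for admissible $v$ (so $\mathcal{M}_\omega[v]=m$ and $\|v\|_{\dot{\Sigma}_{\omega;(s,z)}}^2\le\delta\omega$), Corollary~\ref{GN1} bounds $\|v\|_{L^4(\sigma_\omega)}^4$ by $C_{GN}\sqrt m\{\sqrt\delta\,\omega\|v\|_{\dot{\Sigma}_\omega}^2+m\|\partial_\theta v\|_{L^2(\frac{1}{\sigma_\omega})}+m^{3/2}\}$, where I used $\|v\|_{\dot{\Sigma}_{\omega;(s,z)}}\le\sqrt\delta\sqrt\omega$. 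Hence, for $\delta=\delta(m,E)$ small the quartic term is absorbed into $\tfrac{\omega}{2}\|v\|_{\dot{\Sigma}_\omega}^2$ and $\mathcal{E}_\omega[v]\ge\tfrac{\omega}{8}\|v\|_{\dot{\Sigma}_\omega}^2-C(m)$. Thus $\mathcal{J}_\omega^{(3D)}(m)>-\infty$ and any minimizing sequence $\{v_n\}$ is bounded in $\Sigma_\omega$. Because the potential $U_\omega(s)+z^2$ is confining on the $(s,z)$-half-plane and $\mathbb{S}^1$ is compact, the embedding $\Sigma_\omega\hookrightarrow L^2(\sigma_\omega)\cap L^4(\sigma_\omega)$ is compact for fixed $\omega$; this is the substance of the concentration-compactness step, ruling out both vanishing and dichotomy. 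Passing to a subsequence, $v_n\rightharpoonup\mathcal{Q}_\omega$ in $\Sigma_\omega$ and $v_n\to\mathcal{Q}_\omega$ in $L^2(\sigma_\omega)\cap L^4(\sigma_\omega)$; the strong $L^2$-convergence preserves the mass constraint, weak lower semicontinuity of $\|\cdot\|_{\dot{\Sigma}_{\omega;(s,z)}}$ keeps $\mathcal{Q}_\omega$ admissible, and lower semicontinuity of the quadratic part together with $L^4$-convergence gives $\mathcal{E}_\omega[\mathcal{Q}_\omega]\le\liminf\mathcal{E}_\omega[v_n]=\mathcal{J}_\omega^{(3D)}(m)$, so $\mathcal{Q}_\omega$ is a minimizer and the convergence upgrades to strong in $\Sigma_\omega$. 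Replacing $\mathcal{Q}_\omega$ by $|\mathcal{Q}_\omega|$ only decreases $\|\cdot\|_{\dot{\Sigma}_{\omega;(s,z)}}$ and $\|\partial_\theta\cdot\|_{L^2(\frac{1}{\sigma_\omega})}$ while leaving the mass and the $L^4$-norm unchanged, so $|\mathcal{Q}_\omega|$ is again a minimizer; elliptic regularity and the strong maximum principle applied to \eqref{elp} then yield a strictly positive representative, proving (i).

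For (ii) the decisive observation is that the additional constraint is \emph{inactive}. Using the factorized trial state $\mathcal{Q}_\infty(\theta)\chi_\omega(s)\Phi_0(s,z)$ as in Remark~\ref{formal derivation} --- whose $(s,z)$-seminorm is $O(1)$ because $(\mathcal{H}_\omega^{(2D)}-\Lambda_\omega)\Phi_\omega=0$ and $\chi_\omega\Phi_0\approx\Phi_\omega$ --- I obtain a uniform upper bound $\mathcal{J}_\omega^{(3D)}(m)\le E$. Fixing this $E$ and the associated $\delta$, Corollary~\ref{forbidden region} forces $\|\mathcal{Q}_\omega\|_{\dot{\Sigma}_\omega}^2\le K/\omega$, hence $\|\mathcal{Q}_\omega\|_{\dot{\Sigma}_{\omega;(s,z)}}^2\le K/\omega\ll\delta\omega$ for large $\omega$, so the constraint $\|v\|_{\dot{\Sigma}_{\omega;(s,z)}}\le\delta\sqrt\omega$ is strictly inactive at $\mathcal{Q}_\omega$. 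Consequently only the mass constraint is active, and the Lagrange multiplier theorem produces a single $\mu_\omega\in\mathbb{R}$ and the equation \eqref{elp}; pairing \eqref{elp} with $\mathcal{Q}_\omega$ in $L^2(\sigma_\omega)$ and integrating by parts in $\theta$ (noting $\sigma_\omega$ is $\theta$-independent, so $-\langle\sigma_\omega^{-2}\partial_\theta^2\mathcal{Q}_\omega,\mathcal{Q}_\omega\rangle_{L^2(\sigma_\omega)}=\|\partial_\theta\mathcal{Q}_\omega\|_{L^2(\frac{1}{\sigma_\omega})}^2$) gives the identity \eqref{po1}.

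Part (iii) is then a harvest of the preceding estimate. The bound $\|\mathcal{Q}_\omega\|_{\dot{\Sigma}_\omega}^2\le K/\omega$ immediately gives $\sqrt\omega\|\mathcal{Q}_\omega\|_{\dot{\Sigma}_{\omega;(s,z)}}\le\sqrt K$ and $\|\partial_\theta\mathcal{Q}_\omega\|_{L^2(\frac{1}{\sigma_\omega})}\le\sqrt K$; feeding these into Corollary~\ref{GN1} bounds $\|\mathcal{Q}_\omega\|_{L^4(\sigma_\omega)}^4$ uniformly, and then \eqref{po1} solved for $\mu_\omega$ (using $\|\mathcal{Q}_\omega\|_{L^2(\sigma_\omega)}^2=m$) yields $|\mu_\omega|\lesssim1$. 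For the $L^6$-bound the naive Sobolev inequality \eqref{translated Sobolev} loses a factor $\omega^{1/6}$, so I would instead split $\mathcal{Q}_\omega=(\mathcal{Q}_\omega)_\parallel\,\tilde{\Phi}_\omega+\mathcal{P}_{\tilde{\Phi}_\omega}^\perp\mathcal{Q}_\omega$: the orthogonal part is controlled by \eqref{translated Sobolev} together with Lemma~\ref{orthogonal complement bound} and Lemma~\ref{asymptotic orthogonality}, giving $\|\mathcal{P}_{\tilde{\Phi}_\omega}^\perp\mathcal{Q}_\omega\|_{L^6(\sigma_\omega)}\lesssim\omega^{1/6}\cdot\omega^{-1/2}\to0$, while the parallel part factorizes as $\|(\mathcal{Q}_\omega)_\parallel\|_{L^6(\mathbb{S}^1)}\|\tilde{\Phi}_\omega\|_{L^6(\sigma_\omega)}\lesssim1$ by the one-dimensional Sobolev embedding on $\mathbb{S}^1$ and the directional bounds of Lemma~\ref{mno1}. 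The main obstacle throughout is the inactivity of the additional constraint: everything clean about (ii) and (iii) --- the single Lagrange multiplier, the uniform $O(1/\omega)$ smallness of $\|\mathcal{Q}_\omega\|_{\dot{\Sigma}_\omega}^2$, and the uniform $L^4$ and $L^6$ bounds --- hinges on Corollary~\ref{forbidden region}, which in turn is driven by the refined Gagliardo--Nirenberg inequality.
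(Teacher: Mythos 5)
Your proposal follows essentially the same route as the paper: coercivity on the constraint set via the refined Gagliardo--Nirenberg inequality (the paper's Corollary \ref{forbidden region}), compactness from the confining potential plus Rellich--Kondrachov, positivity via $\mathcal{E}_\omega(|\mathcal{Q}_\omega|)\le\mathcal{E}_\omega(\mathcal{Q}_\omega)$ and the maximum principle, the inactive-constraint argument for the single Lagrange multiplier (the paper's Lemma \ref{upper} and Remark \ref{remark: refined constraint1}), and the identical parallel/orthogonal splitting with \eqref{translated Sobolev}, Lemma \ref{mno1}, Lemma \ref{orthogonal complement bound} and \eqref{asymptotic Pythagorean identity 3} for the uniform $L^6$ bound. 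One imprecision to fix: for the trial state $Q_\infty(\theta)\chi_\omega(s)\Phi_0(s,z)$, an $(s,z)$-seminorm bound of $O(1)$ is \emph{not} enough to conclude $\mathcal{J}_\omega^{(3D)}(m)\le E$ uniformly, since the energy carries the factor $\tfrac{\omega}{2}\|\cdot\|_{\dot{\Sigma}_{\omega;(s,z)}}^2$; you need the seminorm to be $O(\omega^{-1/2})$. This stronger bound does hold, but because $(\mathcal{H}_\omega^{(2D)}-\Lambda_\omega)\Phi_0\neq 0$ it requires the quantitative convergence $\|\Phi_\omega-\Phi_0\|=O(\omega^{-1/2})$ in the weighted norms of Proposition \ref{1d eigenstate} $(iii)$; the paper sidesteps this by taking $\chi_\omega\Phi_\omega Q_\infty$ as the trial state, for which the eigenfunction equation kills the seminorm up to exponentially small cutoff errors.
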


\begin{remark}
As mentioned in the introduction, the energy minimization only under a mass constraint is super-critical in that the energy is not bounded from below, but it is also considered partially sub-critical in a certain regime $(\omega\gg1)$. To capture the sub-critical nature of the problem, the additional constraint $\|v\|_{{\dot{\Sigma}}_{\omega;(s,z)}}\leq\delta\sqrt{\omega}$ is imposed, and we analyze the energy minimization  problem using the refined Gagliardo-Nirenberg inequality (Corollary \ref{GN1}). We refer to \cite{BBJV, LY, HJ1, HJ2} for similar settings.
\end{remark}

For the proof, we first show a uniform negative upper bound on the lowest energy. 
 
\begin{lemma}[Negative minimum energy]\label{upper}
For sufficiently large $\omega\geq1$, we have
$$\mathcal{J}_\omega^{(3D)}(m)\le J_\infty^{(1D)}(m)+O(\omega^{-\frac12})<0.$$
\end{lemma}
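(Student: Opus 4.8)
The plan is to bound $\mathcal{J}_\omega^{(3D)}(m)$ from above by evaluating the modified energy on an explicit admissible trial function built from the known $1$D ground state $Q_\infty$ and the truncated $2$D lowest eigenstate $\tilde{\Phi}_\omega$ of Section~\ref{subsec: truncated projection}. Concretely, I would take
$$v_\omega(s,z,\theta):=Q_\infty(\theta)\,\tilde{\Phi}_\omega(s,z),\qquad \tilde{\Phi}_\omega=\frac{\chi_\omega\Phi_\omega}{\|\chi_\omega\Phi_\omega\|_{L^2(\sigma_\omega dsdz)}}.$$
Since $\tilde{\Phi}_\omega$ is normalized in $L^2(\sigma_\omega dsdz)$ and $\|Q_\infty\|_{L^2(\mathbb{S}^1)}^2=m$, the product satisfies the mass constraint $\mathcal{M}_\omega[v_\omega]=m$ exactly, so the only remaining admissibility check is $\|v_\omega\|_{\dot{\Sigma}_{\omega;(s,z)}}\le\delta\sqrt{\omega}$, which will drop out of the estimate below.

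Next, I would evaluate the three pieces of $\mathcal{E}_\omega[v_\omega]$ (recall \eqref{modified energy'} with $\kappa=-1$) by separation of variables. For the confinement term the crucial fact is $(\mathcal{H}_\omega^{(2D)}-\Lambda_\omega)\Phi_\omega=0$ (Corollary~\ref{2d eigenstate}), so $\|v_\omega\|_{\dot{\Sigma}_{\omega;(s,z)}}^2=\|Q_\infty\|_{L^2(\mathbb{S}^1)}^2\langle(\mathcal{H}_\omega^{(2D)}-\Lambda_\omega)\tilde{\Phi}_\omega,\tilde{\Phi}_\omega\rangle_{L^2(\sigma_\omega dsdz)}$, and the only contribution comes from $\chi_\omega',\chi_\omega''$, supported on $[-\sqrt{\omega}+1,-\sqrt{\omega}]$ where $\Phi_\omega$ is $O(e^{-c\omega})$ by the Gaussian decay of $\phi_\omega$ (Proposition~\ref{1d eigenstate}~$(ii)$); hence $\|v_\omega\|_{\dot{\Sigma}_{\omega;(s,z)}}^2=O(e^{-c\omega})$, which both verifies admissibility and makes $\frac{\omega}{2}\|v_\omega\|_{\dot{\Sigma}_{\omega;(s,z)}}^2=O(\omega e^{-c\omega})\to0$. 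For the angular term, $\frac12\|\partial_\theta v_\omega\|_{L^2(\frac{1}{\sigma_\omega})}^2=\frac12\|\partial_\theta Q_\infty\|_{L^2(\mathbb{S}^1)}^2\,\|\tilde{\Phi}_\omega\|_{L^2(\frac{1}{\sigma_\omega}dsdz)}^2=\frac12\|\partial_\theta Q_\infty\|_{L^2(\mathbb{S}^1)}^2\big(1+O(\omega^{-1/2})\big)$ by the second estimate in \eqref{tue3}. For the quartic term, $\frac14\|v_\omega\|_{L^4(\sigma_\omega)}^4=\frac14\|Q_\infty\|_{L^4(\mathbb{S}^1)}^4\,\|\tilde{\Phi}_\omega\|_{L^4(\sigma_\omega dsdz)}^4$, and I would show $\|\tilde{\Phi}_\omega\|_{L^4(\sigma_\omega dsdz)}^4=\|\Phi_0\|_{L^4(\mathbb{R}^2)}^4+O(\omega^{-1/2})=\frac{1}{2\pi}+O(\omega^{-1/2})$ using $\phi_\omega\to\phi_\infty$ in $L^4$ (\eqref{1d l4conv}), $\sigma_\omega\to1$, and the exponential smallness of the truncation and normalization corrections.

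Collecting the three contributions gives $\mathcal{E}_\omega[v_\omega]=\frac12\|\partial_\theta Q_\infty\|_{L^2(\mathbb{S}^1)}^2-\frac{1}{8\pi}\|Q_\infty\|_{L^4(\mathbb{S}^1)}^4+O(\omega^{-1/2})=E_\infty[Q_\infty]+O(\omega^{-1/2})=J_\infty^{(1D)}(m)+O(\omega^{-1/2})$, the desired upper bound. For the strict inequality, I would test $E_\infty$ against the constant $w\equiv\sqrt{m/(2\pi)}$ on $\mathbb{S}^1$ (which has mass $m$): its angular derivative vanishes, so $E_\infty[w]=-\frac{1}{8\pi}\|w\|_{L^4(\mathbb{S}^1)}^4=-\frac{m^2}{16\pi^2}<0$, whence $J_\infty^{(1D)}(m)\le-\frac{m^2}{16\pi^2}$ is bounded away from $0$ uniformly in $\omega$; thus for $\omega$ large the $O(\omega^{-1/2})$ error is dominated and $\mathcal{J}_\omega^{(3D)}(m)<0$.

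The main obstacle is the confinement term: the prefactor $\omega$ magnifies any error of size $\omega^{-1/2}$ into a divergent $\omega^{1/2}$ contribution, so a naive trial function built from the Hermite state $\Phi_0$ (rather than the exact eigenstate $\Phi_\omega$) would fail. The resolution — and the one point needing careful justification — is that using the genuine eigenstate $\Phi_\omega$ forces the quadratic form to vanish identically, leaving only the exponentially small truncation error, which survives multiplication by $\omega$. Everything else is routine once the spectral facts of Section~\ref{sec: linear analysis} and the truncation estimates of Section~\ref{subsec: truncated projection} are invoked.
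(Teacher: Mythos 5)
Your proof is correct and takes essentially the same route as the paper: both test the constrained problem with the truncated exact eigenstate $\chi_\omega\Phi_\omega$ multiplied by $Q_\infty$, use $(\mathcal{H}_\omega^{(2D)}-\Lambda_\omega)\Phi_\omega=0$ plus the Gaussian decay of $\phi_\omega$ to make the $\omega$-weighted confinement term negligible (exponentially small truncation error only), invoke \eqref{tue3} and the $L^4$ convergence of $\phi_\omega$ for the angular and quartic terms, and obtain strict negativity from the constant function $\sqrt{m/(2\pi)}$. The only differences are cosmetic --- you normalize the transverse factor first so the mass constraint holds exactly, while the paper normalizes the full product at the end --- apart from one harmless slip: $\chi_\omega'$ and $\chi_\omega''$ are supported in $[-\sqrt{\omega}+1,-\sqrt{\omega}+2]$, not $[-\sqrt{\omega}+1,-\sqrt{\omega}]$, which does not affect the estimate.
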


\begin{proof}
We employ $\chi_\omega\Phi_\omega Q_\infty=\chi_\omega(s)\Phi_\omega(s,z) Q_\infty(\theta)$, where $ Q_\infty$ is the ground state for 1D circle problem $J_\infty^{(1D)}(m)$, $\Phi(s,z)=\phi_\omega(s)\phi_\infty(z)$, $\chi_\omega=\chi(\cdot+\sqrt{\omega})$ and $\chi:[0,\infty)\to[0,1]$ is given by $\chi\equiv0$ on $[0,1]$ and $\chi\equiv1$ on $[2,\infty)$. Then, it follows from the fast decay of $\phi_\omega$ (Proposition \ref{1d eigenstate} $(ii)$) and $(\mathcal{H}_\omega^{(2D)}-\Lambda_\omega)\Phi_\omega=0$ that $\|\chi_\omega\Phi_\omega Q_\infty\|_{L^2(\sigma_\omega)}^2=m+O(\omega^{-\frac{1}{2}})$, $\|\chi_\omega\Phi_\omega Q_\infty\|_{{\dot{\Sigma}}_{\omega; (s,z)}}=O(\omega^{-\frac{1}{2}})$ and $\mathcal{E}_\omega(\chi_\omega\Phi_\omega Q_\infty)=\mathcal{E}_\omega(\Phi_\omega Q_\infty)+O(\omega^{-\frac12})=\mathcal{E}_\infty( Q_\infty)+O(\omega^{-\frac12})$. Therefore, we conclude that $ \mathcal{J}_\omega^{(3D)}(m)\leq \mathcal{E}_\omega(\frac{\chi_\omega\Phi_\omega Q_\infty}{\|\chi_\omega\Phi_\omega Q_\infty\|_{L^2(\sigma_\omega)}})=E_\infty( Q_\infty)+O(\omega^{-\frac12})=J_\infty^{(1D)}(m)+O(\omega^{-\frac12})$. In addition, using the constant function $\sqrt{\frac{m}{2\pi}}$, we get $J_\infty^{(1D)}(m)<0$ (refer to \cite[Proposition 3.2]{GLT}).
 \end{proof}

\begin{remark}\label{remark: refined constraint1} 
 Lemma \ref{upper} assures that Corollary \ref{forbidden region} can be applied to a minimizing sequence for $\mathcal{J}_\omega^{(3D)}(m)$. Consequently, the minimum energy level is finite, and the  constraint $\|v\|_{\dot{\Sigma}_{\omega;(s,z)}}\leq\delta\sqrt{\omega}$ is immediately strengthened to $\|v\|_{\dot{\Sigma}_\omega}\leq\sqrt{K(m)}\omega^{-\frac{1}{2}}$. Therefore, a minimizer (if it exists) does not meet the boundary $\|v\|_{\dot{\Sigma}_{\omega;(s,z)}}=\delta\sqrt{\omega}$ of the constraint so that the Euler-Lagrange equation can be expressed with one multiplier rather than two.
\end{remark}
 
\begin{proof}[Proof of Proposition \ref{existence of a minimizer}]
Suppose that $\omega\geq1$ is sufficiently large. Let $\{v_n\}_{n=1}^\infty$ be  a minimizing sequence for  $\mathcal{J}_\omega^{(3D)}(m)$, that is, $\mathcal{E}_\omega(v_n)\to\mathcal{J}_\omega^{(3D)}(m)$ as $n\to \infty$. Then, Lemmas \ref{upper} and \ref{forbidden region} imply that $\|v_n\|_{ \Sigma_\omega}$ is bounded  uniformly in $n$ (see \eqref{norm} for the definition of the norm $\|\cdot\|_{ \Sigma_\omega}$), and thus we may assume that $v_n\rightharpoonup v$ in $\Sigma_\omega$ as $n\rightarrow \infty$.

We claim that 
\begin{equation}\label{mnoo2}
\lim_{n\to\infty}\|v_n-v\|_{L^2(\sigma_\omega)}=0.
\end{equation}
Indeed, by the assumption on $U_\omega$ \textbf{\textup{(H2)}}, for any $\epsilon>0$, there exists $R_\epsilon>0$, independent of $n$, such that
\begin{align*}
&\int_{\mathbb{S}^1}\iint_{([-\sqrt{\omega},\infty)\times \R )\setminus [-R_\epsilon,R_\epsilon]^2}   |v_n-v|^2 \sigma_\omega(s)dsdzd\theta\\
&\leq \epsilon\int_{\mathbb{S}^1}\int_{-\infty}^\infty\int_{-\sqrt{\omega}}^\infty  (U_\omega(s)+z^2)|v_n-v|^2 \sigma_\omega(s)dsdzd\theta\\
&\leq \epsilon \|v_n-v\|_{\Sigma_{\omega}}\lesssim\epsilon \sup_{n\in \mathbb{N}}\|v_n\|_{\Sigma_{\omega}},
\end{align*}
where the implicit constant is independent of $n$. On the other hand, it follows from the Rellich-Kondrachov compactness theorem\footnote{By $v_n(s,z,\theta)=\omega^{\frac{1}{4}}u_n(s+\sqrt{\omega},z, \theta)$ (see \eqref{u-v relation}), $\|u_n\|_{H^1(\R^3)}\le \|v_n\|_{ \Sigma_\omega}.$ Then we have $u_n\to u$ in $L^2_{loc}(\R^3)$, and this implies that $\|\mathbf{1}_{(s,z)\in [-R_\epsilon,R_\epsilon]^2\cap ([-\sqrt{\omega},\infty)\times \R)}(v_n-v)\|_{L^2(\sigma_\omega)}\to0$.}  that 
$$\|\mathbf{1}_{(s,z)\in [-R_\epsilon,R_\epsilon]^2\cap ([-\sqrt{\omega},\infty)\times \R)}(v_n-v)\|_{L^2(\sigma_\omega)}\to0.$$
 Thus, the claim \eqref{mnoo2} follows.

By the claim \eqref{mnoo2}, we have $\|v\|_{L^2(\sigma_\omega)}^2=m$. Moreover, the Gagliardo-Nirenberg inequality (Corollary \ref{GN1}) implies $v_n\to v$ in $ L^4(\sigma_\omega)$. Hence, it follows from the weak convergence $v_n\rightharpoonup v$ in $\Sigma_\omega$ that 
\begin{align*}
 \mathcal{J}_\omega^{(3D)}(m)+o_n(1)&= \mathcal{E}_\omega(v_n)= \frac{\omega}{2}\|v_n\|_{{\dot{\Sigma}}_{\omega}}^2-\frac{1}{4}\|v_n\|_{L^4(\sigma_\omega)}^4\\
&= \mathcal{E}_\omega(v)+ \mathcal{E}_\omega(v_n-v)  +o_n(1)\\
&\ge \mathcal{J}_\omega^{(3D)}(m)+ \frac{\omega}{2}\|v_n-v\|_{{\dot{\Sigma}}_{\omega}}^2+o_n(1).
\end{align*}
Therefore, we conclude that $\|v_n-v\|_{ \Sigma_\omega}\to 0$ and the limit $v$ is a minimizer.

We denote a minimizer $v$ by $\mathcal{Q}_\omega$. Then, by Remark \ref{remark: refined constraint1} and integration by parts, one can derive the Euler-Lagrange equation \eqref{elp} and the identity \eqref{po1}. 
   On the other hand,  we recall that by the relation \eqref{u-v relation}, $R_\omega(x)=R_\omega(r,z,\theta)=\omega^{-\frac14}\mathcal{Q}_\omega(r-\sqrt{\omega},z,\theta)$ solves the equivalent nonlinear elliptic equation 
\begin{equation}\label{requ}
 (-\Delta_x+U(|y|-\sqrt{\omega})+z^2-\Lambda_\omega) R_\omega-\omega^{-\frac12}|R_\omega|^2R_\omega=-\frac{\mu_\omega}{\omega} R_\omega
\textup{ in } \R^3.
\end{equation}
Then we may assume that $\mathcal{Q}_\omega$ is non-negative, because $ \mathcal{E}_\omega(\mathcal{Q}_\omega)\geq\mathcal{E}_\omega(|\mathcal{Q}_\omega|)$.  Thus, the strong maximum principle can be applied to $R_\omega$ to prove that $\mathcal{Q}_\omega$ is positive.

For $(iii)$, we observe from Corollary \ref{forbidden region}, Lemma \ref{upper} and the refined Gagliardo-Nirenberg inequality (Corollary \ref{GN1}) that $\sqrt{\omega}\|\mathcal{Q}_\omega\|_{{\dot{\Sigma}}_\omega}$ and $\|\mathcal{Q}_\omega\|_{L^4(\sigma_\omega)}$ are uniformly bounded, and so is $|\mu_\omega|$ by the identity \eqref{po1}. It remains to estimate $\|\mathcal{Q}_\omega\|_{L^6(\sigma_\omega)}$.  We will estimate $ \mathcal{P}_{\tilde{\Phi}_\omega} \mathcal{Q}_\omega$ and $\mathcal{P}_{\tilde{\Phi}_\omega}^\perp\mathcal{Q}_\omega$ separately. Indeed, it is obvious that 
$$\|\mathcal{P}_{\tilde{\Phi}_\omega} \mathcal{Q}_\omega\|_{L^6(\sigma_\omega)}= \|\mathcal{Q}_{\omega,\parallel}\|_{L^6(\mathbb{S}^1)}\|\Phi_{\omega} \chi_\omega \|_{L^2(\sigma_\omega dsdz)}^{-1}\|\Phi_{\omega} \chi_\omega \|_{L^6(\sigma_\omega dsdz)}\lesssim \|\mathcal{Q}_{\omega,\parallel}\|_{L^6(\mathbb{S}^1)}.$$
Hence, it follows from the Gagliardo-Nirenberg inequality on $\mathbb{S}^1$  \eqref{circle GN ineq}, the $v_\parallel$-bounds (Lemma \ref{mno1}) and  uniform boundedness of $\|\partial_\theta \mathcal{Q}_{\omega}\|_{L^2(\frac{1}{\sigma_\omega})}$ that
$$\begin{aligned}
\|\mathcal{P}_{\tilde{\Phi}_\omega} \mathcal{Q}_\omega\|_{L^6(\sigma_\omega)}^6&\lesssim \|\mathcal{Q}_{\omega,\parallel}\|_{L^2(\mathbb{S}^1)}^4\|\partial_\theta \mathcal{Q}_{\omega,\parallel}\|_{L^2(\mathbb{S}^1)}^2+\|\mathcal{Q}_{\omega,\parallel}\|_{L^2(\mathbb{S}^1)}^6\\
&\lesssim \|\mathcal{Q}_\omega\|_{L^2(\sigma_\omega)}^4\|\partial_\theta \mathcal{Q}_{\omega}\|_{L^2(\frac{1}{\sigma_\omega})}^2+\|\mathcal{Q}_\omega\|_{L^2(\sigma_\omega)}^6\lesssim 1.
\end{aligned}$$
For $\mathcal{P}_{\tilde{\Phi}_\omega}^\perp\mathcal{Q}_\omega$, by the Sobolev inequality \eqref{translated Sobolev}, we obtain
$$\|\mathcal{P}_{\tilde{\Phi}_\omega}^\perp\mathcal{Q}_\omega\|_{L^6(\sigma_\omega)}\lesssim \omega^{-\frac{1}{3}}\left\{\omega\|\nabla_{(s,z)}\mathcal{P}_{\tilde{\Phi}_\omega}^\perp\mathcal{Q}_\omega\|_{L^2(\sigma_\omega)}^2+\|\partial_\theta \mathcal{P}_{\tilde{\Phi}_\omega}^\perp\mathcal{Q}_\omega\|_{L^2(\frac{1}{\sigma_\omega})}^2\right\}^{\frac{1}{2}}.$$
Then, applying   Lemma \ref{orthogonal complement bound} to the first term and the asymptotic Pythagorean identity \eqref{asymptotic Pythagorean identity 3} to the second term on the right hand side of the above inequality, we prove that
$$\begin{aligned}
\|\mathcal{P}_{\tilde{\Phi}_\omega}^\perp\mathcal{Q}_\omega\|_{L^6(\sigma_\omega)}&\lesssim \omega^{-\frac{1}{3}}\bigg\{\omega\|\mathcal{Q}_\omega\|_{\dot{\Sigma}_\omega}^2 +\omega^{-\frac{1}{2}}\bigg\}^{\frac{1}{2}}.
\end{aligned}$$
Therefore, $\|\mathcal{P}_{\tilde{\Phi}_\omega}^\perp\mathcal{Q}_\omega\|_{L^6(\sigma_\omega)}$ is also uniformly bounded.
\end{proof}

\subsection{Uniform Gaussian decay of an energy minimizer}\label{uniformgaus}

The purpose of this subsection is to provide more detailed information about an energy minimizer constructed in Proposition \ref{existence of a minimizer}.

\begin{proposition}[Uniform Gaussian weighted $L^2$ bounds]\label{Gaussian L^2 bounds}
  For any $c\in (0,\alpha_0)$, if $\omega\geq 1$ is large enough, then we have
\begin{align}
\|e^{c(s^2+z^2)}\mathcal{Q}_\omega\|_{L^2(\sigma_\omega)}^2+\|e^{c(s^2+z^2)}\mathcal{Q}_\omega\|_{\dot{\Sigma}_{\omega;(s,z)}}^2+\frac{1}{\omega}\|e^{c(s^2+z^2)}\partial_\theta\mathcal{Q}_\omega\|_{L^2(\frac{1}{\sigma_\omega})}^2&\lesssim1,\label{Gaussian L^2 bound 1}\\
\|\sigma_\omega e^{c(s^2+z^2)} ( \mathcal{H}_\omega^{(2D)}-\Lambda_\omega)\mathcal{Q}_\omega\|_{L^2(\sigma_\omega)}^2+\frac{1}{\omega^2}\|e^{c(s^2+z^2)}\partial_\theta^2\mathcal{Q}_\omega\|_{L^2(\frac{1}{\sigma_\omega})}^2&\lesssim1,\label{Gaussian L^2 bound 2}
\end{align}
where  $\alpha_0>0$ is a constant given in Proposition \ref{1d eigenstate}.
\end{proposition}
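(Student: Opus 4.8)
The plan is to prove both bounds by the Agmon-type weighted energy method already used for the exponential decay in the proof of Proposition \ref{1d eigenstate}$(ii)$, now applied to the nonlinear Euler--Lagrange equation \eqref{elp} rather than to a linear eigenvalue equation. Fix $c\in(0,\alpha_0)$ and introduce the truncated Gaussian weight $g=g_L=e^{f_L}$ with $f_L=c(s^2+z^2)\eta(\cdot/L)$, where $\eta$ is the cutoff from the proof of Proposition \ref{1d eigenstate}$(ii)$; working with $g_L$ keeps every integral finite, and the stated bounds follow by letting $L\to\infty$. Testing \eqref{elp} against $g^2\mathcal{Q}_\omega$ in $L^2(\sigma_\omega)$ and invoking the ground-state substitution identity $\langle(\mathcal{H}_\omega^{(2D)}-\Lambda_\omega)(g\mathcal{Q}_\omega),g\mathcal{Q}_\omega\rangle_{L^2(\sigma_\omega)}=\langle(\mathcal{H}_\omega^{(2D)}-\Lambda_\omega)\mathcal{Q}_\omega,g^2\mathcal{Q}_\omega\rangle_{L^2(\sigma_\omega)}+\big\||\nabla_{(s,z)}g|\,\mathcal{Q}_\omega\big\|_{L^2(\sigma_\omega)}^2$ (valid since $g$ is $\theta$-independent and the first-order part of $\mathcal{H}_\omega^{(2D)}$ is absorbed into the divergence-form weight $\sigma_\omega$), I arrive at $\|g\mathcal{Q}_\omega\|_{\dot{\Sigma}_{\omega;(s,z)}}^2+\tfrac1\omega\|g\,\partial_\theta\mathcal{Q}_\omega\|_{L^2(\frac{1}{\sigma_\omega})}^2=\big\||\nabla_{(s,z)}g|\,\mathcal{Q}_\omega\big\|_{L^2(\sigma_\omega)}^2+\tfrac1\omega\langle|\mathcal{Q}_\omega|^2\mathcal{Q}_\omega,g^2\mathcal{Q}_\omega\rangle_{L^2(\sigma_\omega)}-\tfrac{\mu_\omega}{\omega}\|g\mathcal{Q}_\omega\|_{L^2(\sigma_\omega)}^2$, in which the angular term enters the left-hand side with a favorable sign.

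The left-hand side already carries the first two quantities in \eqref{Gaussian L^2 bound 1}, so it remains to dominate the right-hand side. By \textbf{\textup{(H2)}} and $\Lambda_\omega\to2$, the quadratic form controls the confining potential, $\|g\mathcal{Q}_\omega\|_{\dot{\Sigma}_{\omega;(s,z)}}^2\gtrsim\int(c_1(s^2+z^2)-C)g^2\mathcal{Q}_\omega^2\sigma_\omega$, while the weight-derivative term equals $\int 4c^2(s^2+z^2)g^2\mathcal{Q}_\omega^2\sigma_\omega$; taking $c$ small so that $4c^2<c_1$ lets me absorb the latter into the former, up to a bounded contribution from the compact core $\{s^2+z^2\le R^2\}$ (there $g\le e^{cR^2}$ and $\mathcal{Q}_\omega\in L^2(\sigma_\omega)$). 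The linear term $\tfrac{|\mu_\omega|}{\omega}\|g\mathcal{Q}_\omega\|_{L^2(\sigma_\omega)}^2$ is harmless: $|\mu_\omega|$ is uniformly bounded (Proposition \ref{existence of a minimizer}$(iii)$), the core part is $O(\omega^{-1})$, and outside the core the factor $s^2+z^2\ge R^2$ dominates it. The genuinely delicate term is the cubic one $\tfrac1\omega\int\mathcal{Q}_\omega^2(g\mathcal{Q}_\omega)^2\sigma_\omega$, which I estimate by Hölder, interpolation and the Sobolev inequality \eqref{translated Sobolev}: $\int\mathcal{Q}_\omega^2(g\mathcal{Q}_\omega)^2\sigma_\omega\lesssim\|\mathcal{Q}_\omega\|_{L^6(\sigma_\omega)}^2\|g\mathcal{Q}_\omega\|_{L^2(\sigma_\omega)}\|g\mathcal{Q}_\omega\|_{L^6(\sigma_\omega)}\lesssim\omega^{\frac16}\|\mathcal{Q}_\omega\|_{L^6(\sigma_\omega)}^2\|g\mathcal{Q}_\omega\|_{L^2(\sigma_\omega)}\big(\|g\mathcal{Q}_\omega\|_{\dot{\Sigma}_{\omega}}+\|g\mathcal{Q}_\omega\|_{L^2(\sigma_\omega)}\big)$. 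With the prefactor $\omega^{-1}$, the uniform $L^6$ bound of Proposition \ref{existence of a minimizer}$(iii)$ and Young's inequality, this term carries a net small power of $\omega$ and is absorbed into the left-hand side, the factor $\|g\mathcal{Q}_\omega\|_{L^2(\sigma_\omega)}^2$ being controlled in turn by the confining term via $\|g\mathcal{Q}_\omega\|_{L^2(\sigma_\omega)}^2\lesssim1+R^{-2}\int(s^2+z^2)g^2\mathcal{Q}_\omega^2\sigma_\omega$. Letting $L\to\infty$ yields \eqref{Gaussian L^2 bound 1}.

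For \eqref{Gaussian L^2 bound 2} I first observe that, by the equation \eqref{elp}, $\omega\,\sigma_\omega e^{c(s^2+z^2)}(\mathcal{H}_\omega^{(2D)}-\Lambda_\omega)\mathcal{Q}_\omega=e^{c(s^2+z^2)}\tfrac1{\sigma_\omega}\partial_\theta^2\mathcal{Q}_\omega+\sigma_\omega e^{c(s^2+z^2)}\big(|\mathcal{Q}_\omega|^2\mathcal{Q}_\omega-\mu_\omega\mathcal{Q}_\omega\big)$, so that the first quantity in \eqref{Gaussian L^2 bound 2} is controlled once the second, $\tfrac1{\omega^2}\|e^{c(s^2+z^2)}\partial_\theta^2\mathcal{Q}_\omega\|_{L^2(\frac{1}{\sigma_\omega})}^2$, is bounded, the remaining cubic and linear pieces being handled by \eqref{Gaussian L^2 bound 1}, Proposition \ref{existence of a minimizer}$(iii)$ and the extra $\omega^{-1}$. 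To bound the $\partial_\theta^2$ term I differentiate \eqref{elp} in $\theta$, obtaining for $P:=\partial_\theta\mathcal{Q}_\omega$ the equation $\omega(\mathcal{H}_\omega^{(2D)}-\Lambda_\omega)P-\tfrac1{\sigma_\omega^2}\partial_\theta^2P-3\mathcal{Q}_\omega^2P=-\mu_\omega P$, and run the same weighted energy estimate testing against $g^2P$. The angular term now produces exactly $\tfrac1{\omega^2}\|g\,\partial_\theta^2\mathcal{Q}_\omega\|_{L^2(\frac{1}{\sigma_\omega})}^2$ on the left; the weight-derivative term is absorbed as before, using the $(s^2+z^2)$-weighted consequence of \eqref{Gaussian L^2 bound 1} to control the core contribution, and the cubic term $\tfrac1\omega\int\mathcal{Q}_\omega^2 g^2P^2\sigma_\omega$ is absorbed exactly as above. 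This delivers the second bound, and hence both bounds in \eqref{Gaussian L^2 bound 2}.

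The main obstacle throughout is the cubic term in the weighted energy identity: because the weight $g$ grows like a Gaussian, $\int\mathcal{Q}_\omega^2(g\mathcal{Q}_\omega)^2\sigma_\omega$ cannot be bounded termwise and must be reabsorbed into the left-hand side, which is only possible thanks to the precise interplay of the uniform $L^6$ bound, the explicit $\omega^{1/6}$ loss in the Sobolev inequality \eqref{translated Sobolev}, and the $\omega^{-1}$ prefactor coming from the scaling of the nonlinearity. Keeping track of these powers of $\omega$ together with the truncation parameter $L$, while simultaneously splitting off the compact core to neutralize the $\mu_\omega$ term, is the technical heart of the argument.
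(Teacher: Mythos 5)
Your proposal is correct in outline and rests on the same backbone as the paper's proof: the Agmon-type substitution identity $\langle(\mathcal{H}_\omega^{(2D)}-\Lambda_\omega)(g\mathcal{Q}_\omega),g\mathcal{Q}_\omega\rangle_{L^2(\sigma_\omega)}=\langle(\mathcal{H}_\omega^{(2D)}-\Lambda_\omega)\mathcal{Q}_\omega,g^2\mathcal{Q}_\omega\rangle_{L^2(\sigma_\omega)}+\||\nabla g|\mathcal{Q}_\omega\|_{L^2(\sigma_\omega)}^2$ applied to the Euler--Lagrange equation \eqref{elp} with truncated Gaussian weights $g_L=e^{f_L}$, followed by $L\to\infty$; your identity for $\|g\mathcal{Q}_\omega\|_{\dot{\Sigma}_\omega}^2$ is exactly the paper's \eqref{3d elliptic eq lemma claim proof}. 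Where you genuinely diverge is in two places. First, the cubic term: the paper never estimates it by integration at all --- it first proves the $L^\infty$ bound $\|\mathcal{Q}_\omega\|_{L^\infty}\lesssim\omega^{1/4}$ (Lemma \ref{L^infty bound}, via the semigroup Sobolev inequality, Lemma \ref{semigroup Sobolev}), so that $\frac{\mathcal{Q}_\omega^2-\mu_\omega}{\omega}=O(\omega^{-1/2})$ is a pointwise-small perturbation of the confining potential and the proof collapses to the purely linear Agmon argument of Proposition \ref{1d eigenstate}$(ii)$. You instead keep the cubic integral and absorb it by H\"older ($L^6\times L^2\times L^6$), the uniform $L^6$ bound of Proposition \ref{existence of a minimizer}$(iii)$, the scaled Sobolev inequality \eqref{translated Sobolev}, and Young's inequality; the power count ($\omega^{-1}\cdot\omega^{1/6}$ against the left-hand side) indeed closes, so this works, at the price of heavier bookkeeping and a more delicate simultaneous absorption of the confining, weight-derivative, and $\mu_\omega$ terms. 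Second, for \eqref{Gaussian L^2 bound 2} the paper does \emph{not} differentiate the equation: it conjugates \eqref{elp'} by $e^{c(s^2+z^2)}$, multiplies by $\sigma_\omega$, bounds the resulting right-hand side in $L^2(\sigma_\omega)$ by \eqref{Gaussian L^2 bound 1} (with a larger Gaussian exponent $c_0\in(c,\alpha_0)$) and the $L^\infty$ bound, and then splits the two left-hand terms using the non-negativity of the cross inner product $\langle(\mathcal{H}_\omega^{(2D)}-\Lambda_\omega)v,-\partial_\theta^2 v\rangle_{L^2(\sigma_\omega)}=\|\partial_\theta v\|_{\dot{\Sigma}_{\omega;(s,z)}}^2\geq0$ --- the same trick it reuses for Proposition \ref{uniform angle derivative bounds}. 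Your route, differentiating \eqref{elp} in $\theta$ and rerunning the weighted estimate on $P=\partial_\theta\mathcal{Q}_\omega$ (legitimate since $\mathcal{H}_\omega^{(2D)}$ and $\sigma_\omega^{-2}\partial_\theta^2$ commute with $\partial_\theta$), also delivers $\frac{1}{\omega^2}\|e^{c(s^2+z^2)}\partial_\theta^2\mathcal{Q}_\omega\|_{L^2(\frac{1}{\sigma_\omega})}^2\lesssim1$ and then the first bound directly from the equation. What each approach buys: the paper's is shorter and more robust once the semigroup lemma is in hand, and requires no regularity/finiteness discussion for $\theta$-derivatives beyond what the equation itself provides; yours is more self-contained (no Schr\"odinger semigroup theory), but you should acknowledge that testing the differentiated equation against $g_L^2\partial_\theta\mathcal{Q}_\omega$ presupposes, for each fixed $\omega$, the qualitative finiteness of $\|g_L\partial_\theta^2\mathcal{Q}_\omega\|_{L^2(\frac{1}{\sigma_\omega})}$ (available from elliptic regularity and exponential decay of solutions of \eqref{requ} at fixed $\omega$), and that the constant $\alpha_0$ must be small enough that $4c^2$ is dominated by the quadratic lower bound in \textbf{\textup{(H2)}} --- which is exactly how $\alpha_0$ is chosen in Proposition \ref{1d eigenstate}.
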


\begin{remark}\label{Gaussian L^2 bound remark}
\begin{enumerate}[$(i)$]
\item Proposition \ref{Gaussian L^2 bounds} proves very strong localization near the ring $s=0$ and $z=0$ (or $|y|=\sqrt{\omega}$ and $z=0$).
\item By the definition of the semi-norm (see \eqref{seminorm}), $\|e^{c(s^2+z^2)}\nabla_{(s,z)}\mathcal{Q}_\omega\|_{L^2(\sigma_\omega)}\lesssim1$.
\item For the angle derivatives, we only have inequalities $\|e^{c(s^2+z^2)}\partial_\theta\mathcal{Q}_\omega\|_{L^2(\frac{1}{\sigma_\omega})}\lesssim\sqrt{\omega}$ and $\|e^{c(s^2+z^2)}\partial_\theta^2\mathcal{Q}_\omega\|_{L^2(\frac{1}{\sigma_\omega})}\lesssim\omega$ with increasing upper bounds. However, we still have good localization; there exists $K>0$ such that 
$$\|\mathbf{1}_{|(s,z)|\geq K\sqrt{\ln \omega}}\partial_\theta\mathcal{Q}_\omega\|_{L^2(\frac{1}{\sigma_\omega})}\lesssim \omega^{-cK^2+\frac12}, \|\mathbf{1}_{|(s,z)|\geq K\sqrt{\ln \omega}}\partial_\theta^2\mathcal{Q}_\omega\|_{L^2(\frac{1}{\sigma_\omega})}\lesssim \omega^{-cK^2+1}.
$$
\end{enumerate}
\end{remark}

As mentioned in Remark \ref{Gaussian L^2 bound remark}, the angle derivative bounds in Proposition \ref{Gaussian L^2 bounds} are rather weak near the ring $s=0$ and $z=0$. For the first-order derivative, we, on the other hand, have $\|\partial_\theta\mathcal{Q}_\omega\|_{L^2(\frac{1}{\sigma_\omega})}\lesssim1$ (Proposition \ref{existence of a minimizer} $(iii)$). The following result refines some crudeness for the second-order derivative.

\begin{proposition}[Uniform $L^2(\sigma_\omega)$ bounds for angle derivatives]\label{uniform angle derivative bounds}
$$\|\partial_\theta^2\mathcal{Q}_\omega\|_{L^2(\frac{1}{\sigma_\omega})}\lesssim1.$$
\end{proposition}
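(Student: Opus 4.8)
The plan is to differentiate the Euler--Lagrange equation \eqref{elp} in the angular variable $\theta$ and to run an energy estimate, exploiting that the coefficients $\mathcal{H}_\omega^{(2D)}$, $\sigma_\omega$ and $\mu_\omega$ do not depend on $\theta$. Setting $w=\partial_\theta\mathcal{Q}_\omega$ and applying $\partial_\theta$ to \eqref{elp} gives $\omega(\mathcal{H}_\omega^{(2D)}-\Lambda_\omega)w-\frac{1}{\sigma_\omega^2}\partial_\theta^2 w-\partial_\theta(\mathcal{Q}_\omega^3)+\mu_\omega w=0$. Pairing this with $w$ in $L^2(\sigma_\omega)$ and integrating by parts in $\theta$ (the weight being $\theta$-independent), I obtain the identity
$$\|\partial_\theta^2\mathcal{Q}_\omega\|_{L^2(\frac{1}{\sigma_\omega})}^2+\omega\|\partial_\theta\mathcal{Q}_\omega\|_{\dot{\Sigma}_{\omega;(s,z)}}^2+\mu_\omega\|\partial_\theta\mathcal{Q}_\omega\|_{L^2(\sigma_\omega)}^2=\big\langle\partial_\theta(\mathcal{Q}_\omega^3),\partial_\theta\mathcal{Q}_\omega\big\rangle_{L^2(\sigma_\omega)}.$$
Since $\mathcal{H}_\omega^{(2D)}-\Lambda_\omega\ge 0$, the second term is nonnegative and may be dropped, so it remains to bound the right-hand side and the $\mu_\omega$-term by absolute constants, modulo a small multiple of $\|\partial_\theta^2\mathcal{Q}_\omega\|_{L^2(\frac{1}{\sigma_\omega})}^2$ that can be absorbed.

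Two weighted bounds drive the estimate: $\int\sigma_\omega^3\mathcal{Q}_\omega^6\,dsdzd\theta\lesssim 1$ and $\|\partial_\theta\mathcal{Q}_\omega\|_{L^2(\sigma_\omega)}\lesssim 1$, both uniform in large $\omega$. Neither is immediate, since Proposition \ref{existence of a minimizer} $(iii)$ only supplies the unweighted quantities $\|\mathcal{Q}_\omega\|_{L^6(\sigma_\omega)}$ and $\|\partial_\theta\mathcal{Q}_\omega\|_{L^2(\frac{1}{\sigma_\omega})}$, while the factor $\sigma_\omega=1+s/\sqrt{\omega}$ grows in $s$. I would establish both by splitting at $s=K\sqrt{\ln\omega}$: on the bulk $\{s\le K\sqrt{\ln\omega}\}$ one has $\sigma_\omega\to 1$, so the weighted and unweighted norms are comparable and the Proposition \ref{existence of a minimizer} $(iii)$ bounds apply directly; on the tail $\{s> K\sqrt{\ln\omega}\}$ the polynomial growth of $\sigma_\omega$ is defeated by the Gaussian decay of Proposition \ref{Gaussian L^2 bounds}. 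Concretely, the translated Sobolev inequality \eqref{translated Sobolev} applied to $e^{c(s^2+z^2)}\mathcal{Q}_\omega$, together with \eqref{Gaussian L^2 bound 1} and Remark \ref{Gaussian L^2 bound remark} $(ii)$, yields $\|e^{c(s^2+z^2)}\mathcal{Q}_\omega\|_{L^6(\sigma_\omega)}\lesssim\omega^{1/6}$, while $\sup_{s>K\sqrt{\ln\omega}}\sigma_\omega^2e^{-6cs^2}\lesssim(\ln\omega)\,\omega^{-6cK^2}$; choosing $K$ with $6cK^2>1$ makes the tail contribution $o(1)$. The same split, now using the $\frac{1}{\omega}$-weighted angular bound in \eqref{Gaussian L^2 bound 1}, gives $\|\partial_\theta\mathcal{Q}_\omega\|_{L^2(\sigma_\omega)}\lesssim 1$.

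With these in hand, I would integrate by parts once more in $\theta$ to write the nonlinear term as $\langle\partial_\theta(\mathcal{Q}_\omega^3),\partial_\theta\mathcal{Q}_\omega\rangle_{L^2(\sigma_\omega)}=-\langle\mathcal{Q}_\omega^3,\partial_\theta^2\mathcal{Q}_\omega\rangle_{L^2(\sigma_\omega)}=-\langle\sigma_\omega^2\mathcal{Q}_\omega^3,\partial_\theta^2\mathcal{Q}_\omega\rangle_{L^2(\frac{1}{\sigma_\omega})}$, whence Cauchy--Schwarz bounds it by $\|\sigma_\omega^2\mathcal{Q}_\omega^3\|_{L^2(\frac{1}{\sigma_\omega})}\|\partial_\theta^2\mathcal{Q}_\omega\|_{L^2(\frac{1}{\sigma_\omega})}$; since $\|\sigma_\omega^2\mathcal{Q}_\omega^3\|_{L^2(\frac{1}{\sigma_\omega})}^2=\int\sigma_\omega^3\mathcal{Q}_\omega^6\lesssim1$, this is $\lesssim\|\partial_\theta^2\mathcal{Q}_\omega\|_{L^2(\frac{1}{\sigma_\omega})}$. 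Feeding this and $|\mu_\omega|\,\|\partial_\theta\mathcal{Q}_\omega\|_{L^2(\sigma_\omega)}^2\lesssim 1$ (using $|\mu_\omega|\lesssim1$ from Proposition \ref{existence of a minimizer} $(iii)$) into the identity, and applying Young's inequality $C\,x\le\tfrac12x^2+\tfrac{C^2}{2}$ with $x=\|\partial_\theta^2\mathcal{Q}_\omega\|_{L^2(\frac{1}{\sigma_\omega})}$, absorbs the half-power term and yields $\|\partial_\theta^2\mathcal{Q}_\omega\|_{L^2(\frac{1}{\sigma_\omega})}^2\lesssim1$.

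The main obstacle is not the algebra but its rigorous justification: differentiating \eqref{elp} in $\theta$ and integrating by parts presupposes control of $\partial_\theta^3\mathcal{Q}_\omega$, which is not available a priori. I would circumvent this by working with second-order difference quotients in $\theta$ --- testing the $\theta$-differenced equation against $-D_{-h}D_h\mathcal{Q}_\omega$, or equivalently decomposing into angular Fourier modes --- deriving the estimate above uniformly in the step $h$ and passing to the limit $h\to0$; this uses only the regularity already guaranteed by the variational construction. The remaining care is the weight bookkeeping in the bulk/tail split, which is routine once the cutoff scale $\sqrt{\ln\omega}$ is fixed.
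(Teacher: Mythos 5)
Your proof is correct, but it follows a genuinely different route from the paper's. The paper never differentiates the Euler--Lagrange equation in $\theta$: it takes the localized identity \eqref{elp'} with $g=\eta(\tfrac{s}{\sqrt{\omega}})$ ($\eta$ a cut-off at scale $\sqrt{\omega}$, not your $\sqrt{\ln\omega}$), multiplies by $\sigma_\omega$, and shows the resulting source term has $L^2(\sigma_\omega)$-norm $O(\omega^{-1})$, using $\|\mathcal{Q}_\omega\|_{L^6(\sigma_\omega)}\lesssim 1$, $|\mu_\omega|\lesssim 1$, and the Gaussian bounds of Proposition \ref{Gaussian L^2 bounds} on the support of $\eta'$, $\eta''$. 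It then expands the square of $\sigma_\omega(\mathcal{H}_\omega^{(2D)}-\Lambda_\omega)(\mathcal{Q}_\omega\eta)-\tfrac{1}{\omega\sigma_\omega}\partial_\theta^2(\mathcal{Q}_\omega\eta)$ and drops the cross term, which equals $\tfrac{2}{\omega}\|\partial_\theta(\mathcal{Q}_\omega\eta)\|_{\dot{\Sigma}_{\omega;(s,z)}}^2\geq 0$; this "sum of squares" trick isolates $\tfrac{1}{\omega^2}\|\eta\,\partial_\theta^2\mathcal{Q}_\omega\|_{L^2(\frac{1}{\sigma_\omega})}^2\lesssim\omega^{-2}$ in one stroke, and the region $|s|\gtrsim\sqrt{\omega}$ is then disposed of by Remark \ref{Gaussian L^2 bound remark}. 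What the paper's route buys is exactly what costs you the most work: because the equation is used undifferentiated and the cut-off keeps $\sigma_\omega\sim 1$ on the relevant region, there is no regularity issue to justify and no need for your two auxiliary weighted bounds $\|\partial_\theta\mathcal{Q}_\omega\|_{L^2(\sigma_\omega)}\lesssim 1$ and $\int\sigma_\omega^3\mathcal{Q}_\omega^6\lesssim 1$ (your bulk/tail splitting at scale $\sqrt{\ln\omega}$ essentially re-derives the content of Remark \ref{Gaussian L^2 bound remark} $(iii)$). What your route buys is a more classical, self-contained energy argument: differentiate, test, absorb by Young, with difference quotients (Nirenberg's method) supplying the rigor. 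One remark on your stated "main obstacle": the qualitative finiteness of $\|\partial_\theta^2\mathcal{Q}_\omega\|_{L^2(\frac{1}{\sigma_\omega})}$, with the non-uniform bound $O(\omega)$, is already available from \eqref{Gaussian L^2 bound 2}, so your formal computation (the $H^{-1}$--$H^1$ pairing in $\theta$ for a.e.\ $(s,z)$) can be justified a posteriori without difference quotients; the extra care is legitimate but not strictly necessary.
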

\begin{remark}
By Proposition \ref{existence of a minimizer} $(iii)$ and Proposition \ref{uniform angle derivative bounds}, $\|\partial_\theta\mathcal{Q}_\omega\|_{L^2(\frac{1}{\sigma_\omega})}$ and $\|\partial_\theta^2\mathcal{Q}_\omega\|_{L^2(\frac{1}{\sigma_\omega})}$ are uniformly bounded, and so is $\|\mathcal{Q}_{\omega,\parallel}\|_{H^2(\mathbb{S}^1)}$ due to Lemma \ref{mno1}. Hence, by the Sobolev embedding $H^2(\mathbb{S}^1)\hookrightarrow C^1(\mathbb{S}^1)$, $\mathcal{Q}_{\omega,\parallel}$ and $\mathcal{Q}_{\omega,\parallel}'$ are uniformly bounded point-wisely.
\end{remark}

As a first step, we prove a primitive $L^\infty$-bound.

\begin{lemma}[$L^\infty$-bound  ]\label{L^infty bound}
$$
 \esssup_{(s,z,\theta)\in[-\sqrt{\omega},\infty)\times\mathbb{R}\times \mathbb{S}^1}|\mathcal{Q}_\omega(s,z,\theta)|= \|\mathcal{Q}_\omega\|_{L^\infty}\lesssim\omega^{\frac{1}{4}}.
$$
\end{lemma}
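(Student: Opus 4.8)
The plan is to pass to the equivalent unscaled problem on all of $\mathbb{R}^3$, where the nonlinear elliptic equation \eqref{requ} holds without any boundary, and then invoke a local boundedness estimate for subsolutions. Recall from \eqref{u-v relation} and the conversion $\|u\|_{L^p(\mathbb{R}^3)}=\omega^{-\frac14+\frac{1}{2p}}\|v\|_{L^p(\sigma_\omega)}$ in Section \ref{subsec: modified GN inequality} that for $R_\omega(x)=\omega^{-\frac14}\mathcal{Q}_\omega(|y|-\sqrt{\omega},z,\theta)$ we have $\|\mathcal{Q}_\omega\|_{L^\infty}=\omega^{\frac14}\|R_\omega\|_{L^\infty(\mathbb{R}^3)}$, so it suffices to prove the uniform bound $\|R_\omega\|_{L^\infty(\mathbb{R}^3)}\lesssim 1$. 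By Proposition \ref{existence of a minimizer}, $R_\omega\ge 0$ solves \eqref{requ}, and the relevant norms transfer cleanly: $\|R_\omega\|_{L^2(\mathbb{R}^3)}=\|\mathcal{Q}_\omega\|_{L^2(\sigma_\omega)}=\sqrt m$, while $\|R_\omega\|_{L^6(\mathbb{R}^3)}=\omega^{-\frac16}\|\mathcal{Q}_\omega\|_{L^6(\sigma_\omega)}\lesssim \omega^{-\frac16}$ and $|\mu_\omega|\lesssim 1$ are all uniform in $\omega$.

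Next I would turn the Euler--Lagrange equation into a scalar differential inequality by exploiting the sign of the potential. Rewriting \eqref{requ} as
$$
-\Delta R_\omega=-\big(U(|y|-\sqrt{\omega})+z^2\big)R_\omega+\Lambda_\omega R_\omega-\tfrac{\mu_\omega}{\omega}R_\omega+\omega^{-\frac12}R_\omega^3,
$$
and using $U+z^2\ge 0$ together with $R_\omega\ge 0$, the unbounded (growing) part of the potential carries a favorable sign and can simply be dropped. Since $\Lambda_\omega\to 2$ and $|\mu_\omega|/\omega\lesssim\omega^{-1}$, this yields
$$
-\Delta R_\omega\le V_\omega R_\omega,\qquad V_\omega:=C_0+\omega^{-\frac12}R_\omega^2,
$$
for a constant $C_0$ independent of $\omega$; that is, $R_\omega$ is a nonnegative subsolution of a Schr\"odinger operator with the singular part of the potential removed.

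I would then apply the De Giorgi--Nash--Moser local boundedness estimate (e.g. \cite[Theorem 8.17]{RS4}-type; a standard reference suffices) on unit balls $B_1(x_0)$. The decisive point is that the effective potential $V_\omega$ lies in $L^q$ with $q=3>\tfrac{d}{2}=\tfrac32$, uniformly: for every center $x_0$,
$$
\|V_\omega\|_{L^3(B_1(x_0))}\le C_0|B_1|^{1/3}+\omega^{-\frac12}\|R_\omega\|_{L^6(B_1(x_0))}^2\lesssim 1+\omega^{-\frac12}\omega^{-\frac13}\lesssim 1,
$$
with an implicit constant independent of both $x_0$ and $\omega$. Consequently the local boundedness constant is uniform, and the subsolution estimate gives $\|R_\omega\|_{L^\infty(B_{1/2}(x_0))}\lesssim \|R_\omega\|_{L^2(B_1(x_0))}\le \|R_\omega\|_{L^2(\mathbb{R}^3)}=\sqrt m$. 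Taking the supremum over all centers $x_0\in\mathbb{R}^3$ yields $\|R_\omega\|_{L^\infty(\mathbb{R}^3)}\lesssim 1$, hence $\|\mathcal{Q}_\omega\|_{L^\infty}\lesssim\omega^{\frac14}$.

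The only real obstacle is keeping every constant uniform in $\omega$ and in the ball center $x_0$. This is resolved precisely by the two observations above: the coercive potential $U+z^2$ is discarded using the nonnegativity of $R_\omega$ (so its spatial growth never enters the $L^3$ norm), and the cubic term comes with the small coefficient $\omega^{-1/2}$ against the uniformly bounded $L^6$-norm of $R_\omega$, so that $V_\omega$ has a uniformly controlled $L^3$-norm on unit balls. Everything else is a routine application of the local maximum principle for subsolutions.
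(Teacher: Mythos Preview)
Your argument is correct, and it takes a genuinely different route from the paper's. The paper proves this lemma via a global Sobolev-type bound coming from Schr\"odinger semigroup theory (Lemma~\ref{semigroup Sobolev}, citing \cite{Simon}): in the $\mathbb{R}^3$ variables it reads $\|u\|_{L^\infty(\mathbb{R}^3)}\lesssim\|(-\Delta+U_\omega(|y|-\sqrt\omega)+z^2+1)u\|_{L^2(\mathbb{R}^3)}$, and the right-hand side is uniformly bounded directly from the Euler--Lagrange equation together with the uniform $L^6$ bound of Proposition~\ref{existence of a minimizer}~$(iii)$. So the paper \emph{keeps} the confining potential and exploits its nonnegativity through the semigroup machinery. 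You instead \emph{drop} the potential using the sign $R_\omega\ge 0$, reduce to a subsolution of $-\Delta$ with an $L^3$ coefficient, and run local De~Giorgi--Nash--Moser on unit balls. Both routes rely on exactly the same a~priori input (positivity and the uniform $L^6$ bound), and both yield $\|R_\omega\|_{L^\infty}\lesssim 1$ with $\omega$-independent constants; the paper's is shorter once the semigroup inequality is in hand, while yours is more self-contained for a reader comfortable with classical elliptic regularity. One cosmetic point: your bracketed reference ``\cite[Theorem~8.17]{RS4}'' points to Reed--Simon~IV rather than Gilbarg--Trudinger; in a final version you would want the correct citation for the local maximum principle with $L^q$ coefficients, $q>3/2$.
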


For the proof, we employ a Sobolev-type inequality from the semi-group theory, since it is a convenient tool to deal with non-negative external potentials. 

\begin{lemma}[Semigroup Sobolev inequality]\label{semigroup Sobolev} It holds that
$$
 \esssup_{(s,z,\theta)\in[-\sqrt{\omega},\infty)\times\mathbb{R}\times \mathbb{S}^1}|v(s,z,\theta)|=\|v\|_{L^\infty}\lesssim \omega^{\frac{1}{4}}\left\|\left(\mathcal{H}_\omega^{(2D)}-\frac{1}{\omega\sigma_\omega^2}\partial_\theta^2+1\right)v\right\|_{L^2(\sigma_\omega)}.$$
\end{lemma}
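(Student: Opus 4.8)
The plan is to transfer the estimate to the flat operator on $\R^3$ and then run a standard semigroup domination argument. By the change of variables \eqref{u-v relation}, $v(s,z,\theta)=\omega^{\frac14}u(s+\sqrt{\omega},z,\theta)$ in cylindrical coordinates, and by Remark~\ref{linear differential operator remarks}$(ii)$ together with the identity $-\frac{1}{\omega\sigma_\omega^2}\partial_\theta^2=-\frac{1}{r^2}\partial_\theta^2$ (recall $r=|y|=\sqrt{\omega}\,\sigma_\omega$), the operator $\mathcal{H}_\omega^{(2D)}-\frac{1}{\omega\sigma_\omega^2}\partial_\theta^2+1$ is, via $u\mapsto v$, unitarily equivalent to $H_\omega+1=-\Delta_x+V+1$ on $L^2(\R^3)$, where $V:=U_\omega(|y|-\sqrt{\omega})+z^2\geq0$. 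Using the norm identities $\|u\|_{L^2(\R^3)}=\|v\|_{L^2(\sigma_\omega)}$ and $\|u\|_{L^\infty(\R^3)}=\omega^{-\frac14}\|v\|_{L^\infty}$ (the special cases $p=2,\infty$ of $\|u\|_{L^p(\R^3)}=\omega^{-\frac14+\frac{1}{2p}}\|v\|_{L^p(\sigma_\omega)}$), the $\omega^{\frac14}$ factors cancel and the asserted inequality is equivalent to the $\omega$-uniform bound
$$\|u\|_{L^\infty(\R^3)}\lesssim\big\|(-\Delta+V+1)u\big\|_{L^2(\R^3)}.$$
It therefore suffices to prove this for an arbitrary nonnegative potential $V\geq0$ with an implicit constant depending on neither $V$ nor $\omega$.

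To prove this, set $g:=(-\Delta+V+1)u$, so that $u=(-\Delta+V+1)^{-1}g=\int_0^\infty e^{-t}\,e^{-t(-\Delta+V)}g\,dt$. Since $V\geq0$, the Feynman–Kac (diamagnetic) pointwise domination $|e^{-t(-\Delta+V)}g|\leq e^{t\Delta}|g|$ holds, whence
$$|u(x)|\leq\int_0^\infty e^{-t}\big(e^{t\Delta}|g|\big)(x)\,dt=\big((-\Delta+1)^{-1}|g|\big)(x).$$
The resolvent kernel of $(-\Delta+1)^{-1}$ on $\R^3$ is the Yukawa kernel $G(x)=\frac{e^{-|x|}}{4\pi|x|}$, which lies in $L^2(\R^3)$ because $|x|^{-1}$ is square integrable near the origin in three dimensions and $G$ decays exponentially at infinity. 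Hence, by the Cauchy–Schwarz inequality,
$$\|u\|_{L^\infty(\R^3)}\leq\sup_{x\in\R^3}\int_{\R^3}G(x-y)|g(y)|\,dy\leq\|G\|_{L^2(\R^3)}\,\|g\|_{L^2(\R^3)},$$
and the constant $\|G\|_{L^2(\R^3)}$ depends on neither $V$ nor $\omega$. Undoing the change of variables then yields Lemma~\ref{semigroup Sobolev}.

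The routine verifications are the scaling identities for the $L^2$ and $L^\infty$ norms and the finiteness $\|G\|_{L^2(\R^3)}<\infty$; the conceptual heart is the reduction to the flat operator and the domination step, which is precisely what enforces $V$-independence, hence $\omega$-uniformity, of the constant. The only point I expect to require genuine care is the legitimacy of the semigroup domination for the \emph{unbounded} nonnegative potential $V$: this follows from the Trotter product formula (equivalently, the Feynman–Kac representation) using $V\geq0$ together with positivity of the heat kernel of $e^{t\Delta}$, and this is where the main technical attention is needed.
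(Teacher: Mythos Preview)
Your proof is correct and follows essentially the same route as the paper: both reduce to $\R^3$ via the change of variables \eqref{u-v relation}, obtaining the equivalent inequality $\|u\|_{L^\infty(\R^3)}\lesssim\|(-\Delta+V+1)u\|_{L^2(\R^3)}$ with $V\geq 0$, and then invoke Schr\"odinger semigroup theory. The only difference is that the paper simply cites \cite[Theorem~B.2.1]{Simon} for this last step, whereas you spell out the underlying Feynman--Kac/Trotter domination and Yukawa-kernel argument explicitly.
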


\begin{proof}
By $v(s,z,\theta)=\omega^{\frac{1}{4}}u(s+\sqrt{\omega},z,\theta)$ with $|y|=s+\sqrt{\omega}$, the left and the right hand sides of the inequality respectively can be written as $\|v\|_{L^\infty}=\omega^{\frac{1}{4}}\|u\|_{L^\infty(\mathbb{R}^3)}$ and
$$ \left\|\left(\mathcal{H}_\omega^{(2D)}-\frac{1}{\omega\sigma_\omega^2}\partial_\theta^2+1\right)v\right\|_{L^2(\sigma_\omega)}= \left\|(-\Delta+U(|y|-\sqrt{\omega})+z^2+1)u\right\|_{L^2(\mathbb{R}^3)}.$$
Thus, the inequality is equivalent to $\|u\|_{L^\infty(\mathbb{R}^3)}\lesssim \left\|(-\Delta_x+U_\omega(|y|-\sqrt{\omega})+z^2+1)u\right\|_{L^2(\mathbb{R}^3)}$, which indeed holds true by the theory of Schr\"odinger semigroups \cite[Theorem B.2.1]{Simon}.
\end{proof}

\begin{proof}[Proof of Lemma \ref{L^infty bound}]
By the Euler-Lagrange equation \eqref{elp}, it follows from Proposition \ref{existence of a minimizer} $(iii)$ that 
$\|( \mathcal{H}_\omega^{(2D)}-\frac{1}{\omega\sigma_\omega^2}\partial_\theta^2+1)\mathcal{Q}_\omega\|_{L^2(\sigma_\omega)}\le\|(\Lambda_\omega+1-\frac{\mu_\omega}{\omega})\mathcal{Q}_\omega\|_{L^2(\sigma_\omega)}+\frac{1}{\omega}\|\mathcal{Q}_\omega\|_{L^6(\sigma_\omega)}^3$
is uniformly bounded. Hence, Lemma \ref{semigroup Sobolev} proves the desired bound.
\end{proof}

Next, we show Gaussian weighted $L^2(\sigma_\omega)$-bounds following the argument of Agmon \cite{Agmon} as we did in the proof of Proposition \ref{1d eigenstate} $(ii)$.

\begin{proof}[Proof of Proposition \ref{Gaussian L^2 bounds}]
In the proof, if there is no confusion, we omit the integral domain 
$$\iiint dsdzd\theta=\int_{\mathbb{S}^1}\int_{-\infty}^\infty\int_{-\sqrt{\omega}}^\infty dsdzd\theta.$$
Let $g(s,z): [-\sqrt{\omega},\infty)\times\mathbb{R}\to\mathbb{R}$ be a bounded smooth function  with $g(s,z)=1$ for large $|(s,z)|>1$ to be chosen later. Then, it is obvious from the Euler-Lagrange equation \eqref{elp} that 
\begin{equation}\label{elp'}
\begin{aligned}
\left( \mathcal{H}_\omega^{(2D)}-\Lambda_\omega-\frac{1}{\omega\sigma_\omega^2}\partial_\theta^2\right)(\mathcal{Q}_\omega g)&=\frac{\mathcal{Q}_\omega^2-\mu_\omega}{\omega}\mathcal{Q}_\omega g-\frac{1}{\sqrt{\omega}\sigma_\omega}\mathcal{Q}_\omega(\partial_s g)\\
&\quad-2(\nabla_{(s,z)}\mathcal{Q}_\omega)\cdot(\nabla g)-\mathcal{Q}_\omega(\Delta g).
\end{aligned}
\end{equation}
Recalling the definition of the semi-norm $\|\cdot\|_{\dot{\Sigma}_\omega}$ (see \eqref{seminorm2}), we write 
$$\|\mathcal{Q}_\omega  g\|_{\dot{\Sigma}_\omega}^2=\bigg\langle \frac{\mathcal{Q}_\omega^2-\mu_\omega}{\omega}\mathcal{Q}_\omega g-\frac{1}{\sqrt{\omega}\sigma_\omega}\mathcal{Q}_\omega(\partial_sg)-\underbrace{2(\nabla_{(s,z)}\mathcal{Q}_\omega)\cdot(\nabla g)}_{(*)}-\mathcal{Q}_\omega(\Delta g), \mathcal{Q}_\omega  g\bigg\rangle_{L^2(\sigma_\omega)}.$$
Note that for the inner product including the term $(*)$, by integration by parts, we have
\begin{equation}\label{intbyp}
\begin{aligned}
-\left\langle2(\nabla_{(s,z)}\mathcal{Q}_\omega)\cdot\nabla g, \mathcal{Q}_\omega  g\right\rangle_{L^2(\sigma_\omega)}&=-\iiint \left((\nabla_{(s,z)}\mathcal{Q}_\omega^2)\cdot \nabla g\right)g\sigma_\omega dsdzd\theta\\
&=\iiint \mathcal{Q}_\omega^2 \left\{\left(\Delta g\right)g \sigma_\omega+|\nabla g|^2\sigma_\omega+\frac{1}{\sqrt{\omega}}g(\partial_sg)\right\} dsdzd\theta.
\end{aligned}
\end{equation}
Thus, it follows that 
\begin{equation}\label{3d elliptic eq lemma claim proof}
\|\mathcal{Q}_\omega  g\|_{\dot{\Sigma}_\omega}^2=\iiint\left\{\frac{\mathcal{Q}_\omega^2-\mu_\omega}{\omega}g^2+|\nabla g|^2\right\}\mathcal{Q}_\omega^2\sigma_\omega dsdzd\theta,
\end{equation}
and consequently by definition of the semi-norm $\|\cdot\|_{\dot{\Sigma}_\omega}$ again, 
\begin{equation}\label{intbyp1}
\begin{aligned}
0&\leq\|\nabla_{(s,z)}(\mathcal{Q}_\omega g)\|_{L^2(\sigma_\omega)}^2+\frac{1}{\omega}\|\partial_\theta (\mathcal{Q}_\omega g)\|_{L^2(\frac{1}{\sigma_\omega})}^2\\
&\leq-\iiint \left\{U_\omega(s)+z^2-\Lambda_\omega-\frac{\mathcal{Q}_\omega^2-\mu_\omega}{\omega}-\frac{|\nabla g|^2}{g^2}\right\}(\mathcal{Q}_\omega g)^2\sigma_\omega dsdzd\theta.
\end{aligned}
\end{equation}

Now, we insert $g(s,z)=e^{f_L(s,z)}$, where $L\geq \sqrt{\omega}$ is a large number and $f_L(s,z)=c(s^2+z^2)\eta(\frac{|(s,z)|}{L})$ for some $c\in (0,\alpha_0)$ and a smooth cut-off $\eta$ such that $\eta\equiv1$ on $[-\frac14,\frac14]$ and $\eta$ is supported on $[-\frac12,\frac12]$. Then, it follows that
$$0\geq\iiint \left\{U_\omega(s)+z^2-\Lambda_\omega-\frac{\mathcal{Q}_\omega^2-\mu_\omega}{\omega}-|\nabla f_L|^2\right\}\big(e^{f_L}\mathcal{Q}_\omega\big)^2\sigma_\omega dsdzd\theta.$$
We observe that by the assumptions on $U_\omega$ \textbf{\textup{(H2)}} and the $L^\infty$-bound (Lemma \ref{L^infty bound}), if $\omega\geq1$ is large, there exists $R\in(1,\sqrt{\omega})$, independent of $\omega\geq 1$ and $L\ge \sqrt{\omega}$, such that $U_\omega(s)+z^2-\Lambda_\omega-\frac{\mathcal{Q}_\omega^2-\mu_\omega}{\omega}-|\nabla f_L|^2\geq 1$ for all $|(s,z)|\geq R$, while there exists $C_{R}>0$, independent of $\omega\geq 1$ and $L\ge \sqrt{\omega}$, such that $|U_\omega(s)+z^2-\Lambda_\omega-\frac{\mathcal{Q}_\omega^2-\mu_\omega}{\omega}-|\nabla f_L|^2|e^{2f_L}\leq C_R^2<\infty$ for all $|(s,z)|\leq R$. Therefore, for a smooth cut-off $\eta$ such that $\eta\equiv1$ on $[-\frac{1}{4},\frac{1}{4}]$ and $\eta(s)=0$ for $|s|\geq \frac{1}{2}$, it follows that 
$$\begin{aligned}
&\big\|\mathbf{1}_{|(s,z)|\geq R}(e^{f_L}\mathcal{Q}_\omega)\big\|_{L^2(\sigma_\omega)}^2\\
&\leq \iiint_{|(s,z)|\geq R} \left\{U_\omega(s)+z^2-\Lambda_\omega-\frac{\mathcal{Q}_\omega^2-\mu_\omega}{\omega}-|\nabla f_L|^2\right\}\big(e^{f_L}\mathcal{Q}_\omega\big)^2\sigma_\omega dsdzd\theta\\
&\leq -\iiint_{|(s,z)|\leq R}\left\{U_\omega(s)+z^2-\Lambda_\omega-\frac{\mathcal{Q}_\omega^2-\mu_\omega}{\omega}-|\nabla f_L|^2\right\}\big(e^{f_L}\mathcal{Q}_\omega\big)^2\sigma_\omega dsdzd\theta\leq C_R^2,
\end{aligned}$$
and taking $L\to\infty$, we obtain $\|\mathbf{1}_{|(s,z)|\geq R}(e^{c(s^2+z^2)}\mathcal{Q}_\omega)\|_{L^2(\sigma_\omega)}\leq C_R$. On the other hand, we have 
$\|\mathbf{1}_{|(s,z)|\leq R}(e^{c(s^2+z^2)}\mathcal{Q}_\omega)\|_{L^2(\sigma_\omega)}\leq e^{cR^2}\|\mathcal{Q}_\omega\|_{L^2(\sigma_\omega)}^2=e^{cR^2}m$. Since $R$ does not depend on $\omega\geq 1$, we prove 
\begin{equation}\label{l2ex}
\|e^{c(s^2+z^2)}\mathcal{Q}_\omega\|_{L^2(\sigma_\omega)}^2\lesssim1.
\end{equation} To estimate the second and the third terms in \eqref{Gaussian L^2 bound 1}, we take $g=e^{c(s^2+z^2)\eta(\frac{|(s,z)|}{L})}$ in \eqref{3d elliptic eq lemma claim proof}. Then, \eqref{l2ex} with  $c_0\in (c,\alpha_0)$ implies that the right hand side of \eqref{3d elliptic eq lemma claim proof} is bounded uniformly in large $L\geq 1$. Thus, sending $L\to\infty$, we prove $\|e^{c(s^2+z^2)}\mathcal{Q}_\omega\|_{\dot{\Sigma}_\omega }^2\lesssim1$.

For \eqref{Gaussian L^2 bound 2}, we insert $g=e^{c(s^2+z^2)}$ in \eqref{elp'} and multiply both sides by $\sigma_\omega$. Then, estimating the right side by \eqref{Gaussian L^2 bound 1} with  $c_0\in (c,\alpha_0)$, together with the $L^\infty$-bound (Lemma \ref{L^infty bound}), yields 
$$\left\|\sigma_\omega ( \mathcal{H}_\omega^{(2D)}-\Lambda_\omega)(\mathcal{Q}_\omega e^{c(s^2+z^2)})-\frac{1}{\omega\sigma_\omega}\partial_\theta^2(\mathcal{Q}_\omega e^{c(s^2+z^2)})\right\|_{L^2(\sigma_\omega)}^2\lesssim 1.$$
Finally, expanding the left hand side, we can deduce \eqref{Gaussian L^2 bound 2}, because the cross inner product $\langle ( \mathcal{H}_\omega^{(2D)}-\Lambda_\omega)v, -\partial_\theta^2v\rangle_{L^2(\sigma_\omega)}=\|\partial_\theta v\|_{\dot{\Sigma}_{\omega;(s,z)}}^2$ is non-negative.
\end{proof}

Next, we prove uniform $L^2(\sigma_\omega)$ bounds for the second-order angle derivatives.
\begin{proof}[Proof of Proposition \ref{uniform angle derivative bounds}]
By Proposition \ref{Gaussian L^2 bounds} (see Remark \ref{Gaussian L^2 bound remark}), it is enough to show that $\|\eta(\tfrac{s}{\sqrt{\omega}})\partial_\theta^2\mathcal{Q}_\omega\|_{L^2(\frac{1}{\sigma_\omega})}\lesssim1$, where $\eta$ is a smooth cut-off such that $\eta\equiv1$ on $[-\frac{1}{4},\frac{1}{4}]$ and $\eta$ is supported on $[-\frac{1}{2},\frac{1}{2}]$. For the proof, we take $g=\eta(\frac{s}{\sqrt{\omega}})$ in \eqref{elp'}, we write
\begin{equation}\label{elp''}
\begin{aligned}
\left( \mathcal{H}_\omega^{(2D)}-\Lambda_\omega-\frac{1}{\omega\sigma_\omega^2}\partial_\theta^2\right)(\mathcal{Q}_\omega \eta(\tfrac{s}{\sqrt{\omega}}))&=\frac{\mathcal{Q}_\omega^2-\mu_\omega}{\omega}\mathcal{Q}_\omega \eta(\tfrac{s}{\sqrt{\omega}})-\frac{1}{\sqrt{\omega}\sigma_\omega}\mathcal{Q}_\omega(\eta(\tfrac{s}{\sqrt{\omega}}))'\\
&\quad-2(\partial_s\mathcal{Q}_\omega)(\eta(\tfrac{s}{\sqrt{\omega}}))'-\mathcal{Q}_\omega(\eta(\tfrac{s}{\sqrt{\omega}}))''.
\end{aligned}
\end{equation}
We multiply both sides of \eqref{elp''} by $\sigma_\omega$. Then, we can show that the $L^2(\sigma_\omega)$-norm of the right hand side is bounded by $\omega^{-1}$, because  $\| \mathcal{Q}_\omega\|_{L^6( \sigma_\omega)}\lesssim1$ (see Proposition \ref{existence of a minimizer} (iii)), the derivatives of $\eta(\tfrac{s}{\sqrt{\omega}})$ is supported on $\frac{\sqrt{\omega}}{4}\leq |s|\leq\frac{\sqrt{\omega}}{2}$, and $\mathcal{Q}_\omega$ and $\partial_s\mathcal{Q}_\omega$ can take Gaussian weights (Proposition \ref{Gaussian L^2 bounds}). Therefore, it follows that
$$\begin{aligned}
\frac{1}{\omega^2}&\gtrsim \left\|\sigma_\omega( \mathcal{H}_\omega^{(2D)}-\Lambda_\omega)(\mathcal{Q}_\omega \eta(\tfrac{s}{\sqrt{\omega}}))-\frac{1}{\omega\sigma_\omega}\partial_\theta^2(\mathcal{Q}_\omega \eta(\tfrac{s}{\sqrt{\omega}}))\right\|_{L^2(\sigma_\omega)}^2\\
&\geq \big\|\sigma_\omega( \mathcal{H}_\omega^{(2D)}-\Lambda_\omega)(\mathcal{Q}_\omega \eta(\tfrac{s}{\sqrt{\omega}}))\big\|_{L^2(\sigma_\omega)}^2+\frac{1}{\omega^2}\|\eta(\tfrac{s}{\sqrt{\omega}})\partial_\theta^2\mathcal{Q}_\omega\|_{L^2(\frac{1}{\sigma_\omega})}^2,
\end{aligned}$$
because the cross-inner product is non-negative.
\end{proof}

\subsection{Dimension reduction} \label{dimension reduction section}

We close this section by proving the dimension reduction in the strong confinement limit $\omega\to\infty$.

\begin{proposition}[Dimension reduction]\label{adprop}
For sufficiently large $\omega\geq 1$, the following hold.
\begin{enumerate}[(i)]
\item (Improved vanishing higher eigenstates) 
$$
 \omega \| \mathcal{Q}_\omega\|_{{\dot{\Sigma}}_{\omega; (s,z)}}^2+ \big\|\partial_\theta(\mathcal{P}_{\tilde{\Phi}_\omega}^\perp \mathcal{Q}_\omega)\big\|_{L^2(\frac{1}{\sigma_\omega})}^2=O(\omega^{-\frac12}).
$$
\item (Minimum energy convergence)
$$J_\infty^{(1D)}(m)= \mathcal{J}_\omega^{(3D)}(m)+O(\omega^{-\frac12}).$$
\item (Strong convergences to the circle ground state) Translating along the $\theta$-axis if necessary, $\mathcal{Q}_\omega$ satisfies 
$$\lim_{\omega\to\infty}\|\mathcal{Q}_{\omega,\parallel}-Q_\infty\|_{H^1(\mathbb{S}^1)}=0\textup{ and }
\lim_{\omega\to\infty}\|\partial_\theta\mathcal{Q}_\omega-\partial_\theta\mathcal{Q}_{\infty}(\theta)\Phi_\omega(s,z)\|_{L^2(\sigma_\omega)}=0.$$
\item (Lagrange multiplier convergence)
$$\mu_\omega=\mu_\infty+o_\omega(1).$$
\end{enumerate}
\end{proposition}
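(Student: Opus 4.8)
The plan is to establish the four assertions in the order (i) $\Rightarrow$ (ii) $\Rightarrow$ (iii) $\Rightarrow$ (iv), working throughout with the orthogonal splitting $\mathcal{Q}_\omega=\mathcal{Q}_{\omega,\parallel}\tilde{\Phi}_\omega+w_\omega^\perp$, where $w_\omega^\perp:=\mathcal{P}_{\tilde{\Phi}_\omega}^\perp\mathcal{Q}_\omega$, and using the minimality of $\mathcal{Q}_\omega$, the Euler--Lagrange identity \eqref{po1}, and the uniform bounds of Proposition \ref{existence of a minimizer} $(iii)$ and Remark \ref{remark: refined constraint1}. The central object is a lower bound for $\mathcal{J}_\omega^{(3D)}(m)$ matching the $1$D energy of $\mathcal{Q}_{\omega,\parallel}$.

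First I would split the energy $\mathcal{E}_\omega[\mathcal{Q}_\omega]=\frac{\omega}{2}\|\mathcal{Q}_\omega\|_{\dot{\Sigma}_{\omega;(s,z)}}^2+\frac12\|\partial_\theta\mathcal{Q}_\omega\|_{L^2(\frac{1}{\sigma_\omega})}^2-\frac14\|\mathcal{Q}_\omega\|_{L^4(\sigma_\omega)}^4$ along this decomposition. Applying Lemma \ref{asymptotic orthogonality} to $\partial_\theta\mathcal{Q}_\omega$ (valid since $\partial_\theta$ commutes with $\mathcal{P}_{\tilde{\Phi}_\omega}$ and $\|\tilde{\Phi}_\omega\|_{L^2(\frac{1}{\sigma_\omega})}=1+O(\omega^{-\frac12})$) separates the angular term into $\frac12\|\partial_\theta\mathcal{Q}_{\omega,\parallel}\|_{L^2(\mathbb{S}^1)}^2+\frac12\|\partial_\theta w_\omega^\perp\|_{L^2(\frac{1}{\sigma_\omega})}^2+O(\omega^{-\frac12})$, while $\frac14\|\mathcal{Q}_\omega\|_{L^4(\sigma_\omega)}^4=\frac{1}{8\pi}\|\mathcal{Q}_{\omega,\parallel}\|_{L^4(\mathbb{S}^1)}^4+O(\omega^{-\frac12})+O(\|w_\omega^\perp\|_{L^4(\sigma_\omega)})$, using $\|\tilde{\Phi}_\omega\|_{L^4(\sigma_\omega)}^4\to\|\Phi_0\|_{L^4(\mathbb{R}^2)}^4=\frac{1}{2\pi}$ at rate $O(\omega^{-\frac12})$ (Proposition \ref{1d eigenstate} $(iii)$ and Gaussian decay). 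Since $\mathcal{Q}_{\omega,\parallel}$ is admissible for $J_\infty^{(1D)}$ at mass $m_\omega:=\|\mathcal{Q}_{\omega,\parallel}\|_{L^2(\mathbb{S}^1)}^2=m-\|w_\omega^\perp\|_{L^2(\sigma_\omega)}^2=m+O(\omega^{-1})$ and $J_\infty^{(1D)}$ is continuous in the mass, one gets $E_\infty[\mathcal{Q}_{\omega,\parallel}]\ge J_\infty^{(1D)}(m)+O(\omega^{-1})$. Combining the resulting lower bound with the upper bound $\mathcal{J}_\omega^{(3D)}(m)\le J_\infty^{(1D)}(m)+O(\omega^{-\frac12})$ of Lemma \ref{upper} yields
$$\frac{\omega}{2}\|\mathcal{Q}_\omega\|_{\dot{\Sigma}_{\omega;(s,z)}}^2+\tfrac12\|\partial_\theta w_\omega^\perp\|_{L^2(\frac{1}{\sigma_\omega})}^2\lesssim \omega^{-\frac12}+\|w_\omega^\perp\|_{L^4(\sigma_\omega)}.$$
Bounding $\|w_\omega^\perp\|_{L^4(\sigma_\omega)}$ by the refined Gagliardo--Nirenberg inequality (Corollary \ref{GN1}) in terms of $\|w_\omega^\perp\|_{L^2(\sigma_\omega)},\|w_\omega^\perp\|_{\dot{\Sigma}_{\omega;(s,z)}}\lesssim\|\mathcal{Q}_\omega\|_{\dot{\Sigma}_{\omega;(s,z)}}$ (Lemma \ref{orthogonal complement bound}) first gives crude smallness $o(1)$, and a single bootstrap then pushes $\|w_\omega^\perp\|_{L^4(\sigma_\omega)}$ below $\omega^{-\frac12}$, closing the estimate and proving $(i)$. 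Part $(ii)$ is then immediate, since the same splitting combined with $(i)$ gives $E_\infty[\mathcal{Q}_{\omega,\parallel}]=\mathcal{J}_\omega^{(3D)}(m)+O(\omega^{-\frac12})$, so the two-sided bound forces $\mathcal{J}_\omega^{(3D)}(m)=J_\infty^{(1D)}(m)+O(\omega^{-\frac12})$.

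For $(iii)$, the preceding step also yields $E_\infty[\mathcal{Q}_{\omega,\parallel}]\to J_\infty^{(1D)}(m)$ and $\|\mathcal{Q}_{\omega,\parallel}\|_{L^2(\mathbb{S}^1)}^2\to m$, so after a harmless rescaling to exact mass $\{\mathcal{Q}_{\omega,\parallel}\}$ is a minimizing sequence for $J_\infty^{(1D)}(m)$; it is bounded in $H^1(\mathbb{S}^1)$ and positive (as $\mathcal{Q}_\omega,\tilde{\Phi}_\omega>0$, there is no phase ambiguity). By the compact embedding $H^1(\mathbb{S}^1)\hookrightarrow L^4(\mathbb{S}^1)$ and the uniqueness of the positive $1$D ground state up to translation (Proposition \ref{prop: circle ground state}), a standard subsequence/contradiction argument gives, after translating in $\theta$, strong convergence $\mathcal{Q}_{\omega,\parallel}\to Q_\infty$ in $H^1(\mathbb{S}^1)$ (weak convergence plus seminorm convergence forced by energy convergence). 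For the second convergence I would write $\partial_\theta\mathcal{Q}_\omega-(\partial_\theta Q_\infty)\Phi_\omega=(\partial_\theta\mathcal{Q}_{\omega,\parallel})(\tilde{\Phi}_\omega-\Phi_\omega)+(\partial_\theta\mathcal{Q}_{\omega,\parallel}-\partial_\theta Q_\infty)\Phi_\omega+\partial_\theta w_\omega^\perp$: the first term is $O(e^{-c\omega})$ by \eqref{tue3}, the second $\to0$ by the $H^1$-convergence just obtained, and the third is controlled by converting $\|\partial_\theta w_\omega^\perp\|_{L^2(\frac{1}{\sigma_\omega})}=O(\omega^{-\frac14})$ from $(i)$ into the $L^2(\sigma_\omega)$ norm (the weights are comparable on $|(s,z)|\lesssim\sqrt{\ln\omega}$, and the far-field tail is negligible by the angular localization of Remark \ref{Gaussian L^2 bound remark} $(iii)$).

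Finally, $(iv)$ is read off from \eqref{po1}: dividing by $m$,
$$\mu_\omega=\tfrac{1}{m}\Big(-\omega\|\mathcal{Q}_\omega\|_{\dot{\Sigma}_{\omega;(s,z)}}^2-\|\partial_\theta\mathcal{Q}_\omega\|_{L^2(\frac{1}{\sigma_\omega})}^2+\|\mathcal{Q}_\omega\|_{L^4(\sigma_\omega)}^4\Big),$$
where the first term is $O(\omega^{-\frac12})$ by $(i)$ and, by $(iii)$ together with the $L^4$ splitting above, $\|\partial_\theta\mathcal{Q}_\omega\|_{L^2(\frac{1}{\sigma_\omega})}^2\to\|\partial_\theta Q_\infty\|_{L^2(\mathbb{S}^1)}^2$ and $\|\mathcal{Q}_\omega\|_{L^4(\sigma_\omega)}^4\to\frac{1}{2\pi}\|Q_\infty\|_{L^4(\mathbb{S}^1)}^4$; testing $-\partial_\theta^2Q_\infty-\frac{1}{2\pi}Q_\infty^3=-\mu_\infty Q_\infty$ against $Q_\infty$ gives $\|\partial_\theta Q_\infty\|_{L^2}^2-\frac{1}{2\pi}\|Q_\infty\|_{L^4}^4=-\mu_\infty m$, so the right-hand side converges to $\mu_\infty$. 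I expect the main obstacle to be precisely the self-referential appearance of $\|w_\omega^\perp\|_{L^4(\sigma_\omega)}$ in the energy estimate of Step~2: since Corollary \ref{GN1} yields a useful $L^4$ control of $w_\omega^\perp$ only after its $\dot{\Sigma}_\omega$-smallness is known, the sharp rate $O(\omega^{-\frac12})$ in $(i)$ cannot be reached in one pass and requires the bootstrap; the weight conversion $L^2(\frac{1}{\sigma_\omega})\to L^2(\sigma_\omega)$ for the angular derivative in $(iii)$, relying on the weak far-field bounds of Remark \ref{Gaussian L^2 bound remark} $(iii)$, is the other delicate point.
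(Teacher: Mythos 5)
Your proposal is correct, and its skeleton is the same as the paper's: split $\mathcal{Q}_\omega$ along $\tilde{\Phi}_\omega$, squeeze $E_\infty[\mathcal{Q}_{\omega,\parallel}]$ plus the nonnegative quantities $\frac{\omega}{2}\|\mathcal{Q}_\omega\|_{\dot{\Sigma}_{\omega;(s,z)}}^2+\frac12\|\partial_\theta(\mathcal{P}_{\tilde{\Phi}_\omega}^\perp\mathcal{Q}_\omega)\|_{L^2(\frac{1}{\sigma_\omega})}^2$ between $J_\infty^{(1D)}(m)$ and the upper bound of Lemma \ref{upper}, deduce that the rescaled $\mathcal{Q}_{\omega,\parallel}$ is a minimizing sequence, invoke 1D uniqueness up to translation, and convert weights for the angular derivative of the orthogonal part. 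The only genuine divergence is the step you yourself flagged as the main obstacle: to control the $L^4$ error you invoke Corollary \ref{GN1} and a bootstrap, which does close (a first pass gives $\|\mathcal{P}_{\tilde{\Phi}_\omega}^\perp\mathcal{Q}_\omega\|_{L^4(\sigma_\omega)}\lesssim\omega^{-3/8}$, hence $(i)$ with the rate $\omega^{-3/8}$, and feeding this back gives $\lesssim\omega^{-9/16}<\omega^{-1/2}$), but the paper shows the bootstrap is unnecessary: since Proposition \ref{existence of a minimizer} $(iii)$ already provides a uniform $L^6(\sigma_\omega)$ bound, H\"older gives $\big|\|\mathcal{Q}_\omega\|_{L^4(\sigma_\omega)}^4-\|\mathcal{P}_{\tilde{\Phi}_\omega}\mathcal{Q}_\omega\|_{L^4(\sigma_\omega)}^4\big|\lesssim\{\|\mathcal{Q}_\omega\|_{L^6(\sigma_\omega)}^3+\|\mathcal{P}_{\tilde{\Phi}_\omega}\mathcal{Q}_\omega\|_{L^6(\sigma_\omega)}^3\}\|\mathcal{P}_{\tilde{\Phi}_\omega}^\perp\mathcal{Q}_\omega\|_{L^2(\sigma_\omega)}\lesssim\omega^{-\frac12}$ in one line, because $\|\mathcal{P}_{\tilde{\Phi}_\omega}^\perp\mathcal{Q}_\omega\|_{L^2(\sigma_\omega)}\lesssim\omega^{-\frac12}$ is known beforehand (Lemma \ref{orthogonal complement bound} together with Remark \ref{remark: refined constraint1}); this is precisely a sharper use of the $L^6$ bound you had available but did not exploit. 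Two further cosmetic differences: for the weight conversion in $(iii)$ the paper splits at $|s|\leq\omega^{1/8}$ using the uniform bound on $\|\partial_\theta\mathcal{Q}_\omega\|_{L^2(\frac{1}{\sigma_\omega})}$ and the Gaussian bounds of Proposition \ref{Gaussian L^2 bounds}, rather than at $K\sqrt{\ln\omega}$ (both work); and for $(iv)$ the paper passes through the energy identity $\mathcal{E}_\omega(\mathcal{Q}_\omega)=\frac14\|\partial_\theta\mathcal{Q}_{\omega,\parallel}\|_{L^2(\mathbb{S}^1)}^2-\frac{\mu_\omega m}{4}+o_\omega(1)$ combined with the convergence $\mathcal{J}_\omega^{(3D)}(m)\to J_\infty^{(1D)}(m)$, instead of reading $\mu_\omega$ directly off \eqref{po1} and passing to the limit term by term as you do; your route for $(iv)$ is equally valid and arguably more direct.
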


\begin{proof}
We prove $(i)$ and $(ii)$ by extracting the mass and the energy of $\mathcal{Q}_{\omega,\parallel}$. Indeed, by orthogonality, $m=\|\mathcal{Q}_\omega\|_{L^2(\sigma_\omega)}^2=\|\mathcal{Q}_{\omega,\parallel}\|_{L^2 (\mathbb{S}^1)}^2+\|\mathcal{P}_{\tilde{\Phi}_\omega} ^\perp\mathcal{Q}_\omega\|_{L^2(\sigma_\omega)}^2$. However, by the bound for $\mathcal{P}_{\tilde{\Phi}_\omega}^\perp$ (Lemma \ref{orthogonal complement bound}) and a uniform bound in Proposition \ref{existence of a minimizer} $(iii)$, we have $\|\mathcal{P}_{\tilde{\Phi}_\omega}^\perp  \mathcal{Q}_\omega\|_{L^2(\sigma_\omega)}^2\lesssim\|\mathcal{Q}_\omega\|_{{\dot{\Sigma}}_{\omega; (s,z)}}^2 +e^{-c\omega}\|\mathcal{Q}_\omega\|_{L^2(\sigma_\omega)}^2\lesssim \omega^{-1}$. Thus, we obtain
$$M_\infty(\mathcal{Q}_{\omega,\parallel})=\|\mathcal{Q}_{\omega,\parallel}\|_{L^2 (\mathbb{S}^1)}^2=m+O(\omega^{-1}).$$
On the other hand, we observe that the potential energy is well approximated by the projection, because by the H\"older inequality and Proposition  \ref{existence of a minimizer} $(iii)$, 
\begin{align*}
\big|\|\mathcal{Q}_\omega\|_{L^4(\sigma_\omega)}^4 -\|\mathcal{P}_{\tilde{\Phi}_\omega} \mathcal{Q}_\omega\|_{L^4(\sigma_\omega)}^4\big|\lesssim \left\{\|\mathcal{Q}_\omega\|_{L^6(\sigma_\omega)}^3+\|\mathcal{P}_{\tilde{\Phi}_\omega}  \mathcal{Q}_\omega\|_{L^6(\sigma_\omega)}^3\right\}\|\mathcal{P}_{\tilde{\Phi}_\omega}^\perp \mathcal{Q}_\omega\|_{L^2(\sigma_\omega)}\lesssim \omega^{-\frac{1}{2}}
\end{align*}
and by Proposition \ref{1d eigenstate} (iii),
$$
\|\mathcal{P}_{\tilde{\Phi}_\omega} \mathcal{Q}_\omega\|_{L^4(\sigma_\omega)}^4=\frac{\|\phi_\infty\|_{L^4(\R^)}^4\|\chi_\omega \phi_\omega \|_{L^4(\sigma_\omega)}^4\|\mathcal{Q}_{\omega,\parallel}\|_{L^4 (\mathbb{S}^1)}^4}{\|\chi_\omega \phi_\omega \|_{L^2(\sigma_\omega)}^4}=\Big(\frac{1}{2\pi}+O(\omega^{-\frac12})\Big)\|\mathcal{Q}_{\omega,\parallel}\|_{L^4 (\mathbb{S}^1)}^4.
$$ 
Hence, by \eqref{tue3} and the asymptotic Pythagorean identity \eqref{asymptotic Pythagorean identity 3}, the energy $\mathcal{E}_\omega(\mathcal{Q}_\omega)$ can be written as 
$$\mathcal{E}_\omega(\mathcal{Q}_\omega)=\mathcal{E}_\infty(\mathcal{Q}_{\omega,\parallel})+\frac{\omega}{2}\|\mathcal{Q}_\omega\|_{{\dot{\Sigma}}_{\omega; (s,z)}}^2+\frac{1}{2}\big\|\partial_\theta(\mathcal{P}_{\tilde{\Phi}_\omega}^\perp \mathcal{Q}_\omega)\big\|_{L^2(\frac{1}{\sigma_\omega})}^2+O(\omega^{-\frac{1}{2}}).$$
Thus, it follows that
$$\begin{aligned}
J_\infty^{(1D)}(m)&\le E_\infty\left(\frac{\sqrt{m}\mathcal{Q}_{\omega,\parallel}}{\|\mathcal{Q}_{\omega,\parallel}\|_{L^2 (\mathbb{S}^1)}}\right)=E_\infty( \mathcal{Q}_{\omega,\parallel})+O(\omega^{-\frac{1}{2}})\\
&=\mathcal{E}_\omega(\mathcal{Q}_\omega)-\frac{\omega}{2}\| \mathcal{Q}_\omega\|_{{\dot{\Sigma}}_{\omega; (s,z)}}^2-\frac12\big\|\partial_\theta(\mathcal{P}_{\tilde{\Phi}_\omega}^\perp \mathcal{Q}_\omega)\big\|_{L^2(\frac{1}{\sigma_\omega})}^2+O(\omega^{-\frac12})\\
&=\mathcal{J}_\omega^{(3D)}(m)-\frac{\omega}{2}\| \mathcal{Q}_\omega\|_{{\dot{\Sigma}}_{\omega; (s,z)}}^2-\frac12\big\|\partial_\theta(\mathcal{P}_{\tilde{\Phi}_\omega}^\perp \mathcal{Q}_\omega)\big\|_{L^2(\frac{1}{\sigma_\omega})}^2+O(\omega^{-\frac12}).
\end{aligned}$$
Combining the upper bound on $\mathcal{J}_\omega^{(3D)}(m)$ (Lemma \ref{upper}), we conclude that $J_\infty^{(1D)}(m)= \mathcal{J}_\omega^{(3D)}(m)+O(\omega^{-\frac12})$ and $\omega \| \mathcal{Q}_\omega\|_{{\dot{\Sigma}}_{\omega; (s,z)}}^2+ \big\|\partial_\theta(\mathcal{P}_{\tilde{\Phi}_\omega}^\perp \mathcal{Q}_\omega)\big\|_{L^2(\frac{1}{\sigma_\omega})}^2=O(\omega^{-\frac12})$.

For $(iii)$, we note from the proof of Proposition \ref{adprop} $(i)$ and $(ii)$ that $\{\frac{\sqrt{m}\mathcal{Q}_{\omega,\parallel}}{\|\mathcal{Q}_{\omega,\parallel}\|_{L^2 (\mathbb{S}^1)}}\}$ is shown to be a minimizing sequence for $J_\infty^{(1D)}(m)$. Hence, by the concentration-compactness property for $J_\infty^{(1D)}(m)$ and the uniqueness of the minimizer $Q_\infty$ (see \cite{L, GLT}), we conclude that up to a translation, $\mathcal{Q}_{\omega,\parallel}\rightarrow Q_\infty$ in $H^1(\mathbb{S}^1)$ as $\omega\rightarrow \infty$. We assume that translating along the $\theta$-axis if necessary, $\mathcal{Q}_{\omega,\parallel}\rightarrow Q_\infty$ in $H^1(\mathbb{S}^1)$ as $\omega\rightarrow \infty$. On the other hand, this convergence immediately implies that $\|\partial_\theta(\mathcal{P}_{\tilde{\Phi}_\omega}\mathcal{Q}_\omega)-\partial_\theta\mathcal{Q}_{\infty}(\theta)\Phi_\omega(s,z)\|_{L^2(\sigma_\omega)}=\|\partial_\theta(\mathcal{Q}_{\omega,\parallel}-Q_\infty)\|_{L^2(\mathbb{S}^1)}+o_\omega(1)\to 0$ as $\omega\rightarrow \infty$. Thus, it remains to show that $\|\partial_\theta(\mathcal{P}_{\tilde{\Phi}_\omega}^\perp\mathcal{Q}_\omega)\|_{L^2(\sigma_\omega)}\to 0$. Indeed, since $\mathcal{P}_{\tilde{\Phi}_\omega}^\perp$ and $\partial_\theta$ commute, using   Proposition \ref{adprop} $(i)$, we obtain 
$$\begin{aligned}
\|\partial_\theta(\mathcal{P}_{\tilde{\Phi}_\omega}^\perp\mathcal{Q}_\omega)\|_{L^2(\sigma_\omega)}^2 &=\int_{\mathbb{S}^1}\int_{-\infty}^{\infty}\int_{-\sqrt{\omega}}^\infty \frac{\sigma_\omega^2-1}{\sigma_\omega} |\partial_\theta(\mathcal{P}_{\tilde{\Phi}_\omega}^\perp\mathcal{Q}_\omega)|^2d s dz d\theta+o_\omega(1)\\
&\lesssim\big\|\sqrt{|\sigma_\omega^2-1|}\partial_\theta \mathcal{Q}_\omega\big\|_{L^2(\frac{1}{\sigma_\omega})}^2+\big\|\sqrt{|\sigma_\omega^2-1|}\partial_\theta(\mathcal{P}_{\tilde{\Phi}_\omega}\mathcal{Q}_\omega)\big\|_{L^2(\frac{1}{\sigma_\omega})}^2+o_\omega(1).
\end{aligned}$$
For the first term,  we have
$$\begin{aligned}
&\big\|\sqrt{|\sigma_\omega^2-1|}\partial_\theta \mathcal{Q}_\omega\big\|_{L^2(\frac{1}{\sigma_\omega})}\\
&\leq\omega^{-\frac{1}{2}}\|\mathbf{1}_{|s|\leq \omega^\frac18}\sqrt{|s|(s+2\sqrt{\omega})}\partial_\theta \mathcal{Q}_\omega\|_{L^2(\frac{1}{\sigma_\omega})}+\omega^{-\frac{1}{2}}\|\mathbf{1}_{|s|\geq \omega^\frac18}\sqrt{|s|(s+2\sqrt{\omega})}\partial_\theta \mathcal{Q}_\omega\|_{L^2(\frac{1}{\sigma_\omega})}\\
&=o_\omega(1)\quad\textup{(by Proposition \ref{existence of a minimizer} $(iii)$ and Proposition \ref{Gaussian L^2 bounds})}.
\end{aligned}$$
For the second term on the bound, we have
$$\begin{aligned}
\|\sqrt{|\sigma_\omega^2-1|}\partial_\theta(\mathcal{P}_{\tilde{\Phi}_\omega}\mathcal{Q}_\omega)\|_{L^2(\frac{1}{\sigma_\omega})}&=\|\chi_\omega\Phi_\omega\|_{L^2(\sigma_\omega)}^{-1}\|\sqrt{|\sigma_\omega^2-1|}\chi_\omega\Phi_\omega\|_{L^2(\frac{1}{\sigma_\omega})}\|\partial_\theta\mathcal{Q}_{\omega,\parallel}\|_{L^2(\mathbb{S}^1)}\\
&\lesssim o_\omega(1)\|\partial_\theta\mathcal{Q}_{\omega}\|_{L^2(\frac{1}{\sigma_\omega})}\quad\textup{(by Lemma \ref{mno1})}.
\end{aligned}$$
Therefore, we conclude that $\|\partial_\theta(\mathcal{P}_{\tilde{\Phi}_\omega}^\perp\mathcal{Q}_\omega)\|_{L^2(\sigma_\omega)}=o_\omega(1)$.

For $(iv)$, we note that by \eqref{tue3}, \eqref{asymptotic Pythagorean identity 3}, \eqref{po1}   and Proposition \ref{adprop} $(i)$,
\begin{align*}
\mathcal{E}_\omega(\mathcal{Q}_\omega) &=\frac{\omega}{4}\|\mathcal{Q}_\omega\|_{{\dot{\Sigma}}_{\omega; (s,z)}}^2+\frac{1}{4}\|\partial_\theta \mathcal{Q}_\omega\|_{L^2(\frac{1}{\sigma_\omega})}^2-\frac{\mu_\omega}{4}\|\mathcal{Q}_\omega\|_{L^2(\sigma_\omega)}^2\\
&=\frac14\|\partial_\theta \mathcal{Q}_{\omega,\parallel}\|_{L^2(\mathbb{S}^1)}^2-\frac{\mu_\omega m}{4}+o_\omega(1),
\end{align*}
while
\begin{align*}
 E_\infty(Q_\infty)=\frac12\|\partial_\theta Q_\infty\|_{L^2(\mathbb{S}^1)}^2-\frac{1}{8\pi} \|Q_\infty\|_{L^4(\mathbb{S}^1)}^4=\frac14\|\partial_\theta Q_\infty\|_{L^2(\mathbb{S}^1)}^2-\frac{\mu_\infty m}{4}.
\end{align*}
Thus, combining with the two convergences $ \mathcal{J}_\omega^{(3D)}(m)\to J_\infty^{(1D)}(m)$ and $\mathcal{Q}_{\omega,\parallel}\rightarrow Q_\infty$ in $H^1(\mathbb{S}^1)$, we conclude $\mu_\omega\rightarrow \mu_\infty$.
 \end{proof}

\begin{proof}[Proof of Theorem \ref{drth}] We write
\begin{align*}
\mathcal{Q}_\omega(s,z,\theta)-\mathcal{Q}_\infty(\theta) \chi_\omega(s) \Phi_0(s,z) &=\Big(\mathcal{Q}_{\omega,\parallel}(\theta)-Q_\infty(\theta)\Big)\frac{\chi_\omega(s) \Phi_\omega(s,z)}{\|\chi_\omega  \Phi_\omega\|_{L^2(\sigma_\omega dsdz)}}+\mathcal{P}_{\tilde{\Phi}_\omega}^\perp \mathcal{Q}_\omega\\
&\quad  +\Big(\|\chi_\omega  \Phi_\omega\|_{L^2(\sigma_\omega dsdz)}^{-1}\Phi_\omega(s,z)-\Phi_0(s,z)\Big)Q_\infty(\theta) \chi_\omega(s)  .
\end{align*}
Then we see that by the bound for $\mathcal{P}_{\tilde{\Phi}_\omega}^\perp$(Lemma \ref{orthogonal complement bound}), the fast decay and convergence of $\phi_\omega$(Proposition \ref{1d eigenstate}),  strong convergences of $\mathcal{Q}_{\omega,\parallel}$ and vanishing higher eigenstates (Proposition \ref{adprop}),
$$
\|\mathcal{Q}_\omega -\mathcal{Q}_\infty  \chi_\omega \Phi_0\|_{L^2(\sigma_\omega)\cap {\dot{\Sigma}}_{\omega;(s,z)}}  \lesssim \|\mathcal{Q}_{\omega,\parallel} -Q_\infty \|_{L^2(\mathbb{S}^1)} +\|\mathcal{P}_{\tilde{\Phi}_\omega}^\perp \mathcal{Q}_\omega\|_{L^2(\sigma_\omega)\cap {\dot{\Sigma}}_{\omega;(s,z)}}+o_\omega(1)  =o_\omega(1) 
$$ 
and
$$
 \|\partial_\theta (\mathcal{Q}_\omega -\mathcal{Q}_\infty  \chi_\omega \Phi_0)\|_{L^2(\frac{1}{\sigma_\omega})}\lesssim \|\partial_\theta(\mathcal{Q}_{\omega,\parallel} -Q_\infty) \|_{L^2(\mathbb{S}^1)}+\big\|\partial_\theta(\mathcal{P}_{\tilde{\Phi}_\omega}^\perp \mathcal{Q}_\omega)\big\|_{L^2(\frac{1}{\sigma_\omega})}+o_\omega(1)=o_\omega(1),
$$
and therefore we get
$$
\lim_{\omega\to \infty}\|\mathcal{Q}_\omega(s,z,\theta)-\mathcal{Q}_\infty(\theta) \chi_\omega(s) \Phi_0(s,z) \|_{\Sigma_\omega}=0.
$$
\end{proof}

\section{Uniqueness of a ground state: Proof of Theorem \ref{thm: unique}}\label{sec: uniqueness}

In this section, we derive a coercivity estimate of the linearized operator at the constrained minimizer $\mathcal{Q}_\omega$ invoking that for the 1D periodic NLS (see \cite{GLT} and Appendix \ref{sec: Energy minimization for the cubic NLS on the unit circle}) and the dimension reduction limit. Then, finally, we complete the proof uniqueness of a minimizer $\mathcal{Q}_\omega$.

\subsection{Linearized operator at a 3D ground state}
For our 3D variational problem $\mathcal{J}_\omega^{(3D)}(m)$, the operator \begin{equation}\label{eq: linearized operator}
\mathcal{L}_\omega:=\omega(\mathcal{H}_\omega^{(2D)}-\Lambda_\omega)-\frac{1}{\sigma_\omega^2}\partial_\theta^2+\mu_\omega-3 \mathcal{Q}_\omega^2,
\end{equation}
acting on  $L^2(\sigma_\omega)$, arises when linearizing about a ground state $\mathcal{Q}_\omega$. By rotation invariance with respect to the $z$-axis in the formulation in $\mathbb{R}^3$ (see \eqref{requ}), its kernel contains $\partial_\theta\mathcal{Q}_\omega$. Moreover, it has at least one negative eigenvalue, because $\langle\mathcal{L}_\omega \mathcal{Q}_\omega, \mathcal{Q}_\omega\rangle_{L^2(\sigma_\omega)}<0$.

The following proposition asserts that in the strong confinement regime, the linearized operator $\mathcal{L}_\omega$ is strictly positive on the subspace $\textup{span}\{\mathcal{Q}_\omega, \partial_\theta\mathcal{Q}_\omega\}^\perp\subset L^2(\sigma_\omega)$.

\begin{proposition}[Coercivity of the linearized operator $\mathcal{L}_\omega$]\label{coc1}
Suppose that $m\neq 2\pi^2$, $\omega\ge1$ is large enough, and $\mathcal{Q}_\omega$ satisfies 
\begin{equation}\label{direas}
\lim_{\omega\to\infty}\|\mathcal{Q}_{\omega,\parallel}-Q_\infty\|_{H^1(\mathbb{S}^1)}=
\lim_{\omega\to\infty}\|\partial_\theta\mathcal{Q}_\omega-\partial_\theta\mathcal{Q}_{\infty}(\theta)\Phi_\omega(s,z)\|_{L^2(\sigma_\omega)}=0.
\end{equation}
  Then, we have
$$
\langle \mathcal{L}_\omega \varphi,\varphi\rangle_{L^2(\sigma_\omega)}\ge \frac{L_\infty}{8}\| \varphi_{\omega,\parallel}\|_{H^1(\mathbb{S}^1)}^2+\frac{\omega}{4}\|\varphi\|_{\dot{\Sigma}_{\omega;(s,z)}}^2
$$
for all $\varphi\in \Sigma_\omega$ such that $\langle \varphi, \mathcal{Q}_\omega\rangle_{L^2(\sigma_\omega)}=\langle\varphi,  \partial_\theta \mathcal{Q}_\omega\rangle_{L^2(\sigma_\omega)}=0$, where $L_\infty>0$ is the constant given in Proposition \ref{prop: limit linearized operator coercivity}. 
\end{proposition}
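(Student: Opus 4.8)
The plan is to transfer the coercivity of the one-dimensional linearized operator $\mathcal{L}_\infty=-\partial_\theta^2+\mu_\infty-\frac{3}{2\pi}Q_\infty^2$ (Proposition \ref{prop: limit linearized operator coercivity}) to $\mathcal{L}_\omega$ through the truncated projection $\mathcal{P}_{\tilde{\Phi}_\omega}$ and the dimension reduction limit \eqref{direas}. Throughout I decompose $\varphi=\varphi_{\omega,\parallel}\tilde{\Phi}_\omega+\varphi^\perp$ with $\varphi^\perp=\mathcal{P}_{\tilde{\Phi}_\omega}^\perp\varphi$ and $\varphi_{\omega,\parallel}(\theta)=\langle\varphi(\cdot,\cdot,\theta),\tilde{\Phi}_\omega\rangle_{L^2(\sigma_\omega dsdz)}$. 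The central mechanism is that the leading term contributes $\omega\langle(\mathcal{H}_\omega^{(2D)}-\Lambda_\omega)\varphi,\varphi\rangle_{L^2(\sigma_\omega)}=\omega\|\varphi\|_{\dot{\Sigma}_{\omega;(s,z)}}^2$, which is of size $\omega$, while by the spectral gap $\Lambda_\omega'-\Lambda_\omega\to 2$ (Corollary \ref{2d eigenstate}) together with Lemma \ref{truncation error} the complement obeys $\|\varphi^\perp\|_{L^2(\sigma_\omega)}^2\lesssim\|\varphi\|_{\dot{\Sigma}_{\omega;(s,z)}}^2+e^{-c\omega}\|\varphi_{\omega,\parallel}\|_{L^2(\mathbb{S}^1)}^2$. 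This $O(\omega)$ reservoir is the device that absorbs every contribution supported on $\varphi^\perp$.

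First I would expand
\begin{equation}
\langle\mathcal{L}_\omega\varphi,\varphi\rangle_{L^2(\sigma_\omega)}=\omega\|\varphi\|_{\dot{\Sigma}_{\omega;(s,z)}}^2+\|\partial_\theta\varphi\|_{L^2(\frac{1}{\sigma_\omega})}^2+\mu_\omega\|\varphi\|_{L^2(\sigma_\omega)}^2-3\langle\mathcal{Q}_\omega^2\varphi,\varphi\rangle_{L^2(\sigma_\omega)},
\end{equation}
and isolate the purely angular piece carried by $\varphi_{\omega,\parallel}$. Using the asymptotic Pythagorean identity (Lemma \ref{asymptotic orthogonality}) and $(1-\sigma_\omega^2)\tilde{\Phi}_\omega\to 0$, the derivative term splits as $\|\partial_\theta\varphi_{\omega,\parallel}\|_{L^2(\mathbb{S}^1)}^2+\|\partial_\theta\varphi^\perp\|_{L^2(\frac{1}{\sigma_\omega})}^2+o_\omega(1)\|\partial_\theta\varphi\|_{L^2(\frac{1}{\sigma_\omega})}^2$; the mass term splits by orthogonality \eqref{oth2l} with $\mu_\omega\to\mu_\infty$ (Proposition \ref{adprop} $(iv)$); and in the nonlinear term the parallel-parallel part reduces, via $\mathcal{Q}_{\omega,\parallel}\to Q_\infty$ in $H^1(\mathbb{S}^1)\hookrightarrow L^\infty(\mathbb{S}^1)$ (Proposition \ref{adprop} $(iii)$) and $\int\tilde{\Phi}_\omega^4\sigma_\omega\,dsdz=\frac{1}{2\pi}+O(\omega^{-1/2})$ (Corollary \ref{2d eigenstate}, Proposition \ref{1d eigenstate} $(iii)$), to $-\frac{3}{2\pi}\langle Q_\infty^2\varphi_{\omega,\parallel},\varphi_{\omega,\parallel}\rangle_{L^2(\mathbb{S}^1)}+o_\omega(1)\|\varphi_{\omega,\parallel}\|_{H^1(\mathbb{S}^1)}^2$. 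Collecting these identifies the main term $\langle\mathcal{L}_\infty\varphi_{\omega,\parallel},\varphi_{\omega,\parallel}\rangle_{L^2(\mathbb{S}^1)}$, leaving three families of remainders: the positive quantity $\|\partial_\theta\varphi^\perp\|^2$ (discarded); the $\varphi^\perp$-only pieces $\mu_\omega\|\varphi^\perp\|^2$ and $-3\langle\mathcal{Q}_\omega^2\varphi^\perp,\varphi^\perp\rangle$; and the parallel–perpendicular cross term $-6\,\mathrm{Re}\,\langle\mathcal{Q}_\omega^2\varphi_{\omega,\parallel}\tilde{\Phi}_\omega,\varphi^\perp\rangle$.

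Next I would convert the exact constraints $\langle\varphi,\mathcal{Q}_\omega\rangle_{L^2(\sigma_\omega)}=\langle\varphi,\partial_\theta\mathcal{Q}_\omega\rangle_{L^2(\sigma_\omega)}=0$ into approximate orthogonality of $\varphi_{\omega,\parallel}$ to $Q_\infty$ and $Q_\infty'$. Expanding each pairing through the decomposition and using $\|\mathcal{P}_{\tilde{\Phi}_\omega}^\perp\mathcal{Q}_\omega\|_{L^2(\sigma_\omega)}+\|\partial_\theta(\mathcal{P}_{\tilde{\Phi}_\omega}^\perp\mathcal{Q}_\omega)\|_{L^2(\frac{1}{\sigma_\omega})}=O(\omega^{-1/4})$ (Proposition \ref{adprop} $(i)$) together with $\mathcal{Q}_{\omega,\parallel}\to Q_\infty$ in $H^1$ yields $|\langle\varphi_{\omega,\parallel},Q_\infty\rangle_{L^2(\mathbb{S}^1)}|+|\langle\varphi_{\omega,\parallel},Q_\infty'\rangle_{L^2(\mathbb{S}^1)}|\le o_\omega(1)\big(\|\varphi\|_{\dot{\Sigma}_{\omega;(s,z)}}+\|\varphi_{\omega,\parallel}\|_{H^1(\mathbb{S}^1)}\big)$. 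Writing $\varphi_{\omega,\parallel}=\psi+aQ_\infty+bQ_\infty'$ with $\psi\perp\{Q_\infty,Q_\infty'\}$, the coefficients are of the same small order, so Proposition \ref{prop: limit linearized operator coercivity} gives $\langle\mathcal{L}_\infty\varphi_{\omega,\parallel},\varphi_{\omega,\parallel}\rangle\ge L_\infty\|\psi\|_{H^1}^2-o_\omega(1)(\cdots)\ge L_\infty\|\varphi_{\omega,\parallel}\|_{H^1(\mathbb{S}^1)}^2-o_\omega(1)\big(\|\varphi\|_{\dot{\Sigma}_{\omega;(s,z)}}^2+\|\varphi_{\omega,\parallel}\|_{H^1(\mathbb{S}^1)}^2\big)$, where the $o_\omega(1)\|\varphi\|_{\dot{\Sigma}_{\omega;(s,z)}}^2$ error is harmless against the $\omega$-reservoir.

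The main obstacle is the cross term, because $\|\mathcal{Q}_\omega\|_{L^\infty}$ is only controlled by $\omega^{1/4}$ (Lemma \ref{L^infty bound}), so a naive estimate forces a constant—not small—multiple of $\|\varphi_{\omega,\parallel}\|_{H^1}^2$. The resolution is to use the genuine $L^4$ localization of the minimizer: with $\|\tilde{\Phi}_\omega\|_{L^\infty}\lesssim 1$ and $\|\mathcal{Q}_\omega\|_{L^4(\sigma_\omega)}\lesssim 1$ (Proposition \ref{existence of a minimizer} $(iii)$, Proposition \ref{Gaussian L^2 bounds}) one gets $\|\mathcal{Q}_\omega^2\varphi_{\omega,\parallel}\tilde{\Phi}_\omega\|_{L^2(\sigma_\omega)}\lesssim\|\varphi_{\omega,\parallel}\|_{L^\infty(\mathbb{S}^1)}\lesssim\|\varphi_{\omega,\parallel}\|_{H^1(\mathbb{S}^1)}$, and then I would apply Young's inequality with an $\omega$-growing weight $\epsilon$ satisfying $1\ll\epsilon\ll\omega$ (e.g. $\epsilon=\omega^{1/2}$) to obtain a bound $-\epsilon\|\varphi^\perp\|^2-\frac{C}{\epsilon}\|\varphi_{\omega,\parallel}\|_{H^1}^2$, i.e. $-C\omega^{1/2}\|\varphi\|_{\dot{\Sigma}_{\omega;(s,z)}}^2-o_\omega(1)\|\varphi_{\omega,\parallel}\|_{H^1}^2$. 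The same $\omega^{1/2}$ loss appears in $-3\langle\mathcal{Q}_\omega^2\varphi^\perp,\varphi^\perp\rangle$. Since the reservoir dominates, $(\omega-C\omega^{1/2})\|\varphi\|_{\dot{\Sigma}_{\omega;(s,z)}}^2\ge\frac{\omega}{4}\|\varphi\|_{\dot{\Sigma}_{\omega;(s,z)}}^2$ and $(L_\infty-o_\omega(1))\|\varphi_{\omega,\parallel}\|_{H^1}^2\ge\frac{L_\infty}{8}\|\varphi_{\omega,\parallel}\|_{H^1}^2$ for $\omega$ large, which is the claimed estimate.
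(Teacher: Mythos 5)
Your proposal is correct, and it reaches the same structural endpoint as the paper — decompose through $\mathcal{P}_{\tilde{\Phi}_\omega}$, extract $\langle\mathcal{L}_\infty\varphi_{\omega,\parallel},\varphi_{\omega,\parallel}\rangle_{L^2(\mathbb{S}^1)}$, convert the exact constraints into almost-orthogonality against $Q_\infty$ and $Q_\infty'$, invoke Proposition \ref{prop: limit linearized operator coercivity}, and let the $O(\omega)$ term absorb everything supported on the complement — but the middle of your argument is genuinely different from the paper's. The paper first replaces the coefficient $3\mathcal{Q}_\omega^2$ by the \emph{bounded} coefficient $3(Q_\infty\tilde{\Phi}_\omega)^2$, i.e.\ it introduces the auxiliary operator $\tilde{\mathcal{L}}_\omega=-\frac{1}{\sigma_\omega^2}\partial_\theta^2+\mu_\infty-3(Q_\infty\tilde{\Phi}_\omega)^2$, and controls the replacement errors in $L^4(\sigma_\omega)$ via the refined Gagliardo--Nirenberg inequality (Corollary \ref{GN1}); this produces an unfavorable $-o_\omega(1)\|\partial_\theta\varphi\|_{L^2(\frac{1}{\sigma_\omega})}^2$ term, which the paper then absorbs with a $c_\omega$-splitting trick (carving a small $c_\omega$ portion out of $\langle\tilde{\mathcal{L}}_\omega\varphi,\varphi\rangle$ to regenerate $c_\omega\|\partial_\theta\varphi\|^2$). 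You instead keep $\mathcal{Q}_\omega^2$ as is, estimate the parallel--perpendicular and perpendicular--perpendicular nonlinear terms by $L^\infty$--$L^2$ H\"older, using $\|\tilde{\Phi}_\omega\|_{L^\infty}\lesssim1$, $\|\mathcal{Q}_\omega\|_{L^4(\sigma_\omega)}\lesssim1$, $\|\mathcal{Q}_\omega\|_{L^\infty}\lesssim\omega^{\frac14}$, and Young with the $\omega$-dependent weight $\epsilon=\omega^{\frac12}$; since your errors are expressed purely in $\|\varphi^\perp\|_{L^2(\sigma_\omega)}$ and $\|\varphi_{\omega,\parallel}\|_{H^1(\mathbb{S}^1)}$ (never in $\|\partial_\theta\varphi\|_{L^2(\frac{1}{\sigma_\omega})}$), you need neither the refined Gagliardo--Nirenberg inequality nor the $c_\omega$ trick. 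The trade-off: your route is shorter and more elementary, but it spends the reservoir on polynomial losses $C\omega^{1/2}\|\varphi\|_{\dot{\Sigma}_{\omega;(s,z)}}^2$, whereas the paper keeps every loss $o_\omega(1)$-small — a structure that would survive if the reservoir coefficient were weaker than a full power of $\omega$. One small citation slip: in transferring the constraint $\langle\varphi,\partial_\theta\mathcal{Q}_\omega\rangle_{L^2(\sigma_\omega)}=0$ you appeal to Proposition \ref{adprop} $(i)$, which controls $\partial_\theta(\mathcal{P}_{\tilde{\Phi}_\omega}^\perp\mathcal{Q}_\omega)$ only in $L^2(\frac{1}{\sigma_\omega})$, while the Cauchy--Schwarz pairing there requires the $L^2(\sigma_\omega)$ bound; this is precisely what the hypothesis \eqref{direas} supplies (and is how the paper itself argues), so the step is sound once you cite the hypothesis rather than Proposition \ref{adprop} $(i)$.
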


\begin{remark}
Proposition \ref{coc1} implies the non-degeneracy, i.e., $\textup{Ker}(\mathcal{L}_\omega)=\textup{span}\{\partial_\theta\mathcal{Q}_\omega\}$, and it also shows that the linearized operator has only one negative eigenvalue.
\end{remark}

\begin{proof}[Proof of Proposition \ref{coc1}]
Suppose that $\varphi$ is orthogonal to $\mathcal{Q}_\omega$ and $\partial_\theta \mathcal{Q}_\omega$ in $L^2(\sigma_\omega)$. For the proof, we introduce  an auxiliary 3D linear operator
$$\tilde{\mathcal{L}}_\omega:=-\frac{1}{\sigma_\omega^2}\partial_\theta^2+\mu_\infty-3\big( Q_\infty(\theta)\tilde{\Phi}_\omega(s,z)\big)^2,$$
where $\tilde{\Phi}(s,z)=\frac{\chi_\omega(s)\Phi_{\omega}(s,z)}{\|\chi_\omega \Phi_{\omega} \|_{L^2(\sigma_\omega dsdz)}}$ (see \eqref{projection}).
Then, by the definitions, $\langle\mathcal{L}_\omega\varphi,\varphi\rangle_{L^2(\sigma_\omega)}$ can be written as
$$\begin{aligned}
\langle\mathcal{L}_\omega\varphi,\varphi\rangle_{L^2(\sigma_\omega)}&=\omega\|\varphi\|_{\dot{\Sigma}_{\omega;(s,z)}}^2+\langle\tilde{\mathcal{L}}_\omega\varphi,\varphi\rangle_{L^2(\sigma_\omega)}+(\mu_\omega-\mu_\infty)\|\varphi\|_{L^2(\sigma_\omega)}^2\\
&\quad-3\big\langle(\mathcal{Q}_{\omega,\parallel}^2- Q_\infty^2)\tilde{\Phi}_\omega^2\varphi,\varphi\big\rangle_{L^2(\sigma_\omega)}-3\big\langle(\mathcal{Q}_{\omega,\parallel}\tilde{\Phi}_\omega+\mathcal{Q}_\omega)(\mathcal{P}_{\tilde{\Phi}_\omega}^\perp\mathcal{Q}_\omega)\varphi,\varphi\big\rangle_{L^2(\sigma_\omega)},
\end{aligned}$$
where $-3\mathcal{Q}_\omega^2$ in $\mathcal{L}_\omega$ is expanded by the decomposition $\mathcal{Q}_\omega=\mathcal{Q}_{\omega,\parallel} \tilde{\Phi}_\omega+\mathcal{P}_{\tilde{\Phi}_\omega}^\perp\mathcal{Q}_\omega$. We recall from Proposition \ref{adprop} $(iv)$ that $\mu_\omega-\mu_\infty=o_\omega(1)$. On the other hand, by the H\"older inequality and the convergence $\mathcal{Q}_{\omega,\parallel}\to  Q_\infty$ in $H^1(\mathbb{S}^1)$, we have
$$\begin{aligned}
\big|\big\langle(\mathcal{Q}_{\omega,\parallel}^2-Q_\infty^2)\tilde{\Phi}_\omega^2\varphi, \varphi\big\rangle_{L^2(\sigma_\omega)}\big|&\lesssim \|\mathcal{Q}_{\omega,\parallel}-Q_\infty\|_{L^4(\mathbb{S}^1)}\big(\|\mathcal{Q}_{\omega,\parallel}\|_{L^4(\mathbb{S}^1)}+\|Q_\infty\|_{L^4(\mathbb{S}^1)}\big)\|\varphi\|_{L^4(\sigma_\omega)}^2\\
&=o_\omega(1)\|\varphi\|_{L^4(\sigma_\omega)}^2,
\end{aligned}$$
and by the the fact that $\mathcal{P}_{\tilde{\Phi}_\omega}^\perp\mathcal{Q}_\omega\to 0$ in $L^4(\sigma_\omega)$ (by Lemma \ref{orthogonal complement bound}, Proposition \ref{GN}, Proposition \ref{adprop} $(i)$),
$$\begin{aligned}
&\big|\big\langle(\mathcal{Q}_{\omega,\parallel}\tilde{\Phi}_\omega+\mathcal{Q}_\omega)\mathcal{P}_{\tilde{\Phi}_\omega}^\perp\mathcal{Q}_\omega\varphi, \varphi\big\rangle_{L^2(\sigma_\omega)}\big|\\
&\leq\big(\|\mathcal{Q}_{\omega,\parallel}\tilde{\Phi}_\omega\|_{L^4(\sigma_\omega)}+\|\mathcal{Q}_\omega\|_{L^4(\sigma_\omega)}\big)\|\mathcal{P}_{\tilde{\Phi}_\omega}^\perp\mathcal{Q}_\omega\|_{L^4(\sigma_\omega)}\|\varphi\|_{L^4(\sigma_\omega)}^2=o_\omega(1)\|\varphi\|_{L^4(\sigma_\omega)}^2.
\end{aligned}$$
Thus, it follows that
$$\langle \mathcal{L}_\omega \varphi,\varphi\rangle_{L^2(\sigma_\omega)}\ge \omega \|\varphi\|_{\dot{\Sigma}_{\omega;(s,z)}}^2+\langle \tilde{\mathcal{L}}_\omega \varphi,\varphi\rangle_{L^2(\sigma_\omega)}-o_\omega(1)\|\varphi\|_{L^2(\sigma_\omega)}^2 -o_\omega(1)\|\varphi\|_{L^4(\sigma_\omega)}^2.$$
We note that by Lemma \ref{orthogonal complement bound}, 
\begin{equation}\label{simple L^2 bound}
\|\varphi\|_{L^2(\sigma_\omega)}^2=\|\mathcal{P}_{\tilde{\Phi}_\omega}\varphi\|_{L^2(\sigma_\omega)}^2+\|\mathcal{P}_{\tilde{\Phi}_\omega}^\perp\varphi\|_{L^2(\sigma_\omega)}^2\lesssim\|\varphi_{\omega,\parallel}\|_{L^2(\mathbb{S}^1)}^2+\|\varphi\|_{\dot{\Sigma}_{\omega;(s,z)}}^2,
\end{equation}
and consequently by the refined Gagliardo-Nirenberg inequality (Corollary \ref{GN1}) and Young's inequality, one can easily show that 
\begin{equation}\label{GN1 application}
\begin{aligned}
\|\varphi\|_{L^4(\sigma_\omega)}^2&\lesssim\|\varphi\|_{L^2(\sigma_\omega)}^2+\|\partial_\theta \varphi\|_{L^2(\frac{1}{\sigma_\omega})}^2+\omega\|\varphi\|_{\dot{\Sigma}_{\omega;(s,z)}}^2\\
&\lesssim \|\varphi_{\omega,\parallel}\|_{L^2(\mathbb{S}^1)}^2+\|\partial_\theta \varphi\|_{L^2(\frac{1}{\sigma_\omega})}^2+(1+\omega)\|\varphi\|_{\dot{\Sigma}_{\omega;(s,z)}}^2.
\end{aligned}
\end{equation}
Hence, applying  \eqref{GN1 application}, we obtain
$$\begin{aligned}
\langle \mathcal{L}_\omega \varphi,\varphi\rangle_{L^2(\sigma_\omega)}&\ge(1-o_\omega(1))\omega\|\varphi\|_{\dot{\Sigma}_{\omega;(s,z)}}^2+\langle \tilde{\mathcal{L}}_\omega \varphi,\varphi\rangle_{L^2(\sigma_\omega)}-o_\omega(1)\|\partial_\theta\varphi\|_{L^2(\frac{1}{\sigma_\omega})}^2\\
&\quad -o_\omega(1)\|\varphi_{\omega,\parallel}\|_{L^2(\mathbb{S}^1)}^2.
\end{aligned}$$
Note that in the above inequality, the lower bound includes an unfavorable negative term $-o_\omega(1)\|\partial_\theta\varphi\|_{L^2(\frac{1}{\sigma_\omega})}^2$. In order to absorb it, we take a small number $c_\omega>0$ such that $c_\omega\to 0$ to be specified later, and employ a simple lower bound
$$\begin{aligned}
\langle \tilde{\mathcal{L}}_\omega \varphi,\varphi\rangle_{L^2(\sigma_\omega)}&\geq \|\partial_\theta \varphi\|_{L^2(\frac{1}{\sigma_\omega})}^2-\big(3\| Q_\infty \tilde{\Phi}_\omega\|_{L^\infty}^2-\mu_\infty\big)\|\varphi\|_{L^2(\sigma_\omega)}^2\\
&\geq  \|\partial_\theta \varphi\|_{L^2(\frac{1}{\sigma_\omega})}^2-K\left(\|\varphi_{\omega,\parallel}\|_{L^2(\mathbb{S}^1)}^2+ \|\varphi\|_{\dot{\Sigma}_{\omega;(s,z)}}^2\right)\quad\textup{(by \eqref{simple L^2 bound})}
\end{aligned}$$
to obtain a modified lower bound 
$$\begin{aligned}
\langle \mathcal{L}_\omega \varphi,\varphi\rangle_{L^2(\sigma_\omega)}&\ge (1-o_\omega(1))\omega\|\varphi\|_{\dot{\Sigma}_{\omega;(s,z)}}^2-o_\omega(1)\|\partial_\theta\varphi\|_{L^2(\frac{1}{\sigma_\omega})}^2-o_\omega(1)\|\varphi_{\omega,\parallel}\|_{L^2(\mathbb{S}^1)}^2\\
&\quad +(1-c_\omega)\langle \tilde{\mathcal{L}}_\omega \varphi,\varphi\rangle_{L^2(\sigma_\omega)}+c_\omega\langle   \tilde{\mathcal{L}}_\omega \varphi,\varphi\rangle_{L^2(\sigma_\omega)}\\
&\ge (1-o_\omega(1))\omega\|\varphi\|_{\dot{\Sigma}_{\omega;(s,z)}}^2+(1-c_\omega)\langle \tilde{\mathcal{L}}_\omega \varphi,\varphi\rangle_{L^2(\sigma_\omega)}\\
&\quad+\big(c_\omega-o_\omega(1)\big)\|\partial_\theta\varphi\|_{L^2(\frac{1}{\sigma_\omega})}^2 -\big(Kc_\omega+o_\omega(1)\big)\|\varphi_{\omega,\parallel}\|_{L^2(\mathbb{S}^1)}^2.
\end{aligned}$$
We assume that $0<c_\omega\leq\frac{1}{4}$, $(c_\omega-o_\omega(1))\geq\frac{c_\omega}{2}$ and $(Kc_\omega+o_\omega(1))\leq\frac{L_\infty}{8}$. Then, it is refined as 
$$\langle \mathcal{L}_\omega \varphi,\varphi\rangle_{L^2(\sigma_\omega)}\ge\frac{\omega}{2}\|\varphi\|_{\dot{\Sigma}_{\omega;(s,z)}}^2+\frac{1}{2}\langle \tilde{\mathcal{L}}_\omega \varphi,\varphi\rangle_{L^2(\sigma_\omega)}+\frac{c_\omega}{2}\|\partial_\theta\varphi\|_{L^2(\frac{1}{\sigma_\omega})}^2 -\frac{L_\infty}{8}\|\varphi_{\omega,\parallel}\|_{L^2(\mathbb{S}^1)}^2.$$
As a consequence, the proof of the proposition is reduced to that of a lower bound on the auxiliary operator 
\begin{equation}\label{midpf}
\langle \tilde{\mathcal{L}}_\omega \varphi, \varphi\rangle_{L^2(\sigma_\omega)}\ge \frac{L_\infty}{2}\| \varphi_{\omega,\parallel}\|_{H^1(\mathbb{S}^1)}^2-\frac{\omega}{2}\|\varphi\|_{\dot{\Sigma}_{\omega;(s,z)}}^2-\frac{c_\omega}{2}\|\partial_\theta\varphi\|_{L^2(\frac{1}{\sigma_\omega})}^2.
\end{equation}

To show \eqref{midpf}, we insert the decomposition\footnote{Our goal is to extract the 1D core part from the 3D operator $\tilde{\mathcal{L}}_\omega$. } $\varphi=\mathcal{P}_{\tilde{\Phi}_\omega}\varphi+\mathcal{P}_{\tilde{\Phi}_\omega}^\perp\varphi$ in its left hand side, where $\mathcal{P}_{\tilde{\Phi}_\omega}$ is the modified projection defined in Section \ref{subsec: truncated projection}, and expand as
$$\begin{aligned}
\langle \tilde{\mathcal{L}}_\omega \varphi, \varphi\rangle_{L^2(\sigma_\omega)}&=\langle \tilde{\mathcal{L}}_\omega \mathcal{P}_{\tilde{\Phi}_\omega}\varphi, \mathcal{P}_{\tilde{\Phi}_\omega}\varphi\rangle_{L^2(\sigma_\omega)}+2\langle \tilde{\mathcal{L}}_\omega \mathcal{P}_{\tilde{\Phi}_\omega}^\perp\varphi, \mathcal{P}_{\tilde{\Phi}_\omega}\varphi\rangle_{L^2(\sigma_\omega)}\\
&\quad+\langle \tilde{\mathcal{L}}_\omega \mathcal{P}_{\tilde{\Phi}_\omega}^\perp\varphi, \mathcal{P}_{\tilde{\Phi}_\omega}^\perp\varphi\rangle_{L^2(\sigma_\omega)}.
\end{aligned}$$
Then, it follows from  Proposition \ref{1d eigenstate}   that 
$$\begin{aligned}
\langle \tilde{\mathcal{L}}_\omega \mathcal{P}_{\tilde{\Phi}_\omega}\varphi, \mathcal{P}_{\tilde{\Phi}_\omega}\varphi\rangle_{L^2(\sigma_\omega)}&=\langle \tilde{\mathcal{L}}_\omega (\varphi_{\omega,\parallel}\tilde{\Phi}_\omega), \varphi_{\omega,\parallel}\tilde{\Phi}_\omega \rangle_{L^2(\sigma_\omega)}\\
&=\|\tilde{\Phi}_\omega \|_{L^2(\frac{1}{\sigma_\omega})}^2\langle-\partial_\theta^2 \varphi_{\omega,\parallel},\varphi_{\omega,\parallel}\rangle_{L^2(\mathbb{S}^1)} +\mu_\infty\|\tilde{\Phi}_\omega \|_{L^2(\sigma_\omega)}^2 \|\varphi_{\omega,\parallel}\|_{L^2(\mathbb{S}^1)}^2\\
&\quad -3\|\tilde{\Phi}_\omega \|_{L^4(\sigma_\omega)}^4\langle Q_\infty^2\varphi_{\omega,\parallel},\varphi_{\omega,\parallel}\rangle_{L^2(\mathbb{S}^1)}\\
&=\langle\mathcal{L}_\infty\varphi_{\omega,\parallel},\varphi_{\omega,\parallel}\rangle_{L^2(\mathbb{S}^1)}-o_\omega(1)\|\varphi_{\omega,\parallel}\|_{H^1(\mathbb{S}^1)}.
\end{aligned}$$
On the other hand, by Lemma \ref{orthogonal complement bound} and the asymptotic orthogonality \eqref{asymptotic Pythagorean identity 3}\footnote{$\langle   \mathcal{P}_{\tilde{\Phi}_\omega}^\perp v,  \mathcal{P}_{\tilde{\Phi}_\omega} v\rangle_{L^2(\frac{1}{\sigma_\omega})}=O(\omega^{-\frac{1}{2}})\|v\|_{L^2(\frac{1}{\sigma_\omega})}^2$.}, we have
$$\begin{aligned}
\langle \tilde{\mathcal{L}}_\omega \mathcal{P}_{\tilde{\Phi}_\omega}^\perp\varphi, \mathcal{P}_{\tilde{\Phi}_\omega}\varphi\rangle_{L^2(\sigma_\omega)}&\geq\langle \mathcal{P}_{\tilde{\Phi}_\omega}^\perp(\partial_\theta\varphi), \mathcal{P}_{\tilde{\Phi}_\omega}(\partial_\theta\varphi)\rangle_{L^2(\frac{1}{\sigma_\omega})}+\mu_\infty\langle\mathcal{P}_{\tilde{\Phi}_\omega}^\perp\varphi, \mathcal{P}_{\tilde{\Phi}_\omega}\varphi\rangle_{L^2(\sigma_\omega)}\\
&\quad-3\| Q_\infty\tilde{\Phi}_\omega\|_{L^\infty}^2 \|\mathcal{P}_{\tilde{\Phi}_\omega}^\perp\varphi\|_{L^2(\sigma_\omega)} \|\mathcal{P}_{\tilde{\Phi}_\omega}\varphi\|_{L^2(\sigma_\omega)}\\
&\gtrsim -o_\omega(1) \|\partial_\theta\varphi\|_{L^2(\frac{1}{\sigma_\omega})}^2   -(\|\varphi\|_{\dot{\Sigma}_{\omega;(s,z)}} +e^{-c\omega}\|\varphi_{\omega,\parallel}\|_{L^2(\mathbb{S}^1)})\|\varphi_{\omega,\parallel}\|_{L^2(\mathbb{S}^1)} \\
&\gtrsim -o_\omega(1)\big(\|\partial_\theta\varphi\|_{L^2(\frac{1}{\sigma_\omega})}^2 +\omega\|\varphi\|_{\dot{\Sigma}_{\omega;(s,z)}}^2+\|\varphi_{\omega,\parallel}\|_{L^2(\mathbb{S}^1)}^2\big),
\end{aligned}$$
while Lemma \ref{orthogonal complement bound} implies that 
$$\begin{aligned}
\langle \tilde{\mathcal{L}}_\omega \mathcal{P}_{\tilde{\Phi}_\omega}^\perp\varphi, \mathcal{P}_{\tilde{\Phi}_\omega}^\perp\varphi\rangle_{L^2(\sigma_\omega)}&\geq-3\| Q_\infty\Phi_\omega\|_{L^\infty}^2 \|\mathcal{P}_{\tilde{\Phi}_\omega}^\perp\varphi\|_{L^2(\sigma_\omega)}^2-\mu_\infty\|\mathcal{P}_{\tilde{\Phi}_\omega}^\perp\varphi\|_{L^2(\sigma_\omega)}^2\\
&\gtrsim -\|\varphi\|_{\dot{\Sigma}_{\omega;(s,z)}}^2-o_\omega(1)\|\varphi_{\omega,\parallel}\|_{L^2(\mathbb{S}^1)}^2.
\end{aligned}$$
Collecting all, we conclude that 
$$\begin{aligned}
\langle \tilde{\mathcal{L}}_\omega \varphi, \varphi\rangle_{L^2(\sigma_\omega)}&\geq \langle  \mathcal{L}_\infty  \varphi_{\omega,\parallel},\varphi_{\omega,\parallel}\rangle_{L^2(\mathbb{S}^1)}-\frac{\omega}{4}\|\varphi\|_{\dot{\Sigma}_{\omega;(s,z)}}^2\\
&\quad-o_\omega(1)\|\partial_\theta\varphi\|_{L^2(\frac{1}{\sigma_\omega})}^2-o_\omega(1)\|\varphi_{\omega,\parallel}\|_{H^1(\mathbb{S}^1)}^2
\end{aligned}$$
Hence, assuming $o_\omega(1)\leq\frac{c_\omega}{2}$, the proof of \eqref{midpf} is further reduced to that of the 1D lower bound,  
\begin{equation}\label{rem3}
\langle  \mathcal{L}_\infty  \varphi_{\omega,\parallel},\varphi_{\omega,\parallel}\rangle_{L^2(\mathbb{S}^1)}\ge \frac{2L_\infty}{3}\| \varphi_{\omega,\parallel}\|_{H^1(\mathbb{S}^1)}^2-\frac{\omega}{4}\|\varphi\|_{\dot{\Sigma}_{\omega;(s,z)}}^2.
\end{equation}

For the lower bound \eqref{rem3},  we employ the coercivity of the linearized operator $\mathcal{L}_\infty$ on $\mathbb{S}^1$ (Proposition \ref{prop: limit linearized operator coercivity}). To do so, we extract the core part $\tilde{\varphi}_{\omega,\parallel}$ from $\varphi_{\omega,\parallel}$ in the form  
$$\varphi_{\omega,\parallel}=\tilde{\varphi}_{\omega,\parallel}+\frac{\langle \varphi_{\parallel}, Q_\infty\rangle_{L^2(\mathbb{S}^1)}}{\| Q_\infty\|_{L^2(\mathbb{S}^1)}^2} Q_\infty+\frac{\langle \varphi_{\parallel},\partial_\theta  Q_\infty\rangle_{L^2(\mathbb{S}^1)}}{\|\partial_\theta  Q_\infty\|_{L^2(\mathbb{S}^1)}^2}\partial_\theta  Q_\infty,$$
such that $\tilde{\varphi}_{\omega,\parallel}$ is orthogonal to $ Q_\infty$ and $\partial_\theta  Q_\infty$ in $L^2(\mathbb{S}^1)$. Then, using that $\mathcal{L}_\infty Q_\infty=-\frac{1}{\pi} Q_\infty^3$ and $\mathcal{L}_\infty(\partial_\theta Q_\infty)=0$, we write 
$$\begin{aligned}
\langle  \mathcal{L}_\infty  \varphi_{\omega,\parallel},\varphi_{\omega,\parallel}\rangle_{L^2(\mathbb{S}^1)}&=\langle  \mathcal{L}_\infty \tilde{\varphi}_{\omega,\parallel},\tilde{\varphi}_{\omega,\parallel}\rangle_{L^2(\mathbb{S}^1)}-\frac{2\langle \varphi_{\omega,\parallel}, Q_\infty\rangle_{L^2(\mathbb{S}^1)}}{\pi\| Q_\infty\|_{L^2(\mathbb{S}^1)}^2}\langle Q_\infty^3,\varphi_{\omega,\parallel}\rangle_{L^2(\mathbb{S}^1)}\\
&\quad-\frac{ \langle \varphi_{\omega,\parallel}, Q_\infty\rangle_{L^2(\mathbb{S}^1)}^2}{\pi\| Q_\infty\|_{L^2(\mathbb{S}^1)}^4}\| Q_\infty\|_{L^4(\mathbb{S}^1)}^4.
\end{aligned}$$
Note that by the assumptions $\varphi\perp \mathcal{Q}_\omega, \partial_\theta \mathcal{Q}_\omega$ in $L^2(\sigma_\omega)$ and the dimension reduction \eqref{direas}, one may expect that $\varphi_{\omega,\parallel}$ is almost orthogonal to $ Q_\infty$ and $\partial_\theta  Q_\infty$. Indeed, we have
$$\begin{aligned}
\langle \varphi_{\omega,\parallel}, Q_\infty\rangle_{L^2(\mathbb{S}^1)}&=\langle \varphi , Q_\infty\tilde{\Phi}_\omega\rangle_{L^2(\sigma_\omega)}=\langle \varphi,  \mathcal{Q}_\omega\rangle_{L^2(\sigma_\omega)}+\langle \varphi, Q_\infty \tilde{\Phi}_\omega- \mathcal{Q}_\omega\rangle_{L^2(\sigma_\omega)}\\
&=\langle \varphi, (Q_\infty - \mathcal{Q}_{\omega,\parallel})\tilde{\Phi}_\omega+\mathcal{P}_{\tilde{\Phi}_\omega}^\perp \mathcal{Q}_\omega\rangle_{L^2(\sigma_\omega)}=o_\omega(1)\|\varphi\|_{L^2(\sigma_\omega)}
\end{aligned}$$
and 
$$\begin{aligned}
\langle \varphi_{\omega,\parallel},\partial_\theta  Q_\infty\rangle_{L^2(\mathbb{S}^1)}&=\langle \varphi ,(\partial_\theta  Q_\infty)\tilde{\Phi}_\omega\rangle_{L^2(\sigma_\omega)}\\
&=\langle \varphi,  \partial_\theta \mathcal{Q}_\omega\rangle_{L^2(\sigma_\omega)}+\langle \varphi, \partial_\theta(  Q_\infty\tilde{\Phi}_\omega-\mathcal{Q}_\omega)\rangle_{L^2(\sigma_\omega)}\\ 
&=\langle \varphi, \partial_\theta ( Q_\infty\Phi_\omega- \mathcal{Q}_\omega)+\partial_\theta Q_\infty(\tilde{\Phi}_\omega-\Phi_\omega)\rangle_{L^2(\sigma_\omega)}=o_\omega(1)\|\varphi\|_{L^2(\sigma_\omega)}.
\end{aligned}$$
Therefore, it follows that  
$$\begin{aligned}
\langle  \mathcal{L}_\infty  \varphi_{\omega,\parallel},\varphi_{\omega,\parallel}\rangle_{L^2(\mathbb{S}^1)}&\geq\langle  \mathcal{L}_\infty \tilde{\varphi}_{\omega,\parallel},\tilde{\varphi}_{\omega,\parallel}\rangle_{L^2(\mathbb{S}^1)}-o_\omega(1)\|\varphi\|_{L^2(\sigma_\omega)}\|\varphi_{\omega,\parallel}\|_{L^2(\mathbb{S}^1)}\\
&\quad-o_\omega(1)\|\varphi\|_{L^2(\sigma_\omega)}^2
\end{aligned}$$
and
$$
\|\tilde{\varphi}_{\omega,\parallel}\|_{H^1(\mathbb{S}^1)}^2\ge \| \varphi_{\omega,\parallel}\|_{H^1(\mathbb{S}^1)}^2-o_\omega(1)\|\varphi\|_{L^2(\sigma_\omega)}^2.
$$
Consequently, the coercivity estimate for $\mathcal{L}_\infty$ (see \eqref{coercive for}) and the Cauchy-Schwarz inequality yield 
$$\begin{aligned}
\langle  \mathcal{L}_\infty  \varphi_{\omega,\parallel},\varphi_{\omega,\parallel}\rangle_{L^2(\mathbb{S}^1)}&\geq \big(L_\infty-o_\omega(1)\big)\|\tilde{\varphi}_{\omega,\parallel}\|_{H^1(\mathbb{S}^1)}^2-o_\omega(1)\|\varphi\|_{L^2(\sigma_\omega)}^2\\
&\geq \big(L_\infty-o_\omega(1)\big)\| \varphi_{\omega,\parallel}\|_{H^1(\mathbb{S}^1)}^2-o_\omega(1)\|\varphi\|_{L^2(\sigma_\omega)}^2.
\end{aligned}$$
Finally, applying \eqref{simple L^2 bound}, we complete the proof of \eqref{rem3}.
\end{proof}

\subsection{Uniqueness of a ground state}

To prove uniqueness, we introduce the functional
$$\mathcal{I}_\omega(v)=\mathcal{E}_\omega(v)+\frac{\mu_\omega}{2}\mathcal{M}_\omega(v).$$
For contradiction, we assume that two positive minimizers $\mathcal{Q}_\omega$ and $\tilde{\mathcal{Q}}_\omega$ exist for $\mathcal{J}_\omega^{(3D)}(m)$ such that $\tilde{\mathcal{Q}}_\omega(\cdot,\cdot,\cdot-\theta_*)\not\equiv \mathcal{Q}_\omega$ for all $0\leq \theta_*\leq 2\pi$. By the dimension reduction limit (Proposition \ref{adprop} (iii)), translating in $\theta$ if necessary, we may assume that the $\tilde{\Phi}_\omega$-directional components $ \mathcal{Q}_{\omega,\parallel}$ and $\tilde{\mathcal{Q}}_{\omega,\parallel}$ both converge to $Q_\infty$ in $H^1(\mathbb{S}^1)$ as $\omega\to \infty$, and $\lim_{\omega\to\infty}\|\partial_\theta\mathcal{Q}_\omega-\partial_\theta\mathcal{Q}_{\infty}(\theta)\Phi_\omega(s,z)\|_{L^2(\sigma_\omega)}=\lim_{\omega\to\infty}\|\partial_\theta\tilde{\mathcal{Q}}_\omega-\partial_\theta\mathcal{Q}_{\infty}(\theta)\Phi_\omega(s,z)\|_{L^2(\sigma_\omega)}=0$. On the other hand, we take the angle translation parameter $\theta_\omega\in [0,2\pi]$ such that 
$$\|\tilde{\mathcal{Q}}_\omega- \mathcal{Q}_\omega(\cdot,\cdot,\cdot+\theta_\omega)\|_{L^2(\sigma_\omega)}.=\min_{\theta_*\in[0,2\pi]}\|\tilde{\mathcal{Q}}_\omega- \mathcal{Q}_\omega(\cdot,\cdot,\cdot+\theta_*)\|_{L^2(\sigma_\omega)}.$$
Then, we have $\langle \tilde{\mathcal{Q}}_\omega, (\partial_\theta \mathcal{Q}_\omega)(\cdot,\cdot,\cdot+\theta_\omega)\rangle_{L^2(\sigma_\omega)}=0$. Indeed, we have $\theta_\omega\to0$, because $\mathcal{Q}_{\omega,\parallel}, \tilde{\mathcal{Q}}_{\omega,\parallel}\to Q_\infty$. Hence, $\mathcal{Q}_\omega(\cdot,\cdot,\cdot+\theta_\omega)$ is also a minimizer whose $\tilde{\Phi}_\omega$-directional component $ \mathcal{Q}_{\omega,\parallel}(\cdot+\theta_\omega)$ converges to $Q_\infty$ in $H^1(\mathbb{S}^1)$ as $\omega\to\infty$, and $\lim_{\omega\to\infty}\|\partial_\theta\mathcal{Q}_\omega(s,z,\theta+\theta_\omega)-\partial_\theta\mathcal{Q}_{\infty}(\theta)\Phi_\omega(s,z)\|_{L^2(\sigma_\omega)}=0$. Thus, translating the angle variable for $\mathcal{Q}_\omega$ by $-\theta_\omega$, we may assume that 
$$\langle \tilde{\mathcal{Q}}_\omega, \partial_\theta \mathcal{Q}_\omega\rangle_{L^2(\sigma_\omega)}=0   
$$
and
$$
\lim_{\omega\to\infty}\|\mathcal{Q}_{\omega,\parallel}-Q_\infty\|_{H^1(\mathbb{S}^1)}= \lim_{\omega\to\infty}\|\partial_\theta\mathcal{Q}_\omega-\partial_\theta\mathcal{Q}_{\infty}(\theta)\Phi_\omega(s,z)\|_{L^2(\sigma_\omega)}=0.
$$
Next, we decompose
$$
\tilde{\mathcal{Q}}_\omega=\sqrt{1-\delta_\omega^2}\mathcal{Q}_\omega+\mathcal{R}_\omega,
$$
where $\delta_\omega>0$ is chosen so that the remainder $\mathcal{R}_\omega$ is orthogonal to $\mathcal{Q}_\omega$, i.e.,
$$\langle \mathcal{R}_\omega, \mathcal{Q}_\omega\rangle_{L^2(\sigma_\omega)}=0.$$
Indeed, by construction, $\mathcal{R}_\omega$ is perpendicular to $\partial_\theta\mathcal{Q}_\omega$ too, because 
$$\langle \mathcal{R}_\omega,\partial_\theta\mathcal{Q}_\omega \rangle_{L^2(\sigma_\omega)}=\langle \tilde{\mathcal{Q}}_\omega,\partial_\theta\mathcal{Q}_\omega \rangle_{L^2(\sigma_\omega)}-\sqrt{1-\delta_\omega^2}\langle \mathcal{Q}_\omega,\partial_\theta\mathcal{Q}_\omega \rangle_{L^2(\sigma_\omega)}=0.$$
Note that the parameter $\delta_\omega$ and  the remainder $\mathcal{R}_\omega$ are small in the sense that 
\begin{equation}\label{deler}
\delta_\omega=\frac{1}{\sqrt{m}}\|\mathcal{R}_\omega\|_{L^2(\sigma_\omega)}=o_\omega(1),
\end{equation}
because $m=\|\tilde{\mathcal{Q}}_\omega\|_{L^2(\sigma_\omega)}^2=(1-\delta_\omega^2)m+\|\mathcal{R}_\omega\|_{L^2(\sigma_\omega)}^2$ and $0\leftarrow\|\mathcal{Q}_\omega-\tilde{\mathcal{Q}}_\omega\|_{L^2(\sigma_\omega)}^2=(1-\sqrt{1-\delta_\omega^2})^2m+ \|\mathcal{R}_\omega\|_{L^2(\sigma_\omega)}^2$.

Now, we are ready to deduce a contradiction extracting $\mathcal{I}_\omega(\mathcal{Q}_\omega)$ from
\begin{align*}
\mathcal{I}_\omega(\tilde{\mathcal{Q}}_\omega)&=\frac{\omega}{2}\|\sqrt{1-\delta_\omega^2}\mathcal{Q}_\omega+\mathcal{R}_\omega\|_{{\dot{\Sigma}}_{\omega; (s,z)}}^2+\frac{1}{2}\big\|\partial_\theta (\sqrt{1-\delta_\omega^2}\mathcal{Q}_\omega+\mathcal{R}_\omega)\big\|_{L^2(\frac{1}{\sigma_\omega})}^2\\
&\quad -\frac{1}{4}\|\sqrt{1-\delta_\omega^2}\mathcal{Q}_\omega+\mathcal{R}_\omega\|_{L^4(\sigma_\omega)}^4+\frac{\mu_\omega}{2}\|\sqrt{1-\delta_\omega^2}\mathcal{Q}_\omega+\mathcal{R}_\omega\|_{L^2(\sigma_\omega)}^2.
\end{align*}
Indeed, expanding and then reorganizing terms with respect to degree of $\mathcal{R}_\omega$, it can be expressed as 
\begin{align*}
\mathcal{I}_\omega(\tilde{\mathcal{Q}}_\omega)&=\frac{1-\delta_\omega^2}{2}\left\{\omega\|\mathcal{Q}_\omega\|_{{\dot{\Sigma}}_{\omega; (s,z)}}^2+\|\partial_\theta \mathcal{Q}_\omega\|_{L^2(\frac{1}{\sigma_\omega})}^2 -\frac{1-\delta_\omega^2}{2}\|\mathcal{Q}_\omega \|_{L^4(\sigma_\omega)}^4+\mu_\omega\|\mathcal{Q}_\omega\|_{L^2(\sigma_\omega)}^2\right\}\\
&  +\sqrt{1-\delta_\omega^2}\left\langle \left(\omega(\mathcal{H}_\omega^{(2D)}-\Lambda_\omega)-\frac{1}{\sigma_\omega^2}\partial_\theta^2+\mu_\omega\right)\mathcal{Q}_\omega-(1-\delta_\omega^2)\mathcal{Q}_\omega^3,\mathcal{R}_\omega \right\rangle_{L^2(\sigma_\omega)}\\
&+\frac12 \left\langle \left(\omega(\mathcal{H}_\omega^{(2D)}-\Lambda_\omega)-\frac{1}{\sigma_\omega^2}\partial_\theta^2+\mu_\omega\right)\mathcal{R}_\omega-3(1-\delta_\omega^2)\mathcal{Q}_\omega^2\mathcal{R}_\omega,\mathcal{R}_\omega \right\rangle_{L^2(\sigma_\omega)}\\
&-\sqrt{1-\delta_\omega^2}\langle \mathcal{Q}_\omega,\mathcal{R}_\omega^3\rangle_{L^2(\sigma_\omega)}-\frac14\|\mathcal{R}_\omega\|_{L^4(\sigma_\omega)}^4.
\end{align*}
For the zeroth order terms in the above expansion, extracting $\mathcal{I}_\omega(\mathcal{Q}_\omega)$, we write
\begin{align*}
&\frac{1-\delta_\omega^2}{2}\left\{\omega\|\mathcal{Q}_\omega\|_{{\dot{\Sigma}}_{\omega; (s,z)}}^2+\|\partial_\theta \mathcal{Q}_\omega\|_{L^2(\frac{1}{\sigma_\omega})}^2+\mu_\omega\|\mathcal{Q}_\omega\|_{L^2(\sigma_\omega)}^2 -\frac{1-\delta_\omega^2}{2}\|\mathcal{Q}_\omega \|_{L^4(\sigma_\omega)}^4\right\}\\
&=\mathcal{I}_\omega(\mathcal{Q}_\omega)-\frac{\delta_\omega^2}{2}\left\{\omega\|\mathcal{Q}_\omega\|_{{\dot{\Sigma}}_{\omega; (s,z)}}^2+\|\partial_\theta \mathcal{Q}_\omega\|_{L^2(\frac{1}{\sigma_\omega})}^2+\mu_\omega\|\mathcal{Q}_\omega\|_{L^2(\sigma_\omega)}^2 -\frac{2-\delta_\omega^2}{2}\|\mathcal{Q}_\omega \|_{L^4(\sigma_\omega)}^4\right\}\\
&=\mathcal{I}_\omega(\mathcal{Q}_\omega)-\frac{\delta_\omega^4}{4}\|\mathcal{Q}_\omega \|_{L^4(\sigma_\omega)}^4=\mathcal{I}_\omega(\mathcal{Q}_\omega)+o_\omega(1)\|\mathcal{R}_\omega\|_{L^2(\sigma_\omega)}^2, 
\end{align*}
where the elliptic equation (or \eqref{po1}) is used in the second identity and \eqref{deler} is employed in the last step. For the first order term, cancelling by the equation \eqref{elp}, we obtain
\begin{align*}
&\sqrt{1-\delta_\omega^2}\left|\left\langle \left(\omega(\mathcal{H}_\omega^{(2D)}-\Lambda_\omega)-\frac{1}{\sigma_\omega^2}\partial_\theta^2+\mu_\omega\right)\mathcal{Q}_\omega-(1-\delta_\omega^2)\mathcal{Q}_\omega^3,\mathcal{R}_\omega \right\rangle_{L^2(\sigma_\omega)}\right|\\
&=\sqrt{1-\delta_\omega^2}\delta_\omega^2\big|\langle \mathcal{Q}_\omega^3,\mathcal{R}_\omega\rangle_{L^2(\sigma_\omega)}\big|\leq \delta_\omega^2\|\mathcal{Q}_\omega\|_{L^6(\sigma_\omega)}^3\|\mathcal{R}_\omega\|_{L^2(\sigma_\omega)}\\
&=o_\omega(1)\|\mathcal{R}_\omega\|_{L^2(\sigma_\omega)}^2\quad\textup{(by \eqref{deler})}.
\end{align*}
For the higher-order terms, by Young’s
inequality, Corollary \ref{GN1}, Proposition \ref{existence of a minimizer} (iii) and Proposition \ref{adprop} (i) $(\|\mathcal{R}_\omega\|_{\dot{\Sigma}_{\omega;(s,z)}}=O(\omega^{-\frac34}))$,   we get
\begin{equation}\label{hir}
\begin{aligned}
\|\mathcal{R}_\omega\|_{L^4(\sigma_\omega)}^4&\lesssim  \|\mathcal{R}_\omega\|_{L^2(\sigma_\omega)} \|\mathcal{R}_\omega\|_{\dot{\Sigma}_{\omega;(s,z)}}^2+\sqrt{\omega}\|\mathcal{R}_\omega\|_{L^2(\sigma_\omega)}\|\mathcal{R}_\omega\|_{\dot{\Sigma}_{\omega;(s,z)}}^3 +o_\omega(1)\|\mathcal{R}_\omega \|_{L^2(\sigma_\omega)}^2 \\ 
&=o_\omega(1)(\|\mathcal{R}_\omega \|_{L^2(\sigma_\omega)}^2+\|\mathcal{R}_\omega\|_{\dot{\Sigma}_{\omega;(s,z)}}^2)
\end{aligned}
\end{equation}
and
\begin{align*}
\big|\langle \mathcal{Q}_\omega,\mathcal{R}_\omega^3\rangle_{L^2(\sigma_\omega)}\big|&\leq \|\mathcal{Q}_\omega\|_{L^4(\sigma_\omega)}\|\mathcal{R}_\omega\|_{L^4(\sigma_\omega)}^3\\
&\lesssim \|\mathcal{R}_\omega\|_{L^2(\sigma_\omega)}^\frac34 \|\mathcal{R}_\omega\|_{\dot{\Sigma}_{\omega;(s,z)}}^\frac32+\omega^\frac38\|\mathcal{R}_\omega\|_{L^2(\sigma_\omega)}^\frac34\|\mathcal{R}_\omega\|_{\dot{\Sigma}_{\omega;(s,z)}}^\frac94 + o_\omega(1)\|\mathcal{R}_\omega \|_{L^2(\sigma_\omega)}^2\\
&=o_\omega(1)(\|\mathcal{R}_\omega \|_{L^2(\sigma_\omega)}^2+\|\mathcal{R}_\omega\|_{\dot{\Sigma}_{\omega;(s,z)}}^2).
\end{align*} 
For the second order terms, we extract the linearized operator 
\begin{align*}
&\left\langle \left(\omega(\mathcal{H}_\omega^{(2D)}-\Lambda_\omega)-\frac{1}{\sigma_\omega^2}\partial_\theta^2+\mu_\omega\right)\mathcal{R}_\omega-3(1-\delta_\omega^2)\mathcal{Q}_\omega^2\mathcal{R}_\omega,\mathcal{R}_\omega \right\rangle_{L^2(\sigma_\omega)}\\
&=\langle \mathcal{L}_\omega  \mathcal{R}_\omega, \mathcal{R}_\omega  \rangle_{L^2(\sigma_\omega)}+3\delta_\omega^2\langle \mathcal{Q}_\omega^2\mathcal{R}_\omega,\mathcal{R}_\omega  \rangle_{L^2(\sigma_\omega)},
\end{align*}
and then apply the $L^4$ estimate \eqref{hir} and the coercivity estimate (Proposition \ref{coc1}) to obtain 
\begin{align*}
&\left\langle \left(\omega(\mathcal{H}_\omega^{(2D)}-\Lambda_\omega)-\frac{1}{\sigma_\omega^2}\partial_\theta^2+\mu_\omega\right)\mathcal{R}_\omega-3(1-\delta_\omega^2)\mathcal{Q}_\omega^2\mathcal{R}_\omega,\mathcal{R}_\omega \right\rangle_{L^2(\sigma_\omega)}\\
&\geq\frac{L_\infty}{8}\| \mathcal{R}_{\omega,\parallel}\|_{H^1(\mathbb{S}^1)}^2+\frac{\omega}{4}\|\mathcal{R}_\omega\|_{\dot{\Sigma}_{\omega;(s,z)}}^2-o_\omega(1)\|\mathcal{R}_\omega\|_{L^2(\sigma_\omega)}^2\\
&\geq\frac{L_\infty}{16}\| \mathcal{R}_\omega\|_{L^2(\sigma_\omega)}^2+\frac{\omega}{8}\|\mathcal{R}_\omega\|_{\dot{\Sigma}_{\omega;(s,z)}}^2\quad\textup{(by \eqref{simple L^2 bound})},
\end{align*}
provided that $\omega\geq1$ is large enough.  

Putting all together, we obtain 
$$\mathcal{I}_\omega(\tilde{\mathcal{Q}}_\omega)\geq \mathcal{I}_\omega(\mathcal{Q}_\omega)+\frac{L_\infty}{32}\| \mathcal{R}_\omega\|_{L^2(\sigma_\omega)}^2,$$
which contradicts to our assumptions that $\mathcal{I}_\omega(\tilde{\mathcal{Q}}_\omega)=\mathcal{I}_\omega(\mathcal{Q}_\omega)$ and $\mathcal{R}_\omega\not\equiv 0$.

\appendix

\section{Energy minimization for the 1D periodic NLS}\label{sec: Energy minimization for the cubic NLS on the unit circle}

Following \cite{GLT}, we review characterization of a ground state for the 1D periodic energy minimization
\begin{equation}\label{appendix: circle minimization}
J_\infty^{(1D)}(m)=\inf \left\{E_\infty[w]: w\in H^1(\mathbb{S}^1)\textup{ and }\|w\|_{L^2(\mathbb{S}^1)}^2=m\right\},
\end{equation}
where 
$$E_\infty[w]= \frac{1}{2}\|w'\|_{L^2(\mathbb{S}^1)}^2+\frac{\kappa}{8\pi}\|w\|_{L^4(\mathbb{S}^1)}^4\quad(\kappa=\pm1).$$
Then, we prove a coercivity estimate for the linearized operator at a ground state; it is a key ingredient for the proof of uniqueness of a 3D constrained energy minimizer.

\subsection{Ground states on the unit circle; a review}
To begin with, we summarize basic facts about the Jacobi elliptic functions, because a ground state can be represented by them; we refer to \cite{La} for more details of elliptic functions. Given $k\in(0,1)$, we define the \textit{incomplete elliptic integral of the first kind} (resp., \textit{the second kind}) by 
$$F(\phi;k):=\int_0^\phi\frac{ds}{\sqrt{1-k^2\sin^2(s)}}\quad\left(\textup{resp., }E(\phi;k):=\int_0^\phi  \sqrt{1-k^2\sin^2(s)} ds\right).$$
In particular, when $\phi=\frac{\pi}{2}$, it is called the \textit{complete elliptic integral of the first kind} (resp., \textit{the second kind}), i.e., 
\begin{equation}\label{K and E functions}
K(k):=F(\tfrac{\pi}{2}; k),\quad\left(\textup{resp., }E(k):=E(\tfrac{\pi}{2};k)\right),
\end{equation}
satisfying that $K(k)\to\frac{\pi}{2}$ as $k\to0^+$ and $K(k)\to\infty$ as $k\to1^-$.

Next, using the inverse of $z=F(\phi;k)$, denoted by $\phi(z;k)$, we define the \textit{snoidal} (resp., \textit{cnoidal}, \textit{dnoidal}) function by 
$$\textbf{sn}(z;k):=\sin\phi(z;k)\left(\textup{resp., }\textbf{cn}(z;k):=\cos\phi(z;k),\ \textbf{dn}(z;k):=\sqrt{1-k^2\textbf{sn}^2(z;k)}\right).$$
These three functions are called the \textit{Jacobi elliptic functions}. Note that $\textbf{sn}(z;k)$ and $\textbf{cn}(z;k)$ are $4K(k)$-periodic, but $\textbf{dn}(z;k)$ is $2K(k)$-periodic. Moreover, we have
\begin{equation}\label{sncsdn algebra}
\begin{aligned}
1&=\textbf{sn}^2+\textbf{cn}^2=k^2\textbf{sn}^2+\textbf{dn}^2,\\
\textbf{sn}'=(\textbf{cn})(\textbf{dn})&,\quad  \textbf{cn}'=-(\textbf{sn})(\textbf{dn}), \quad \textbf{dn}'=-k^2(\textbf{cn})(\textbf{sn})
\end{aligned}
\end{equation}
and
\begin{equation}\label{E(k)}
E(k)=\int_0^{K(k)}\textbf{dn}^2(z;k)dz.
\end{equation}
Thus, it follows that $\textbf{dn}(z;k)$ solves the nonlinear elliptic equation on $(0,2K(k))$, 
\begin{equation}\label{dn elliptic equation}
-\textbf{dn}''-2\textbf{dn}^3=-(2-k^2)\textbf{dn}.
\end{equation}

Coming back to the minimization problem \eqref{appendix: circle minimization}, we may assume that 
\begin{equation}\label{circle Q fixing}
Q_\infty(0)=\max_{\theta\in\mathbb{S}^1}|Q_\infty(\theta)|>0
\end{equation}
without loss of generality, since the variational problem is invariant under translation and phase shift. The following proposition asserts that a minimizer is uniquely determined, and that in the focusing case, a ground state is given by the dnoidal function provided that the mass is large enough; otherwise, it is constant. 

\begin{proposition}[Ground state on the unit circle;$\textup{\cite[Proposition 3.2]{GLT}}$]\label{prop: circle ground state}
Suppose that $Q_\infty$ is a minimizer for the problem $J_\infty^{(1D)}(m)$ with \eqref{circle Q fixing}. \begin{enumerate}[$(i)$]
\item If $\kappa=1$ or if $\kappa=-1$ and $0<m\le 2\pi^2$, then $Q_\infty\equiv \sqrt{\frac{m}{2\pi}}$.
\item If $\kappa=-1$ and $m>2\pi^2$, then $Q_\infty=\frac{1}{\alpha}\textup{\textbf{dn}}(\frac{\theta}{\beta};k)$, where $\alpha, \beta$ and $k$ are uniquely determined by 
\begin{equation}\label{alpha beta K relation}
4\pi \alpha^2=\beta^2, \quad K(k)\beta=\pi, \quad \ m=8E(k)K(k).
\end{equation}
\end{enumerate}
\end{proposition}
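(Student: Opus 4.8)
The plan is to combine the direct method in the calculus of variations with a phase-plane classification of the Euler--Lagrange ODE. First I would establish existence of a minimizer. Since the cubic problem on $\mathbb{S}^1$ is $L^2$-subcritical, the Gagliardo--Nirenberg inequality \eqref{circle GN ineq} gives $\|w\|_{L^4(\mathbb{S}^1)}^4\lesssim m^{3/2}\|w\|_{H^1(\mathbb{S}^1)}$, so in the focusing case $E_\infty$ stays bounded below on the mass sphere and minimizing sequences are bounded in $H^1(\mathbb{S}^1)$; the compact embedding $H^1(\mathbb{S}^1)\hookrightarrow L^4(\mathbb{S}^1)$ then turns weak convergence into convergence of the quartic term and produces a minimizer $Q_\infty$. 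Replacing $Q_\infty$ by $|Q_\infty|$ does not raise the energy, so I may assume $Q_\infty\ge 0$; elliptic regularity makes it smooth, and the strong maximum principle applied to the Euler--Lagrange equation $-Q_\infty''+\frac{\kappa}{2\pi}Q_\infty^3=-\mu_\infty Q_\infty$ upgrades this to $Q_\infty>0$.

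The defocusing case $\kappa=1$ I would dispatch first by a soft inequality. Since $|\mathbb{S}^1|=2\pi$, Hölder gives $\|w\|_{L^4(\mathbb{S}^1)}^4\ge \frac{1}{2\pi}\|w\|_{L^2(\mathbb{S}^1)}^4=\frac{m^2}{2\pi}$, with equality iff $|w|$ is constant, while $\|w'\|_{L^2(\mathbb{S}^1)}^2\ge 0$ with equality iff $w$ is constant. Hence $E_\infty[w]\ge \frac{m^2}{16\pi^2}=E_\infty[\sqrt{m/(2\pi)}\,]$, and equality forces $w$ to be a constant. This settles part $(i)$ when $\kappa=1$.

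For $\kappa=-1$ I would reduce to the autonomous ODE and classify its positive $2\pi$-periodic solutions through the first integral $\tfrac12(Q')^2+\tfrac{1}{8\pi}Q^4-\tfrac{\mu_\infty}{2}Q^2=\textup{const}$. The bounded orbits are either equilibria ($Q^2=2\pi\mu_\infty$, i.e.\ the constant $\sqrt{m/(2\pi)}$) or genuine periodic orbits which, under the rescaling $Q=\frac{1}{\alpha}\textbf{dn}(\frac{\theta}{\beta};k)$, are matched to the dnoidal equation \eqref{dn elliptic equation}: substituting and comparing the cubic and linear coefficients forces $4\pi\alpha^2=\beta^2$ and $\mu_\infty=(2-k^2)/\beta^2$. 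Imposing $2\pi$-periodicity of the single-bump mode gives $K(k)\beta=\pi$, and the mass constraint, using $\int_0^{2K(k)}\textbf{dn}^2=2E(k)$ from \eqref{E(k)}, yields $m=\frac{2\beta E(k)}{\alpha^2}=8E(k)K(k)$ after eliminating $\alpha$ and $\beta$. Because $8E(k)K(k)\to 2\pi^2$ as $k\to 0^+$ and $\to\infty$ as $k\to 1^-$, a single-bump dnoidal of mass $m$ exists precisely when $m>2\pi^2$, while multi-bump dnoidals carry strictly more kinetic energy and cannot be minimizers. Therefore for $0<m\le 2\pi^2$ the only positive periodic critical point is the constant, which is then the minimizer (completing part $(i)$), and for $m>2\pi^2$ the minimizer is the single-bump dnoidal; the latter has strictly lower energy than the constant, consistent with the constant losing stability exactly at the bifurcation point $m=2\pi^2$, detected by the kernel condition $-v''=\frac{c^2}{\pi}v$ with $c^2=\pi$.

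The final step is uniqueness of $(\alpha,\beta,k)$, which reduces to showing that $k\mapsto 8E(k)K(k)$ is a strictly increasing bijection of $(0,1)$ onto $(2\pi^2,\infty)$. The hard part will be this monotonicity: using the standard formulas $E'(k)=\frac{E-K}{k}$ and $K'(k)=\frac{E-(1-k^2)K}{k(1-k^2)}$, a direct computation gives $\frac{d}{dk}(EK)=\frac{E^2-(1-k^2)K^2}{k(1-k^2)}$, so the claim follows from the pointwise estimate $E\ge\sqrt{1-k^2}\,K$ (which holds since $\sqrt{1-k^2}\,K\le \frac{\pi}{2}\le E$), strict on $(0,1)$. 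Once the monotonicity is in hand, $k$ is determined by $m$, after which $\beta=\pi/K(k)$ and $\alpha=\beta/(2\sqrt{\pi})$ are fixed, giving the asserted uniqueness. It is this monotonicity of the product of complete elliptic integrals, rather than the comparatively soft variational parts, that I expect to be the genuine analytic obstacle.
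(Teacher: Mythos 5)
The paper never proves this proposition at all: it is imported verbatim from \cite[Proposition 3.2]{GLT}, so your attempt can only be compared with the standard argument, whose skeleton you reproduce faithfully — direct method on the compact circle, positivity via $|Q_\infty|$ and the maximum principle, the H\"older argument settling $\kappa=1$, the phase-plane/dnoidal matching giving $4\pi\alpha^2=\beta^2$, $K(k)\beta=\pi$, $m=8E(k)K(k)$ and $\mu_\infty=(2-k^2)/\beta^2$, the rescaling $v(\theta)=w(\theta/n)$ excluding multi-bump solutions, and the reduction of uniqueness to strict monotonicity of $k\mapsto E(k)K(k)$. Your derivative computation $\frac{d}{dk}(EK)=\frac{E^2-(1-k^2)K^2}{k(1-k^2)}$ is also correct.

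The genuine gap sits exactly at the step you yourself flagged as the crux. Your justification of $E(k)\ge\sqrt{1-k^2}\,K(k)$ via the chain $\sqrt{1-k^2}\,K\le\frac{\pi}{2}\le E$ is false: since $\sqrt{1-k^2\sin^2 s}\le 1$, one has $E(k)=\int_0^{\pi/2}\sqrt{1-k^2\sin^2 s}\,ds\le\frac{\pi}{2}$ for \emph{every} $k$, with equality only at $k=0$ (indeed $E(k)\to 1$ as $k\to 1^-$), so the second inequality in your chain points the wrong way and the first alone gives nothing. The needed inequality is true but genuinely delicate: expanding at $k=0$ gives $E-\sqrt{1-k^2}\,K=\frac{\pi}{32}k^4+O(k^6)$, so no crude pointwise bound can reach it. A correct elementary repair: set $g(k)=E(k)^2-(1-k^2)K(k)^2$; then $g(0)=0$ and, by the same derivative formulas you quote, $g'(k)=\frac{2\left(E(k)-K(k)\right)^2}{k}>0$ on $(0,1)$, hence $g>0$ there and $EK$ is strictly increasing. (Alternatively, Cauchy--Schwarz combined with the identity $\int_0^{\pi/2}(1-k^2\sin^2 s)^{-3/2}ds=\frac{E(k)}{1-k^2}$ yields $\sqrt{1-k^2}\,K\le E$ directly.) A secondary, smaller gap: for $m>2\pi^2$ you must still rule out the constant as the minimizer, and your ``loss of stability'' remark does this only if the second-variation computation is actually carried out — with $c=\sqrt{m/(2\pi)}$, $\mu_\infty=\frac{m}{4\pi^2}$ and test direction $\varphi=\cos\theta$ one gets $\langle \mathcal{L}_\infty\varphi,\varphi\rangle_{L^2(\mathbb{S}^1)}=\pi-\frac{m}{2\pi}<0$, so the constant is not even a local minimizer on the mass sphere; this should be stated as a step of the proof rather than as a consistency check.
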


\begin{remark}\label{1D coercivity; easy case}
The minimizer $Q_\infty$ solves the nonlinear elliptic equation 
\begin{equation}\label{appendix: 1d circle elliptic equation}
-Q_\infty'' +\frac{\kappa}{2\pi}(Q_\infty)^3=-\mu_\infty Q_\infty
\end{equation}
with the Lagrange multiplier\footnote{When $\kappa=-1$ and $m>2\pi^2$, it follows from \eqref{dn elliptic equation} and \eqref{alpha beta K relation}.}
\begin{equation}\label{appendix: 1d circle lagrange multiplier}
\mu_\infty=\left\{\begin{aligned}&-\frac{m}{(2\pi)^2} &&\textup{if }\kappa=1,\\
&\frac{m}{(2\pi)^2} &&\textup{if }\kappa=-1\textup{ and }0<m\le 2\pi^2,\\
&\frac{(2-k^2)K(k)^2}{\pi^2} &&\textup{if }\kappa=-1\textup{ and }m>2\pi^2.
\end{aligned}\right.
\end{equation}
\end{remark}

\subsection{Linearized operator at the ground state $Q_\infty$}
Our next goal is to establish a coercivity estimate for the linearized operator
$$\mathcal{L}_\infty=-\partial_\theta^2+\mu_\infty+\frac{3\kappa}{2\pi}(Q_\infty)^2$$
at the ground state $Q_\infty$, that is, if $\kappa=1$ or    $\kappa=-1$ and $m\in(0,2\pi^2)\cup(2\pi^2,\infty)$, then there exists $L_\infty>0$ such that 
\begin{equation}\label{coercive for}
\langle \mathcal{L}_\infty\varphi, \varphi\rangle_{L^2(\mathbb{S}^1)}\ge L_\infty\|\varphi\|_{H^1(\mathbb{S}^1)}^2
\end{equation}
for all $\varphi\in H^1(\mathbb{S}^1)$ such that $\langle \varphi,   Q_\infty\rangle_{L^2(\mathbb{S}^1)}=\langle \varphi, Q_\infty'\rangle_{L^2(\mathbb{S}^1)}=0$.

\begin{remark}\label{1d linearized operator; constant ground state}
If $Q_\infty$ is constant (see \eqref{appendix: 1d circle lagrange multiplier}), then $\mathcal{L}_\infty$ has the following simple lower bound. If $\kappa=1$, then $\mathcal{L}_\infty=-\partial_\theta^2+\frac{m}{2\pi^2}$ is obviously strictly positive. On the other hand,  if $\kappa=-1$ and $0<m\leq 2\pi^2$, then $\mathcal{L}_\infty=-\partial_\theta^2-\frac{m}{2\pi^2}$ satisfies $\langle \mathcal{L}_\infty \varphi,\varphi\rangle_{L^2(\mathbb{S}^1)}\geq \frac{1}{2}(1-\frac{m}{2\pi^2})\|\varphi\|_{H^1(\mathbb{S}^1)}^2$ for all $\varphi\in H^1(\mathbb{S}^1)$ such that $\langle \varphi, Q_\infty\rangle_{L^2(\mathbb{S}^1)}=0$.
\end{remark}

By Remark \ref{1d linearized operator; constant ground state}, it suffices to consider the case $\kappa=-1$ and $m>2\pi^2$. We prove the following coercive estimate.
\begin{proposition}\label{prop: limit linearized operator coercivity}
If $\kappa=-1$ and $m>2\pi^2$, then there exists $L_\infty>0$ such that 
$$\langle \mathcal{L}_\infty\varphi, \varphi\rangle_{L^2(\mathbb{S}^1)}\ge L_\infty\|\varphi\|_{H^1(\mathbb{S}^1)}^2$$
for all $\varphi\in H^1(\mathbb{S}^1)$ such that $\langle \varphi,   Q_\infty\rangle_{L^2(\mathbb{S}^1)}=\langle \varphi, Q_\infty'\rangle_{L^2(\mathbb{S}^1)}=0$.
\end{proposition}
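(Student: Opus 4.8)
The plan is to reduce $\mathcal{L}_\infty$ to an explicitly solvable Lam\'e operator, read off its spectrum, and then feed that spectral picture into a standard constrained--coercivity argument. Writing $Q_\infty=\tfrac1\alpha\textbf{dn}(\tfrac\theta\beta;k)$ and using the relations \eqref{alpha beta K relation} together with $\mu_\infty=\tfrac{(2-k^2)K(k)^2}{\pi^2}$ and $K(k)\beta=\pi$, a direct substitution gives $\tfrac{3}{2\pi}Q_\infty^2=\tfrac{6}{\beta^2}\textbf{dn}^2(\tfrac\theta\beta;k)$, so that after the change of variables $x=\theta/\beta$ and the identity $\textbf{dn}^2=1-k^2\textbf{sn}^2$ one obtains
$$
\mathcal{L}_\infty=\frac{1}{\beta^2}\Big(-\partial_x^2+6k^2\textbf{sn}^2(x;k)-(4+k^2)\Big).
$$
Thus $\beta^2\mathcal{L}_\infty+(4+k^2)$ is precisely the $n=2$ Lam\'e operator $\mathcal{M}:=-\partial_x^2+6k^2\textbf{sn}^2(x;k)$, whose band edges and Lam\'e--polynomial eigenfunctions are classically known.

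First I would identify the $2K(k)$--periodic eigendata of $\mathcal{M}$, since these are exactly the ones living on the circle $x\in[0,2K(k)]$, i.e. $\theta\in[0,2\pi]$. The two even eigenfunctions of the form $\textbf{sn}^2+\mathrm{const}$ carry the eigenvalues $2(1+k^2)\mp2\sqrt{1-k^2+k^4}$, while the odd eigenfunction $\textbf{sn}\,\textbf{cn}$ carries the eigenvalue $4+k^2$; a one-line computation using \eqref{sncsdn algebra} verifies $\mathcal{M}(\textbf{sn}\,\textbf{cn})=(4+k^2)\textbf{sn}\,\textbf{cn}$, and $\textbf{sn}\,\textbf{cn}\propto\partial_\theta Q_\infty$. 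The remaining two edges are antiperiodic and do not contribute. Translating through $\mathcal{L}_\infty=\beta^{-2}(\mathcal{M}-(4+k^2))$ and using the elementary inequalities $2(1+k^2)-2\sqrt{1-k^2+k^4}<4+k^2$ and $2\sqrt{1-k^2+k^4}-(2-k^2)=\tfrac{3k^4}{2\sqrt{1-k^2+k^4}+(2-k^2)}>0$, I obtain the spectral structure: $\mathcal{L}_\infty$ has a simple negative eigenvalue $\lambda_0<0$ with an even positive eigenfunction $\chi_0$, a simple zero eigenvalue with odd eigenfunction $\partial_\theta Q_\infty$, and all higher eigenvalues bounded below by some $\lambda_2>0$. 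Since the potential is even, $\mathcal{L}_\infty$ preserves parity, and on the odd subspace its lowest eigenvalue is $0$ with next eigenvalue $\ge\lambda_2$; hence, for the odd part $\varphi_o$ of $\varphi$, the constraint $\varphi_o\perp\partial_\theta Q_\infty$ already yields $\langle\mathcal{L}_\infty\varphi_o,\varphi_o\rangle\ge\lambda_2\|\varphi_o\|_{L^2(\mathbb{S}^1)}^2$.

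The even part is delicate, because $\chi_0$ is even (so not removed by $\varphi\perp\partial_\theta Q_\infty$) and must be killed by the mass constraint $\varphi\perp Q_\infty$. On the even subspace $\mathcal{L}_\infty$ has exactly one negative eigenvalue and trivial kernel, hence is invertible there, and the classical constrained--positivity lemma (Weinstein; Grillakis--Shatah--Strauss) gives $\langle\mathcal{L}_\infty\varphi_e,\varphi_e\rangle>0$ for every nonzero even $\varphi_e\perp Q_\infty$ if and only if $\langle\mathcal{L}_\infty^{-1}Q_\infty,Q_\infty\rangle<0$, the inverse taken on the even subspace. To evaluate this scalar I would differentiate the profile equation $-Q''-\tfrac1{2\pi}Q^3=-\mu Q$ in the parameter $\mu$, which yields $\mathcal{L}_\infty(\partial_\mu Q_\infty)=-Q_\infty$ and therefore
$$
\langle\mathcal{L}_\infty^{-1}Q_\infty,Q_\infty\rangle=-\langle\partial_\mu Q_\infty,Q_\infty\rangle=-\tfrac12\frac{d}{d\mu_\infty}\|Q_\infty\|_{L^2(\mathbb{S}^1)}^2=-\tfrac12\frac{dm}{d\mu_\infty}.
$$
Thus the sign condition is equivalent to the Vakhitov--Kolokolov slope condition $\frac{dm}{d\mu_\infty}>0$ along the dnoidal branch.

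The main obstacle is exactly this slope computation. Parametrising by the modulus through $m=8E(k)K(k)$ and $\mu_\infty=\pi^{-2}(2-k^2)K(k)^2$ and using the standard derivative formulas $K'(k)=\tfrac{E-(1-k^2)K}{k(1-k^2)}$ and $E'(k)=\tfrac{E-K}{k}$, I would show both $\tfrac{dm}{dk}>0$ and $\tfrac{d\mu_\infty}{dk}>0$ on $(0,1)$; the monotonicity of $m$ is classical, whereas the monotonicity of $\mu_\infty$ reduces to proving positivity of a single elliptic-integral expression, which is the one genuinely nontrivial inequality. Together these give $\frac{dm}{d\mu_\infty}=(dm/dk)/(d\mu_\infty/dk)>0$, hence $\langle\mathcal{L}_\infty^{-1}Q_\infty,Q_\infty\rangle<0$ and strict positivity on $\{Q_\infty\}^\perp\cap L^2_{\mathrm{even}}(\mathbb{S}^1)$. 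Combining the even and odd estimates, a spectral-gap/compactness argument turns the strict inequalities into a uniform bound $\langle\mathcal{L}_\infty\varphi,\varphi\rangle\ge c\|\varphi\|_{L^2(\mathbb{S}^1)}^2$ on the full constraint space $\{Q_\infty,\partial_\theta Q_\infty\}^\perp$. Finally, the upgrade to the $H^1$ estimate \eqref{coercive for} is routine: splitting $\langle\mathcal{L}_\infty\varphi,\varphi\rangle=(1-t)\langle\mathcal{L}_\infty\varphi,\varphi\rangle+t\langle\mathcal{L}_\infty\varphi,\varphi\rangle$, bounding the first term below by $(1-t)c\|\varphi\|_{L^2}^2$ and the second by $t\|\partial_\theta\varphi\|_{L^2}^2-t\big(\tfrac3{2\pi}\|Q_\infty\|_{L^\infty}^2-\mu_\infty\big)\|\varphi\|_{L^2}^2$, and choosing $t>0$ small recovers a positive multiple of $\|\varphi\|_{H^1(\mathbb{S}^1)}^2$.
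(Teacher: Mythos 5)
Your proposal is correct, but it takes a genuinely different route from the paper's. The paper never invokes the Vakhitov--Kolokolov slope condition: it uses the fact that $Q_\infty$ is a constrained \emph{minimizer}, expanding $\epsilon\mapsto E_\infty\bigl(\sqrt{m}\,(Q_\infty+\epsilon\psi)/\|Q_\infty+\epsilon\psi\|_{L^2(\mathbb{S}^1)}\bigr)$ to second order to obtain $\langle\mathcal{L}_\infty\psi,\psi\rangle_{L^2(\mathbb{S}^1)}\ge0$ for all $\psi\perp Q_\infty$ essentially for free; it then only needs the Lam\'e spectrum (Lemma \ref{spectra of L_+^dn}, cited from \cite{GLT}) to identify $\ker\mathcal{L}_\infty=\textup{span}\{Q_\infty'\}$ and discreteness of the spectrum, after which the $L^2$ lower bound on $\{Q_\infty,Q_\infty'\}^\perp$ and the upgrade to the $H^1$ bound follow from a compactness/contradiction argument (that last step is essentially your final splitting argument). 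You instead argue purely from the elliptic equation: the Lam\'e reduction and band-edge computation (which re-derives Lemma \ref{spectra of L_+^dn}), the parity decomposition, the Weinstein/Grillakis--Shatah--Strauss constrained-positivity lemma on the even subspace, and the slope condition $dm/d\mu_\infty>0$. Your route is longer---the slope computation is genuine work that the paper's variational shortcut avoids---but it buys something: it nowhere uses minimality of $Q_\infty$, so it establishes coercivity for the dnoidal profile as a solution in its own right, and it makes the spectral mechanism explicit (one negative, even direction killed by the mass constraint; the zero mode odd and killed by the translation orthogonality). For completeness, the one inequality you flag but leave unproven does hold: using $K'(k)=\frac{E-(1-k^2)K}{k(1-k^2)}$ one gets $\frac{d}{dk}\bigl[(2-k^2)K^2\bigr]=\frac{2K}{k(1-k^2)}\bigl[(2-k^2)E-2(1-k^2)K\bigr]$, and $(2-k^2)E-2(1-k^2)K>0$ follows from $E>\sqrt{1-k^2}\,K$ (integrate the pointwise inequality $1-k^2\le(1-k^2\sin^2 s)(1-k^2\cos^2 s)$ over $[0,\tfrac{\pi}{2}]$) together with $2-k^2\ge2\sqrt{1-k^2}$; since also $\frac{dm}{dk}=\frac{8}{k(1-k^2)}\bigl(E^2-(1-k^2)K^2\bigr)>0$ by the same inequality, indeed $dm/d\mu_\infty>0$ and your even-subspace step closes.
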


 We recall  $Q_\infty=\frac{1}{\alpha}\textup{\textbf{dn}}(\frac{\theta}{\beta};k)$ if $\kappa=-1$ and $m>2\pi^2$ (see Proposition \ref{prop: circle ground state} $(ii)$). Thus, for the proof, we need the spectral property for the operator $L^{\textup{\textbf{dn}}}_+:=-\partial_z^2+(2-k^2)-6\textup{\textbf{dn}}^2$.

\begin{lemma} [Spectrum of $L^{\textup{\textbf{dn}}}_+$ \textup{\cite[Section 4.1]{GLT}}]\label{spectra of L_+^dn}
For $k\in(0,1)$, the first three lowest eigenvalues of $L^{\textup{\textbf{dn}}}_+$ are given by 
$$\lambda_0=(k^2-2)-2\sqrt{k^4-k^2+1},\quad  \lambda_1=0, \quad \lambda_2=(k^2-2)+2\sqrt{k^4-k^2+1}$$
with the corresponding eigenstates (in order)
$$\chi_-=1-\big(k^2+1-\sqrt{k^4-k^2+1}\big)\textup{\textbf{sn}}^2, \quad  \textup{\textbf{dn}}', \quad \chi_+=1-\big(k^2+1+\sqrt{k^4-k^2+1}\big)\textup{\textbf{sn}}^2.$$
\end{lemma}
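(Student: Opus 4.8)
The plan is to prove Lemma \ref{spectra of L_+^dn} in two stages: first verify directly that the three stated functions are eigenfunctions with the stated eigenvalues, and then use periodic Sturm--Liouville oscillation theory together with a nodal count to show that these are precisely the three lowest eigenvalues. Throughout I would regard $L^{\textbf{dn}}_+=-\partial_z^2+(2-k^2)-6\textbf{dn}^2$ as a self-adjoint operator on the torus $\mathbb{R}/2K(k)\mathbb{Z}$, i.e. on one full period of the potential (note that $\textbf{dn}^2$ and $\textbf{sn}^2$ are $2K(k)$-periodic, not $4K(k)$-periodic), which is exactly the period picked out by the rescaling $z=\theta/\beta$ with $K(k)\beta=\pi$ that identifies $\mathbb{S}^1$ with this torus. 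One may recognize $L^{\textbf{dn}}_+$ as a Lamé operator with $6=\ell(\ell+1)$, $\ell=2$, whose band-edge (Lamé polynomial) solutions are explicit; this is what makes the eigenfunctions elementary.

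For the eigenfunction verification, the translation mode is immediate: differentiating the dnoidal equation \eqref{dn elliptic equation} gives $L^{\textbf{dn}}_+(\textbf{dn}')=0$, so $\textbf{dn}'$ is an eigenfunction with eigenvalue $\lambda_1=0$. For the two Lamé modes I would substitute the ansatz $\chi=1-c\,\textbf{sn}^2$ and use the derivative rules \eqref{sncsdn algebra} together with $\textbf{cn}^2=1-\textbf{sn}^2$ and $\textbf{dn}^2=1-k^2\textbf{sn}^2$ to compute $(\textbf{sn}^2)''=2-4(1+k^2)\textbf{sn}^2+6k^2\textbf{sn}^4$. Plugging into $L^{\textbf{dn}}_+\chi$, the $\textbf{sn}^4$ terms cancel automatically (a consistency check that the ansatz has the right degree), and the requirement $L^{\textbf{dn}}_+\chi=\lambda\chi$ collapses to the two scalar conditions $\lambda=2c-4-k^2$ and the quadratic $c^2-2(1+k^2)c+3k^2=0$. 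Its roots $c_\pm=k^2+1\pm\sqrt{k^4-k^2+1}$ recover exactly $\chi_-$ (from $c_-$, eigenvalue $\lambda_0$) and $\chi_+$ (from $c_+$, eigenvalue $\lambda_2$). This step is pure algebra and only confirms the formulas.

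The substantive step is to show these are the three lowest eigenvalues. The ordering $\lambda_0<0<\lambda_2$ is elementary: $\lambda_0<0$ because $k^2-2<0$, and $\lambda_2>0$ reduces after squaring to $3k^4>0$. To rule out any other eigenvalue at or below $\lambda_2$ I would invoke the oscillation theorem for Hill's equation: ordering the periodic eigenvalues $\nu_0<\nu_1\le\nu_2<\cdots$, the ground state $\nu_0$ is simple with a nodeless eigenfunction, while the eigenfunctions of $\nu_1,\nu_2$ have exactly two zeros per period. I then count nodes. Since $0<c_-<1$ (the inequality $c_-<1$ is equivalent to $k^2<1$), the function $\chi_-=1-c_-\textbf{sn}^2$ is strictly positive, hence $\lambda_0=\nu_0$ is the ground state. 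Both $\textbf{dn}'=-k^2\,\textbf{sn}\,\textbf{cn}$ (zeros at $z=0$ and $z=K(k)$) and $\chi_+=1-c_+\textbf{sn}^2$ with $c_+>1$ (two solutions of $\textbf{sn}^2=1/c_+\in(0,1)$, using monotonicity of $\textbf{sn}$ on $[0,K(k)]$ and $[K(k),2K(k)]$) have exactly two simple zeros per period. As the two-zero periodic level consists of exactly the pair $\nu_1\le\nu_2$, and we have produced two independent two-zero eigenfunctions with the distinct eigenvalues $0$ and $\lambda_2$, we conclude $\nu_1=0$ and $\nu_2=\lambda_2$, so the three lowest eigenvalues are $\lambda_0<\lambda_1=0<\lambda_2$.

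The routine computation of Step 1 is not the obstacle; the delicate point is the completeness claim in Step 2. One must pin down the exact period of each eigenfunction (all $2K(k)$-periodic), count the nodes of $\textbf{dn}'$ and $\chi_+$ correctly (two each, with transversal sign changes), and apply the Hill oscillation theorem in the precise form that forbids any hidden eigenvalue between $\lambda_0$ and $\lambda_2$. Once the simplicity of the nodeless ground state and the two-node character of the first excited pair are established, the identification of $\lambda_0,\lambda_1,\lambda_2$ as the bottom of the spectrum is forced.
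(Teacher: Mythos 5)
Your proof is correct, and it is worth noting at the outset that the paper itself contains no argument for this lemma: it is quoted verbatim from \cite[Section 4.1]{GLT}, where the statement rests on the classical spectral theory of the Lam\'e operator $-\partial_z^2+\ell(\ell+1)k^2\textup{\textbf{sn}}^2$ with $\ell=2$ (your observation that $6=\ell(\ell+1)$ identifies exactly this structure, since $-6\,\textup{\textbf{dn}}^2=6k^2\textup{\textbf{sn}}^2-6$ up to a constant shift). Your proposal is therefore a self-contained substitute for the citation, and its two halves both check out. The algebra is right: with $\chi=1-c\,\textup{\textbf{sn}}^2$ and $(\textup{\textbf{sn}}^2)''=2-4(1+k^2)\textup{\textbf{sn}}^2+6k^2\textup{\textbf{sn}}^4$, the $\textup{\textbf{sn}}^4$ coefficient in $L_+^{\textup{\textbf{dn}}}\chi$ is $6ck^2-6ck^2=0$, and matching the remaining coefficients gives precisely $\lambda=2c-4-k^2$ together with $c^2-2(1+k^2)c+3k^2=0$, whose roots $c_\pm=1+k^2\pm\sqrt{k^4-k^2+1}$ reproduce the stated $\lambda_0,\lambda_2$; differentiating \eqref{dn elliptic equation} gives $L_+^{\textup{\textbf{dn}}}(\textup{\textbf{dn}}')=0$. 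The completeness step is also sound: $3k^2>0$ and $k^2<1$ give $0<c_-<1<c_+$, so $\chi_-$ is strictly positive (hence the simple periodic ground state), while $\textup{\textbf{dn}}'=-k^2\,\textup{\textbf{sn}}\,\textup{\textbf{cn}}$ and $\chi_+$ each have exactly two transversal zeros per $2K(k)$-period (at $z=0,K(k)$, respectively at the two solutions of $\textup{\textbf{sn}}^2=1/c_+$), and the oscillation theorem for Hill's equation with periodic boundary conditions (eigenfunctions of $\nu_1,\nu_2$ have exactly two zeros per period) then forces $\nu_1=0<\nu_2=\lambda_2$, with $\lambda_2>0$ following from $4(k^4-k^2+1)>(2-k^2)^2\Leftrightarrow 3k^4>0$. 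You also correctly pin down the domain as the $2K(k)$-torus, which is what the rescaling $K(k)\beta=\pi$ requires in the application to $\mathcal{L}_\infty$ on $\mathbb{S}^1$ in Proposition \ref{prop: limit linearized operator coercivity}. The only thing to add is a precise reference for the oscillation theorem you invoke (e.g., Magnus--Winkler, \emph{Hill's Equation}), since that is now the sole external input of your argument, replacing the paper's wholesale citation of \cite{GLT}.
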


\begin{proof}[Proof of Proposition \ref{prop: limit linearized operator coercivity}]
In order to prove the result, it suffices to show that
\begin{equation}\label{asp1}
\langle \mathcal{L}_\infty\varphi, \varphi\rangle_{L^2(\mathbb{S}^1)}\gtrsim\|\varphi\|_{L^2(\mathbb{S}^1)}^2
\end{equation}
for all $\varphi\in L^2(\mathbb{S}^1)$ such that $\langle \varphi,   Q_\infty\rangle_{L^2(\mathbb{S}^1)}=\langle \varphi, Q_\infty'\rangle_{L^2(\mathbb{S}^1)}=0$. Indeed, if \eqref{asp1} holds but Proposition \ref{prop: limit linearized operator coercivity} is false, there exists a sequence $\{\varphi_n\}_{n=1}^\infty\subset H^1(\mathbb{S}^1)$ such that $\langle \varphi_n,   Q_\infty\rangle_{L^2(\mathbb{S}^1)}=\langle \varphi_n, Q_\infty'\rangle_{L^2(\mathbb{S}^1)}=0$, $\|\varphi_n\|_{H^1(\mathbb{S}^1)}=1$ and $\langle \mathcal{L}_\infty\varphi_n, \varphi_n\rangle_{L^2(\mathbb{S}^1)}\to 0$ as $n\to \infty$. Then, \eqref{asp1} implies that $\|\varphi_n\|_{L^2(\mathbb{S}^1)}\to 0$ as $n\to \infty$, and so is $\|\varphi_n\|_{L^4(\mathbb{S}^1)}\to 0$ by the Gagliardo-Nirenberg inequality \eqref{circle GN ineq}. Thus, it follows that 
$$o_n(1)=\langle \mathcal{L}_\infty\varphi_n, \varphi_n\rangle_{L^2(\mathbb{S}^1)}\ge \min\{1,\mu_\infty\}\|\varphi_n\|_{H^1(\mathbb{S}^1)}+o_n(1)\to \min\{1,\mu_\infty\},$$
which deduces a contradiction.

To prove \eqref{asp1}, for sufficiently small $\epsilon\in \R$ and any $\psi\in H^1(\mathbb{S}^1)$ such that $\|\psi\|_{L^2(\mathbb{S}^1)}=1$ and $\langle Q_\infty, \psi\rangle_{L^2(\mathbb{S}^1)}=0$, we define
$$g(\epsilon):=E_\infty\left(\sqrt{m}\frac{Q_\infty+\epsilon \psi}{\|Q_\infty+\epsilon \psi\|_{L^2(\mathbb{S}^1)}}\right).$$
By direct computations with the identity $\|\partial_\theta Q_\infty\|_{L^2(\mathbb{S}^1)}^2-\frac{1}{2\pi}\|  Q_\infty\|_{L^4(\mathbb{S}^1)}^4=-\mu_\infty m$, it can be written as 
$$g(\epsilon)=g(0)+\frac{\epsilon^2}{2}\langle\mathcal{L}_\infty\psi, \psi\rangle_{L^2(\mathbb{S}^1)}+O(\epsilon^3).$$
However, since $Q_\infty$ is a positive minimizer to the 1D energy minimization problem \eqref{circle minimization}, we must have 
$$\langle\mathcal{L}_\infty\psi, \psi\rangle_{L^2(\mathbb{S}^1)}\ge 0$$
for all $\psi\in H^1(\mathbb{S}^1)$ with $\|\psi\|_{L^2(\mathbb{S}^1)}=1$ and $\langle Q_\infty, \psi\rangle_{L^2(\mathbb{S}^1)}=0$. On the other hand, it is known that $\ker \mathcal{L}_\infty=\textup{span}\{Q_\infty'\}$ and $\mathcal{L}_\infty$ has discrete eigenvalues (see  Proposition \ref{prop: circle ground state} $(ii)$ and  Lemma \ref{spectra of L_+^dn}). Therefore, we prove the claim \eqref{asp1}.
\end{proof}

\section{Defocusing case}\label{sec: defocusing}
In this appendix, we briefly state the results of the existence/uniqueness of an energy minimizer and  its dimension reduction for the defocusing case.  In the defocusing case, we consider the standard  mass constraint energy minimization problem
\begin{equation}\label{variational problem; defocusing}
\boxed{\quad J_\omega^{(3D)}(m):=\min\Big\{E_\omega[u]: \  u\in\Sigma,\ M[u]=m  \Big\}\quad}
\end{equation}
where the mass and the energy are given by \eqref{eq: mass} and \eqref{eq: energy} with $\kappa=1$ respectively. 
The following existence and uniqueness   result was proved in \cite[Theorem 2.1]{LSY}.

\begin{theorem}[Existence and uniqueness of an energy minimizer; defocusing case]\label{thm: existence defocusing}
Let $\kappa=1$ and $\omega\ge1$.
Then the problem \eqref{variational problem; defocusing}
has a positive unique minimizer $Q_\omega$, up to phase shift, and it solves the nonlinear elliptic equation
$$\omega(H_\omega-\Lambda_\omega) Q_\omega+ \sqrt{\omega} Q_\omega^3=-\mu_\omega Q_\omega,$$
where $\mu_\omega\in \R$ is a Lagrange multiplier. 
\end{theorem}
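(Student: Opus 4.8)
The plan is to prove existence by the direct method and uniqueness by a hidden-convexity argument in the mass density $\rho=|u|^2$. The crucial structural simplification in the defocusing case $\kappa=1$ is that both contributions $\tfrac{\omega}{2}\langle(H_\omega-\Lambda_\omega)u,u\rangle$ and $\tfrac{\sqrt{\omega}}{4}\int_{\mathbb{R}^3}|u|^4$ to $E_\omega[u]$ are nonnegative on the constraint $M[u]=m$ (recall that $\Lambda_\omega$ is the bottom of the spectrum of $H_\omega$), so that $J_\omega^{(3D)}(m)\geq 0$ is finite. Working in the form domain of $H_\omega$ (on which the energy norm is $\int_{\mathbb{R}^3}|\nabla u|^2+(U_\omega(|y|-\sqrt{\omega})+z^2+1)|u|^2$), I would first take a minimizing sequence $\{u_n\}$ on which the energy is bounded.

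The first step is compactness. The energy bound controls $\|\nabla u_n\|_{L^2}^2+\langle Vu_n,u_n\rangle$ with $V=U_\omega(|y|-\sqrt{\omega})+z^2$, and together with $M[u_n]=m$ this bounds $\{u_n\}$ in the energy space. The essential point is that $V(x)\to\infty$ as $|x|\to\infty$ in $\mathbb{R}^3$: going to infinity forces either $|y|\to\infty$, whence $U_\omega(|y|-\sqrt{\omega})\sim(|y|-\sqrt{\omega})^2\to\infty$ by \textbf{\textup{(H2)}}, or $z\to\infty$, whence $z^2\to\infty$. Hence the energy space embeds compactly into $L^2(\mathbb{R}^3)$ and $L^4(\mathbb{R}^3)$ by the standard Rellich-plus-tail argument, and after passing to a subsequence $u_n\rightharpoonup Q_\omega$ weakly in the energy space and strongly in $L^2\cap L^4$; in particular $M[Q_\omega]=m$ and $\int_{\mathbb{R}^3}|u_n|^4\to\int_{\mathbb{R}^3}|Q_\omega|^4$. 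Weak lower semicontinuity of the nonnegative quadratic form $\langle(H_\omega-\Lambda_\omega)\cdot,\cdot\rangle$ combined with convergence of the quartic term then gives $E_\omega[Q_\omega]\leq\liminf E_\omega[u_n]=J_\omega^{(3D)}(m)$, so $Q_\omega$ is a minimizer, and as a constrained critical point it solves $\omega(H_\omega-\Lambda_\omega)Q_\omega+\sqrt{\omega}Q_\omega^3=-\mu_\omega Q_\omega$ with a Lagrange multiplier $\mu_\omega\in\mathbb{R}$. Since $|\nabla|u||\leq|\nabla u|$ and $||u||^4=|u|^4$, the function $|Q_\omega|$ is also a minimizer; applying the strong maximum principle to the equivalent elliptic equation in $\mathbb{R}^3$ (via \eqref{u-v relation}) for the nonnegative minimizer yields $Q_\omega>0$.

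For uniqueness, I would first reduce to real-valued minimizers. Writing $u=\sqrt{\rho}\,e^{i\phi}$ gives
$$E_\omega[u]=\frac{\omega}{2}\int_{\mathbb{R}^3}|\nabla\sqrt{\rho}|^2+\frac{\omega}{2}\int_{\mathbb{R}^3}\rho|\nabla\phi|^2+\frac{\omega}{2}\int_{\mathbb{R}^3}(V-\Lambda_\omega)\rho+\frac{\sqrt{\omega}}{4}\int_{\mathbb{R}^3}\rho^2,$$
and since the second term is nonnegative and vanishes exactly when $\phi$ is constant, every minimizer is, up to a global phase, a nonnegative function with density $\rho$. It therefore suffices to show that
$$\mathcal{F}[\rho]:=\frac{\omega}{2}\int_{\mathbb{R}^3}|\nabla\sqrt{\rho}|^2+\frac{\omega}{2}\int_{\mathbb{R}^3}(V-\Lambda_\omega)\rho+\frac{\sqrt{\omega}}{4}\int_{\mathbb{R}^3}\rho^2$$
has a unique minimizer over $\{\rho\geq0:\int_{\mathbb{R}^3}\rho=m\}$. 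The constraint is affine, the term $\int(V-\Lambda_\omega)\rho$ is linear, $\int\rho^2$ is strictly convex, and $\rho\mapsto\int|\nabla\sqrt{\rho}|^2$ is convex; hence $\mathcal{F}$ is strictly convex on the affine constraint set, forcing the minimizing density $\rho$, and thus $Q_\omega=\sqrt{\rho}$ up to phase, to be unique.

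I expect the main obstacle to be this last step, specifically the hidden convexity of $\rho\mapsto\int|\nabla\sqrt{\rho}|^2$ together with the rigorous strict-convexity conclusion. One must justify the change of variables $u\mapsto\rho$ at the level of the energy space (in particular near the zero set of $\rho$, where $\sqrt{\rho}$ is only Lipschitz), verify that equality in the convexity inequality forces proportional densities, and combine this with the strict convexity of $\int\rho^2$ to exclude two distinct minimizing densities. By comparison, the existence part is routine, the only genuinely structural input being the confinement $V\to\infty$ that supplies compactness.
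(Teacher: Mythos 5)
Your proposal is correct and coincides with the argument the paper relies on: the paper does not prove this theorem itself but cites \cite[Theorem 2.1]{LSY}, whose proof is precisely your route---direct method with compactness from the confining potential for existence, and uniqueness via strict convexity of the energy in the density $\rho=|u|^2$ (linear potential term, strictly convex quartic term, convex kinetic term $\int|\nabla\sqrt{\rho}|^2$), after reducing to nonnegative minimizers and obtaining positivity from the strong maximum principle. The technical points you flag (equality in the phase/convexity step, behavior near zeros of $\rho$) are resolved there exactly as you indicate, using that any minimizer's modulus is strictly positive.
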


Next, we state the dimension reduction from 3D to the 1D minimizer, and we briefly sketch the ideas of the proof since the arguments are almost same with the focusing case($\kappa=-1$).

\begin{theorem}[Dimension reduction; defocusing case $\kappa=1$]\label{drth; defocusing} 
Let $Q_\omega$ and $\mu_\omega$ be the minimizer and a Lagrange multiplier in Theorem \ref{thm: existence defocusing}. Then  there exists $\{\mathcal{O}_\omega\}_{\omega\gg1}\subset \textup{SO}(2)$  such that 
$$   \left\|Q_\omega(\mathcal{O}_\omega y,z)-\sqrt{\frac{m}{2\pi}} \left(\chi(|y|)\omega^{-\frac14} \Phi_0(|y|-\sqrt{\omega},z)\right) \right\|_{\Sigma}+\left|\mu_\omega+\frac{m}{(2\pi)^2}\right|\to0 \textup{ as } \omega\to \infty,$$
where $\Phi_{0}(s,z)=\frac{1}{\sqrt{\pi}}e^{-\frac{s^2+z^2}{2}}$  and $\chi:[0,\infty)\to[0,1]$ is a smooth function such that $\chi\equiv0$ on $[0,1]$ and $\chi\equiv1$ on $[2,\infty)$.
\end{theorem}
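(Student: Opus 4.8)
The plan is to follow the focusing-case analysis of Section~\ref{sec: existence} almost verbatim, working in the $(s,z,\theta)$-coordinates of Section~\ref{sec: reformulation}: by the relation \eqref{u-v relation} it suffices to prove $\lim_{\omega\to\infty}\|\mathcal{Q}_\omega-\sqrt{m/2\pi}\,\chi_\omega\Phi_0\|_{\Sigma_\omega}=0$ and $\mu_\omega\to-m/(2\pi)^2$, where $\mathcal{Q}_\omega(s,z,\theta)=\omega^{1/4}Q_\omega(s+\sqrt{\omega},z,\theta)$ is the reformulated minimizer for $\mathcal{J}_\omega^{(3D)}(m):=\min\{\mathcal{E}_\omega[v]:\mathcal{M}_\omega[v]=m\}$ supplied by Theorem~\ref{thm: existence defocusing}. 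The decisive simplification relative to the focusing case is that the defocusing energy $\mathcal{E}_\omega[v]=\frac{\omega}{2}\|v\|_{\dot\Sigma_{\omega;(s,z)}}^2+\frac12\|\partial_\theta v\|_{L^2(\frac{1}{\sigma_\omega})}^2+\frac14\|v\|_{L^4(\sigma_\omega)}^4$ is a sum of nonnegative terms, so the additional seminorm constraint and the forbidden-region mechanism are not needed to bound the energy from below; in particular $\mathcal{E}_\omega[\mathcal{Q}_\omega]\ge\frac{\omega}{2}\|\mathcal{Q}_\omega\|_{\dot\Sigma_{\omega;(s,z)}}^2$ directly. The target $1$D profile is the \emph{constant} minimizer $Q_\infty\equiv\sqrt{m/2\pi}$ of $J_\infty^{(1D)}(m)$ (Proposition~\ref{prop: circle ground state}~$(i)$), whose Lagrange multiplier is $\mu_\infty=-m/(2\pi)^2$ (see \eqref{appendix: 1d circle lagrange multiplier}).

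First I would establish the matching energy bounds as in Lemma~\ref{upper} and Proposition~\ref{adprop}~$(ii)$. For the upper bound, testing with the trial state $\chi_\omega\Phi_\omega\sqrt{m/2\pi}$ and using the Gaussian decay of $\phi_\omega$ (Proposition~\ref{1d eigenstate}~$(ii)$) together with $(\mathcal{H}_\omega^{(2D)}-\Lambda_\omega)\Phi_\omega=0$ gives $\mathcal{J}_\omega^{(3D)}(m)\le J_\infty^{(1D)}(m)+O(\omega^{-1/2})$. From this and nonnegativity of the energy terms I immediately obtain the a priori bounds $\omega\|\mathcal{Q}_\omega\|_{\dot\Sigma_{\omega;(s,z)}}^2\lesssim1$ and $\|\partial_\theta\mathcal{Q}_\omega\|_{L^2(\frac{1}{\sigma_\omega})}\lesssim1$, and then, via Corollary~\ref{GN1} and the identity from testing the (reformulated defocusing) Euler--Lagrange equation against $\mathcal{Q}_\omega$, the uniform $L^4$, $L^6$ and $|\mu_\omega|$ bounds matching Proposition~\ref{existence of a minimizer}~$(iii)$. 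For the lower bound I would decompose $\mathcal{Q}_\omega=\mathcal{P}_{\tilde\Phi_\omega}\mathcal{Q}_\omega+\mathcal{P}_{\tilde\Phi_\omega}^\perp\mathcal{Q}_\omega$ and extract the mass and energy of the angular component $\mathcal{Q}_{\omega,\parallel}$; using Lemma~\ref{orthogonal complement bound}, Lemma~\ref{mno1}, the truncation bounds (Lemma~\ref{truncation error}) and the asymptotic Pythagorean identity \eqref{asymptotic Pythagorean identity 3} one finds $\|\mathcal{Q}_{\omega,\parallel}\|_{L^2(\mathbb{S}^1)}^2=m+O(\omega^{-1})$ and $\mathcal{E}_\omega[\mathcal{Q}_\omega]=E_\infty[\mathcal{Q}_{\omega,\parallel}]+\frac{\omega}{2}\|\mathcal{Q}_\omega\|_{\dot\Sigma_{\omega;(s,z)}}^2+\frac12\|\partial_\theta(\mathcal{P}_{\tilde\Phi_\omega}^\perp\mathcal{Q}_\omega)\|_{L^2(\frac{1}{\sigma_\omega})}^2+O(\omega^{-1/2})$. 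Comparing $E_\infty$ of the renormalized profile $\sqrt{m}\,\mathcal{Q}_{\omega,\parallel}/\|\mathcal{Q}_{\omega,\parallel}\|_{L^2(\mathbb{S}^1)}$ with the upper bound then yields the energy convergence $J_\infty^{(1D)}(m)=\mathcal{J}_\omega^{(3D)}(m)+O(\omega^{-1/2})$ and the improved vanishing $\omega\|\mathcal{Q}_\omega\|_{\dot\Sigma_{\omega;(s,z)}}^2+\|\partial_\theta(\mathcal{P}_{\tilde\Phi_\omega}^\perp\mathcal{Q}_\omega)\|_{L^2(\frac{1}{\sigma_\omega})}^2=O(\omega^{-1/2})$, the analogue of Proposition~\ref{adprop}~$(i)$.

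Next I would pass to the limit. The family $\{\sqrt{m}\,\mathcal{Q}_{\omega,\parallel}/\|\mathcal{Q}_{\omega,\parallel}\|_{L^2(\mathbb{S}^1)}\}$ is a minimizing sequence for $J_\infty^{(1D)}(m)$, so by uniqueness of its minimizer (Proposition~\ref{prop: circle ground state}~$(i)$) I obtain $\mathcal{Q}_{\omega,\parallel}\to\sqrt{m/2\pi}$ in $H^1(\mathbb{S}^1)$; since the limit is constant no $\theta$-translation is required and $\mathcal{O}_\omega$ may be taken to be the identity (indeed, uniqueness up to phase in Theorem~\ref{thm: existence defocusing} together with rotational invariance forces the positive $Q_\omega$ to be radial in $y$, so that $\partial_\theta\mathcal{Q}_\omega\equiv0$ and the angular direction is in fact trivial). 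For the multiplier, testing the Euler--Lagrange equation against $\mathcal{Q}_\omega$ gives $\omega\|\mathcal{Q}_\omega\|_{\dot\Sigma_{\omega;(s,z)}}^2+\|\partial_\theta\mathcal{Q}_\omega\|_{L^2(\frac{1}{\sigma_\omega})}^2+\|\mathcal{Q}_\omega\|_{L^4(\sigma_\omega)}^4=-\mu_\omega m$; letting $\omega\to\infty$, the first term vanishes, $\|\partial_\theta\mathcal{Q}_\omega\|_{L^2(\frac{1}{\sigma_\omega})}^2\to\|\partial_\theta Q_\infty\|_{L^2(\mathbb{S}^1)}^2=0$ because $Q_\infty$ is constant, and $\|\mathcal{Q}_\omega\|_{L^4(\sigma_\omega)}^4\to\frac{1}{2\pi}\|Q_\infty\|_{L^4(\mathbb{S}^1)}^4=\frac{m^2}{4\pi^2}$ by the strong $L^4$-convergence and Proposition~\ref{1d eigenstate}~$(iii)$, whence $\mu_\omega\to-m/(2\pi)^2=\mu_\infty$.

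Finally I would assemble the $\Sigma_\omega$-convergence exactly as in the proof of Theorem~\ref{drth}, writing $\mathcal{Q}_\omega-\sqrt{m/2\pi}\,\chi_\omega\Phi_0$ as the sum of $(\mathcal{Q}_{\omega,\parallel}-\sqrt{m/2\pi})\tilde\Phi_\omega$, the orthogonal remainder $\mathcal{P}_{\tilde\Phi_\omega}^\perp\mathcal{Q}_\omega$, and a profile-mismatch term governed by $\|\chi_\omega\Phi_\omega\|_{L^2(\sigma_\omega)}^{-1}\Phi_\omega-\Phi_0$; the three pieces tend to $0$ in $\Sigma_\omega$ by the $H^1(\mathbb{S}^1)$-convergence of $\mathcal{Q}_{\omega,\parallel}$, the improved vanishing (Lemma~\ref{orthogonal complement bound} and the bound just established), and the convergence and Gaussian decay of $\phi_\omega$ (Proposition~\ref{1d eigenstate}), after which \eqref{u-v relation} returns the claimed $\Sigma$-statement. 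I do not expect a genuine obstacle: because the energy is coercive, the refined Gagliardo--Nirenberg/forbidden-region input that was essential in the focusing lower bound is here only auxiliary, and no coercivity of a linearized operator (the hard step of the focusing uniqueness theorem) enters at all. The only points requiring care are the bookkeeping in the energy extraction and checking that the angular-derivative contribution, which enters the $\Sigma_\omega$-norm with the favorable weight $\omega^{-1}$, is controlled---both routine given the projection estimates of Section~\ref{subsec: truncated projection}.
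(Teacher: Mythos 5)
Your proposal is correct and takes essentially the same route as the paper: the paper's own proof is precisely a sketch that reformulates via \eqref{u-v relation}, proves the energy upper bound with the same test function $\sqrt{m/2\pi}\,\chi_\omega\Phi_\omega$ (Lemma \ref{upper}), derives the uniform bounds of Proposition \ref{existence of a minimizer} $(iii)$, and then reruns the arguments of Sections \ref{uniformgaus}--\ref{dimension reduction section} to get Proposition \ref{adprop} and assemble the convergence as in Theorem \ref{drth}. Your two deviations are harmless simplifications rather than gaps: skipping the Gaussian-decay step (Proposition \ref{Gaussian L^2 bounds}) is legitimate since that input is only used for the second convergence in Proposition \ref{adprop} $(iii)$, which feeds the focusing coercivity/uniqueness argument and not the dimension-reduction assembly, and your observation that uniqueness up to phase plus rotation invariance forces $Q_\omega$ to be radial in $y$ (so $\mathcal{O}_\omega$ may be taken to be the identity and the angular terms are trivial) is a valid shortcut the paper does not exploit.
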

\begin{proof}
 As in Section \ref{sec: reformulation}, we reformulate  the problem with  suitable changes of variables and recall that  the problem $J_\infty^{(1D)}(m)$ has a minimizer  $\sqrt{\frac{m}{2\pi}}$ with  the Lagrange multiplier $\mu_\infty=-\frac{m}{(2\pi)^2}$ for defocusing case $\kappa=1$ (see Proposition \ref{prop: circle ground state}).

\noindent{\bf Step 1.} Energy upper bound ($\mathcal{J}_\omega^{(3D)}(m)\le J_\infty^{(1D)}(m)+O(\omega^{-\frac12})$).
Using the test function $\sqrt{\frac{m}{2\pi}}\chi_\omega(s)\Phi_\omega(s,z) $, where $\sqrt{\frac{m}{2\pi}}$ is the ground state for 1D circle problem $J_\infty^{(1D)}(m)$ in the defocusing case, we can prove the energy upper bound (see Lemma \ref{upper}).

\noindent{\bf Step 2.} Dimension reduction.
Applying the energy bound (Step 1) and arguments  in Proposition \ref{existence of a minimizer} (iii), we can prove the following uniform bound:
$$\sqrt{\omega}\|\mathcal{Q}_\omega\|_{{\dot{\Sigma}}_{\omega;(s,z)}}, \|\partial_\theta\mathcal{Q}_\omega\|_{L^2(\frac{1}{\sigma_\omega})}, \|\mathcal{Q}_\omega\|_{L^6(\sigma_\omega)} \textup{ and } |\mu_\omega|$$ are  bounded uniformly in $\omega$. Then following the arguments in subsections \ref{uniformgaus}-\ref{dimension reduction section}, we can obtain uniform Gaussian decay of an energy minimizer (Proposition \ref{Gaussian L^2 bounds}) and dimension reduction (Proposition \ref{adprop}), and hence we can prove the result.
\end{proof}

\end{document}